\documentclass[a4paper,12pt,reqno]{amsart}
\usepackage{upref,amsmath,amsfonts,amsxtra,bbm,amssymb,mathrsfs,geometry}
\usepackage{enumitem}
\usepackage{hyperref}
\usepackage[english]{babel}

\usepackage{tikz}
\usetikzlibrary{calc}
\usetikzlibrary{arrows.meta,calc,decorations.pathreplacing}

\definecolor{myred}{RGB}{255, 100, 100}
\definecolor{myblue}{RGB}{100, 150, 255}
\definecolor{mygreen}{RGB}{40,180,95}
\definecolor{myorange}{RGB}{255,170,80}

\definecolor{skyblue}{HTML}{55e1ff}
\definecolor{limegreen}{HTML}{00FF00}
\definecolor{deeppink}{HTML}{FF00A2}
\definecolor{teal}{HTML}{008080}
\definecolor{light brown}{HTML}{D2B48C}

\usepackage{caption}
\usepackage{subcaption}
\usepackage{float}
\usepackage{xcolor}
\usepackage{ifthen}
\usepackage{siunitx}
\usetikzlibrary{patterns}
\usepackage[dvipsnames]{xcolor}

\hypersetup{
	colorlinks=true,
	linkcolor=blue,
	citecolor=blue,
	urlcolor=blue
}

\newcommand{\R}{{\mathbb{R}}}
\newcommand{\T}{{\mathbb{T}}}
\newcommand{\Z}{{\mathbb{Z}}}
\newcommand{\N}{{\mathbb{N}}}
\newcommand{\Q}{{\mathbb{Q}}}

\newcommand{\Om}{{\Omega}}
\newcommand{\La}{{\Lambda}}
\newcommand{\la}{{\lambda}}

\newcommand{\vep}{{\varepsilon}}
\newcommand{\m}{{\mathfrak{m}}}
\newcommand{\lrangle}[1]{\left\langle #1 \right\rangle}
\newcommand{\Zcomb}[2]{#1\mathbb{Z} + #2\mathbb{Z}}

\newcommand{\supp}{\operatorname{supp}}
\newcommand{\bd}{\text{bd}}

\newcommand{\qae}{\quad\text{a.e.}\ }

\newtheorem{theorem}{Theorem}[section]
\newtheorem{lemma}[theorem]{Lemma}
\newtheorem{proposition}[theorem]{Proposition}
\newtheorem{corollary}[theorem]{Corollary}
\newtheorem{remark}[theorem]{Remark}

\numberwithin{equation}{section}

\begin{document}
\baselineskip 18pt

\title{Weak tiling by unions of non-overlapping unit cubes}

\author{Tianyu Chen}
\address
{Tianyu Chen: School of Mathematics and Statistics, and Hubei Key Lab--Math. Sci., Central China Normal University, Wuhan 430079, China}
\email{chentianyu1024@gmail.com}

\author
{Shilei Fan}
\address
{Shilei FAN: School of Mathematics and Statistics, and Key Lab NAA--MOE, Central China Normal University, Wuhan 430079, China}
\email{slfan@ccnu.edu.cn}

\author{Mihail N. Kolountzakis}
\address{Mihail N. Kolountzakis: Department of Mathematics and Applied Mathematics, University of Crete, Voutes Campus, 70013 Heraklion, Greece, and Institute of Computer Science, Foundation for Research and Technology Hellas, N. Plastira 100, Vassilika Vouton, 700 13, Heraklion, Greece}
\email{kolount@gmail.com}

\author{Chun-Kit  Lai}
\address{Chun-Kit Lai: Department of Mathematics, San Francisco State University, San Francisco, CA 94132, USA.}
{}
\email{cklai@sfsu.edu}


\begin{abstract}
We study weak tiling by finite unions of pairwise non-overlapping unit cubes in \(\mathbb R^d\) that are not necessarily axis-parallel. We obtain a weak-tiling analogue of Keller’s classical theorem, which gives a structural restriction on the support of any weak tiling measure associated with the unit cube. This allows us to derive geometric restrictions on configurations of cubes whose union admits a weak tiling. As an application, we prove Fuglede's conjecture for unions of three non-overlapping unit squares in \(\mathbb R^2\) as well as unions of two non-overlapping axis-parallel unit cubes in any dimension.
\end{abstract}

\thanks{This work is supported by the NSF of Xinjiang Uygur Autonomous Region (Grant No. 2024D01A160) and the NSFC (grants No. 12331004  and No. 12231013).}
\subjclass[2020]{Primary 43A99; Secondary 05B45, 52C22.}
\keywords{Tiling, weak tiling and spectral sets.}
\maketitle

\setcounter{tocdepth}{1}

\tableofcontents  

\section{Introduction}

Let $\Om\subset\R^d$ be a bounded, measurable set with finite positive Lebesgue measure. We say that $\Om$ is a {\bf spectral set} if there exists a countable set $\La\subset\R^d$ such that the system of exponential functions $\{e^{2\pi i\lrangle{\la,x}}\}_{\la\in\La}$ forms an orthogonal basis in $L^2(\Om).$  In this case, the pair $(\Omega, \Lambda)$ is called a \textbf{spectral pair}, and $\Lambda$ is referred to as a \textbf{spectrum} for $\Omega$.
We say that $\Om$ is a \textbf{translational tile} of $\R^d$ if there exists a countable set $T\subset \R^d$ such that $$\sum_{t\in T}\mathbbm{1}_\Om(x-t)=1,\qae x \in \mathbb{R}^d.$$
T is called  a \textbf{tiling set} of $\Om$.

Fuglede~\cite{Fuglede1974} conjectured that $\Om$ is a spectral set if and only if $\Om$ is a tile. Fuglede proved the equivalence under the additional assumption that the spectrum or the tiling set of $\Om$ is a lattice. Later on, several partial results were established, for example, for 2-dimensional convex bodies~\cite{IKT03}, unions of two intervals~\cite{Laba2001}, certain unions of three intervals~\cite{BKA2010,BM2014} and three-dimensional convex polytopes~\cite{GL17}. The general conjecture has been shown to fail in both directions when \( d \ge 3 \) (see~
\cite{FMM07,FBR06,KM06,KMM06,M05,T04}). Recently, it was also disproved in $\R^2$ \cite{zhang2026},  leaving it open only for $\R^1$.

On the other hand, Fuglede's conjecture was also confirmed under extra assumptions. One prominent result was due to Lev and Matolcsi \cite{LevMat2022} who proved the conjecture for convex bodies in all dimensions. A central tool in their proof is the concept of \textbf{weak tiling}. Let $\Sigma\subset\R^d$ be a measurable set. We say that $\Sigma$ admits a \textbf{weak tiling} by translates of $\Om$ if there exists a positive, locally finite measure $\mu$ on $\R^d$ such that $$\mathbbm{1}_\Om*\mu=\mathbbm{1}_\Sigma\qae$$
And we say that $\Om$ is a \textbf{weak tile} of $\R^d$ if $\Om^c$ admits a weak tiling by translates of $\Om$. In other words, there exists a positive, locally finite measure $\mu$ such that $$\mathbbm{1}_\Om*\mu=\mathbbm{1}_{\Om^c}\qae$$
Using \cite[Corollary~2.6]{LevMat2022}, one obtains the following equivalent formulation: a bounded measurable set $\Omega\subset\R^d$ is a weak tile of $\R^d$ if and only if there exists a {non-negative}, locally finite measure $\nu$ satisfying
	\[
	\mathbbm{1}_\Omega * \nu = 1 \qae
	\quad\text{and}\quad
	\nu(\{0\})=1.
	\]
The following theorem establishes a link between spectrality and weak tiling. 
\begin{theorem}[\cite{LevMat2022}]\label{thm:1.1}
	Suppose $\Om$ is a bounded, measurable set in $\R^d$. If $\Om$ is spectral, then it is a weak tile. 
\end{theorem}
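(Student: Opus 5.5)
We follow the Lev--Matolcsi strategy: convert the spectral hypothesis into a positive-definiteness statement, and then into a weak tiling. Let $\La$ be a spectrum for $\Om$. After translating we may assume $0\in\La$. Orthogonality of the exponentials says that $\widehat{\mathbbm 1_\Om}(\la-\la')=0$ for distinct $\la,\la'\in\La$. Completeness, via Parseval applied to the exponentials $e^{2\pi i\lrangle{\xi,x}}$, gives
\[
\sum_{\la\in\La}|\widehat{\mathbbm 1_\Om}(\xi-\la)|^2=|\Om|^2 \quad\text{for all } \xi\in\R^d,
\]
that is, $|\widehat{\mathbbm 1_\Om}|^2*\delta_\La=|\Om|^2$. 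Two consequences follow. First, $\La$ is uniformly discrete, since $\widehat{\mathbbm 1_\Om}$ is nonzero near $0$. Second, $\La$ has uniform density $|\Om|$. Write $g=\mathbbm 1_\Om*\mathbbm 1_{-\Om}$, so that $\hat g=|\widehat{\mathbbm 1_\Om}|^2$ and $g$ is continuous with compact support $\Om-\Om$.

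The key step is to produce a positive-definite measure from $\La$. Take a weak-$*$ limit of the normalized autocorrelations
\[
\gamma_R=\frac{1}{|B_R|}\,\delta_{\La\cap B_R}*\delta_{-(\La\cap B_R)}
\]
along a subsequence. Uniform discreteness makes these measures uniformly locally bounded, so a limit exists. The limit $\gamma$ is a positive, positive-definite measure supported in $\La-\La$, with $\gamma(\{0\})=\operatorname{dens}\La=|\Om|$. Hence $\hat\gamma$ is a positive measure.

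We then transfer the identity $|\widehat{\mathbbm 1_\Om}|^2*\delta_\La=|\Om|^2$ to the spectral side. One cannot convolve $\Om$ with $\gamma$ directly, because $\gamma$ lives on the frequency side. Instead we use the orthogonality identity $\hat g\cdot\gamma=|\Om|^2\delta_0$. This holds because $\hat g$ vanishes on $(\La-\La)\setminus\{0\}$ and $\hat g(0)=|\Om|^2$, and $\gamma$ is supported on $\La-\La$ with $\gamma(\{0\})=|\Om|$. Taking Fourier transforms gives
\[
g*\hat\gamma=|\Om|^2\cdot\text{const}.
\]
Since $\gamma(\{0\})=|\Om|$, the positive measure $\hat\gamma$ has its constant part equal to $|\Om|\,\mathrm{Leb}$.

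Set $\mu=|\Om|^{-1}\hat\gamma$, a positive measure with $\mathbbm 1_\Om*\mathbbm 1_{-\Om}*\mu=\text{const}$. Together with density considerations, in the form of the $\xi=0$ comparison $\hat\gamma(\{0\})=\operatorname{dens}(\La)^2$, this should identify $\mu$ as satisfying $\mathbbm 1_\Om*\nu=1$ for a suitable positive $\nu$ with $\nu(\{0\})=1$. By \cite[Corollary~2.6]{LevMat2022} as quoted above, that is exactly the weak tile property.

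The main obstacle is this last identification. What we have produced is a weak tiling of $\R^d$ by the \emph{difference body} kernel $g$, not by $\mathbbm 1_\Om$ itself, so one must pass from $g*\mu=\text{const}$ to a statement about $\mathbbm 1_\Om$ with an atom at $0$. The first approach I would try is to work with the measure $\hat\gamma$ restricted away from the Lebesgue part. The atom $\gamma(\{0\})\delta_0$ Fourier-transforms to $|\Om|\,\mathrm{Leb}$, and $\gamma-|\Om|\delta_0$ is supported on $(\La-\La)\setminus\{0\}$, where $\hat g$ vanishes. This should allow a direct construction in which $\mathbbm 1_\Om*\mathbbm 1_{-\Om}*(\delta_0+\rho)=|\Om|$ with $\rho\ge0$. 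From there I would deduce $\mathbbm 1_\Om*\nu=1$ by testing against $\mathbbm 1_{-\Om}$ and using positivity to force equality almost everywhere. Verifying positivity of $\rho$ is the delicate point; it relies on the positive-definiteness of $\gamma$ together with a careful handling of the singular part at the origin.
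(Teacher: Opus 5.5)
There is a genuine gap, caused by a weakening you make early on. You note that orthogonality gives $\widehat{\mathbbm 1_\Om}(\la-\la')=0$ for $\la\neq\la'$, but you then only use the weaker fact that $\hat g=|\widehat{\mathbbm 1_\Om}|^2$ vanishes on $(\La-\La)\setminus\{0\}$. This leaves you with $g*\hat\gamma=\text{const}$, which is a weak tiling by the difference-body kernel. (Incidentally, $\hat g\cdot\gamma=|\Om|^3\delta_0$, not $|\Om|^2\delta_0$.) The passage from this to $\mathbbm 1_\Om*\nu=1$ is exactly the step you leave open, and the repair you sketch does not work.

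\emph{The Lebesgue part.} $\hat\gamma$ does not contain $|\Om|\,\mathrm{Leb}$ as a positive summand. The transform of $|\Om|\delta_0$ is $|\Om|\,\mathrm{Leb}$, but the transform of $\gamma-|\Om|\delta_0$ is in general not positive. For $\Om=(0,1)^d$ and $\La=\Z^d$ one has $\gamma=\hat\gamma=\delta_{\Z^d}$, which does not dominate Lebesgue measure. So ``restricting $\hat\gamma$ away from the Lebesgue part'' has no meaning.

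\emph{Testing against $\mathbbm 1_{-\Om}$.} A nonnegative $h$ with $\mathbbm 1_{-\Om}*h$ constant need not be constant. For $\Om=(0,1)$, the function $h(x)=1+\cos(2\pi x)$ is nonnegative and satisfies $\mathbbm 1_{-\Om}*h\equiv 1$. Positivity alone therefore cannot force equality.

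The missing idea is simply not to square. The support of $\gamma$ lies in $\overline{\La-\La}$, and $0$ is isolated there by uniform discreteness. Since $\widehat{\mathbbm 1_\Om}$ is continuous, it vanishes on $\supp\gamma\setminus\{0\}$. It is also smooth with bounded derivatives, so the product $\widehat{\mathbbm 1_\Om}\cdot\gamma$ is a legitimate measure. It equals $\widehat{\mathbbm 1_\Om}(0)\,\gamma(\{0\})\,\delta_0=|\Om|^2\delta_0$.

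Take inverse Fourier transforms and use that $\gamma$, hence $\hat\gamma$, is even. This gives
\[
\mathbbm 1_\Om*\hat\gamma=|\Om|^2 \quad\text{a.e.}
\]

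The ingredient you already quoted then finishes the argument: $\hat\gamma(\{0\})=D(\La)^2=|\Om|^2$, equivalently $\hat\gamma\ge|\Om|^2\delta_0$. This is the one lemma that needs a real proof, for example via $\hat\gamma(\{0\})=\lim_{r\to\infty}\gamma(B_r)/|B_r|$ together with the uniform density of $\La$. Set $\nu:=|\Om|^{-2}\hat\gamma$. Then $\nu$ is positive and locally finite, $\mathbbm 1_\Om*\nu=1$ a.e., and $\nu(\{0\})=1$. So $\nu-\delta_0\ge 0$ automatically, with no delicate positivity check.

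This is the Lev--Matolcsi argument that the paper invokes by citation; the paper gives no proof of its own. Your setup (autocorrelation, positive-definiteness, the atom of $\hat\gamma$ at the origin) is the right one, but as written the proof does not close.
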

Clearly, every tile is a weak tile. Together with Theorem \ref{thm:1.1}, we know that both spectral sets and tiles belong to the class of weak tiles. Due to the counterexamples of Fuglede's conjecture in both directions, the class of weak tiles forms a strictly larger class than each of the classes of spectral sets and tiles.  Kiss, Londner, Matolcsi, and Somlai~\cite{KLMS2026} constructed an example of a ``lonely" weak tile which is neither a spectral set nor a tile, showing that the class of weak tiles is genuinely a broader class than the other two.

This now opens up new questions to decide  if all these three classes of sets are the same under some more assumptions on the sets. Lev and Matolcsi pioneered this study by showing that every spectral set weakly tiles its complement and using this necessary condition to establish Fuglede's conjecture for convex bodies \cite{LevMat2022}. Together with the subsequent result of Kolountzakis, Lev, and Matolcsi~\cite{KLM2023} that every convex weak tile is a tile, this shows that spectrality, weak tiling, and translational tiling are equivalent for convex bodies. Another such class is given by unions of at most two intervals in $\mathbb{R}$: weak tiling implies proper translational tiling for these sets, and, together with {\L}aba's result~\cite{Laba2001}, this yields the equivalence of the three notions \cite{KLM2025}.

\subsection{Main Results.} In this paper, we aim to explore the equivalence of these concepts for some finite unions of cubes in $\R^d$. We find two general results that may be of independent interest in further study. They include a generalization of Keller's classical criterion and the non-spectrality of a union of non-parallel convex polytopes.  

To begin, let us set up some notation for this paper. The open unit cube $(0,1)^d$ in $\R^d$ will be denoted by $Q_d$. We will write  $\Z^{\ast} = \Z\setminus\{0\}$, and 
\begin{equation}\label{gridL}
{\mathcal L} = \{x = (x_1,\cdots, x_d)\in\R^d: \exists i \  \mbox{such that} \ x_i\in\Z^{\ast} \}.
\end{equation}
In 1930, Keller~\cite{keller1930} showed that if $Q_d$ tiles $\R^d$ by translations with a tiling set ${\mathcal J}$, then for all $t\ne t'\in{\mathcal J}$, at least one of the coordinates of $t-t'$ must belong to $\Z^*$. Assuming $0\in {\mathcal J}$, Keller's theorem can be rephrased as ${\mathcal J}\subset {\mathcal L}\cup\{0\}$. Our first result is that Keller's theorem is also true for the weak tiling measures of $Q_d$.
\begin{theorem} [Keller's theorem for weak tiling]\label{th-Keller}
    Let $\mu$ be a positive locally finite measure such that 
    $$
    {\mathbbm 1}_{Q_d}\ast (\mu+\delta_0) = 1 \qae
    $$
    Then $\supp ({\mu}) \subset {\mathcal L}$. 
\end{theorem}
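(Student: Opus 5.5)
The plan is to first obtain the local statement that $\mu$ vanishes near the origin, and then transport this information outward using the difference structure forced by the equation $\mathbbm 1_{Q_d}*\nu=1$, where $\nu:=\mu+\delta_0$. Note that $\mathcal L$ is closed, since it is a locally finite union of hyperplanes $\{x_i=k\}$ with $k\in\Z^\ast$. So the claim is that $\mu(U)=0$ for the open set $U=\R^d\setminus\mathcal L$, the set of points none of whose coordinates is a nonzero integer.

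\emph{Step 1 (the origin cube).} On $Q_d$ we have $\mathbbm 1_{Q_d}*\delta_0=1$. Since $\mu\ge 0$, this gives $\mathbbm 1_{Q_d}*\mu=0$ a.e.\ on $Q_d$, that is, $\mu(x-Q_d)=0$ for a.e.\ $x\in Q_d$. The union of the open sets $x-Q_d$ over a full-measure subset of $Q_d$ is $(-1,1)^d$, so $\mu$ vanishes on $(-1,1)^d$. This is the part of $U$ with all $|t_i|<1$.

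\emph{Step 2 (slab periodicity).} Differentiate $\mathbbm 1_{Q_d}*\nu=1$ distributionally in $x_i$. Since $\partial_i\mathbbm 1_{Q_d}=(\delta_0-\delta_{e_i})$ in the $i$-th variable tensored with $\mathbbm 1_{Q_{d-1}}$ in the remaining variables, we get
\[
\bigl((\delta_0-\delta_{e_i})\otimes \mathbbm 1_{Q_{d-1}}\bigr)*\nu=0 .
\]
In words: for a.e.\ $y'\in\R^{d-1}$ and every interval $I$,
\[
\nu\bigl(I\times (y'-Q_{d-1})\bigr)=\nu\bigl((I+1)\times(y'-Q_{d-1})\bigr),
\]
where the slab is taken in the coordinates other than $i$. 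Together with Step 1, this is the tool for moving mass information by integer steps in one coordinate direction while keeping the other coordinates in a unit window.

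\emph{Step 3 (induction outward).} Argue by contradiction. Pick $t\in\supp\mu\cap U$ minimizing the integer vector $(\lfloor |t_1|\rfloor,\dots,\lfloor |t_d|\rfloor)$, say in total size; by Step 1 some $|t_i|\ge 1$, and by symmetry I treat $t_1\ge 1$. Choose a tiny interval $I\ni t_1$ avoiding $\Z$ and a window $y'-Q_{d-1}$ containing $t'$ whose closure stays inside the region where $t'$'s coordinates avoid $\Z^\ast$. Step 2 then transfers the positive mass of the box around $t$ to the box shifted by $-e_1$. That box lies in $U$ and is strictly closer to the origin in the first coordinate, contradicting minimality once we know this mass is genuinely located near $t-e_1$. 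In dimension one this closes immediately, because the window is absent: $\nu$ is $1$-periodic, and the atom at $0$ forbids mass in $(-1,1)\setminus\{0\}$, hence off $\Z$.

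\emph{Main obstacle.} The difficulty is exactly the localization in Step 3. The slab $I\times(y'-Q_{d-1})$ has width one in the other coordinates, so the transferred mass may sit at points whose other coordinates cross integers, i.e.\ points of $\mathcal L$ rather than $U$. My first attempt will be induction on the dimension via Fubini. Slice $\nu$ by the lines $x'=$ const through the convolution identity, so that for a.e.\ $x'$ the measure $\nu$ projected onto $x_1$ and weighted by $\mathbbm 1_{x'-Q_{d-1}}$ is $1$-periodic. Then apply the $(d-1)$-dimensional statement to the section measures obtained by disintegrating $\nu$ along $x_1$ near $t_1$: these weakly tile the $(d-1)$-cube up to an atom-like contribution coming from the cube at $0$, after shifting by integers. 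I also expect positivity to be used essentially to prevent cancellation when the windows are varied. If the localization still fails, the fallback is the Fourier route: the identity gives $\widehat\nu=\delta_0+\sigma$, with $\sigma$ supported on the zero set $\bigcup_i\{\xi_i\in\Z^\ast\}$ of $\widehat{\mathbbm 1_{Q_d}}$. I would then decompose $\nu-1$ into pieces, the $i$-th being $1$-periodic in $x_i$, and combine this with Step 1.
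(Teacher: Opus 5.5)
\textbf{Verdict: there is a genuine gap, in Step 3.} Steps 1 and 2 are fine. Step 1 gives $\mu((-1,1)^d)=0$, including $\mu(\{0\})=0$. Step 2 is the identity $\partial_i(\mathbbm 1_{Q_d}*\nu)=0$; the null set of $y'$ depends on $I$, which is harmless.

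Step 3 is not just a localization detail. The one-direction transfer works when exactly one coordinate of $t$ has $|t_i|>1$. Then the windows in the other coordinates can be placed inside $(-1,1)$, and Step 1 gives the contradiction. Once two coordinates exceed $1$ in absolute value (e.g.\ $t=(3/2,3/2)$ in $d=2$), the window you ask for does not exist. Every closed interval of length one containing $t_2\in(1,2)$ contains $1$ or $2$. The identity $\nu(I\times W)=\nu((I-1)\times W)$ then only says that $(I-1)\times W$ carries positive $\nu$-mass, and that mass may lie entirely on $\mathcal L$.

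This really happens. Take $\nu=\delta_{\Z}\otimes\delta_0+\sum_{k\neq0}\m\otimes\delta_k$. It satisfies $\mathbbm 1_{Q_2}*\nu=1$ a.e.\ and $\nu(\{0\})=1$, and gives no mass to $(\R^2\setminus\mathcal L)\setminus\{0\}$. Yet for a small interval $I$ around $3/2$ and almost every window $W\ni 3/2$, both sides of your identity equal $\m(I)>0$, with the mass on the line $x_2=1$ or $x_2=2$. So the positivity you derive never produces a point of $\supp\mu\cap U$ closer to the origin, and minimality is not contradicted.

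Your Fubini sketch has the same defect. A slab $(x_1-1,x_1)\times\R^{d-1}$ not containing the origin carries no distinguished unit atom. Projecting it along $x_1$ merges mass on $\{x_1=k\}$, $k\neq0$, with mass at non-integer $x_1$, so the $(d-1)$-dimensional statement has nothing to act on. The Fourier fallback is only a sketch: how the signed, singly periodic pieces would combine with Step 1 is exactly the missing step.

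\textbf{The paper's route.} The paper uses an idea absent from your plan: it replaces the cube by a positive test function. From $\mathbbm 1_Q*\nu=1$ one gets $\supp\widehat\nu\subset\{0\}\cup\mathcal L$. The function $f=|\widehat{\mathbbm 1_Q}|^2$, viewed in physical space, has the following properties:
\begin{itemize}
\item $f\ge0$, $f(0)=1$ and $\{f=0\}=\mathcal L$;
\item $\widehat f=\mathbbm 1_Q*\mathbbm 1_{-Q}$ is supported in $[-1,1]^d$, whose interior meets $\{0\}\cup\mathcal L$ only at $0$.
\end{itemize}
Kolountzakis's approximants $f_\varepsilon\ge0$ with $\supp\widehat{f_\varepsilon}\subset(-1,1)^d$ then give $f*\nu=1$ everywhere. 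Evaluating at $0$ yields $\int f\,d\mu=0$, i.e.\ $\mu(\R^d\setminus\mathcal L)=0$.

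\textbf{A physical-space repair.} To stay in physical space, differentiate in all large coordinates at once. For $t\in U$ put $L=\{i:|t_i|>1\}$ and apply $\prod_{i\in L}\partial_i$ to $\mathbbm 1_{Q_d}*\nu=1$. Write $e_S=\sum_{i\in S}e_i$ and $Q_{L^c}=(0,1)^{L^c}$ (no window if $L=\{1,\dots,d\}$). For every box $A\subset\R^L$ and a.e.\ $w$ this gives
\[
\sum_{S\subseteq L}(-1)^{|S|}\,\nu\bigl((A-e_S)\times(w-Q_{L^c})\bigr)=0.
\]
Unit windows now appear only in coordinates with $|t_i|<1$, where they can be taken inside $(-1,1)$. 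Choose $A$ so that the boxes $A-e_S$ are small neighbourhoods of the points $t_L-\sum_{i\in S'}\operatorname{sgn}(t_i)e_i$, $S'\subseteq L$. Every box except the one around $t$ then lies in $U\setminus\{0\}$ and consists of points $s$ with $\sum_i\lfloor|s_i|\rfloor$ strictly smaller than for $t$. Induction on this quantity, with your Step 1 as base case and positivity of $\nu$, gives $\nu=0$ near $t$. Your Step 3 is exactly the case $|L|=1$ of this argument.
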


\begin{figure}[!ht]
\centering
\begin{tikzpicture}[scale=0.8]
  \begin{scope}
    \draw[->] (-3.5,0) -- (3.5,0) node[right] {$\mathbb R$};
    \foreach \x in {-3,-2,-1,1,2,3}
      \fill[blue!65] (\x,0) circle (2pt);
    \fill[white] (0,0) circle (2.3pt);
    \draw (0,0) circle (2.3pt);
    \node at (0,-0.45) {$0$};

    \node at (0,-1.15) {$d=1:\ \mathcal L=\mathbb Z^*$};
  \end{scope}

  \begin{scope}[xshift=10cm]
    \draw[->] (-4,0) -- (4,0) node[right] {$x_1$};
    \draw[->] (0,-4) -- (0,4) node[above] {$x_2$};

    \foreach \x in {-3,-2,-1,1,2,3}
      \draw[thick,blue!65] (\x,-3.4) -- (\x,3.4);
    \foreach \y in {-3,-2,-1,1,2,3}
      \draw[thick,blue!65] (-3.4,\y) -- (3.4,\y);

    \draw[thin] (-4,0) -- (4,0);
    \draw[thin] (0,-4) -- (0,4);

    \fill[white] (0,0) circle (2.3pt);
    \draw (0,0) circle (2.3pt);
    \node[below left] at (0,0) {$0$};

    \node at (0,-5) {$d=2:\ \mathcal L=(\mathbb Z^*\times\mathbb R)\cup(\mathbb R\times\mathbb Z^*)$};
  \end{scope}
\end{tikzpicture}
\caption{The support restriction in Keller's theorem for weak tiling measures. When \(d=1\), the support of \(\mu\) is contained in \(\mathbb Z^\ast\); in fact, \(\mu=\delta_{\mathbb Z^\ast}\). When \(d=2\), it is contained in $\mathcal L=(\mathbb Z^\ast\times\mathbb R)\cup(\mathbb R\times\mathbb Z^\ast)$, the union of all horizontal and vertical lines on which one coordinate is a nonzero integer. In particular, the origin does not belong to
\(\mathcal L\).}
\label{fig:keller-support}
\end{figure}

{We say that a family of measurable sets is \textbf{pairwise non-overlapping} if every two distinct sets intersect only on a set of measure zero.} Now we consider the pairwise non-overlapping union
	\[
	\Omega = Q\cup P_1\cup\cdots\cup P_N\subset \R^d,
	\]
where we write \(Q:=Q_d\) for brevity and the \(P_i\) are {full-dimensional} convex polytopes (i.e. they have positive volume). A {\bf facet} is a \((d-1)\)-dimensional face of a polytope. When we say that \(F\) is a facet of \(\Omega\), we mean that \(F\) is a facet of one of the polytopes in the union. Each facet \(F\) lies in a \((d-1)\)-dimensional hyperplane with a unit outward normal vector \(n_F\). The standard unit vectors in \(\mathbb R^d\) are denoted by \(e_1,\ldots,e_d\).

Our second general result shows that if none of the polytopes \(P_i\) has a facet whose outward normal vector is parallel to any coordinate axis, then \(\Omega\) cannot be a spectral set. This extends our understanding of the Fuglede conjecture beyond the axis-parallel setting. See Figure~\ref{fig:non-spectral-polytopes}.

\begin{figure}[!htbp]
	\centering
	\definecolor{cubeblue}{RGB}{100,155,230}
	\definecolor{squareorange}{RGB}{242,166,90}
	\definecolor{squarered}{RGB}{232,110,105}
	\definecolor{polygongreen}{RGB}{110,175,140}

	\begin{tikzpicture}[scale=0.95,line join=round]

		\begin{scope}
			\filldraw[fill=cubeblue,draw=black,line width=0.8pt]
			(0,0) rectangle (1.5,1.5);
			\node[font=\small] at (0.75,0.75) {$Q$};

			\begin{scope}[shift={(3.0,0.9)},rotate=18]
				\filldraw[fill=squareorange,draw=black,line width=0.8pt]
				(-0.7,-0.7) rectangle (0.7,0.7);
				\node[font=\small] at (0,0) {$P_1$};
			\end{scope}

			\begin{scope}[shift={(5.4,1.6)},rotate=-27]
				\filldraw[fill=squarered,draw=black,line width=0.8pt]
				(-0.65,-0.65) rectangle (0.65,0.65);
				\node[font=\small] at (0,0) {$P_2$};
			\end{scope}

			\node[font=\scriptsize,align=center] at (3.0,-0.65)
			{\(Q\cup P_1\cup P_2\) is not spectral};
		\end{scope}

		\draw[densely dashed,gray!65,line width=0.7pt] (7.1,-0.9)--(7.1,3.15);

		\begin{scope}[shift={(8.2,0)}]
			\filldraw[fill=cubeblue,draw=black,line width=0.8pt]
			(0,0) rectangle (1.5,1.5);
			\node[font=\small] at (0.75,0.75) {$Q$};

			\filldraw[fill=polygongreen,draw=black,line width=0.8pt]
			(4.585,1.174) --
			(3.739,1.990) --
			(2.701,1.438) --
			(2.905,0.281) --
			(4.069,0.117) -- cycle;

			\node[font=\small] at (3.60,1.02) {$P$};

			\node[font=\scriptsize,align=center] at (2.8,-0.65)
			{\(Q\cup P\) is not spectral};
		\end{scope}

	\end{tikzpicture}

	\caption{
		Two non-spectral sets covered by
		Theorem~\ref{th-spectral-non-parallel}.
		In both examples, none of the facets of the additional
		polytopes is parallel to a coordinate axis.
	}
	\label{fig:non-spectral-polytopes}
\end{figure}


\begin{theorem}\label{th-spectral-non-parallel}
    Let $\Omega=Q~\cup P_1~\cup\cdots\cup P_N$ as above. Suppose that for all facets in $P_i$, $i = 1,\cdots, N$, the unit normal vectors are not equal to $\pm e_j$, $j=1,\cdots, d$. Then $\Omega$ is not a spectral set. 
\end{theorem}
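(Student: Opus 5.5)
Assume, for contradiction, that $\Omega$ is spectral. By Theorem~\ref{thm:1.1} it is then a weak tile, so there is a non-negative locally finite measure $\nu$ with $\mathbbm 1_\Omega*\nu=1$ a.e.\ and $\nu(\{0\})=1$; write $\nu=\delta_0+\mu$. The plan is to show that the pieces $P_i$ impose constraints on $\nu$ that are incompatible with the Keller-type structure that the cube piece forces.

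Here is the intuition for the core step. Near the boundary of the copy $Q$ (placed at $0$), translates of $\Omega$ must fill space exactly. The facets of $Q$ are axis-parallel. The facets of every $P_i$ have normals $\neq \pm e_j$, so the $P_i$ can never contribute a face matching an axis-parallel hyperplane. I would make this precise by Fourier analysis/distributional derivatives. Differentiate $\mathbbm 1_\Omega*\nu=1$ in the direction $e_1$, or better, look at the distribution $\partial_{e_1}\mathbbm 1_\Omega$. It equals the surface measures on facets weighted by $\langle n_F,e_1\rangle$. Then $\partial_{e_1}\mathbbm 1_\Omega*\nu=0$. Now decompose the surface measures by the direction of the hyperplane. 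Convolution with a positive measure $\nu$ preserves the hyperplane direction of a facet measure, and the pieces with distinct normal directions are mutually singular in a suitable microlocal (wave-front / Fourier decay along the normal) sense. One can therefore project onto the part supported on hyperplanes with normal $\pm e_1$. Only the two facets $\{x_1=0\}$ and $\{x_1=1\}$ of $Q$ contribute there, since any $P_i$ facet has a different normal. This gives
\[
(\sigma_{F_0}-\sigma_{F_1})*\nu=0,
\]
where $F_0,F_1$ are the facets of $Q$ orthogonal to $e_1$ and $\sigma$ denotes surface measure. The same holds for every $j$. The main obstacle is justifying this separation rigorously. What I would try first is to take the Fourier transform: $\widehat{\sigma_{F}}$ has no decay along the normal line $\R n_F$ and decays elsewhere. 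Testing against $\hat\nu$ along the line $\R e_j$ (after smoothing) should isolate the $e_j$-normal components. A cleaner alternative is a blow-up argument: take weak limits of translates/dilations of the identity near a generic point of a hyperplane $\{x_j=c\}$ hit by $\supp\nu+F_0$. Only facets parallel to it survive in the limit.

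Next I integrate these identities. From $(\sigma_{F_0}-\sigma_{F_1})*\nu=0$ for the $e_1$ direction, I would deduce $(\mathbbm 1_{Q}*\nu)$ is constant along $e_1$ in the distributional sense. Indeed, $\partial_{e_1}\mathbbm 1_Q = \sigma_{F_0}-\sigma_{F_1}$ up to sign. The same holds for all directions $e_j$, so $\mathbbm 1_Q*\nu$ is a.e.\ constant, say equal to $c$. Comparing densities then pins $c$ down. Since $\mathbbm 1_\Omega*\nu=1$, the measure $\nu$ has density $1/|\Omega|$ in the averaged sense, so $c=|Q|/|\Omega|=1/(1+\sum|P_i|)<1$. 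But then $\mathbbm 1_Q*(\nu/c)=1$ with $\nu/c(\{0\})=1/c>1$. Keller's Theorem~\ref{th-Keller} is not even needed at this point. The measure $\nu/c$ is a positive tiling measure for $Q$ with an atom of mass $>1$ at $0$. This forces $\mathbbm 1_Q*(\nu/c)\ge 1/c>1$ on $Q$, a contradiction.

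To nail the density step, I would average $\mathbbm 1_\Omega*\nu=1$ and $\mathbbm 1_Q*\nu=c$ over large balls. Locally finiteness of $\nu$ and boundedness of $\Omega$ give $\nu(B_R)/|B_R|\to 1/|\Omega|$ and also $\to c/|Q|$. The only delicate point in the whole argument therefore remains the direction-separation step.
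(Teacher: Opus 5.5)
There is a genuine gap, and it sits at the step you flag yourself: the identity $(\sigma_{F_0}-\sigma_{F_1})*\nu=0$. The problem is not only that it is unproved. The principle you invoke is false in the generality you use it. Your separation argument uses only $\nu\ge 0$ and $\partial_{e_1}(\mathbbm 1_\Omega*\nu)=0$, and convolution with a positive measure does not preserve facet directions. If $\nu$ has a diffuse part, $\sigma_F*\nu$ can be absolutely continuous. Smearing a slanted facet along a vertical line even produces a density that jumps across vertical lines, i.e.\ a singularity with conormal $e_1$.

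Here is a concrete failure. Take an admissible $\Omega$ and a point $\xi_0$ with $\widehat{\mathbbm 1_\Omega}(\xi_0)=0\neq\widehat{\mathbbm 1_Q}(\xi_0)$ and $(\xi_0)_1\neq0$. Such points exist, e.g., for $\Omega=Q\cup P_1\cup P_2$ with $P_2$ the reflection of $P_1$ through the center of $Q$: there $\widehat{\mathbbm 1_\Omega}$ is real up to a phase and must change sign. Set $\nu=|\Omega|^{-1}\bigl(1+\tfrac12\cos 2\pi\langle\xi_0,x\rangle\bigr)\,dx$. Then $\nu\ge0$ and $\mathbbm 1_\Omega*\nu=1$. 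However, $\mathbbm 1_Q*\nu=|\Omega|^{-1}\bigl(1+\tfrac12\mathrm{Re}\bigl(\widehat{\mathbbm 1_Q}(\xi_0)e^{2\pi i\langle\xi_0,x\rangle}\bigr)\bigr)$ is not constant in $x_1$, so $(\sigma_{F_0}-\sigma_{F_1})*\nu\neq0$.

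So any correct separation must exploit the atom $\nu(\{0\})=1$, and none of your three mechanisms does. What can be proved rigorously is weaker. Restrict $\partial_{e_1}(\mathbbm 1_\Omega*\nu)=0$ to hyperplanes $\{x_1=c\}$; the slanted $\sigma_F*\nu$ do not charge these. This gives $(\sigma_{F_0}-\sigma_{F_1})*\nu^{(1)}=0$ only for the part $\nu^{(1)}$ of $\nu$ carried by the countably many hyperplanes $\{x_1=c\}$ that $\nu$ charges. The blow-up (BV jump-part) argument yields the same thing. Since $\nu^{(j)}$ depends on $j$ and the diffuse part of $\nu$ is untouched, you cannot conclude that $\mathbbm 1_Q*\nu$ is constant. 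Your closing density-plus-atom argument is correct, but it then has nothing to act on. The Fourier variant fails for a related reason. In the cone around $e_1$ the slanted facets contribute an $O(|\xi_1|^{-1})$ term to $\widehat{\mathbbm 1_\Omega}$, and multiplying by the distribution $\widehat\nu$ gives no mechanism to discard it.

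Note also that your route, if it worked, would prove that $\Omega$ is not even a weak tile. That is strictly stronger than the theorem, and the paper does not claim it: for non-parallel squares it proves only spectral $\Leftrightarrow$ tile. The paper's proof uses spectrality essentially. It translates a spectrum $\Gamma$ to infinity along $e_1$ and takes a weak limit, which is again a spectrum. In that limit the $O(|\xi_1|^{-1})$ contribution of the slanted facets disappears, forcing $u_1-u_2\in\Z$ whenever the transverse difference avoids the zeros of $\widehat{\mathbbm 1_{Q_{d-1}}}$. Iterating over all coordinates gives a spectrum $\Lambda$ with $\Lambda-\Lambda\subset\mathcal L\cup\{0\}$. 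Then $Q+\Lambda$ is a packing, so $\Lambda$ has density at most $1<|\Omega|$, a contradiction. This shift-to-infinity step has no analogue for a weak tiling measure: frequency translates of $\widehat\nu$ correspond to modulations of $\nu$, which are neither positive nor weak tiling measures.
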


\noindent {\bf (i) Axis-parallel cubes.} We now use our general theorems to discuss our findings concerning unions of several unit cubes. Using the Keller criterion (Theorem \ref{th-Keller}), we can establish the validity of the Fuglede spectral set conjecture for the union of two non-overlapping axis-aligned cubes.

\begin{theorem}\label{th-two-cubes}
Let $A = \{0,a\}\subset\R^d$ and $\Omega = A+Q_d$ with $Q_d\cap (Q_d+a) = \varnothing$. Then the following are equivalent.
    \begin{enumerate}[label=(\arabic*),font=\normalfont]
        \item $\Omega$ is a weak tile.
        \item $\Omega$ is a tile.
        \item $\Omega$ is a spectral set.
        \item $a\in {\mathcal L}$.
    \end{enumerate}
\end{theorem}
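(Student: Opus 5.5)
We prove the implications (4)$\Rightarrow$(2), (4)$\Rightarrow$(3), (2)$\Rightarrow$(1), (3)$\Rightarrow$(1) and (1)$\Rightarrow$(4). The implications (2)$\Rightarrow$(1) and (3)$\Rightarrow$(1) need no work: every tile is a weak tile, and Theorem~\ref{thm:1.1} gives the second. The substance is therefore the constructive direction (4)$\Rightarrow$(2),(3) and the rigidity direction (1)$\Rightarrow$(4).

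\textbf{Construction from (4).} Suppose $a\in\mathcal L$, and say $a_1=k\in\Z^\ast$. After reflecting in $x_1$ we may take $k\ge 1$. The set $\Omega$ is then a product-like union: in the first coordinate it projects to $\{0,k\}+(0,1)$, and in the remaining coordinates it is $Q_{d-1}\cup(Q_{d-1}+a')$ with $a'=(a_2,\dots,a_d)$. We tile slab by slab. The set $\{0,k\}$ tiles $\Z$ with some periodic set $B\subset\Z$ (any two-element subset of $\Z$ tiles $\Z$). Let $T=\{(b,0,\dots,0)+(0,m)+\epsilon(b)\cdot \ldots\}$; more precisely, I would choose the tiling set
\[
T=\{(b,\,m)-\mathbf 1_{\{\cdot\}}\;:\; b\in B,\ m\in\Z^{d-1}\},
\]
adjusted so that the shift $a'$ is absorbed. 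Concretely, I would use the affine map $x\mapsto x-(x_1/k)\,(0,a')$ on the strip coordinates, i.e.\ a shear along $x_1$, which maps the cube in the slab $x_1\in(k,k+1)$ back over the slab $(0,1)$. The cleaner route is to observe that $\Omega$ tiles the region $\big((0,1)\cup(k,k+1)\big)\times\R^{d-1}$ using the translates $\{(0,m),(0,m)+a : m\in\Z^{d-1}\}$. Since each slab is independently tiled by unit cubes shifted by $\Z^{d-1}$, and $a'$ only shifts within its slab, this is a tiling. Translating these slabs by $B\times\{0\}$ then tiles $\R^d$. For spectrality, the lattice-based tiling $\Z^{d-1}$ in the last coordinates, combined with the fact that $\{0,k\}+(0,1)$ is spectral in $\R$ (a union of two unit intervals at integer distance, with spectrum $\Z\cup(\Z+\tfrac1{2k})$), gives an explicit spectrum. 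I would take $\Lambda=\big(\Z\cup(\Z+\tfrac{1}{2k})\big)\times\Z^{d-1}$, possibly corrected by a phase depending on $a'$, and verify orthogonality and completeness by separating variables: $\widehat{\mathbbm 1_\Omega}(\xi)=\widehat{\mathbbm 1_Q}(\xi)(1+e^{-2\pi i\langle a,\xi\rangle})$. The zeros are $\langle a,\xi\rangle\in\tfrac12+\Z$, and for $\xi\in\Lambda-\Lambda$ either some $\xi_j\in\Z^\ast$ kills $\widehat{\mathbbm 1_Q}$, or $\xi=(\pm\tfrac1{2k}+n,0)$ gives $\langle a,\xi\rangle=\pm\tfrac12+nk$. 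Completeness follows from the density $2$ matching $|\Omega|=2$, via the standard tiling criterion $\sum_\lambda|\widehat{\mathbbm 1_\Omega}(\xi-\lambda)|^2=|\Omega|^2$.

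\textbf{Rigidity (1)$\Rightarrow$(4).} Suppose $\mathbbm 1_\Omega*\nu=1$ with $\nu(\{0\})=1$. Then
\[
\mathbbm 1_Q*\big(\nu+\delta_a*\nu\big)=1 .
\]
The measure $\nu+\delta_a*\nu$ has atom mass at least $1$ at $0$, so write it as $\delta_0+\mu$ with $\mu\ge0$. Theorem~\ref{th-Keller} gives $\supp\mu\subset\mathcal L$. Now $a\in\supp(\delta_a*\nu)$ because $\nu(\{0\})=1$, so $a\in\supp\mu\cup\{0\}$. Since $a\neq0$ (the cubes are disjoint), either $a\in\mathcal L$ or $a$ is an atom that exactly cancels. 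The last case cannot occur, because all measures here are positive. Hence $a\in\mathcal L$.

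The main obstacle I expect is this last support-bookkeeping step, in particular ensuring that the point $a$ is genuinely in $\supp\mu$ and not absorbed into the $\delta_0$ part. This is fine because $a\neq0$ and $\mu=\nu+\delta_a*\nu-\delta_0\ge \delta_a$ near $a$. The remaining risk is in the spectrum verification, specifically the completeness step. If the density argument is delicate, I would instead cite Fuglede's lattice theorem in the product form.
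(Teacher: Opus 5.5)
Your argument is essentially the paper's proof. For (1)$\Rightarrow$(4) you write $\mathbbm 1_\Omega*\nu=\mathbbm 1_{Q_d}*(\delta_0+\mu)$ with $\mu=(\nu-\delta_0)+\delta_a*\nu\ge\delta_a$ and apply Theorem~\ref{th-Keller}. For (4)$\Rightarrow$(2) you use the tiling set $B\times\mathbb Z^{d-1}$ with $\{0,k\}\oplus B=\mathbb Z$; the paper takes $B=\{0,\dots,k-1\}+2k\mathbb Z$. For (4)$\Rightarrow$(3) you use exactly the paper's spectrum $\{0,\gamma\}+\mathbb Z^d$, $\gamma=(\frac1{2k},0,\dots,0)$, with the same orthogonality check, and no phase correction for $a'$ is needed.

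In the tiling step, delete the abandoned definitions of $T$ and the shear. Also, the copies of $\Omega$ that fill the two slabs are $\Omega+(\{0\}\times\mathbb Z^{d-1})$. The points $(0,m)$ and $(0,m)+a$ are the positions of the individual cubes, not translation vectors for $\Omega$.

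The one step that is not right as stated is completeness. Orthogonality plus density $|\Omega|$ does not imply completeness in general: for $\Omega=(0,1)$, the set $\mathbb Z\setminus\{0\}$ is orthogonal with density $1$ but is not a spectrum. What rescues your argument is that $\Lambda$ is periodic. Set $F(\xi)=\sum_{\lambda\in\Lambda}|\widehat{\mathbbm 1_\Omega}(\xi-\lambda)|^2$. This function is continuous and $\mathbb Z^d$-periodic, and $F\le|\Omega|^2=4$ by Bessel's inequality. Moreover $\int_{[0,1]^d}F=2\int_{\mathbb R^d}|\widehat{\mathbbm 1_\Omega}|^2=2|\Omega|=4$, so $F\equiv 4$.

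Alternatively, compute $F$ directly as the paper does. Write $\lambda=t+n$ with $t\in\{0,\gamma\}$ and $n=(n_1,n')\in\mathbb Z^d$.
\begin{itemize}
\item Since $a_1=k\in\mathbb Z$, the factor $|1+e^{-2\pi i a\cdot(\xi-t-n)}|^2$ does not depend on $n_1$, so summing $|\widehat{\mathbbm 1_{(0,1)}}(\xi_1-t_1-n_1)|^2$ over $n_1$ gives $1$.
\item The two choices of $t$ contribute $|1+e^{-2\pi i\theta}|^2+|1-e^{-2\pi i\theta}|^2=4$, where $\theta=k\xi_1+a'\cdot(\xi'-n')$.
\item The remaining sum over $n'$ of $|\widehat{\mathbbm 1_{Q_{d-1}}}(\xi'-n')|^2$ equals $1$.
\end{itemize}

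Your fallback through Fuglede's lattice theorem is not available in general. For $d=1$ and $k$ even, $(0,1)\cup(k,k+1)$ has no lattice tiling, and hence no lattice spectrum.
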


We can move on to discuss results with three unit intervals in $\R^1$ or squares in $\R^2$.

\begin{theorem}\label{th-three-cubes-parallel}
Suppose that $\Omega$ is a union of three disjoint unit intervals in $\R^1$ or is a union of three disjoint axis-parallel unit squares in $\R^2$. If $\Omega$ is a weak tile in $\R^d$, then it is both a tile and a spectral set in $\R^d$ for $d = 1,2$. 
\end{theorem}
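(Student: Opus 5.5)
The plan is to turn the weak tiling hypothesis into a constraint on the translation set $A$ via Theorem~\ref{th-Keller}, and then classify the few configurations that survive. Write $\Omega=A+Q_d$ with $A=\{0,a,b\}$, $d\in\{1,2\}$, the three cubes pairwise disjoint. By the equivalent formulation of \cite[Corollary~2.6]{LevMat2022} there is a positive locally finite $\nu$ with $\mathbbm 1_\Omega*\nu=1$ a.e.\ and $\nu(\{0\})=1$. Rewrite this as $\mathbbm 1_{Q_d}*(\delta_A*\nu)=1$ a.e. The measure $\rho:=\delta_A*\nu$ is positive and has an atom at each point of $A$, and by translation invariance at each point of $A-t$ for every atom $t$ of $\nu$.

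\textbf{Step 1: Keller restriction on differences.} Fix any atom $p$ of $\rho$ and translate it to the origin. Write $\rho=\delta_p+\mu_p$; here we use that the atom has mass exactly $1$, which follows because $\mathbbm 1_{Q_d}*\rho\le 1$ forces atoms of mass at most $1$. Theorem~\ref{th-Keller} then gives $\supp\rho-p\subset\mathcal L\cup\{0\}$. Applying this with $p=0$ and with $p=a$ shows that all pairwise differences of $A$ lie in $\mathcal L$. Hence $a,b,a-b\in\mathcal L$. For $d=1$ this means $A\subset\Z$ up to translation. For $d=2$ each difference has some nonzero integer coordinate.

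\textbf{Step 2: case analysis and matching tilings and spectra.} For $d=1$ we have $A\subset\Z$, so $A=\{0,m,n\}$ with integers $0<m<n$. Then $\Omega$ tiles $\R$ if and only if $A$ tiles $\Z$, and three-element sets tile $\Z$ exactly when they are spectral in $\Z_N$ (Coven--Meyerowitz, or direct for $|A|=3$). So I must show that weak tiling forces $A$ to tile $\Z$, i.e. $A=\{0,k,2k\}$ up to the form $\{0,m,n\}$ with the right $3$-adic structure.

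The key tool here is to use the structure of $\rho$ more fully. Since $\mathbbm 1_{[0,1)}*\rho=1$, Step 1 shows $\rho=\delta_{\Z+s}$ on each coset, and in fact $\rho=\delta_{\Z}$. Thus $\nu$ is a positive measure on $\Z$ with $\mathbbm 1_A*\nu=1$ on $\Z$, i.e. $A$ weakly tiles $\Z$. Taking Fourier transforms, $\widehat{\mathbbm 1_A}$ must vanish on $\supp\widehat\nu\setminus\{0\}$. Combining this with positivity, following Lev--Matolcsi and the argument of \cite{KLM2025} for two intervals, I expect to show that $A$ is a genuine tile of $\Z$. Since $|A|=3$, Newman's/Coven--Meyerowitz conditions are easy to verify directly: $\Phi_{3^k}$ must divide $A(x)$, giving $A=\{0,3^{j}u,2\cdot3^j u'\}$-type tiles, and tiles imply spectral here.

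For $d=2$ I split according to which coordinates of $a,b,a-b$ are nonzero integers. If some coordinate direction, say $x_1$, has all of $a_1,b_1,a_1-b_1$ integral, project or slice: $\Omega$ is then a product-like stacking, and I reduce to the one-dimensional case along a column structure. Concretely, I consider $\rho$ restricted to lines and use Fubini to get weak tilings of $\R$ by unions of intervals. The remaining mixed case, e.g. $a_1\in\Z^*$ and $b_2\in\Z^*$ while $a-b$ uses either coordinate, is where I'd exhibit explicit tilings by a lattice (e.g.\ $A+\Z^2$-type arguments, or showing $A$ tiles a strip and then stacking) and spectra via Fuglede's lattice theorem.

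\textbf{Main obstacle.} The hardest step is the 1D claim that a weak tiling of $\Z$ by a three-point set forces an actual tiling. This is also needed inside the $d=2$ slicing. I would first try the Fourier/positivity argument: $\widehat\nu\ge0$-type positive-definiteness of $\nu*\tilde\nu$, zeros of $1+e(m\xi)+e(n\xi)$ lying at points where both exponentials are cube roots of unity, which forces $3\mid$ structure.
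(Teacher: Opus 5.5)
\textbf{The $d=1$ case works and differs from the paper.} Step 1 gives you $\supp\nu\subset\Z$ and $\mathbbm 1_A*\nu=1$ on $\Z$. Hence $\widehat{\mathbbm 1_A}\,\widehat\nu=\delta_0$ on $\T$, so $\widehat\nu-\tfrac13\delta_0$ is supported on the zero set of $1+e(m\xi)+e(n\xi)$. Three unit vectors summing to $0$, one of them equal to $1$, must be $1,\omega,\omega^2$. A short B\'ezout computation then shows that this zero set is nonempty if and only if $\{0,m/g,n/g\}$ is a complete residue system mod $3$, where $g=\gcd(m,n)$; that is, if and only if $A$ tiles $\Z$. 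If $A$ does not tile $\Z$, then $\widehat\nu=\tfrac13\delta_0$, so $\nu\equiv\tfrac13$, which contradicts $\nu(\{0\})=1$. Write it this way rather than ``I expect to show''; Coven--Meyerowitz is not needed. This Fourier argument replaces the paper's combinatorial propagation (Lemmas~\ref{lem:three-points-weak-tiling-measure} and~\ref{lem:MN-mod-3-generate-gcd}) and is shorter.

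\textbf{The $d=2$ part has a genuine gap.} Your ``mixed case'' cannot occur. If $a_1\in\Z^*$, $b_2\in\Z^*$ and $a_2,b_1\notin\Z$, then $a-b\notin\mathcal L$. So Step 1 already forces one coordinate to be integral on all of $A$ (Corollary~\ref{cor-equal-size-necessary-dimension-2}).

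Everything therefore reduces to the layered case: squares at $(0,0)$, $(a,\ell_1)$, $(b,\ell_2)$ with $\ell_i\in\Z$. Here Keller gives essentially nothing. Take $\ell_1=1$, $\ell_2=3$. Every difference already has a nonzero integer second coordinate, so Step 1 is vacuous. Yet $\Omega$ is a weak tile if and only if $b-3a\in\Z^*$ (Theorems~\ref{thm:layered-reduction} and~\ref{prop-WT-3intervals-characterization}).

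Your proposed mechanism, restricting to lines and using Fubini to get weak tilings of $\R$ by unions of intervals, cannot detect this condition:
\begin{itemize}
\item A horizontal line meets different translates of $\Omega$ in different layers. What you obtain is not a weak tiling of $\R$ by a fixed set, but the coupled system $\mathbbm 1_{(0,1)}*(\mu_j+\mu_{j-\ell_1}*\delta_a+\mu_{j-\ell_2}*\delta_b)=1$, $j\in\Z$.
\item Even writing this system requires first compressing $\mu$, which may charge non-integer heights, onto $\R\times\Z$. The paper does this by phase-averaging (Proposition~\ref{prop:weak-tiling-layered-reduction}).
\item Your 1D Fourier trick does not transfer. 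On $\R\times\T$ the zero set of $\widehat{\mathbbm 1_{\widetilde\Omega}}$ contains the entire circles $\{\lambda\}\times\T$ for $\lambda\in\Z^*$, so it is not a finite set.
\end{itemize}

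The missing idea is the paper's fiber analysis. Propagating the triangle rule across fibers produces unit atoms on every fiber $j\in\gcd(\ell_1,\ell_2)\Z$, located at positions $\equiv ma+nb \pmod 1$ (Lemma~\ref{lem:atoms-on-gcd-fibers}). This yields the alignment $a\equiv \ell_1\xi$, $b\equiv\ell_2\xi \pmod 1$. After shearing to integer offsets $M,N$:
\begin{itemize}
\item the $\Q$-dependent case $(M,\ell_1)\parallel(N,\ell_2)$ is excluded by a double-covering argument on $(0,1)+M$ at level $\ell_1$;
\item in the independent case one chooses a shear $\eta$ making $\{0,M+\ell_1\eta,N+\ell_2\eta\}$ a three-element tile of $\Z$, and both the tiling and the spectrum lift from it.
\end{itemize}
Your sufficiency sketch (``lattice tilings'' plus Fuglede's lattice theorem) would likewise need a concrete construction for exactly these aligned configurations.
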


Combining this with Theorem \ref{thm:1.1}, we conclude that weak tiles, tiles and spectral sets are in the same class for a union of three disjoint unit intervals in $\R^1$ or a union of three disjoint axis-parallel unit squares in $\R^2$.

In $\R$, the spectral set conjecture for three intervals of the same length was solved in \cite{BKA2010}, and it was recently solved by Shi~\cite{Shi2026} for three intervals of arbitrary lengths. Neither of these works addresses weak tiling for unions of three intervals. Theorem \ref{th-three-cubes-parallel} gives a new proof for the result in \cite{BKA2010} by completely classifying weak tiles that are unions of three intervals of equal length.

We now briefly talk about the proof strategy in $\R^2$. A structural consequence of Theorem~\ref{th-Keller}, stated in Corollary~\ref{cor-equal-size-necessary-dimension-2}, shows that, after possibly interchanging the coordinate axes, the three squares lie in horizontal strips at integer heights. We may therefore apply the following reduction principle for layered sets, which preserves the three properties relevant to our argument: translational tiling, weak tiling, and spectrality.

\begin{theorem}\label{thm:layered-reduction}
Let \(S\subset\mathbb Z\) be finite, and let
\[
\Omega
=
\bigcup_{j\in S}\Omega_j\times(j,j+1)
\subset\mathbb R^2,
\qquad
\widetilde{\Omega}
=
\bigcup_{j\in S}\Omega_j\times\{j\}
\subset\mathbb R\times\mathbb Z,
\]
where each \(\Omega_j\subset\mathbb R\) is bounded and measurable with finite positive Lebesgue measure.
Then \(\Omega\) is a tile/weak tile/spectral set in \(\mathbb R^2\) if and only if
    \(\widetilde{\Omega}\) is a tile/weak tile/spectral set in
    \(\mathbb R\times\mathbb Z\).
\end{theorem}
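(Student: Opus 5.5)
The idea is to write $\Omega$ as a product-type set with respect to the splitting $\R^2=\R\times\R$, namely
\[
\Omega=\widetilde\Omega+\bigl(\{0\}\times(0,1)\bigr),
\]
where $\widetilde\Omega\subset\R\times\Z$. The key observation is that $\mathbbm 1_\Omega$ is a ``convolution'' of $\mathbbm 1_{\widetilde\Omega}$ (a function on the group $G=\R\times\Z$) with $\mathbbm 1_{(0,1)}$ in the second variable. Accordingly, $\R^2$ fibers as $G\times[0,1)$ via $(x,y)\mapsto\bigl((x,\lfloor y\rfloor),\{y\}\bigr)$. Each of the three properties will be transferred in both directions using this fibration.

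\emph{Tiling and weak tiling.} For the easy direction, let $\nu$ be a positive measure on $G$ with $\mathbbm 1_{\widetilde\Omega}*\nu=1$ (or $=\mathbbm 1_{\widetilde\Omega^c}$ in the weak case). Regard $\nu$ as a measure on $\R\times\Z\subset\R^2$. Then $\mathbbm 1_\Omega*\nu(x,y)=\mathbbm 1_{\widetilde\Omega}*\nu(x,\lfloor y\rfloor)$, so $\nu$ tiles, or weakly tiles, for $\Omega$. Tiling sets are simply the support of $\nu$ when $\nu$ is a counting measure.

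For the converse, let $\mu$ be a positive locally finite measure on $\R^2$ with $\mathbbm 1_\Omega*\mu=1$, using the formulation with $\mu(\{0\})=1$ for weak tiles. I would push $\mu$ forward under $(s,t)\mapsto(s,\lfloor t\rfloor)$ only after averaging. Concretely, for a.e.\ $y\in(0,1)$ the identity $\mathbbm 1_\Omega*\mu(x,y+k)=1$ reads $\sum_{j}\mathbbm 1_{\Omega_j}*\mu_{y,k-j}=1$, where $\mu_{y,m}$ is the restriction of $\mu$ to the strip $\{t: \lfloor y-t\rfloor = m\}$, projected to $\R$. This restriction is the part of $\mu$ with $t\in(y-m-1,y-m]$. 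Thus for a.e.\ $y$, the measure $\nu_y=\sum_m \mu_{y,m}\otimes\delta_{m}$ is a tiling (or weak tiling) measure on $G$. Its total mass at the origin is at least $\mu(\{0\})=1$, because the origin lies in the $m=0$ slab for $y\in(0,1)$.

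In the weak tile case, the paper's criterion needs $\nu(\{0\})=1$ exactly. I would handle this using the equivalence in \cite[Corollary~2.6]{LevMat2022}, which only requires mass at least $1$ at the origin after normalization, or by splitting off $\delta_0$ directly. In the proper tiling case, $\nu_y$ is a sum of point masses for a.e.\ $y$ and is automatically a tiling set.

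\emph{Spectrality.} The dual group of $G$ is $\R\times\T$, and I would take $\T=[0,1)$. Compute the Fourier transform
\[
\widehat{\mathbbm 1_\Omega}(\xi,\eta)=\widehat{\mathbbm 1_{\widetilde\Omega}}(\xi,\eta \bmod 1)\cdot\widehat{\mathbbm 1_{(0,1)}}(\eta),
\]
where the second factor vanishes exactly on $\Z^*$. Given a spectrum $\widetilde\Lambda\subset\R\times\T$ for $\widetilde\Omega$, I would set $\Lambda=\{(\xi,\eta+n):(\xi,\eta)\in\widetilde\Lambda,\ n\in\Z\}$. Orthogonality follows from the factorization: differences with the same $(\xi,\eta)$ have second coordinate in $\Z^*$. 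Completeness follows from $L^2(\Omega)\cong L^2(\widetilde\Omega)\otimes L^2(0,1)$ together with the fact that the exponentials $e^{2\pi i n y}$ form a basis of $L^2(0,1)$. One cleanly checks this via the Parseval/packing criterion $\sum_\lambda|\widehat{\mathbbm 1_\Omega}(\cdot-\lambda)|^2=|\Omega|^2$, which reduces to the corresponding identity on $G$ after summing over $n$.

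\emph{Main obstacle.} The hard direction is showing that a spectrum $\Lambda\subset\R^2$ of $\Omega$ yields one for $\widetilde\Omega$. $\Lambda$ need not be $\Z$-periodic in the second coordinate. My plan is to use the tiling form of spectrality, $\sum_{\lambda}|\widehat{\mathbbm 1_\Omega}(z-\lambda)|^2=|\Omega|^2$, and the factorization above. Then $|\widehat{\mathbbm 1_\Omega}|^2=F(\xi,\eta\bmod 1)\,\mathrm{sinc}^2(\eta)$, and summing over a $\Z$-translate gives $\sum_n\mathrm{sinc}^2(\eta+n)=1$. I expect to show, via the orthogonality zero set, that $\Lambda$ projects injectively onto $\R\times\T$ modulo $\Z$ on each fiber. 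I would then either periodize $\Lambda$ or take a weak limit of translates in $\eta$ to produce the spectrum $\widetilde\Lambda$. Making this step rigorous is where I expect the real work; failing that, I would fall back to a Lev--Matolcsi style positive-definite measure argument on the quotient.
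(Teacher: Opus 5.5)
\textbf{Gap: the direction ``$\Omega$ spectral $\Rightarrow$ $\widetilde\Omega$ spectral'' is not proved.} Your tiling and weak-tiling arguments and the easy spectral direction are sound, modulo small slips noted below. The hard spectral direction, however, is only sketched, and the sketched steps fail. Write $q(\xi,\eta)=(\xi,\eta \bmod 1)$.

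\begin{itemize}
\item $q$ is in general not injective on a spectrum. The periodized spectrum $\widetilde\Lambda+\{0\}\times\Z$ from your easy direction is an example.
\item Worse, $q(\Lambda)$ need not be orthogonal for $\widetilde\Omega$. For $\Omega=(0,1)^2$, the set $\{(n+\alpha_m,m):n,m\in\Z\}$ is a spectrum for arbitrary reals $\alpha_m$. Its image $\{(n+\alpha_m,0)\}$ is orthogonal for $(0,1)\times\{0\}$ only if all $\alpha_m$ agree mod $1$.
\item For generic $\alpha_m$ (e.g.\ $\alpha_m=m\sqrt2$), every weak limit of vertical translates of this $\Lambda$ has the same defect. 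So neither projecting, nor periodizing, nor taking such limits extracts a spectrum.
\item Your fallback, a Lev--Matolcsi positive-definite argument, yields weak tiling, not spectrality.
\end{itemize}

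The underlying obstruction is that $\sum_\lambda|\widehat{\mathbbm 1_\Omega}(z-\lambda)|^2=|\Omega|^2$ tests only exponentials. So the $\T$-variable of $\widehat{\mathbbm 1_{\widetilde\Omega}}$ and the argument of $\mathrm{sinc}^2$ are the same $\eta$, and cannot be averaged separately. The paper decouples them by lifting to $\R^3$ (tilted segment $I$, shear $T$). This turns $\Omega$ into the product measure $\omega\times h_J$. It then applies the product theorem for spectral measures quoted in Section~\ref{S6}, and descends the spectrum of $\omega$ to the dual group $\R\times\T$.

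\textbf{An elementary repair within your framework.} Work on unit-height strips and decouple the two frequencies.
\begin{itemize}
\item For $t\in\R$ let $\Lambda_t=\Lambda\cap(\R\times[t,t+1))$. Distinct points of $\Lambda_t$ satisfy $\mathrm{sinc}(\lambda_2-\lambda_2')\neq0$. Hence $q$ is injective on $\Lambda_t$, and $q(\Lambda_t)$ is orthogonal for $\widetilde\Omega$.
\item Bessel then gives $h_t(\zeta):=\sum_{\lambda\in\Lambda_t}|\widehat{\mathbbm 1_{\widetilde\Omega}}(\zeta-q\lambda)|^2\le|\widetilde\Omega|^2$.
\item Apply Parseval for $\Lambda$ to $f(x,y)=e^{2\pi i(\xi x+\theta\lfloor y\rfloor+\tau\{y\})}$ on $\Omega$, rather than to exponentials. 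This gives $\sum_\lambda|\widehat{\mathbbm 1_{\widetilde\Omega}}((\xi,\theta)-q\lambda)|^2\,\mathrm{sinc}^2(\tau-\lambda_2)=|\widetilde\Omega|^2$ with $\tau\in\R$ free.
\item Integrating in $\tau$ over $[0,1]$ yields $\int_\R\mathrm{sinc}^2(t)\,h_t(\xi,\theta)\,dt=|\widetilde\Omega|^2$.
\item Hence $h_t\equiv|\widetilde\Omega|^2$ for a.e.\ $t$, using a countable dense set of $(\xi,\theta)$ and continuity of $h_t$. So $q(\Lambda_t)$ is a spectrum.
\end{itemize}

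\textbf{Smaller slips in the tiling part.}
\begin{itemize}
\item Your slab $\{t:\lfloor y-t\rfloor=m\}$ is sent to the wrong level. To match $\mathbbm 1_\Omega*\mu(x,y+k)$, mass at heights $t\in(y+m-1,y+m)$ must go to level $m$, as with the paper's $\lceil v-t\rceil$.
\item $\nu_y(\{0\})\le 1$ follows directly from $\mathbbm 1_{\widetilde\Omega}*\nu_y=1$; Lev--Matolcsi is not needed.
\item You must discard the countably many $y$ for which $\mu$ charges some line $\R\times\{y+k\}$.
\item In the tiling case you must exclude multiple atoms.
\end{itemize}
With these fixes, a single generic phase works for weak tiling as well; the paper instead averages over all phases.
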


Theorem~\ref{thm:layered-reduction} reduces the parallel-square problem to the study of  unions of three pairwise non-overlapping unit intervals in \(\mathbb R\times\mathbb Z\). In Theorem \ref{prop-WT-3intervals-characterization}, we completely characterize the weak tiles in $\R\times\Z$  that are unions of three pairwise non-overlapping unit intervals and prove that every such set is both a translational tile and a spectral set. Applying Theorem~\ref{thm:layered-reduction} once more transfers these conclusions back to \(\mathbb R^2\), thereby completing the parallel case of Theorem~\ref{thm:Fuglede-conjecture-3squares}.

\begin{remark}
    {\rm{The conclusions of Theorems \ref{th-two-cubes} and \ref{th-three-cubes-parallel} do not extend to an arbitrary number of cubes due to the counterexamples of Fuglede's conjecture, which are finite unions of unit cubes. However, \cite[Theorem 2.4]{KM06} shows that if $\Om:=A+[0,1)^d$ is spectral, where $A\subset\Z^d$ and $|A|\le 5$, then $\Om$ tiles $\R^d$.  It will be interesting to determine the minimal number of cubes for which the conjecture starts to fail.} 
    }

\end{remark}

{
\noindent {\bf (ii) Non-axis-parallel cubes.} Because of Theorem \ref{th-spectral-non-parallel}, we can also say something about non-overlapping unit squares that are not necessarily parallel to the coordinate axes.  Indeed, by Theorem~\ref{th-spectral-non-parallel}, we know that if a union of two non-overlapping unit squares in $\R^2$ is a spectral set, then the two squares must be parallel to each other. Therefore, by Theorem~\ref{th-two-cubes} and Proposition~\ref{lem:congruent-square-tiling-parallel}, we can conclude that Fuglede's conjecture holds for the union of two non-overlapping unit squares in $\R^2$.
\begin{corollary}\label{cor-two-squares}
	Fuglede's conjecture holds for the union of two non-overlapping unit squares in $\R^2$. 
\end{corollary}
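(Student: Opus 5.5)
Let $\Omega=S_1\cup S_2$ be a union of two non-overlapping unit squares in $\R^2$. We must show that $\Omega$ is spectral if and only if it tiles. The plan is to reduce both directions to the parallel case and then apply Theorem~\ref{th-two-cubes}. Fuglede's conjecture is invariant under rigid motions: a rotation or translation of $\Omega$ carries spectra to rotated spectra and tiling sets to rotated tiling sets. So we may apply a rotation and translation and assume $S_1=Q_2=(0,1)^2$.

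\emph{Spectral implies tile.} Suppose $\Omega$ is spectral. If $S_2$ is not axis-parallel, then no edge of $S_2$ has outward normal $\pm e_1$ or $\pm e_2$, since the normals of a square are $\pm$ two orthogonal unit vectors. Theorem~\ref{th-spectral-non-parallel} with $N=1$ and $P_1=S_2$ then says $\Omega$ is not spectral, a contradiction. Hence $S_2=Q_2+a$ for some $a\in\R^2$ with $Q_2\cap(Q_2+a)=\varnothing$. Theorem~\ref{th-two-cubes}, (3)$\Rightarrow$(2), gives that $\Omega$ tiles.

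\emph{Tile implies spectral.} Suppose $\Omega$ tiles. The tiling direction needs the parallel case too, and this is the step I expect to be the main obstacle. It is what Proposition~\ref{lem:congruent-square-tiling-parallel} presumably supplies: if a union of congruent squares tiles, then they must be parallel. Accepting that result, $S_2=Q_2+a$ again, and Theorem~\ref{th-two-cubes}, (2)$\Rightarrow$(3), gives spectrality.

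If I had to prove the proposition myself, I would look at the edges of a tiling. Take a tiling $\Omega+T$ and assume $S_2$ has angle $\theta\notin\frac{\pi}{2}\Z$ relative to $S_1$. The tiling is by translates only, so edges in the tiling occur in only two orientation classes: axis-parallel ones and $\theta$-rotated ones. Fix a $\theta$-rotated edge $E$ of some translate $S_2+t$. The region just across $E$, locally near the interior of $E$, must be covered by other tiles. Any tile meeting that region must bring a $\theta$-direction edge flush against $E$, and the outward normals must be opposite. A single square $S_2$ does have edges with both opposite normals. Counting shows that the total length of $\theta$-edges with outward normal $n$ along a line equals the total length with normal $-n$ on the other side. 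This balancing is consistent, so the real content lies elsewhere.

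The obstruction should come from density and weak tiling. I would instead use Theorem~\ref{thm:1.1}-style reasoning, in the form tile $\Rightarrow$ weak tile $\Rightarrow$ Fourier conditions. Tiling forces $\hat{\mathbbm 1}_\Omega$ to vanish on $\supp\hat\delta_T\setminus\{0\}$. The Lev–Matolcsi argument underlying Theorem~\ref{th-spectral-non-parallel} uses the asymptotics of $\hat{\mathbbm 1}_\Omega$ along the directions $\pm e_j$. That argument should transfer to the weak-tile setting, because it only uses the positivity of a measure $\nu$ with $\mathbbm 1_\Omega*\nu=1$. This would show that a non-parallel union is not even a weak tile, which finishes the argument.
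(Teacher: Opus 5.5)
Your main argument is correct and is exactly the paper's: Theorem~\ref{th-spectral-non-parallel} with $N=1$ forces the two squares to be parallel when $\Omega$ is spectral, Proposition~\ref{lem:congruent-square-tiling-parallel} (applied to the square tiling $\{S_1+t,\,S_2+t\}_{t\in T}$) does the same when $\Omega$ tiles, and Theorem~\ref{th-two-cubes} settles the parallel case. Drop the speculative last paragraph, though: the proof of Theorem~\ref{th-spectral-non-parallel} uses the difference property $\Lambda-\Lambda\subset\{\widehat{\mathbbm 1_\Omega}=0\}\cup\{0\}$ of a spectrum, which a weak tiling measure does not have, so nothing there shows that non-parallel unions fail to be weak tiles; meanwhile, the edge argument you abandoned is in fact the paper's proof of the proposition once you add two facts: at a crossing point off the locally finite vertex set, adjacent squares have collinear edges and are therefore parallel, and $\R^2$ minus the vertex set is path connected, so parallelism propagates from one square to all of them.
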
}

{\rm 
\begin{remark} We have not resolved a similar version of the corollary in higher dimensions because Theorem~\ref{th-spectral-non-parallel} does not cover the case where one of the additional polytopes has even a single facet parallel to a coordinate hyperplane. See Figure~\ref{fig:two-cubes-one-rotated}.
\end{remark}
}

\begin{figure}[htbp]
\centering
\begin{tikzpicture}[
    x={(1cm,0cm)},
    y={(0.366cm,0.366cm)},
    z={(0cm,1cm)},
    line join=round,
    line cap=round,
    scale=1.25
]
    \definecolor{cubedraw}{RGB}{35,70,155}
    \definecolor{cubefill}{RGB}{190,205,245}
    \definecolor{facetblue}{RGB}{120,155,235}
    \definecolor{polydraw}{RGB}{170,90,0}
    \definecolor{polyfill}{RGB}{255,215,165}
    \definecolor{facetorange}{RGB}{245,170,85}

    \coordinate (A000) at (0,0,0);
    \coordinate (A100) at (1,0,0);
    \coordinate (A110) at (1,1,0);
    \coordinate (A010) at (0,1,0);
    \coordinate (A001) at (0,0,1);
    \coordinate (A101) at (1,0,1);
    \coordinate (A111) at (1,1,1);
    \coordinate (A011) at (0,1,1);

    \filldraw[
        fill=cubefill,
        fill opacity=.55,
        draw=cubedraw,
        line width=.9pt
    ]
        (A000)--(A100)--(A101)--(A001)--cycle;

    \filldraw[
        fill=cubefill,
        fill opacity=.42,
        draw=cubedraw,
        line width=.9pt
    ]
        (A100)--(A110)--(A111)--(A101)--cycle;

    \filldraw[
        fill=facetblue,
        fill opacity=.75,
        draw=cubedraw,
        line width=.95pt
    ]
        (A001)--(A101)--(A111)--(A011)--cycle;

    \draw[cubedraw,dashed,line width=.75pt]
        (A000)--(A010)--(A110);

    \draw[cubedraw,dashed,line width=.75pt]
        (A010)--(A011);

    \draw[cubedraw,dashed,line width=.75pt]
        (A011)--(A111);

    \pgfmathsetmacro{\cx}{3}
    \pgfmathsetmacro{\cy}{0.82}
    \pgfmathsetmacro{\h}{0.5}

    \pgfmathsetmacro{\ang}{60}
    \pgfmathsetmacro{\cosa}{cos(\ang)}
    \pgfmathsetmacro{\sina}{sin(\ang)}

    \coordinate (B1) at
        ({\cx-\h*\cosa+\h*\sina},
         {\cy-\h*\sina-\h*\cosa},0);

    \coordinate (B2) at
        ({\cx+\h*\cosa+\h*\sina},
         {\cy+\h*\sina-\h*\cosa},0);

    \coordinate (B3) at
        ({\cx+\h*\cosa-\h*\sina},
         {\cy+\h*\sina+\h*\cosa},0);

    \coordinate (B4) at
        ({\cx-\h*\cosa-\h*\sina},
         {\cy-\h*\sina+\h*\cosa},0);

    \coordinate (C1) at
        ({\cx-\h*\cosa+\h*\sina},
         {\cy-\h*\sina-\h*\cosa},1);

    \coordinate (C2) at
        ({\cx+\h*\cosa+\h*\sina},
         {\cy+\h*\sina-\h*\cosa},1);

    \coordinate (C3) at
        ({\cx+\h*\cosa-\h*\sina},
         {\cy+\h*\sina+\h*\cosa},1);

    \coordinate (C4) at
        ({\cx-\h*\cosa-\h*\sina},
         {\cy-\h*\sina+\h*\cosa},1);

    \filldraw[
        fill=polyfill,
        fill opacity=.55,
        draw=polydraw,
        line width=.9pt
    ]
        (B4)--(B1)--(C1)--(C4)--cycle;

    \filldraw[
        fill=polyfill,
        fill opacity=.42,
        draw=polydraw,
        line width=.9pt
    ]
        (B1)--(B2)--(C2)--(C1)--cycle;

    \filldraw[
        fill=facetorange,
        fill opacity=.75,
        draw=polydraw,
        line width=.95pt
    ]
        (C1)--(C2)--(C3)--(C4)--cycle;

    \draw[polydraw,dashed,line width=.75pt]
        (B2)--(B3)--(B4);

    \draw[polydraw,dashed,line width=.75pt]
        (B3)--(C3);

    \coordinate (Oaxis) at (0,0,0);

    \draw[->,black!75,dashed,line width=.3pt]
        (Oaxis)--++(5,0,0)
        node[below right] {$x$};

    \draw[->,black!75,dashed,line width=.3pt]
        (Oaxis)--++(0,4.2,0)
        node[above right] {$y$};

    \draw[->,black!75,dashed,line width=.3pt]
        (Oaxis)--++(0,0,2)
        node[left] {$z$};
\end{tikzpicture}

\caption{The union $\Omega=Q\cup Q'$ of two unit cubes in
$\mathbb R^3$. The cube $Q$ is the standard unit cube, while
$Q'$ is a translate of a unit cube rotated by $45^\circ$ about
the vertical line through its center.}
\label{fig:two-cubes-one-rotated}
\end{figure}

We can also further extend Corollary \ref{cor-two-squares} to unions of three pairwise non-overlapping unit squares in \(\R^2\). We prove that Fuglede's conjecture holds for every such union, without assuming that the squares are parallel.

\begin{theorem}\label{thm:Fuglede-conjecture-3squares}
Let \(\Omega\subset\mathbb{R}^2\) be a union of three pairwise non-overlapping unit squares, not necessarily parallel to each other. Then \(\Omega\) is a spectral set if and only if \(\Omega\) tiles \(\mathbb{R}^2\) by translations.
\end{theorem}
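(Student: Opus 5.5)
We prove the two implications separately. The engine in both directions is the parallel case, Theorem~\ref{th-three-cubes-parallel}, combined with the non-spectrality criterion of Theorem~\ref{th-spectral-non-parallel}.

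\emph{Spectral $\Rightarrow$ tile.} Let $\Omega=Q^{(1)}\cup Q^{(2)}\cup Q^{(3)}$ be spectral.

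First we normalize. Spectrality and tiling are invariant under rotations and translations, so we rotate so that $Q^{(1)}$ is axis-parallel, and translate so that $Q^{(1)}=Q_2$.

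Next we show the other two squares are parallel to $Q^{(1)}$. Each other square $Q^{(i)}$ is either axis-parallel or has no facet normal equal to $\pm e_j$, because a rotated square has normals $\pm(\cos\theta,\sin\theta)$ and $\pm(-\sin\theta,\cos\theta)$. We split into three cases.
\begin{enumerate}[label=(\alph*),font=\normalfont]
\item Both $Q^{(2)}$ and $Q^{(3)}$ are non-axis-parallel. Theorem~\ref{th-spectral-non-parallel} with $P_1=Q^{(2)}$, $P_2=Q^{(3)}$ contradicts spectrality.
\item Exactly one of them, say $Q^{(3)}$, is axis-parallel. We cannot apply Theorem~\ref{th-spectral-non-parallel} directly, since the hypothesis is on the $P_i$ and $Q^{(3)}$ would be one of them. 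Instead we re-normalize so that the non-parallel square $Q^{(2)}$ is the standard cube. In that frame both $Q^{(1)}$ and $Q^{(3)}$ are rotated by the same nonzero angle modulo $90^\circ$, so neither has an axis-normal facet. Theorem~\ref{th-spectral-non-parallel} then applies and again gives a contradiction.
\item All three are parallel. This is the only surviving case.
\end{enumerate}
So all three squares are parallel and axis-aligned after rotation. By Theorem~\ref{thm:1.1}, $\Omega$ is a weak tile, and Theorem~\ref{th-three-cubes-parallel} gives that $\Omega$ tiles.

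\emph{Tile $\Rightarrow$ spectral.} A tile is a weak tile. If the three squares are pairwise parallel, Theorem~\ref{th-three-cubes-parallel} gives spectrality after rotating to the axis-parallel position. Everything therefore reduces to showing that a tiling union of three unit squares must consist of mutually parallel squares.

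This is where I expect the main obstacle. Theorem~\ref{th-spectral-non-parallel} is a statement about spectra, so it does not apply here. We need a geometric tiling argument. The plan is to study a tiling $\Omega+T$ locally along edges, in the spirit of the result cited as Proposition~\ref{lem:congruent-square-tiling-parallel} (presumably the statement that tilings by congruent squares force parallelism).

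\begin{enumerate}[label=(\arabic*),font=\normalfont]
\item Since $\Omega$ is a finite union of polygons, every boundary edge of $\Omega+t$ must be covered from the other side. For each edge direction, the total length of boundary with outward normal $n$ must be matched, within the tiling, by boundary with normal $-n$ along the same lines. This is a Minkowski/Venkov-type balance along each edge direction.
\item Consider the squares oriented at a single angle $\theta$. Their edges in direction $\theta$ can only be matched by translates of squares with the same orientation modulo $90^\circ$. Hence the squares of each orientation class form, locally, a union that itself has to produce a consistent matching along those lines.
\item Suppose some orientation class contains only one or two squares of $\Omega$. I would argue via an extremal edge, for example a vertex extreme in the direction $\theta$, that the tiling forces a translate of a square of another orientation to fill a wedge near a corner. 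That is impossible because corners of mismatched angles cannot fit together, since the angles are $90^\circ$ but the directions differ.
\end{enumerate}

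Alternatively, one might first try to show that $\Omega$ being a tile forces it to be a weak tile in a way compatible with the Fourier zero-set argument underlying Theorem~\ref{th-spectral-non-parallel}. The idea would be that tiling gives $\widehat{\mathbbm 1_\Omega}$ vanishing on the support of $\widehat{\delta_T}$ minus the origin, which might recover the same contradiction. I would try this Fourier approach first, and fall back on the combinatorial edge-matching argument if it fails.
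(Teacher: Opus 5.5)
Your ``spectral $\Rightarrow$ tile'' direction is correct and matches the paper's argument. If the squares are not all parallel, you normalize a square that is parallel to neither of the other two; such a square always exists. Then neither remaining square has a facet normal $\pm e_1,\pm e_2$, and Theorem~\ref{th-spectral-non-parallel} applies. The parallel case follows from Theorem~\ref{thm:1.1} and Theorem~\ref{th-three-cubes-parallel}.

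The gap is in the converse. You correctly reduce it to showing that if $\Omega$ tiles by translations then its three squares are parallel, but you do not prove this. Step (2) asserts the key local fact without justification. Step (3) rests on a false claim. Two squares of different orientations \emph{can} share a corner: a $90^\circ$ wedge of one fits inside the $270^\circ$ complement of the other. So ``mismatched corners cannot fit together'' fails; any vertex-based contradiction would have to come from angle sums, which you do not set up. Also, the assumption that some orientation class contains only one or two squares of $\Omega$ is automatic and carries no information. The Fourier fallback does not transfer either. A tiling constrains $\operatorname{supp}\widehat{\delta_T}$, not the differences $T-T$, so the packing/density contradiction behind Theorem~\ref{th-spectral-non-parallel} has no direct analogue.

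The missing ingredient is exactly Proposition~\ref{lem:congruent-square-tiling-parallel}, which you anticipated but neither used nor proved. Applying it to the tiling of $\mathbb R^2$ by the squares $Q^{(i)}+t$ ($t\in T$, $i=1,2,3$) finishes the converse at once, and this is what the paper does. A non-parallel $\Omega$ is then not a tile, and a parallel one is spectral by Theorem~\ref{th-three-cubes-parallel}.

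Its proof is a connectivity argument along edges, not at corners:
\begin{enumerate}
\item The vertex set $V$ of all tiles is locally finite, so $\mathbb R^2\setminus V$ is path connected.
\item A generic polygonal path from the interior of one tile to the interior of another avoids $V$, crosses tile boundaries transversally, and meets only a finite chain of tiles.
\item At each crossing point $p\notin V$, the point $p$ lies in the relative interior of a side of both neighbouring squares. These sides must be collinear. Otherwise the half-planes the two squares occupy near $p$ would share an open sector, and their interiors would overlap.
\item Hence consecutive squares in the chain share an edge segment and are parallel, so all tiles are parallel.
\end{enumerate}
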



The paper is organized as follows: In Section \ref{S2}, we prove Theorem \ref{th-Keller}, Keller's criterion for weak tiling measures. In Section \ref{S3}, we prove Theorem \ref{th-spectral-non-parallel} for the non-spectrality of non-parallel polytopes. In Section \ref{S4}, we prove Theorem \ref{th-two-cubes}. In Section \ref{S5}, we prove Theorem \ref{th-three-cubes-parallel} in $\R^1$. In Section \ref{S6}, we prove the reduction principle Theorem \ref{thm:layered-reduction}. In Section \ref{S7}, we provide a complete characterization of weak tiles that are unions of three intervals in $\R\times \Z$. In Section \ref{S8}, we prove Theorem \ref{th-three-cubes-parallel} in $\R^2$ and Theorem \ref{thm:Fuglede-conjecture-3squares}.

\section{Keller's criterion for weak tiling measures of the unit cube}\label{S2}
In the following, we present a complete proof of Theorem~\ref{th-Keller}. 
The proof is based on modifying the Fourier analytic proof for Keller's theorem discovered by Kolountzakis~\cite{Kol2000}. 

\subsection{Measures and distributions.}
By a \emph{measure} we mean a Borel measure on \(\mathbb R^d\). We write \(\supp(\mu)\) for the closed support of a measure \(\mu\), and \(\delta_\lambda\) for the Dirac measure at \(\lambda\). For a locally finite set \(\La\), we write \(\delta_\La:=\sum_{\lambda\in\La}\delta_\lambda\) for its counting measure. We denote by \(\m(A)\) the Lebesgue measure of a measurable set \(A\subset\mathbb R^d\). If needed, \(\m_k(A)\) denotes the \(k\)-dimensional Lebesgue measure of \(A\). 
We use the following terminology and standard facts as in~\cite[Section~2.2]{LevMat2022}.

If $\alpha$ is a tempered distribution on $\R^d$, and if $\varphi$ is a Schwartz function on $\R^d$, then we use $\langle \alpha,\varphi\rangle$ to denote the action of $\alpha$ on $\varphi$. A tempered distribution $\alpha$ is \emph{positive} if we have $\langle \alpha,\varphi\rangle \ge 0$ for any Schwartz function $\varphi \ge 0$. If a tempered distribution $\alpha$ is positive, then $\alpha$ is a positive measure~\cite[Theorem~2.1.7]{Hormander1990}. The Fourier transform $\widehat{\alpha}$ of a tempered distribution $\alpha$ is defined by
	\[
	\langle \widehat{\alpha}, \varphi\rangle := \langle \alpha, \widehat{\varphi}\rangle.
	\]

If $\mu$ is a measure on $\R^d$, then $\mu$ is said to be
\emph{locally finite} if we have $|\mu|(B) < \infty$ for every open ball $B$.

A measure $\mu$ on $\R^d$ is said to be \emph{translation-bounded} if for every (or equivalently, for some) open ball $B$ we have
	\[
	\sup_{x\in\R^d} |\mu|(B+x) < \infty.
	\]
If a measure $\mu$ on $\R^d$ is translation-bounded, then it is a tempered distribution.

A sequence of measures \(\{\mu_n\}\) on \(\mathbb R^d\) is said to be \emph{uniformly translation-bounded} if there exists a constant \(C>0\), independent of \(n\), such that for every $n$,
	\[
	\sup_{x\in\mathbb R^d}|\mu_n|(x+[0,1]^d)\le C.
	\]

We say that \(\mu_n\) converges \emph{vaguely} to a measure \(\mu\) if, for every continuous compactly supported function \(\varphi\), one has
	\[
	\int_{\mathbb R^d}\varphi\,d\mu_n
	\longrightarrow
	\int_{\mathbb R^d}\varphi\,d\mu .
	\]
	In this case, if \(\{\mu_n\}\) is uniformly translation-bounded, then the limit \(\mu\) is also translation-bounded. Moreover, for a uniformly translation-bounded sequence of measures \(\{\mu_n\}\), vague convergence is equivalent to convergence in the space of tempered distributions.

\subsection{Keller's theorem for weak tiling measures of cubes}
For a locally finite measure $\mu$, the {\it upper and lower Beurling densities} of $\mu$ are defined to be 
$$
D^{+}(\mu) = \limsup_{R\to\infty} \sup_{x\in \R^d} \frac{|\mu| (B(x,R))}{|B(x,R)|}, \ D^{-}(\mu) = \liminf_{R\to\infty} \inf_{x\in \R^d} \frac{|\mu| (B(x,R))}{|B(x,R)|}
$$
and $\mu$ is said to have a {\it uniform density} if $D^{+}(\mu) = D^{-}(\mu)$, in which case their common value is denoted by \(D(\mu)\) and is called the uniform density of \(\mu\). The definition of uniform density is unchanged if balls are replaced by cubes; in particular, the same density \(D\) is obtained with either choice of averaging sets.
We define the Fourier transform of $f\in L^1(\R^d)$ by 
$$
\widehat{f}(\xi) = \int f(x) e^{-2\pi i \xi\cdot x} ~dx.
$$
Throughout this section we take $Q:=(-1/2,1/2)^d$. In particular, 
$$
\widehat{{\mathbbm 1}_{{ Q}}}(\xi) = \prod_{i=1}^d \frac{\sin(\pi \xi_i)}{\pi \xi_i} 
$$ 
and thus 
\begin{equation}\label{zero_Q}
\{\xi\in\R^d: \widehat{{\mathbbm 1}_{Q}} (\xi) = 0\} = {\mathcal L}
\end{equation}


We first recall a fact in \cite{Gabardo2009}.
{
\begin{lemma}\cite[Lemma~4.5]{Gabardo2009}\label{lem:atom-existence}
Let \(\mu\) be a complex Borel measure on \(\mathbb{R}^m\) with the property that its total variation,
\(|\mu|\), is translation-bounded. Suppose that for some \(r>0\) and some \(\tau\in\mathbb{R}^m\),
\[
\operatorname{supp}(\widehat{\mu})\cap B_r(\tau)=\{\tau\},
\]
where
\[
B_r(\tau)=\{\xi\in\mathbb{R}^m:\ |\xi-\tau|<r\}.
\]
Then, there exists \(a\in\mathbb{C}\) such that
\[
\widehat{\mu}=a\delta_\tau \quad \text{on } B_r(\tau).
\]
\end{lemma}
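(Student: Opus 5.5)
We would prove this by the classical combination of the structure theorem for distributions supported at a point and a boundedness argument coming from translation-boundedness. First reduce to \(\tau=0\). Replace \(\mu\) by the modulated measure \(\mu_\tau:=e^{-2\pi i\langle\tau,x\rangle}\mu\). Its total variation is still \(|\mu|\), hence translation-bounded, and \(\widehat{\mu_\tau}=\widehat{\mu}(\cdot+\tau)\). It therefore suffices to treat \(\tau=0\), with \(\operatorname{supp}(\widehat\mu)\cap B_r(0)=\{0\}\). Note that \(\mu\) is a tempered distribution because \(|\mu|\) is translation-bounded, so \(\widehat\mu\) makes sense.

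Next we localize. Fix \(0<\rho<r\) and choose a Schwartz function \(\varphi\) whose Fourier transform satisfies \(\widehat\varphi\in C_c^\infty(B_r(0))\) and \(\widehat\varphi\equiv1\) on \(B_\rho(0)\). The distribution \(\widehat\varphi\,\widehat\mu\) is well defined, being the product of a tempered distribution with a test function. Its support lies in \(\operatorname{supp}\widehat\mu\cap B_r(0)=\{0\}\). By the structure theorem for distributions supported at a point, there is a finite family of coefficients with
\[
\widehat\varphi\,\widehat\mu=\sum_{|\alpha|\le N}c_\alpha\,\partial^\alpha\delta_0 .
\]
Taking inverse Fourier transforms gives \(\mu*\varphi=P\), where \(P\) is a polynomial. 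The identity \(\widehat{\mu*\varphi}=\widehat\varphi\,\widehat\mu\) holds here for a tempered measure convolved with a Schwartz function.

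The key step is to show that \(P\) is constant. We estimate
\[
|(\mu*\varphi)(x)|\le\int|\varphi(x-y)|\,d|\mu|(y)\le\sum_{k\in\mathbb Z^m}\sup_{y\in k+[0,1]^m}|\varphi(x-y)|\;|\mu|(k+[0,1]^m).
\]
Translation-boundedness bounds \(|\mu|(k+[0,1]^m)\le C\) uniformly in \(k\). Because \(\varphi\) decays rapidly, the remaining sum is bounded independently of \(x\). Thus \(P\) is a bounded polynomial on \(\mathbb R^m\), hence a constant \(a\). Consequently \(\widehat\varphi\,\widehat\mu=a\delta_0\), so \(c_\alpha=0\) for all \(\alpha\neq0\).

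Finally we pass from \(\widehat\varphi\,\widehat\mu\) back to \(\widehat\mu\). On \(B_\rho(0)\) we have \(\widehat\varphi\equiv1\), so \(\widehat\mu=\widehat\varphi\,\widehat\mu=a\delta_0\) there. On \(B_r(0)\setminus\{0\}\) we have \(\widehat\mu=0\) by hypothesis. Since these two open sets cover \(B_r(0)\), a partition of unity gives \(\widehat\mu=a\delta_0\) on all of \(B_r(0)\). The value of \(a\) does not depend on \(\rho\), because \(a\) is determined by \(\widehat\mu\) near \(0\). Undoing the modulation gives \(\widehat\mu=a\delta_\tau\) on \(B_r(\tau)\).

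The main obstacle is bookkeeping rather than an essential difficulty. One must justify that \(\widehat{\mu*\varphi}=\widehat\varphi\,\widehat\mu\) in \(\mathcal S'\) and invoke the finite-order structure of point-supported distributions. The only genuinely analytic input is the uniform bound on \(\mu*\varphi\), which is what kills the derivative terms \(\partial^\alpha\delta_0\) with \(|\alpha|\ge1\).
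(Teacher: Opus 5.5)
Your proof is correct and complete. The paper gives no proof of this lemma: it quotes \cite[Lemma~4.5]{Gabardo2009} and uses it as a black box in Proposition~\ref{prop-supp}(2), so there is no in-paper argument to compare yours with.

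Your argument is the standard self-contained one:
\begin{enumerate}
\item cut off $\widehat\mu$ near the point with $\widehat\varphi$;
\item apply the structure theorem to the compactly supported, point-supported distribution $\widehat\varphi\,\widehat\mu$;
\item use translation-boundedness of $|\mu|$ to show that the polynomial $\mu*\varphi$ is bounded, hence constant. This removes every term $\partial^\alpha\delta_0$ with $|\alpha|\ge 1$.
\end{enumerate}

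Two routine facts deserve an explicit citation. First, $\int\varphi(x-y)\,d\mu(y)$ converges absolutely, by the same cube-decomposition bound you wrote, and by Fubini it agrees with the $\mathcal S'$-convolution $\mu*\varphi$. Second, $\widehat{u*\varphi}=\widehat\varphi\,\widehat u$ holds for $u\in\mathcal S'$ and $\varphi\in\mathcal S$.

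Your remark that $a$ does not depend on $\rho$ is unnecessary. One choice of $\rho$, combined with gluing over the open cover $\{B_\rho(0),\,B_r(0)\setminus\{0\}\}$, already finishes the proof.
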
}

\begin{proposition}\label{prop-supp} The following assertions hold.

\begin{enumerate}[label=(\arabic*),font=\normalfont]
	\item Let $f\in L^1(\mathbb{R}^d)$ with $\int f dx\neq 0$, and let $\mu$ be a translation-bounded measure on $\mathbb{R}^d$ such that
		\[
		f * \mu = 1 \quad \text{a.e.}
		\]
		Then
		\begin{equation}\label{suppf}
		\supp(\widehat{\mu}) \subset \{0\}\cup\{\xi\in\mathbb{R}^d:\widehat f(\xi)=0\}.
		\end{equation}

    \item Conversely, suppose that \(0\leq f\in L^1(\mathbb R^d)\) and that \(\mu\) is a non-negative, translation-bounded measure with uniform density \(D(\mu)\). Assume that \eqref{suppf} holds. Suppose further that for every \(\varepsilon>0\) there exists a non-negative function \(f_\varepsilon\in L^1(\mathbb R^d)\) such that
		\begin{enumerate}[label=(\alph*),font=\normalfont]
			\item $\widehat{f_\varepsilon}\in C_c^\infty(\mathbb{R}^d)$;
			\item $\supp(\widehat{f_\varepsilon})\subset(\supp(\widehat{\mu})\setminus\{0\})^{\mathtt C}$;
			\item $\|f_\varepsilon-f\|_{L^1}\to0$.
		\end{enumerate}
		Then $f*\mu=c$ a.e., where $c=D(\mu)\int_{\mathbb R^d}f(x)\,dx\geq0.$
\end{enumerate}
\end{proposition}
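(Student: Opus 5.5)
For part (1), I will take Fourier transforms in the distributional sense. Since \(\mu\) is translation-bounded, it is a tempered distribution, and \(f*\mu\) is a tempered distribution whose Fourier transform is \(\widehat f\,\widehat\mu\). The identity \(f*\mu=1\) then gives \(\widehat f\,\widehat\mu=\delta_0\).

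Now fix \(\xi_0\neq 0\) with \(\widehat f(\xi_0)\neq0\). By continuity, \(\widehat f\neq0\) on a small ball \(B\) around \(\xi_0\) that avoids \(0\). For \(\varphi\in C_c^\infty(B)\), I want to test against \(\varphi/\widehat f\), but this is not legitimate directly because \(\widehat f\) is only continuous. Instead I will regularize. Choose \(g\in L^1\) with \(\widehat g\) smooth and \(\widehat g\) close to \(\widehat f\) uniformly on \(B\); for example, \(g=f*k\) for a suitable kernel \(k\) whose Fourier transform equals \(1\) near \(\overline B\), so that \(\widehat g=\widehat f\,\widehat k\). Then \(g*\mu=f*(k*\mu)=\widehat k(0)\) is a constant. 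This multiplies both sides of the identity by \(\widehat k\) and does not yet make \(\widehat f\) smooth.

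A cleaner route is Wiener's lemma. Locally on \(B\), \(1/\widehat f\) coincides with the Fourier transform of some \(h\in L^1\), namely \(\widehat h\,\widehat f=1\) on a neighbourhood of \(\operatorname{supp}\varphi\). Then
\[
\langle\widehat\mu,\varphi\rangle=\langle\widehat\mu,\widehat f\,\widehat h\,\varphi\rangle,
\]
which can be rewritten as a pairing of \(\widehat f\widehat\mu=\delta_0\) against \(\widehat h\varphi\). That pairing vanishes, since \(0\notin B\).

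The one delicate point is justifying multiplication of the distribution \(\widehat\mu\) by the non-smooth function \(\widehat f\widehat h\). I will handle it on the space side. Set \(\psi=\check\varphi\), which is Schwartz. The computation then becomes \(\int (\psi*\tilde h*\tilde f)\,d\mu\), with the appropriate reflections, and this equals \(\int\psi*\tilde h\cdot(f*\mu)^{\sim}\), a Fubini interchange that is valid because \(\mu\) is translation-bounded. Using \(f*\mu=1\), this reduces to a constant times \(\widehat{\psi*\tilde h}(0)\), which is proportional to \(\varphi(0)\widehat h(0)=0\). Hence \(\widehat\mu=0\) on \(B\), which proves (1). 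The hypothesis \(\int f\neq0\) is not needed for this inclusion.

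For part (2), I will compute \(f_\varepsilon*\mu\) first. Its Fourier transform is \(\widehat{f_\varepsilon}\,\widehat\mu\). By (b) and \eqref{suppf}, \(\widehat{f_\varepsilon}\) is smooth and compactly supported and vanishes on a neighbourhood of \(\supp\widehat\mu\setminus\{0\}\), so \(\widehat{f_\varepsilon}\,\widehat\mu\) is supported in \(\{0\}\). Since \(\mu\) is translation-bounded, Lemma~\ref{lem:atom-existence} (applied after localizing with a bump function near \(0\)) gives \(\widehat{f_\varepsilon}\,\widehat\mu=a_\varepsilon\delta_0\). Consequently \(f_\varepsilon*\mu=a_\varepsilon\) is a constant function, and it is smooth.

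Next I identify the constant. Averaging \(f_\varepsilon*\mu\) over large cubes and using the uniform density of \(\mu\) gives \(a_\varepsilon=D(\mu)\int f_\varepsilon\).

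Finally I pass to the limit using (c). For any bounded set \(K\),
\[
\|f_\varepsilon*\mu-f*\mu\|_{L^1(K)}\le\|f_\varepsilon-f\|_{L^1}\sup_x\mu(K-x)\to0,
\]
where the bound uses the translation-boundedness of \(\mu\). Therefore \(f*\mu=\lim a_\varepsilon=D(\mu)\int f\) a.e.

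The main obstacle is the justification in (1), namely dividing by \(\widehat f\) when \(\widehat f\) is not smooth. I will resolve it with Wiener's local inversion combined with the Fubini computation on the space side.
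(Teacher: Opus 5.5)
Your proposal is correct. Part (2) follows the paper's argument essentially step by step:
\begin{itemize}
\item $\widehat{f_\varepsilon}\,\widehat\mu$ is supported in $\{0\}$;
\item Lemma~\ref{lem:atom-existence} makes $f_\varepsilon*\mu$ constant;
\item averaging against the uniform density identifies the constant as $D(\mu)\int f_\varepsilon$;
\item the local $L^1$ bound from translation-boundedness passes to the limit.
\end{itemize}
Your ``localizing'' step is cleanest if you apply the lemma to the bounded function $f_\varepsilon*\mu$ itself, which is bounded because $f_\varepsilon$ is Schwartz. That way you do not need $0$ to be isolated in $\supp\widehat\mu$. Equivalently, a bounded function whose Fourier transform is supported at the origin is a bounded polynomial, hence constant.

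For part (1), the paper gives no argument and simply cites \cite[Theorem~2.4]{KLM2025}. You instead give a self-contained proof via Wiener's local inversion. This makes $\varphi=\widehat f\,\widehat h\,\varphi$ an honest identity of functions, and lets you evaluate the pairing on the space side using $f*\mu=1$, producing $\varphi(0)\widehat h(0)=0$. This is the standard argument behind the cited result. It makes the section self-contained and correctly shows that $\int f\neq0$ plays no role in the inclusion.

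Two points should be tightened.
\begin{itemize}
\item Your opening identity $\widehat f\,\widehat\mu=\delta_0$ is only heuristic, since a tempered distribution cannot in general be multiplied by a merely continuous function. Your final argument rightly does not rely on it.
\item The Fubini interchange does not follow from translation-boundedness of $\mu$ together with $\tilde h*\tilde\psi\in L^1$ alone, because $|f|*|\mu|$ is only uniformly locally integrable. What makes it work is that $k:=|\tilde f|*|\tilde h|\in L^1$ is convolved with the Schwartz function $|\tilde\psi|$. Hence $\sum_{n\in\mathbb Z^d}\sup_{x\in n+[0,1]^d}(k*|\tilde\psi|)(x)<\infty$, and combining this with $\sup_n|\mu|(n+[0,1]^d)<\infty$ gives the absolute convergence you need.
\end{itemize}
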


\begin{proof}
	(1) This is well-known. See e.g.~\cite[Theorem~2.4]{KLM2025}.

	(2) Suppose that $f_\vep$ is as in the statement. If $f=0$ a.e., the conclusion is immediate. We may therefore assume that $\int f>0$. Then $\int f_\vep >0$ for all sufficiently small $\vep$. First we show that $(\int f_\vep)^{-1}f_\vep*\mu$ is a constant. Let $\varphi$ be a $C_c^{\infty}$ function. Then 
    \[
    \langle f_\varepsilon*\mu,\varphi\rangle
    =
    \left\langle \widehat{f_\varepsilon}\widehat{\mu},
    \mathcal F^{-1}\varphi\right\rangle
    =
    \left\langle \widehat{\mu},
    \widehat{f_\varepsilon}\mathcal F^{-1}\varphi\right\rangle,
    \]
    where $\mathcal F^{-1}$ denotes the inverse Fourier transform.
	But the function $\widehat{f_\vep}\mathcal{F}^{-1}\varphi$ is a $C_c^\infty$ function whose support intersects $\supp(\widehat{\mu})$ only at $0$. By \eqref{suppf} and Lemma~\ref{lem:atom-existence}, there exists \(a\in\mathbb C\) such that \(\widehat{\mu}=a\delta_0\) in a neighborhood of \(0\). Thus,
	\[
    \lrangle{f_\vep*\mu,\varphi}
    =
    \lrangle{\widehat{\mu},\widehat{f_\vep}\mathcal F^{-1}\varphi}
    =
    a\cdot\bigl(\widehat{f_\vep}\mathcal F^{-1}\varphi\bigr)(0)
    =
    a\cdot\int f_\vep\int\varphi,\]
	and, since this is true for an arbitrary $C_c^\infty$ function $\varphi$, we conclude that $f_\vep*\mu= a\cdot \int f_\vep.$

We next identify the constant \(a\). Averaging over a ball \(B_R\) and applying Tonelli's theorem, we obtain
    \[
    a\int_{\mathbb R^d} f_\varepsilon
    =
    \frac{1}{|B_R|}\int_{B_R}(f_\varepsilon*\mu)(x)\,dx
    =
    \int_{\mathbb R^d}
    f_\varepsilon(u)\frac{\mu(B_R-u)}{|B_R|}\,du.
    \]
    Since \(\mu\) has uniform density \(D(\mu)\), letting \(R\to\infty\) gives
    \[
    a\int_{\mathbb R^d} f_\varepsilon
    =
    D(\mu)\int_{\mathbb R^d} f_\varepsilon.
    \]
    Hence, if \(\int f_\varepsilon>0\), then
    \[
    a=D(\mu).
    \]

{For any ball \(B\subset\mathbb R^d\) and \(g\in L^1(\mathbb R^d)\), Tonelli's theorem and the change of variables \(u=x-y\) give
	\[
	\begin{aligned}
	\int_B |g*\mu(x)|\,dx
	&\leq
	\int_{\mathbb R^d}\int_B |g(x-y)|\,dx\,d|\mu|(y)\\
	&=
	\int_{\mathbb R^d}|g(u)|\,|\mu|(B-u)\,du\\
	&\leq
	C_{B,\mu}\|g\|_{L^1},
	\end{aligned}
	\]
	where $C_{B,\mu}:=\sup_{t\in\mathbb R^d}|\mu|(B-t)<\infty$.}  Applying this to $g=f-f_\vep$, we obtain that 
	$$f_\vep*\mu\to f*\mu,\quad \text{in } L^1(B).$$
	Since $B$ is arbitrary, we conclude that \[f*\mu=a \int f dx \quad a.e. \]
\end{proof}

\begin{proof}[Proof of Theorem \ref{th-Keller}.] Let $\nu = \delta_0+\mu$ be a locally finite measure such that ${\mathbbm 1}_{\mathcal Q} \ast \nu = 1$ a.e.  Then $\nu$ is translation-bounded and has uniform density $1$. By Proposition \ref{prop-supp} (1), we see that 
$$
\supp (\widehat{\nu}) \subset \{0\}\cup\{\xi\in\R^d: \widehat{{\mathbbm 1}_{Q}} (\xi) = 0\} = \{0\}\cup{\mathcal L}
$$
by (\ref{zero_Q}). In \cite[Page 596]{Kol2000}, it has been proved that if $f = |\widehat{{\mathbbm 1}_{Q}}|^2$, one can construct for each $\varepsilon>0$, $\widehat{f_{\varepsilon}}\in C^{\infty}_c$ such that $f_\vep\ge 0$, $\|f_{\varepsilon}-f\|_{L^1}\to 0$ and
$$
\supp \widehat{f_{\varepsilon}} \subset {Q}-{Q} = (-1,1)^d. 
$$
However,  $(-1,1)^d \cap {\mathcal L} = \varnothing$. We can guarantee all assumptions in Proposition \ref{prop-supp}(2) are satisfied. Hence, we have 
	$|\widehat{{\mathbbm 1}_{Q}}|^2\ast(\delta_0+\mu)= c$ a.e. for some constant $c$. Since \(\delta_0+\mu\) has uniform density \(1\), the level of the functional tiling is
		\[
			c=D(\nu)\int_{\mathbb R^d}
			\bigl|\widehat{\mathbbm 1_Q}(\xi)\bigr|^2\,d\xi
			=1.
		\]
    For every compact set $K\subset\mathbb R^d$, the decay of $|\widehat{\mathbbm 1}_{\mathcal Q}|^2$ gives
    \[
    	\sup_{\xi\in K}|\widehat{\mathbbm 1}_{\mathcal Q}|^2(\xi-y)
    	\leq C_K\prod_{j=1}^d(1+|y_j|)^{-2}.
    \]
    The right-hand side is integrable with respect to $\delta_0+\mu$
    by translation-boundedness. Since $f$ is continuous, dominated
    convergence shows that $|\widehat{\mathbbm 1}_{\mathcal Q}|^2*(\delta_0+\mu)$ is continuous.
    Hence the functional tiling identity holds everywhere, and in particular,
    \[
    	|\widehat{\mathbbm 1}_{\mathcal Q}(0)|^2
    	+\bigl(|\widehat{\mathbbm 1}_{\mathcal Q}|^2*\mu\bigr)(0)=1.
    \]
But $|\widehat{{\mathbbm 1}_{Q}}(0)|^2=1$. This forces that $|\widehat{{\mathbbm 1}_{ Q}}|^2 \ast\mu (0)  =0$. This means that $\mu$ has no support on $|\widehat{{\mathbbm 1}_{ Q}}(\xi)|^2>0$. Thus, $\supp (\mu)\subset \{\xi: \widehat{{\mathbbm 1}_{Q}}(\xi) = 0\} = {\mathcal L}$, completing the proof.
\end{proof}

In the next proposition we show that if a {non-negative}, locally finite Borel measure $\nu$ on $\mathbb{R}$ satisfies that the convolution of $\nu$ with the indicator function of a bounded interval is a constant almost everywhere, then $\nu$ is necessarily periodic with a period related to the length of the interval.

\begin{proposition}\label{prop:unit-interval-WTmeasure-1periodic}
Let $\nu$ be a non-negative, locally finite Borel measure on $\R$. Then there exists a constant $c\ge 0$ such that
\[
\mathbbm 1_{(0,1)}*\nu = c \qquad \text{a.e. on }\R
\]
if and only if $\nu$ is $1$-periodic, i.e.\ $\nu=\nu*\delta_1$.
\end{proposition}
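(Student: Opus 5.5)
The plan is to prove both directions directly, working with the function $F(x)=\mathbbm 1_{(0,1)}*\nu(x)=\nu\bigl((x-1,x)\bigr)$. This avoids Fourier analysis, and it needs no translation-boundedness hypothesis beyond local finiteness.

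\emph{($\Leftarrow$)} Assume $\nu=\nu*\delta_1$. For every $x$, the half-open interval $[x-1,x)$ is a fundamental domain for $\Z$. Periodicity then gives
\[
\nu([x-1,x))=\nu([0,1)) .
\]
Set $c:=\nu([0,1))<\infty$, which is finite by local finiteness. Next note that $F(x)=\nu([x-1,x))-\nu(\{x-1\})$. Since $\nu$ is locally finite, it has at most countably many atoms. Hence $\nu(\{x-1\})=0$ for all but countably many $x$, so $F=c$ a.e.

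\emph{($\Rightarrow$)} Assume $F=c$ a.e.

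\textbf{Step 1: pass from a.e.\ to everywhere.} The function $x\mapsto \nu((x-1,x))$ is left-continuous: as $y\uparrow x$, the sets $(y-1,y)$ converge to $(x-1,x)$ up to the endpoint $x-1$, and one checks this with monotone and dominated convergence on a bounded window. A cleaner route is to use the one-sided limits of $G(x):=\nu((-\infty\text{-anchored}))$, but an easier way to get an everywhere identity is as follows. For $h>0$ and a.e.\ $x$,
\[
\nu((x-1,x))=\nu((x-1+h,x+h)).
\]
So the difference of the two intervals has zero net mass:
\[
\nu\bigl((x-1,x-1+h]\bigr)=\nu\bigl([x,x+h)\bigr)\quad\text{for a.e. } x.
\]

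\textbf{Step 2: conclude periodicity.} Rather than handling the a.e.\ issue pointwise, I would test against a smooth function. Take $\varphi\in C_c(\R)$. Integrating $F(x)-F(x+1)=0$ against $\varphi'$ gives
\[
\int \varphi'(x)\bigl[\nu((x-1,x))-\nu((x,x+1))\bigr]\,dx=0 .
\]
Apply Fubini with $x$ ranging over the region where $y\in(x-1,x)$, i.e.\ $x\in(y,y+1)$:
\[
\int \varphi'(x)\,\nu((x-1,x))\,dx=\int\bigl[\varphi(y+1)-\varphi(y)\bigr]\,d\nu(y).
\]
Similarly,
\[
\int\varphi'(x)\,\nu((x,x+1))\,dx=\int\bigl[\varphi(y)-\varphi(y-1)\bigr]\,d\nu(y).
\]
Write $\tau\nu:=\nu*\delta_1$. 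The identity becomes
\[
\bigl\langle \tau^{-1}\nu-2\nu+\tau\nu,\ \varphi\bigr\rangle=0 \quad\text{for all }\varphi .
\]
Letting $\sigma:=\tau\nu-\nu$, this says $\tau\sigma=\sigma$, so $\sigma$ is $1$-periodic.

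This only gives the second difference, and ruling out a nonzero periodic $\sigma$ is the main obstacle. The better approach is to differentiate directly. For a.e.\ $x$, $F(x)=c$ implies, in the distributional sense,
\[
0=F'=\nu*\bigl(\delta_0-\delta_1\bigr)\cdot(\text{sign}),
\]
since $\frac{d}{dx}\mathbbm 1_{(0,1)}=\delta_0-\delta_1$. More precisely, $(\mathbbm 1_{(0,1)}*\nu)'=\nu-\nu*\delta_1$ holds as distributions; this is just the Fubini computation above with a single term. Since $F$ is a.e.\ constant, its distributional derivative is $0$. Therefore $\nu=\nu*\delta_1$.

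Everything thus reduces to justifying $(\mathbbm 1_{(0,1)}*\nu)'=\nu-\tau\nu$ for a merely locally finite $\nu$. This is the Fubini computation $\int\varphi'(x)\nu((x-1,x))\,dx=\int[\varphi(y+1)-\varphi(y)]\,d\nu(y)$, which is valid because $\varphi$ has compact support and $\nu$ is locally finite, so all integrands are absolutely integrable.
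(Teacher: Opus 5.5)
Your final argument is correct. For the forward direction it takes a genuinely different route from the paper; your converse, via the fundamental domain $[x-1,x)$ and countability of atoms, is the paper's.

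The paper works on the Fourier side. It first notes that $\nu$ is translation-bounded and disposes of $c=0$. It then uses Proposition~\ref{prop-supp}(1) to get $\supp\widehat{\nu}\subset\Z$, and invokes the simple zeros of $\widehat{\mathbbm 1_{(0,1)}}$ at $\Z\setminus\{0\}$ to exclude derivatives of Dirac masses. This gives $\widehat{\nu}=\sum_k a_k\delta_k$, and $\nu*\delta_1=\nu$ follows from $e^{-2\pi i k}=1$.

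You instead compute the distributional derivative directly. By Fubini, which is justified because $F(x)\le\nu([A-1,B])$ for $x\in[A,B]$ so that $|\varphi'|F$ is integrable,
\[
\int_\R\varphi'(x)\,\nu\bigl((x-1,x)\bigr)\,dx=\int_\R\bigl[\varphi(y+1)-\varphi(y)\bigr]\,d\nu(y),
\]
while the left side equals $c\int_\R\varphi'=0$. Hence $\nu*\delta_1$ and $\nu$ agree on $C_c^\infty(\R)$ and therefore coincide.

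Your route is shorter and more elementary. It needs no tempered distributions, no translation-boundedness, no case split on $c$, and no structure theorem for point-supported distributions. It also works verbatim for signed or complex measures of locally finite variation. The paper's route buys consistency with the Fourier mechanism of Section~\ref{S2}: $\supp\widehat{\nu}$ lies in $\{0\}$ together with the zero set of $\widehat f$, the same fact that drives Theorem~\ref{th-Keller}. For an interval, though, the identity $(\mathbbm 1_{(0,1)}*\nu)'=\nu-\nu*\delta_1$ is the natural proof.

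In a write-up, present only that identity and remove the abandoned material:
\begin{itemize}
\item Drop Step~1. The claimed left-continuity is false in general, since $\nu((y-1,y))\to\nu([x-1,x))$ as $y\uparrow x$, and the definition of $G$ is garbled.
\item Drop the second-difference detour that comes from using only $F(x)=F(x+1)$.
\item Drop the line containing ``(sign)''.
\item Take $\varphi\in C_c^\infty(\R)$ rather than $C_c(\R)$, since you differentiate it.
\end{itemize}
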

\begin{proof}
Assume first that $\mathbbm{1}_{(0,1)}*\nu=c$ a.e. for some $c\ge 0$. Then $\nu$ is translation-bounded. If $c=0$, then we have immediately $\nu=0$. Now we suppose $c>0$. Taking Fourier transforms in the sense of tempered distributions, we obtain $\widehat{\mathbbm{1}_{(0,1)}}\cdot\widehat{\nu}=c\delta_0$. By Proposition~\ref{prop-supp} (1), we have
	\[\supp(\widehat{\nu})\subset \Z=\{0\}\cup\{\xi:\widehat{\mathbbm{1}_{(0,1)}}(\xi)=0\}.\]
	Since \(\widehat{\mathbbm{1}_{(0,1)}}\) has simple zeros precisely at the nonzero integers, the identity \(\widehat{\mathbbm{1}_{(0,1)}}\,\widehat{\nu}=c\delta_0\) rules out derivative terms in \(\widehat{\nu}\) at every \(k\in\mathbb Z\). Hence \(\widehat{\nu}\) is a pure point measure supported on \(\mathbb Z\) and we can write $\widehat{\nu}=\sum_{k\in\Z}a_k\delta_k$ for some $a_k\in\mathbb C$. For any $\varphi\in C_c^\infty(\R)$, we have 
	\[
    \begin{aligned}
    \langle \nu*\delta_1,\varphi\rangle
    &=\langle \nu,\varphi(\,\cdot+1)\rangle
    =\left\langle \widehat{\nu},e^{-2\pi i\xi}\mathcal F^{-1}\varphi(\xi)\right\rangle \\
    &=\sum_{k\in\mathbb Z}a_k e^{-2\pi i k}\mathcal F^{-1}\varphi(k)
    =\left\langle\widehat{\nu},\mathcal F^{-1}\varphi\right\rangle
    =\langle\nu,\varphi\rangle.
    \end{aligned}
    \]
	This implies that $\nu=\nu*\delta_1$.

Conversely, suppose that $\nu=\nu*\delta_1$. By periodicity, for any $x\in\R$, we have
    \[
    	\nu([x-1,x))=\nu([0,1))=:c.
    \]
Since $\nu$ has at most countably many atoms, we obtain
\[
	(\mathbbm 1_{(0,1)}*\nu)(x)
	=\nu((x-1,x))=c
	\qquad\text{for a.e. }x\in\mathbb R.
\]

\end{proof}

As an immediate consequence of Proposition~\ref{prop:unit-interval-WTmeasure-1periodic}, we have the following.

\begin{corollary}\label{cor:unit-interval-WTmeasure-constant-masses}
Let $\nu$ be a non-negative, locally finite Borel measure on $\R$ such that
	\[
	\mathbbm 1_{(0,1)}*\nu = c \qquad \text{a.e. on }\R
	\]
	for some $c\ge 0$. Then for any $a\in\R$, we have $$\nu(\{a+k\})=\nu(\{a\})\quad \text{for all } k\in\Z.$$
	
	Moreover, if $c=\nu(\{0\})=1$, then $\nu=\delta_{\Z}$.
\end{corollary}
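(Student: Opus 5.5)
The corollary should follow directly from Proposition~\ref{prop:unit-interval-WTmeasure-1periodic}, which tells us that $\nu$ is $1$-periodic, i.e.\ $\nu=\nu*\delta_1$. I will first unpack what this identity says about point masses. For any Borel set $E$ we have
\[
(\nu*\delta_1)(E)=\nu(E-1).
\]
Applying this to $E=\{a+1\}$ gives $\nu(\{a+1\})=\nu(\{a\})$ for every $a\in\R$. Iterating, in both directions since the statement holds for all $a$, gives $\nu(\{a+k\})=\nu(\{a\})$ for all $k\in\Z$. This proves the first assertion.

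For the second assertion, assume $c=1$ and $\nu(\{0\})=1$. The first part gives $\nu(\{k\})=1$ for every $k\in\Z$, so $\nu\ge\delta_{\Z}$. It remains to show that the measure $\rho:=\nu-\delta_{\Z}\ge0$ vanishes.

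I will compute the mass of a period interval. Since $\mathbbm 1_{(0,1)}*\nu(x)=\nu((x-1,x))$ equals $1$ for a.e.\ $x$, we can pick some $x\notin\Z$ for which this holds. The open interval $(x-1,x)$ contains exactly one integer, so
\[
1=\nu((x-1,x))=1+\rho((x-1,x)),
\]
and hence $\rho((x-1,x))=0$.

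Now I will spread this vanishing to all of $\R$ using periodicity. The measure $\rho$ is $1$-periodic, being the difference of two $1$-periodic measures. The translates of $(x-1,x)$ by integers cover $\R$ except for the countable set $x+\Z$. On that set, $\rho$ is also zero: the points $x+k$ are non-integers, so their $\rho$-mass equals their $\nu$-mass, and that mass is contained in $\nu((x'-1,x'))$ for nearby admissible $x'$. To avoid this bookkeeping, I will instead choose two admissible non-integer points $x,x'$ with $x-x'\notin\Z$. The corresponding families of intervals then together cover $\R$, and $\rho$ vanishes on each interval, so $\rho=0$ and $\nu=\delta_{\Z}$.

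There is no real obstacle here. The only point needing care is this last step: we must make sure no mass of $\rho$ sits on the excluded countable set. Using two incommensurate shifts handles this.
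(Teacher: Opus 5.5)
Your proof is correct and follows the intended argument. The paper states this corollary as an immediate consequence of the $1$-periodicity $\nu=\nu*\delta_1$ from Proposition~\ref{prop:unit-interval-WTmeasure-1periodic}, and your second part (unit atoms at every integer, plus zero extra mass on a period interval spread by periodicity) is exactly what that step amounts to. Your two-shift device works, but you could shorten it: as in the converse part of that proposition, periodicity gives $\nu([0,1))=c=1$, hence $\nu((0,1))=0$.
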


\section{Non-spectrality of non-parallel polytopes} \label{S3}
 The goal of this section is to prove Theorem~\ref{th-spectral-non-parallel}. The proof of the theorem requires us to note that some results about the spectrality of convex polytopes can be extended to finite unions \cite{GL17,GL20}. First, we recall that (see \cite[Lemma 2.4]{GL17}), by the divergence theorem, a convex polytope $P$ satisfies the following:
$$
-2\pi i \xi \widehat{\mathbbm{1}_{P}} (\xi) = \sum_{F} n_F ~\widehat{\sigma_F} (\xi)
$$
where $\sigma_F$ is the surface measure on the facet $F$ and the sum is taken over all facets of $P$. Hence, by summing up all polytopes, the following also holds:
$$
-2\pi i \xi \widehat{\mathbbm{1}_{\Omega}} (\xi) = \sum_{F} n_F ~\widehat{\sigma_F} (\xi)
$$
where the sum is taken over all facets in $\Omega$. In the sequel, all estimates provided in \cite[Section 2]{GL17} continue to hold. In particular, Lemma 2.7 in \cite{GL17} holds in the following version.

\begin{lemma}\label{lem:asymptotic-facet}
Let $\Omega$ be a union of pairwise non-overlapping polytopes. Let ${\mathcal A}_{+},{\mathcal A}_{-}$ be the collections of all facets $F$ whose outward pointing normal is $e_1$ and $-e_1$, respectively. Then there exists $\alpha>0$ such that 
$$
-2\pi i \xi_1  \widehat{\mathbbm{1}_{\Omega}} (\xi)  = \sum_{A\in {\mathcal A}_{+}} \widehat{\sigma_A}(\xi)-\sum_{A\in {\mathcal A}_{-}} \widehat{\sigma_A}(\xi) + O(|\xi_1|^{-1}), \ \mbox{as}  ~|\xi_1|\to\infty
$$ 
in the cone
$$
K(\alpha) = \{\xi = (\xi_1,\cdots,\xi_d): |\xi_j| \le \alpha |\xi_1|, ~~\forall j>1\}.
$$
\end{lemma}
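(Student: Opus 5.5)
The identity $-2\pi i\xi\,\widehat{\mathbbm 1_\Omega}(\xi)=\sum_F n_F\,\widehat{\sigma_F}(\xi)$ holds for each convex polytope by the divergence theorem, and hence for the non-overlapping union by summing over the pieces. Taking the first coordinate gives
\[
-2\pi i\xi_1\,\widehat{\mathbbm 1_\Omega}(\xi)=\sum_F \langle n_F,e_1\rangle\,\widehat{\sigma_F}(\xi).
\]
We split the facets into two groups: those with $n_F=\pm e_1$, which are exactly the families $\mathcal A_\pm$, and the remaining facets. The statement then reduces to the claim that each remaining facet satisfies $\widehat{\sigma_F}(\xi)=O(|\xi_1|^{-1})$ uniformly in a suitable cone $K(\alpha)$. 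The coefficients $\langle n_F,e_1\rangle$ are bounded and there are finitely many facets, so this claim suffices.

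To prove the claim for a fixed facet $F$ with $n_F\neq\pm e_1$, we project $\xi$ onto the hyperplane $H_F$ containing $F$. Write $\xi=\langle\xi,n_F\rangle n_F+\xi_F$, with $\xi_F$ the tangential component. Since $\widehat{\sigma_F}(\xi)=e^{-2\pi i\langle\xi,n_F\rangle c_F}\,\widehat{\mathbbm 1_F}(\xi_F)$, where the second factor is the $(d-1)$-dimensional Fourier transform of the polytope $F$ inside $H_F$, we can apply the divergence theorem once more within $H_F$. This gives $|\widehat{\sigma_F}(\xi)|\le C_F\,|\xi_F|^{-1}$, because $2\pi i\xi_F\,\widehat{\mathbbm 1_F}(\xi_F)$ is a finite sum of Fourier transforms of $(d-2)$-dimensional surface measures, each bounded by its total mass. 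This is the same estimate as in \cite[Section 2]{GL17}.

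The key geometric step is to show that $|\xi_F|\ge c|\xi_1|$ on $K(\alpha)$ for small $\alpha$. Because $n_F\neq\pm e_1$, the tangential part of $e_1$ has norm $\sqrt{1-\langle n_F,e_1\rangle^2}=:\beta_F>0$. For $\xi\in K(\alpha)$, we have $|\xi-\xi_1e_1|\le\alpha\sqrt{d-1}\,|\xi_1|$, and the tangential projection is $1$-Lipschitz, so
\[
|\xi_F|\ge(\beta_F-\alpha\sqrt{d-1})\,|\xi_1|.
\]
We choose $\alpha<\min_F\beta_F/(2\sqrt{d-1})$, taking the minimum over the finitely many facets not in $\mathcal A_\pm$. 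This gives $|\widehat{\sigma_F}(\xi)|\le 2C_F\beta_F^{-1}|\xi_1|^{-1}$ throughout $K(\alpha)$, and summing over these facets yields the error term $O(|\xi_1|^{-1})$.

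The main obstacle is confirming that the divergence identities and the $(d-1)$-dimensional decay bound remain valid when $\Omega$ is only a union of non-overlapping polytopes, where shared or overlapping facets could make the facet bookkeeping inconsistent. Here we keep every facet of every $P_i$ and of $Q$ separately, with no cancellation; each term is then handled individually, so the argument does not depend on how facets of different pieces meet.
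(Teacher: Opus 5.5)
Your proof is correct and follows the same route as the paper. The paper sums the divergence-theorem identity over the pieces, keeping every facet of every polytope separately just as you do, and then simply invokes the estimates of \cite[Section 2]{GL17}; you have written out the argument behind \cite[Lemma 2.7]{GL17}, namely the bound $|\widehat{\sigma_F}(\xi)|\le C_F|\xi_F|^{-1}$ combined with $|\xi_F|\ge \tfrac{\beta_F}{2}|\xi_1|$ on a sufficiently thin cone around $e_1$.
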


This implies the following lemma that we need. In the following, by a translation, we assume $Q = (-\frac12, \frac12)^d$. Let also $Q_{d-1} = (-\frac12, \frac12)^{d-1}.$ The following lemma is essentially \cite[Lemma 6.1]{GL17}.
\begin{lemma}\label{lem:asymptotic-facet-2}
Let $\Omega = Q\cup P_1\cup\cdots\cup P_N $ be a non-overlapping union of polytopes with the unit cube $Q$. Suppose that the only facets with normal vector $\pm e_1$ are the two corresponding facets of $Q$. Write $\xi = (\xi_1,\xi')$ where $\xi'\in \R^{d-1}$. Let $C>0$. Then there exists $D>0$ such that 
\begin{equation}\label{eq_asym}
\pi \xi_1  \widehat{\mathbbm{1}_{\Omega}} (\xi)  = \sin (\pi \xi_1)\widehat{\mathbbm 1}_{Q_{d-1}}(\xi') +O(|\xi_1|^{-1}), \ 
\end{equation}
for all  $|\xi_1|\ge D$ and $|\xi'| \le C$. 
\end{lemma}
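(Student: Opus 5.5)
The plan is to apply Lemma~\ref{lem:asymptotic-facet} and then identify the two facet terms explicitly. For this we need a cone containing the region $\{|\xi_1|\ge D,\ |\xi'|\le C\}$. Fix the $\alpha>0$ given by Lemma~\ref{lem:asymptotic-facet} and choose $D\ge C/\alpha$. Then $|\xi_1|\ge D$ and $|\xi'|\le C$ give $|\xi_j|\le C\le \alpha|\xi_1|$ for every $j>1$, so the whole region lies in $K(\alpha)$.

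On $K(\alpha)$, Lemma~\ref{lem:asymptotic-facet} gives
\[
-2\pi i \xi_1\widehat{\mathbbm 1_\Omega}(\xi)=\sum_{A\in\mathcal A_+}\widehat{\sigma_A}(\xi)-\sum_{A\in\mathcal A_-}\widehat{\sigma_A}(\xi)+O(|\xi_1|^{-1}).
\]
The $O$-term is uniform there, since it comes from the estimates of \cite[Section 2]{GL17} on the cone. By hypothesis, $\mathcal A_\pm$ consist only of the facets $\{\pm\tfrac12\}\times \overline{Q_{d-1}}$ of $Q$.

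We compute the two surface-measure transforms directly:
\[
\widehat{\sigma_{A_\pm}}(\xi)=\int_{Q_{d-1}}e^{-2\pi i(\pm\frac12\xi_1+\langle\xi',y\rangle)}\,dy=e^{\mp\pi i\xi_1}\,\widehat{\mathbbm 1}_{Q_{d-1}}(\xi').
\]
Their difference is therefore $(e^{-\pi i\xi_1}-e^{\pi i\xi_1})\widehat{\mathbbm 1}_{Q_{d-1}}(\xi')=-2i\sin(\pi\xi_1)\,\widehat{\mathbbm 1}_{Q_{d-1}}(\xi')$. Dividing both sides by $-2i$ gives \eqref{eq_asym}, and the error term is still $O(|\xi_1|^{-1})$.

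The only delicate point is making sure the $O(|\xi_1|^{-1})$ bound is uniform over the region. The remaining facets have normals $n_F\neq\pm e_1$, and their contributions are controlled in $K(\alpha)$ exactly as in \cite[Lemma 2.7]{GL17}. Boundedness of $\xi'$ does no harm here, since it only places us inside the cone. If needed, $D$ can be enlarged so that the asymptotic regime of Lemma~\ref{lem:asymptotic-facet} applies.
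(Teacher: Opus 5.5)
Your proof is correct and follows the route the paper intends. The paper gives no separate argument; it says the lemma follows from Lemma~\ref{lem:asymptotic-facet}, essentially as in \cite[Lemma~6.1]{GL17}, and you supply that derivation by taking $D\ge \max(D_0, C/\alpha)$ to place the region inside $K(\alpha)$ and computing $\widehat{\sigma_{A_\pm}}(\xi)=e^{\mp\pi i\xi_1}\widehat{\mathbbm 1}_{Q_{d-1}}(\xi')$, with correct signs and division by $-2i$.
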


\begin{proposition}\label{prop-containing-spectrum}
Let $\Omega = Q\cup P_1\cup\cdots\cup P_N $ be a union of non-overlapping polytopes with the unit cube $Q$. Suppose that for each $1\leq i\leq d$, the only facets of $\Om$ with normal vector $\pm e_i$ are the corresponding facets of $Q$. Suppose that $\Omega$ is spectral. Then $\Omega$ admits a spectrum $\Lambda$ such that 
$$
{\Lambda-\Lambda \subset {\mathcal L}\cup \{0\} }
$$
where ${\mathcal L}$ is as defined in \eqref{gridL}.
\end{proposition}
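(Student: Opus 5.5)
Our plan follows the argument of \cite[Section~6]{GL17} for convex polytopes, using Lemma~\ref{lem:asymptotic-facet-2} in each coordinate direction, then a compactness (weak-limit) step to pass to a spectrum with the desired structure.

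Let $\La$ be a spectrum for $\Om$. Then $\La-\La\subset\{0\}\cup Z(\widehat{\mathbbm 1_\Om})$, where $Z(\widehat{\mathbbm 1_\Om})$ denotes the zero set of $\widehat{\mathbbm 1_\Om}$. We may assume $\m(\Om)=N+1$, so $\La$ has uniform density $\m(\Om)$. Moreover, $|\widehat{\mathbbm 1_\Om}|^2*\delta_\La=\m(\Om)^2$ everywhere.

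\textbf{Step 1: zeros far out in the $e_1$ direction.} Fix $C>0$ and let $\xi\in Z(\widehat{\mathbbm 1_\Om})$ with $|\xi'|\le C$ and $|\xi_1|\ge D$. By \eqref{eq_asym},
\[
\bigl|\sin(\pi\xi_1)\widehat{\mathbbm 1}_{Q_{d-1}}(\xi')\bigr|=O(|\xi_1|^{-1}).
\]
So either $\xi_1$ lies within $O(|\xi_1|^{-1})$ of a nonzero integer, or $\widehat{\mathbbm 1}_{Q_{d-1}}(\xi')$ is small. The latter means $\xi'$ lies near the set where some $\xi_j\in\Z^*$, $j\ge2$. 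The same holds with $e_1$ replaced by any $e_i$, since by hypothesis the only facets with normal $\pm e_i$ are those of $Q$.

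\textbf{Step 2: passing to a limit spectrum.} Pick $x_n\in\La$ and consider the translates $\La-x_n$. These are all spectra, and by uniform discreteness they are uniformly translation-bounded. Since differences in $\La-\La$ that are not $O(1)$-close to $\mathcal L$ must be bounded, the plan is to apply the GL17 trick: scale or shift along a sequence so that the errors $O(|\xi_1|^{-1})$ vanish in the limit. Concretely, I would use the following fact. If $\lambda,\lambda'\in\La$ and $\lambda-\lambda'$ has a large $i$-th coordinate, then repeating the Step~1 analysis shows that some coordinate of $\lambda-\lambda'$ is asymptotically an integer. One then takes a vague limit of $\delta_{\La-x_n}$ along a subsequence chosen so that $|x_n|\to\infty$ in a generic direction. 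The limit $\Gamma$ is the counting measure of a set; this uses Proposition-type arguments that limits of spectra with the same density are spectra, as in \cite{LevMat2022}. In the limit, every difference $\gamma-\gamma'$ is a limit of differences whose asymptotic estimate forces some coordinate to lie in $\Z^*$ exactly. Equivalently, $\Gamma-\Gamma\subset\mathcal L\cup\{0\}$ up to closure, and $\mathcal L\cup\{0\}$ is closed away from the origin's neighbourhood.

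\textbf{Main obstacle.} The hardest step is the bounded part of $\La-\La$: differences with all coordinates bounded are not controlled by the asymptotics. I would first try to handle these with the standard argument from \cite[Lemma~6.2]{GL17}. There one takes the limit along a sequence $t_n e_1$ with $t_n\to\infty$ chosen so that the differences in $\La\cap(\text{large translate window})$ become far apart in some coordinate. If this fails, I would use a multi-direction version: $\Om$ is compared with $\Om$ dilated by translating along a direction where all coordinates grow, and one checks that the vague-limit spectrum inherits the exact integrality. I also need to verify that the vague limit is again a spectrum with no multiplicity. This follows from $|\widehat{\mathbbm 1_\Om}|^2*\delta=\m(\Om)^2$ being preserved under vague limits, together with $\widehat{\mathbbm 1_\Om}(0)\neq0$.
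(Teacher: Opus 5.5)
\textbf{There is a genuine gap in Step 2.} It lacks the mechanism that turns the asymptotic formula \eqref{eq_asym} into an exact arithmetic constraint, and the routes you sketch would not supply it.

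\textbf{Why a single limit gives nothing.} Suppose you take one vague limit $\Gamma=\lim(\Lambda-x_n)$. For $\gamma,\gamma'\in\Gamma$, the approximating points $\lambda_n-x_n\to\gamma$ and $\lambda'_n-x_n\to\gamma'$ satisfy $\lambda_n-\lambda'_n\to\gamma-\gamma'$, which is a bounded vector. So Lemma~\ref{lem:asymptotic-facet-2} never applies, whatever direction $x_n$ escapes in. Your description of ``differences in a large translate window becoming far apart'' does not correspond to an actual mechanism: differences taken within a single translate stay the same.

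\textbf{The missing idea} (from \cite{GL20}, used in the paper) is to translate only along $e_1$ and only by integers. Take $\Lambda-ke_1$ and extract a limit $\Lambda_1=\lim_j(\Lambda-k_je_1)$. Approximate two points $(u_1,v_1),(u_2,v_2)\in\Lambda_1$ through two different subsequences $k'_j,k''_j$ with $k'_j-k''_j\to\infty$. This gives $(u'_j,v'_j),(u''_j,v''_j)\in\Lambda$ with $(u'_j-k'_j,v'_j)\to(u_1,v_1)$ and $(u''_j-k''_j,v''_j)\to(u_2,v_2)$. Their difference is a zero of $\widehat{\mathbbm 1_\Omega}$ whose first coordinate tends to infinity while its transverse part stays bounded. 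Hence \eqref{eq_asym} gives $\sin(\pi(u'_j-u''_j))\,\widehat{\mathbbm 1}_{Q_{d-1}}(v'_j-v''_j)\to0$. If $v_1-v_2\notin\mathcal L_{d-1}$, the second factor is bounded below. Because $k'_j-k''_j\in\mathbb Z$, we have $|\sin(\pi(u'_j-u''_j))|\to|\sin\pi(u_1-u_2)|$, so $u_1-u_2\in\mathbb Z$ exactly. This yields $\Lambda_1-\Lambda_1\subset H:=(\mathbb Z\times\mathbb R^{d-1})\cup(\mathbb R\times\mathcal L_{d-1})$. The integrality of the shift difference is precisely what converts an $O(|\xi_1|^{-1})$ statement into an exact one, and nothing in your proposal plays this role.

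\textbf{One direction is not enough.} This constraint gives only $H$, not $\mathcal L\cup\{0\}$. You must repeat the limiting procedure successively along $e_2,\dots,e_d$. At each stage, the new limit satisfies $\Lambda_j-\Lambda_j\subset\overline{\Lambda_{j-1}-\Lambda_{j-1}}\cap\tau_j(H)$, using that $H$ is closed. At the end, check that $\bigcap_j\tau_j(H)=\mathcal L\cup\{0\}$.

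\textbf{Your alternatives cannot replace this iteration.} Translating in a ``generic direction'', or in one where all coordinates grow, fails because Lemmas~\ref{lem:asymptotic-facet} and~\ref{lem:asymptotic-facet-2} only describe $\widehat{\mathbbm 1_\Omega}$ in a narrow cone around a coordinate axis. Along a diagonal direction, the leading behaviour of $\widehat{\mathbbm 1_\Omega}$ comes from facets of the $P_i$ and carries no information about $Q$.

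\textbf{Smaller points.} Your assertion that differences not $O(1)$-close to $\mathcal L$ must be bounded is unjustified: zeros of $\widehat{\mathbbm 1_\Omega}$ outside these cones are uncontrolled, and the correct argument never needs such a statement. Also, $\mathfrak m(\Omega)=N+1$ is not given, since the $P_i$ are arbitrary polytopes; this plays no role here.
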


\begin{proof}
Let $\Gamma$ be a spectrum for $\Omega$ and let $\Gamma_k = \Gamma- (k,0,\cdots,0)$. Since all $\Gamma_k$ are spectra for $\Omega$ and they are uniformly discrete, we can find a subsequence $k_j$ such that $\Gamma_{k_j}$ converges weakly to a spectrum $\Lambda_1$ (see \cite[Section~3.3]{GL20}).

\medskip

{\bf Claim:} $\Lambda_1-\Lambda_1\subset (\Z\times \R^{d-1})\cup \left(\R\times {\mathcal Z}(\widehat{\mathbbm 1}_{Q_{d-1}})\right) = (\Z\times \R^{d-1})\cup \left(\R\times {\mathcal L}_{d-1}\right)=: H,$ 
\medskip

\noindent where ${\mathcal L}_{d-1}$ is the set of points in $\R^{d-1}$ having at least one coordinate that is a nonzero integer, which is also $\mathcal{Z}(\widehat{\mathbbm{1}}_{Q_{d-1}})$, i.e., the zero set of  $\widehat{\mathbbm 1}_{Q_{d-1}}$.

Assuming this claim, we can take the weak limits of translates of $\Lambda_1$ to obtain $\Lambda_2$ such that 
$$
\Lambda_2-\Lambda_2\subset \tau _2\left(H\right)
$$
where $\tau_2$ is the linear map that interchanges the first and the second coordinates, i.e. $\tau_2(x_1,x_2,\cdots,x_d)$ $ = (x_2,x_1,\cdots, x_d)$. Moreover, $\Lambda_2-\Lambda_2$ is in the closure of $\Lambda_1-\Lambda_1\subset H$, which is a closed set. Hence, $\Lambda_2-\Lambda_2\subset H\cap \tau_2(H)$. Continuing the process to all other coordinates, we obtain a spectrum $\Lambda$ such that 
$$
\Lambda-\Lambda \subset \bigcap_{j=1}^d\tau_j(H)
$$
where $\tau_j$ is the linear map that interchanges the first and $j^{th}$ coordinates,  and $\tau_1$ is the identity map. We notice that $\bigcap_{j=1}^d\tau_j(H)$ is exactly equal to ${\mathcal L}\cup \{0\}$ and this will complete the proof. 

\medskip
To justify the claim, we let $(u_1,v_1), (u_2,v_2)\in \R\times \R^{d-1}$ and they belong to $\Lambda_1$.   To show that $(u_1-u_2,v_1-v_2)\in H$, we assume that $v_1-v_2\not\in {\mathcal Z}(\widehat{\mathbbm 1}_{Q_{d-1}})$. The claim requires us to show that $u_1-u_2\in \Z$. The rest of the proof is basically the same as \cite[Lemma 6.2]{GL20}. We present the argument here for the sake of completeness.

We can  choose two subsequences $k'_j$ and $k_j''$ of $k_j$ such that they go to infinity and $k'_j-k_j''\to\infty$. Moreover, we can find sequences $(u'_j,v_j')$ and $(u_j'',v_j'')$ in $\Gamma$ such that 
$$
(u'_j-k'_j, v'_j)\to (u_1,v_1), \  (u''_j-k''_j, v''_j)\to (u_2,v_2).
$$
As $k'_j-k_j''\to\infty$, $u_j'-u_j''\to\infty$. This means that $(u'_j,v_j')$ and $(u''_{j},v_{j}'')$ must be distinct. Hence, 
$$
 \widehat{\mathbbm{1}_{\Omega}} (u_j'-u_j'',v_j'-v_j'') = 0. 
 $$   
 As $v_j'-v_j''$ converges, $|v_j'-v_j''|$ is bounded by some constant $C$. Using (\ref{eq_asym}), we must have
 $$
 \sin (\pi (u'_j-u_j''))\widehat{\mathbbm 1}_{Q_{d-1}}(v'_j-v_j'') \to 0 
 $$
 Our assumption that $v_1-v_2\notin {\mathcal Z}(\widehat{\mathbbm 1}_{Q_{d-1}})$ and the continuity of $\widehat{\mathbbm 1}_{Q_{d-1}}$ imply that there exists $c>0$ such that $|\widehat{\mathbbm 1}_{Q_{d-1}}(v_j'-v_j'')|\ge c$ for all sufficiently large $j$. This forces $\sin\bigl(\pi(u_j'-u_j'')\bigr)\to0.$ On the other hand, 
 $$
 |\sin (\pi (u'_j-u_j''))| = |\sin (\pi ((u'_j-k'_j)-(u_j''-k_j'')))|\to |\sin \pi (u_1-u_2)|.
 $$
 This implies that $\sin \pi (u_1-u_2) = 0$ and thus $u_1-u_2\in \Z$. This justifies the claim. 
\end{proof}

\begin{proof}[\bf Proof of Theorem \ref{th-spectral-non-parallel}.]
Suppose $\Omega$ is spectral. Proposition \ref{prop-containing-spectrum} shows us that there exists a spectrum $\Lambda$ such that 
	$$
	{\Lambda-\Lambda\subset {\mathcal L}\cup\{0\}}.
	$$   
However, this implies that $Q+(\lambda-\lambda')$ and $Q$  must be non-overlapping for distinct $\la,\la'\in\La$ and thus $Q+\Lambda$ is a packing of $\R^d$. However, this is a contradiction since the upper density of $\Lambda$ is now at most one, but $\Omega$ is a union of $Q$ with other polytopes, a spectrum must have a density equal to $|\Omega|>1$. The proof is complete. 
\end{proof}

\section{Unions of pairwise non-overlapping translates of the unit cube} \label{S4}

Having studied the weak tiling of the set $Q$, we can further establish a necessary condition for a finite union of pairwise non-overlapping translates of $Q$ to be a weak tile of $\R^d$. In particular, for the case where the union consists of two such cubes, we are able to deduce that the Fuglede conjecture holds true for this configuration. Moreover, in the one-dimensional setting, when considering the union of three unit intervals, we can provide a complete characterization for when this union is a weak tile of $\R$.

\subsection{Necessary condition for weak tiling.}
We first establish a necessary condition for a finite union of pairwise non-overlapping translates of \(Q\) to be a weak tile of \(\mathbb{R}^d\).

\begin{proposition}\label{prop-equal-size-necessary}
Let \(A = \{a_0=0, a_1, \dots, a_{n-1}\}\) be a finite subset of \(\mathbb{R}^d\) such that the translates \(a_j + Q\) are pairwise non-overlapping for all \(0 \le j \le n-1\). Define
\[
\Omega = A + Q = \bigcup_{j=0}^{n-1}(a_j + Q).
\]
If \(\Omega\) is a weak tile of \(\mathbb{R}^d\), then \(a_i \in \mathcal{L}\) for every \(i = 1, \dots, n-1\).

\end{proposition}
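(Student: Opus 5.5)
The plan is to turn a weak tiling of $\Omega^c$ by $\Omega$ into a weak tiling measure for the single cube $Q$, and then apply Theorem~\ref{th-Keller}.

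Since $\Omega$ is a weak tile, the equivalent formulation from \cite[Corollary~2.6]{LevMat2022} gives a non-negative, locally finite measure $\nu$ with
\[
\mathbbm 1_\Omega*\nu=1 \qae
\qquad\text{and}\qquad
\nu(\{0\})=1.
\]
Because $\Omega=A+Q$ is a non-overlapping union, $\mathbbm 1_\Omega=\mathbbm 1_Q*\delta_A$ a.e., where $\delta_A=\sum_{j}\delta_{a_j}$. Hence
\[
\mathbbm 1_Q*(\delta_A*\nu)=1 \qae
\]
Set $\rho:=\delta_A*\nu$. This is a non-negative, locally finite measure, and since $\nu(\{0\})=1$ we have $\rho\ge \delta_A$.

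Write $\rho=\delta_0+\mu$ with $\mu:=\rho-\delta_0$. This $\mu$ is non-negative because $\rho(\{0\})\ge \nu(\{0\})=1$. Theorem~\ref{th-Keller} (after the harmless translation between $(0,1)^d$ and $(-\frac12,\frac12)^d$) then gives $\supp(\mu)\subset\mathcal L$. For $i\ge1$ we have $a_i\neq 0$, and $\mu\ge \delta_A-\delta_0\ge \delta_{a_i}$. So $a_i$ is an atom of $\mu$, hence $a_i\in\supp(\mu)\subset\mathcal L$.

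The only points needing care are bookkeeping. First, the $a_j$ must be distinct so that $\delta_A$ carries the single atom $\delta_{a_i}$ and we can subtract $\delta_0$ once. Distinctness follows from the non-overlapping assumption. Second, the convolution identity holds because $\mathbbm 1_\Omega=\sum_j \mathbbm 1_{a_j+Q}$ a.e. There is no real obstacle; the main step is simply recognizing that $\delta_A*\nu$ is a weak tiling measure for $Q$ with an atom of mass at least $1$ at the origin.
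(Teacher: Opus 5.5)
Your proof is correct and is essentially the paper's argument. Your $\mu=\delta_A*\nu-\delta_0$ is exactly the paper's $\varrho=\sum_{j\ge1}\delta_{a_j}+\delta_A*\nu$ (the paper writes the weak tiling measure as $\delta_0+\nu$ from the outset); Theorem~\ref{th-Keller} is then applied to it, and $a_i\in\supp(\varrho)\subset\mathcal L$ follows from the atom bound $\varrho\ge\delta_{a_i}$.
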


\begin{proof}
    Let $\nu$ be a non-negative, locally finite measure, and set $\mu:=\delta_0+\nu$. Suppose that
    \[
    \mathbbm{1}_{\Omega}\ast\mu=1 \qae
    \]
    As the translates of cubes are pairwise non-overlapping,  $ {\mathbbm 1}_{\Omega} = \left(\sum_{j=0}^{n-1} \delta_{a_j}\right)\ast {\mathbbm 1}_{Q}$. It follows immediately that 
    $$
      {\mathbbm 1}_{Q} \ast  \left(\sum_{j=0}^{n-1} \delta_{a_j}\right)\ast (\delta_0+\nu) = 1 \qae
    $$
    Let $\varrho = \sum_{j=1}^{n-1}\delta_{a_j} + \sum_{j=0}^{n-1}\delta_{a_j}\ast\nu$. The above implies that 
    $$
      {\mathbbm 1}_{Q} \ast (\delta_0+\varrho) = 1 \qae
    $$
    By Theorem \ref{th-Keller}, $\supp \left(\varrho\right) \subset {\mathcal L}$.  Finally, notice that $\varrho$ is a sum of {non-negative} measures. Then for all $r>0$ and $1\leq j\leq n-1$, the ball $B(a_j,r)$ must have $\varrho(B(a_j,r))\ge \delta_{a_j} (B(a_{j},r))>0$. Hence, $a_j\in \supp (\varrho)$. This implies that $a_j\in {\mathcal L}$.  
\end{proof}

We have the following corollary.
\begin{corollary}\label{prop-equal-size-necessary-translation}
	  Let \(A\) be a finite subset of \(\mathbb{R}^d\) such that the translates \(a + Q,\ a\in A\) are pairwise non-overlapping. 
    If \(\Omega=A + Q\) is a weak tile of \(\mathbb{R}^d\), then 
    \[
    A - A \subset \mathcal{L} \cup \{0\}.
    \]
\end{corollary}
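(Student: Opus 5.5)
The plan is to reduce to Proposition~\ref{prop-equal-size-necessary} by translating $\Omega$ so that an arbitrary point of $A$ sits at the origin. We need to show $a-a'\in\mathcal L$ for all distinct $a,a'\in A$.

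First, fix $a'\in A$ and set $A':=A-a'$, so that $0\in A'$ and $\Omega-a'=A'+Q$. The translates $b+Q$, $b\in A'$, are still pairwise non-overlapping, since this property is invariant under a common translation.

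Second, we check that weak tiling is translation invariant. Suppose $\mathbbm 1_\Omega*\nu=\mathbbm 1_{\Omega^c}$ a.e. Then
\[
\mathbbm 1_{\Omega-a'}*\nu=\mathbbm 1_{\Omega}*\delta_{-a'}*\nu=\mathbbm 1_{\Omega^c}(\cdot+a')=\mathbbm 1_{(\Omega-a')^c}\quad\text{a.e.}
\]
Hence $\Omega-a'$ is a weak tile with the same measure $\nu$. The equivalent formulation $\mathbbm 1_\Omega*(\delta_0+\nu)=1$ transfers in exactly the same way.

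Third, we apply Proposition~\ref{prop-equal-size-necessary} to $A'$, listed with $a_0=0$. It gives $b\in\mathcal L$ for every nonzero $b\in A'$, that is, $a-a'\in\mathcal L$ for every $a\in A$ with $a\neq a'$. Since $a'$ was arbitrary, and $a-a=0$ accounts for the diagonal, we get $A-A\subset\mathcal L\cup\{0\}$.

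There is no real obstacle. The only point needing care is the translation invariance of weak tiling in the second step, which is immediate from the convolution identity above.
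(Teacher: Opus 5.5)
Your proposal is correct and is essentially the paper's argument. The paper states this corollary without a separate proof, as an immediate consequence of Proposition~\ref{prop-equal-size-necessary}. Your reduction is exactly the one the paper itself uses in the proof of Corollary~\ref{cor-equal-size-necessary-dimension-2}: translate by $-a'$ so an arbitrary point of $A$ sits at the origin, use translation invariance of weak tiling, and apply the proposition.
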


For any point $x=(x_1,\cdots,x_d) \in \mathbb{R}^d$, we define the projections onto the coordinate axes by
	\[
	\pi_i : \mathbb{R}^d \to \mathbb{R}, \quad \pi_i(x) := x_i, 1\leq i\leq d.
	\]
	
	Then by Proposition~\ref{prop-equal-size-necessary}, we derive the following corollary for the two-dimensional setting, which asserts that the points in \(A\) must have integer coordinates in at least one canonical direction.

\begin{corollary}\label{cor-equal-size-necessary-dimension-2}
    Let $A = \{0 = a_0, a_1, \dots, a_{n-1}\} \subset \mathbb{R}^2$ be a finite set such that the translates $a_j + Q$ are pairwise non-overlapping.     If $\Omega=A+Q$ is a weak tile of  $\mathbb{R}^2$, then either
    \[
    \pi_1(a_i) \in \mathbb{Z} \quad \text{for all } 0 \le i \le n-1,
    \quad \text{or} \quad
    \pi_2(a_i) \in \mathbb{Z} \quad \text{for all } 0 \le i \le n-1.
    \]
    Moreover, if there exist distinct indices $i \neq j$ such that $\pi_1(a_i) = \pi_1(a_j)$, then
    \[
    \pi_2(a_i) - \pi_2(a_j) \in \mathbb{Z}^*,
    \]
    and similarly, if there exist distinct indices $i \neq j$ such that $\pi_2(a_i) = \pi_2(a_j)$, then
    \[
    \pi_1(a_i) - \pi_1(a_j) \in \mathbb{Z}^*.
    \]
\end{corollary}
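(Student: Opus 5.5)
The plan is to apply Corollary~\ref{prop-equal-size-necessary-translation} and then argue purely combinatorially with the explicit form of $\mathcal L$ in dimension two. Since $\Omega=A+Q$ is a weak tile and the translates are pairwise non-overlapping, that corollary gives $A-A\subset \mathcal L\cup\{0\}$. In $\mathbb R^2$ we have $\mathcal L=(\mathbb Z^\ast\times\mathbb R)\cup(\mathbb R\times\mathbb Z^\ast)$. So for every pair of distinct indices $i\neq j$, at least one of the following holds:
\[
\pi_1(a_i)-\pi_1(a_j)\in\mathbb Z^\ast
\quad\text{or}\quad
\pi_2(a_i)-\pi_2(a_j)\in\mathbb Z^\ast .
\]
Applying this with $j=0$ and $a_0=0$ shows that every $a_i$ with $i\ge1$ has at least one coordinate in $\mathbb Z^\ast$.

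For the first assertion, I argue by contradiction. Suppose neither alternative holds. Then there is an index $i$ with $\pi_1(a_i)\notin\mathbb Z$, and an index $j$ with $\pi_2(a_j)\notin\mathbb Z$. Both indices are nonzero, since $a_0=0$.

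Applying the observation above to $a_i$ forces $\pi_2(a_i)\in\mathbb Z^\ast$. Likewise, applying it to $a_j$ forces $\pi_1(a_j)\in\mathbb Z^\ast$. In particular $i\neq j$, because $\pi_2(a_i)\in\mathbb Z$ while $\pi_2(a_j)\notin\mathbb Z$.

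Now consider the difference $a_i-a_j$. Its first coordinate is a non-integer minus an integer, so it is not an integer. Its second coordinate is an integer minus a non-integer, so it is not an integer either. Hence $a_i-a_j\notin\mathcal L\cup\{0\}$, which contradicts the corollary. Note that $a_i-a_j\neq 0$, since its coordinates are non-integers.

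For the ``moreover'' part, take $i\neq j$ with $\pi_1(a_i)=\pi_1(a_j)$. Then the first coordinate of $a_i-a_j$ is $0\notin\mathbb Z^\ast$. Since $a_i-a_j\in\mathcal L$, the second coordinate must satisfy $\pi_2(a_i)-\pi_2(a_j)\in\mathbb Z^\ast$. The case $\pi_2(a_i)=\pi_2(a_j)$ is symmetric, with the roles of the two coordinates swapped.

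There is no real obstacle here. The only point requiring care is checking that the two witnesses $i$ and $j$ in the contradiction argument are distinct, and this follows from the integrality comparison of their second coordinates noted above.
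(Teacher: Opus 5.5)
Your proof is correct and takes essentially the same route as the paper. Both arguments invoke $A-A\subset\mathcal L\cup\{0\}$ and reach a contradiction from two witnesses $i,j$ with $\pi_1(a_i)\notin\mathbb Z$ and $\pi_2(a_j)\notin\mathbb Z$; the paper shows that $a_i-a_j\in\mathcal L$ forces $a_i$ or $a_j\notin\mathcal L$, while you run the same implication in contrapositive order, and your treatment of the ``moreover'' part is identical to the paper's.
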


\begin{proof}
 If $n\leq 2$, the conclusion is immediate. Now suppose that $n\ge 3$. We prove the argument by contradiction. Suppose that there exist $i,j$ such that both $\pi_1(a_i)\notin\Z$ and $\pi_2(a_j)\notin\Z$. If $i=j$, then $a_i\notin \mathcal{L}$, a contradiction to Proposition~\ref{prop-equal-size-necessary}. If $i\neq j$, then we translate $\Om$ by $-a_j$ and consider the new set $\Om' = \Om - a_j$. Then $0\in A' = A - a_j$ and by Proposition~\ref{prop-equal-size-necessary}, $a_i-a_j\in\mathcal L$, which implies either $\pi_1(a_i - a_j)\in\Z^*$ or $\pi_2(a_i - a_j)\in\Z^*$. If $\pi_1(a_i-a_j)\in\Z^*$, then $\pi_1(a_j)\notin \Z$ and $a_j\notin \mathcal L$, which contradicts Proposition~\ref{prop-equal-size-necessary}. The case $\pi_2(a_i - a_j)\in\Z^*$ can be treated similarly. This proves the first part of the corollary.
 
 The remaining part follows directly from Corollary \ref{prop-equal-size-necessary-translation}.
\end{proof}


Now we are ready to prove Theorem \ref{th-two-cubes}, which shows that Fuglede's conjecture holds for the union of two non-overlapping cubes of the same size.

\begin{proof}[Proof of Theorem \ref{th-two-cubes}.]
    Proposition \ref{prop-equal-size-necessary} shows that (1) implies (4). It suffices to show that (4) implies both (2) and (3). Then the proof will be complete since tiles and spectral sets are all weak tiles.

\smallskip

   \noindent {(4)$\Longrightarrow$(2).} By relabeling the coordinate axes and translating $\Om$ if necessary, we may assume without loss of generality that $a=(m,a_2,\ldots,a_d)$, where $m$ is a positive integer and $a_j\in\R$ for $j=2,\ldots,d$. In this case, let 
    $$
    {\mathcal J} = T\times \Z^{d-1}, \ \mbox{where} \  T = \{0,1,\cdots,m-1\}+ 2m\Z.
    $$
  Then ${\mathcal J}$ is a tiling set for $\Omega$. Indeed, 
  $$
  \Omega + \{(0,{\bf 0}),(1,{\bf 0}),\cdots, (m-1,{\bf 0})\} = R+ \{0,a\}\quad \text{a.e.}, \ \mbox{where}  \  R =(0,m)\times (0,1)^{d-1}, 
  $$
	${\bf 0}$ is the $(d-1)$-dimensional zero vector, and they clearly form a partition. Hence, we have 
$$
\Omega+  \{0,1,\cdots,m-1\}\times\Z^{d-1} = (0,2m)\times \R^{d-1}\qae
$$
and they continue to form a partition.  Finally,  translating the first coordinate by $2m\Z$ produces a tiling. This shows $\Omega$ is a tile. See Figure~\ref{fig:tiling-by-two-cubes}.
\begin{figure}[!ht]
	\centering
	\begin{tikzpicture}[scale=0.75]
		\draw[black,line width=1pt,fill=gray!50] (0,0) rectangle (1,1);
		\draw[black,line width=1pt,fill=gray!50] (3,2.4) rectangle (4,3.4);

		\newcommand{\DrawOm}[1]{
			\begin{scope}[shift={#1}]
				\draw[brown!50,line width=1pt] (0,0) rectangle (1,1);
				\draw[brown!50,line width=1pt] (3,2.4) rectangle (4,3.4);
			\end{scope}
		}

		\def\a{{1,0.8}}
		\def\b{{0,2}}

		\begin{scope}
			\clip (-5.75,-1.8) rectangle (8.5,4.8);
			\foreach \k in {-10,...,10}
			\foreach \r in {0,1,2}
			\foreach \n in {-10,...,10} {
			\pgfmathsetmacro\x{6*\k+\r}
			\pgfmathsetmacro\y{\n}
			\DrawOm{(\x,\y)}
			\fill[blue] (\x,\y) circle (2pt);
			}
		\end{scope}

		\fill[black] (0,0) circle (2pt);
	\end{tikzpicture}
	\caption{A tiling by translates of \(\Omega=\{(0,0),(3,3.4)\}+(0,1)^2\), in the case where the first coordinate of \(a=(3,3.4)\) is a nonzero integer. The gray region represents the set $\Om$, and the blue points represent the tiling set \(\mathcal J=(\{0,1,2\}+6\mathbb Z)\times\mathbb Z\).}
	\label{fig:tiling-by-two-cubes}
\end{figure}

\smallskip

     \noindent {(4)$\Longrightarrow$(3).}   Suppose that $a\in{\mathcal L}$ and we let $\gamma = \left(\frac1{2m},0,\cdots,0\right)$ and  define 
    $$
    \Lambda = \left\{0, \gamma\right\} + \Z^d.
    $$
Then $\Lambda$ is a spectrum for $\Omega$. To see this, we note that 
$$
\widehat{{\mathbbm 1}_{\Omega}}(\xi)  = (1+e^{-2\pi i a\cdot\xi}) \widehat{{\mathbbm 1}_{Q}}(\xi).
$$
We first verify orthogonality. Let $\lambda_1,\lambda_2\in\Lambda$ be distinct. If
$\lambda_1-\lambda_2\in\mathbb Z^d\setminus\{0\}$, then
$\widehat{\mathbbm 1}_{Q}(\lambda_1-\lambda_2)=0$. Otherwise, we can write $\lambda_1-\lambda_2=\pm\gamma+n$ for some $n=(n_1,n')\in\mathbb Z\times\mathbb Z^{d-1}.$ If $n'\neq0$, then again
$\widehat{\mathbbm 1}_{Q}(\pm\gamma+n)=0$. If $n'=0$, then $a\cdot(\pm\gamma+n)=\pm\frac12+mn_1$, so $1+e^{-2\pi i a\cdot(\pm\gamma+n)}=0$. Hence $\widehat{\mathbbm 1}_{\Omega}(\lambda_1-\lambda_2)=0$, and $\Lambda$ is mutually orthogonal.


Write $a=(m,a'),\ \xi=(\xi_1,\xi'),\ n=(n_1,n')$. Completeness can be checked via the tiling equation:
\[
\begin{aligned}
&\sum_{n\in\mathbb Z^d}
\sum_{\lambda\in\{0,\gamma\}}
\left|
\widehat{\mathbbm 1}_{\Omega}(\xi-\lambda-n)
\right|^2  \\
=&
\sum_{n'\in\mathbb Z^{d-1}}
\left|
\widehat{\mathbbm 1}_{Q_{d-1}}(\xi'-n')
\right|^2
\sum_{\lambda\in\{0,\gamma\}}
\left|
1+
e^{-2\pi i\left(m(\xi_1-\lambda_1)
+a'\cdot(\xi'-n')\right)}
\right|^2  \\
=&
4\sum_{n'\in\mathbb Z^{d-1}}
\left|
\widehat{\mathbbm 1}_{Q_{d-1}}(\xi'-n')
\right|^2
=4
=|\Omega|^2.
\end{aligned}
\]
\end{proof}

\section{Weak tilings of three unit intervals in \texorpdfstring{$\R$}{R}}\label{S5}
In the rest of the paper, we will be focusing on the equivalence of tiles, weak tiles and spectral sets for unions of three unit squares. We will need to study the corresponding problem on $\R$ and $\R\times \Z$.  In this section, we begin with an easy case study about the weak tiles in $\R$ consisting of three pairwise non-overlapping unit intervals. The one-dimensional case of Theorem~\ref{th-three-cubes-parallel} will be proved. Indeed, it follows directly from the next theorem.  By Proposition~\ref{prop-equal-size-necessary}, we may assume our set has the following form:
\begin{equation}\label{eq:three-intervals-R}
\Omega = (0,1) \cup (M,M+1) \cup (N,N+1),\quad M,N\in\Z,
\end{equation}
where $0$, $M$, and $N$ are distinct integers. Let $d:=\gcd(M,N)$.

\begin{theorem}\label{prop-three-intervals-R-tile-Z}
Let $\Omega$ be as in \eqref{eq:three-intervals-R}. The following are equivalent:
\begin{itemize}
    \item[\textup{(1)}] $\Om$ is a weak tile;
    \item[\textup{(2)}] $\Om$ is a tile;
    \item[\textup{(3)}] $\Om$ is a spectral set;
    \item[\textup{(4)}] $\{0,M/d,N/d\}\equiv \{0,1,2\}\pmod{3}$.
\end{itemize}
\end{theorem}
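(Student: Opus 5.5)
We prove the cycle of implications. Since tiles and spectral sets are weak tiles (Theorem~\ref{thm:1.1}), (2)$\Rightarrow$(1) and (3)$\Rightarrow$(1) are free. It remains to prove (1)$\Rightarrow$(4), and then (4)$\Rightarrow$(2) and (4)$\Rightarrow$(3).

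For (1)$\Rightarrow$(4), first discretize. Write $A=\{0,M,N\}$, so $\mathbbm 1_\Om=\delta_A*\mathbbm 1_{(0,1)}$. Take $\nu\ge0$ with $\mathbbm 1_\Om*\nu=1$ a.e.\ and $\nu(\{0\})=1$, and set $\rho=\delta_A*\nu$. Then $\mathbbm 1_{(0,1)}*\rho=1$ and $\rho(\{0\})\ge\nu(\{0\})=1$.
\begin{itemize}
\item By Proposition~\ref{prop:unit-interval-WTmeasure-1periodic}, $\rho$ is $1$-periodic with $\rho([0,1))=1$. An atom of mass $\ge1$ at $0$ then forces $\rho=\delta_\Z$, as in Corollary~\ref{cor:unit-interval-WTmeasure-constant-masses}.
\item Since $0\in A$, we have $\nu\le\rho=\delta_\Z$. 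Hence $\nu=\sum_{k\in\Z}w(k)\delta_k$ with $0\le w\le1$.
\item Consequently $w(0)=1$ and $w*\mathbbm 1_A=1$ on $\Z$, i.e.\ $A$ weakly tiles $\Z$ with weight $w$.
\end{itemize}
Restricting $w$ to the coset $d\Z$ and rescaling reduces to $A'=\{0,M',N'\}$ with $M'=M/d$, $N'=N/d$ and $\gcd(M',N')=1$, still with $w(0)=1$.

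Now pass to Fourier on $\T$. The weight $w$ is bounded, hence a tempered distribution on $\Z$. The identity $\widehat w\cdot P=\delta_0$, where $P(\xi)=1+e^{2\pi iM'\xi}+e^{2\pi iN'\xi}$, gives $\supp\widehat w\subset\{0\}\cup Z(P)$; this is the analogue of Proposition~\ref{prop-supp}(1) on $\Z$, proved the same way. Three unit vectors sum to zero only if they are $1,\omega,\omega^2$ in some order, with $\omega=e^{2\pi i/3}$. So $\xi\in Z(P)$ iff $\{M'\xi,N'\xi\}\equiv\{1/3,2/3\}\pmod 1$.
\begin{itemize}
\item If $Z(P)=\varnothing$, then $\widehat w$ is supported at $0$. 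Since $P$ does not vanish there, no derivative terms survive (as in Proposition~\ref{prop:unit-interval-WTmeasure-1periodic}), so $w\equiv1/3$. This contradicts $w(0)=1$.
\item Otherwise pick $\xi\in Z(P)$. Then $3M'\xi,3N'\xi\in\Z$, and coprimality of $M',N'$ gives $3\xi\in\Z$, so $\xi\in\{1/3,2/3\}$. The condition $\{M'\xi,N'\xi\}\equiv\{1/3,2/3\}$ then says exactly $\{0,M',N'\}\equiv\{0,1,2\}\pmod3$, which is (4).
\end{itemize}
The delicate point is the discrete support lemma and the exclusion of derivative terms at $0$. It can be handled either as in Proposition~\ref{prop:unit-interval-WTmeasure-1periodic}, or by noting that a bounded $w$ cannot produce polynomial growth.

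For (4)$\Rightarrow$(2), $A'$ is a complete residue system mod $3$, so $A'\oplus3\Z=\Z$. Hence $A\oplus(3d\Z+\{0,\dots,d-1\})=\Z$, and $\Om$ tiles $\R$ with translation set $3d\Z+\{0,\dots,d-1\}$.

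For (4)$\Rightarrow$(3), take $\La=\{0,\tfrac1{3d},\tfrac2{3d}\}+\Z$. Orthogonality uses $\widehat{\mathbbm 1_\Om}(\xi)=\widehat{\mathbbm 1_{(0,1)}}(\xi)\,\overline{P_A(\xi)}$, where $P_A(\xi)=1+e^{2\pi iM\xi}+e^{2\pi iN\xi}$.
\begin{itemize}
\item Differences in $\Z\setminus\{0\}$ are zeros of $\widehat{\mathbbm 1_{(0,1)}}$.
\item The other differences are $\tfrac{j}{3d}+n$ with $j\in\{1,2\}$ and $n\in\Z$. There $P_A=1+\omega^{jM'}+\omega^{jN'}=0$ by (4).
\end{itemize}
For completeness, verify $\sum_{\la\in\La}|\widehat{\mathbbm 1_\Om}(\xi-\la)|^2=9$. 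Average $|P_A(\xi-j/(3d))|^2$ over $j\in\{0,1,2\}$: the cross terms $e^{2\pi i(M-N)\xi}\omega^{j(N'-M')}$, and the others, sum to zero over $j$ because $M',N',M'-N'\not\equiv0\pmod3$. So the average is $3$. Also $|P_A|^2$ is $1$-periodic, since $M,N\in\Z$. Therefore the sum equals $9\sum_n|\widehat{\mathbbm 1_{(0,1)}}(\xi-n)|^2=9=|\Om|^2$, which gives completeness.
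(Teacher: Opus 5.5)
Your proof is correct. For (4)$\Rightarrow$(2),(3) it coincides with the paper's: the same tiling set $\{0,\dots,d-1\}+3d\Z$ and the same spectrum $\{0,\tfrac1{3d},\tfrac2{3d}\}+\Z$, which the paper only asserts and you actually verify.

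For (1)$\Rightarrow$(4) you take a genuinely different route.
\begin{itemize}
\item \emph{The paper's route.} It first uses Theorem~\ref{th-Keller} to get $\supp\nu\subset\Z$, then $\delta_{\{0,M,N\}}*\nu=\delta_\Z$ (Lemma~\ref{three-intervals-R-WT-necessary}). A combinatorial propagation on the index lattice $(k,\ell)\mapsto kM+\ell N$ (Lemma~\ref{lem:three-points-weak-tiling-measure}) shows that $\nu(\{kM+\ell N\})$ equals $1$ or $0$ according as $k\equiv\ell \pmod 3$ or not. It finishes with Lemma~\ref{lem:MN-mod-3-generate-gcd}: if (4) fails, $M$ has a representation with $k\equiv\ell$, so $\nu(\{M\})$ would be both $1$ and $0$.
\item \emph{Your route.} You get discreteness for free from $\nu\le\delta_A*\nu=\delta_\Z$, so Keller's theorem is not needed, only Proposition~\ref{prop:unit-interval-WTmeasure-1periodic}. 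You then argue on $\T$. If (4) fails, the mask polynomial $P$ has no zeros at all, so $1/P$ is smooth and $\widehat w=P(0)^{-1}\delta_0$. Hence $w\equiv 1/3$, contradicting $w(0)=1$.
\end{itemize}
This makes your announced ``delicate point'' a non-issue: in the only case that matters, no support lemma or exclusion of derivative terms is needed.

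What each route buys:
\begin{itemize}
\item Your argument is shorter and more conceptual. It shows that any bounded weight $w$ with $w*\mathbbm 1_A=1$ must be the constant $1/|A|$ whenever the mask of $A$ has no zero on $\T$, which is incompatible with a unit atom.
\item The paper's propagation uses only nonnegativity and the local triangle identity. That is why it can be reused verbatim in Section~\ref{S7} (Lemma~\ref{lem:nonempty-supp(mu_j)-gcd-fibers}), where the entries are functions on intervals rather than atoms. It also gives the explicit values of $\nu$ on $M\Z+N\Z$.
\end{itemize}

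Two harmless slips:
\begin{itemize}
\item When $d>1$, the non-integer differences in $\La$ are $\pm\tfrac{j}{3d}+n$, not only $+\tfrac{j}{3d}+n$. The same computation covers them, or use $\widehat{\mathbbm 1_\Om}(-\xi)=\overline{\widehat{\mathbbm 1_\Om}(\xi)}$.
\item In the completeness check there is no single common factor. The inner sums are $\sum_n|\widehat{\mathbbm 1_{(0,1)}}(\xi-\tfrac{j}{3d}-n)|^2$, one for each $j$, and each equals $1$, so the total is still $9$.
\end{itemize}
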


\begin{lemma}\label{three-intervals-R-WT-necessary}
	Let $\Om$ be as in \eqref{eq:three-intervals-R}. Suppose that $\Om$ weakly tiles $\R$ with a {non-negative}, locally finite measure $\nu$ such that $\nu(\{0\})=1$. Then 
	$$\nu(\{j\})+\nu(\{j-M\})+\nu(\{j-N\})=1,\quad \forall j\in\Z.$$
\end{lemma}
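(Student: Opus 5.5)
Write \(\Omega=(0,1)+\{0,M,N\}\). Because the three intervals are pairwise non-overlapping, \(\mathbbm 1_\Omega=\mathbbm 1_{(0,1)}*(\delta_0+\delta_M+\delta_N)\). The weak tiling identity \(\mathbbm 1_\Omega*\nu=1\) a.e. therefore becomes
\[
\mathbbm 1_{(0,1)}*\rho=1\quad\text{a.e.},\qquad \rho:=(\delta_0+\delta_M+\delta_N)*\nu .
\]
Here \(\rho\) is non-negative and locally finite, since it is a finite sum of translates of \(\nu\).

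By Corollary~\ref{cor:unit-interval-WTmeasure-constant-masses} with \(c=1\), the atoms of \(\rho\) are constant along cosets of \(\mathbb Z\). In particular, \(\rho(\{j\})=\rho(\{0\})\) for every \(j\in\mathbb Z\). Evaluating the atom of \(\rho\) at an integer \(j\) gives
\[
\rho(\{j\})=\nu(\{j\})+\nu(\{j-M\})+\nu(\{j-N\}),
\]
because \(M,N\in\mathbb Z\). It therefore suffices to show that \(\rho(\{0\})=1\).

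To pin down the constant, use Proposition~\ref{prop:unit-interval-WTmeasure-1periodic}: \(\rho\) is \(1\)-periodic and satisfies \(\rho([0,1))=1\). Indeed, the proof of that proposition gives \(\mathbbm 1_{(0,1)}*\rho(x)=\rho((x-1,x))=\rho([0,1))\) for a.e. \(x\). This yields
\[
\rho(\{0\})\le \rho([0,1))=1 .
\]
For the reverse inequality, \(\rho(\{0\})\ge \nu(\{0\})=1\), since \(\delta_0*\nu\) is one of the summands of \(\rho\) and all summands are non-negative. Hence \(\rho(\{0\})=1\), and consequently \(\rho(\{j\})=1\) for all \(j\in\mathbb Z\), which is the claim.

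The only delicate point is making sure that the normalization \(c=1\) really corresponds to the full mass \(\rho([0,1))=1\) on a period. The sandwich \(1\le\rho(\{0\})\le\rho([0,1))=1\) handles this cleanly, and it also shows that \(\rho\) restricted to \([0,1)\) is exactly \(\delta_0\). Equivalently, the last part of Corollary~\ref{cor:unit-interval-WTmeasure-constant-masses} (the case \(c=\rho(\{0\})=1\)) gives \(\rho=\delta_{\mathbb Z}\) directly.
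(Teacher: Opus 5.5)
Your argument is correct and is essentially the paper's: you rewrite the identity as $\mathbbm 1_{(0,1)}*\rho=1$ with $\rho=\delta_{\{0,M,N\}}*\nu$, apply Corollary~\ref{cor:unit-interval-WTmeasure-constant-masses} to $\rho$, and pin down $\rho(\{0\})=1$ using $\nu(\{0\})=1$ together with the fact that atoms of $\rho$ have mass at most $1$ (the paper phrases this as $\nu(\{-M,-N\})=0$). The only difference is that you omit the paper's preliminary appeal to Theorem~\ref{th-Keller} showing $\supp\nu\subset\Z$, and that step is indeed not needed for the stated identity.
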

\begin{proof}
	Since $\mathbbm{1}_\Om*\nu=1$ a.e. and $\mathbbm{1}_{\Om}=\mathbbm{1}_{(0,1)}*\delta_{\{0,M,N\}}$, we have 
	$$\mathbbm{1}_{(0,1)}*(\delta_{\{0,M,N\}}*\nu) = 1 \qae$$
	By Theorem~\ref{th-Keller}, we have $\supp(\delta_{\{0,M,N\}}*\nu)\subset \Z$. Since 
	$$\supp(\delta_{\{0,M,N\}}*\nu)=\supp(\nu)+\{0,M,N\},$$
	we conclude that $\supp(\nu)\subset \Z$. Now we write $\nu=\sum_{j\in\Z}\nu(\{j\})\delta_j$.
	
	Since $\mathbbm{1}_{\Omega}*\nu=1$ a.e. and $\nu$ is nonnegative with $\nu(\{0\})=1$, we have $\nu(\{-M,-N\})=0$ and therefore $\delta_{\{0,M,N\}}*\nu$ has a unit mass at the origin. Applying Corollary \ref{cor:unit-interval-WTmeasure-constant-masses} to the measure $\delta_{\{0,M,N\}}*\nu$, we obtain
\[
\delta_{\{0,M,N\}}*\nu=\delta_{\mathbb{Z}}.
\]
Consequently,
\[
\nu(\{j\})+\nu(\{j-M\})+\nu(\{j-N\})=1,\quad \forall\, j\in\mathbb{Z}.
\]
		
%
\end{proof}

\begin{lemma}\label{lem:three-points-weak-tiling-measure}
	Let $\Om$ be as in \eqref{eq:three-intervals-R}. Suppose that $\Om$ weakly tiles $\R$ with a {non-negative}, locally finite measure $\nu$ such that $\nu(\{0\})=1$. Then 
	$$\nu(\{kM+\ell N\})=
		\begin{cases}
			1,& \text{if } k\equiv \ell\pmod{3},\\
			0,& \text{if } k\not\equiv \ell\pmod{3}.
		\end{cases}$$
\end{lemma}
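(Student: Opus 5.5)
The plan is to reduce the lemma to a purely combinatorial statement on $\Z^2$ and then propagate the value $1$ along the lattice $\{(k,\ell): k\equiv \ell \pmod 3\}$ using Lemma~\ref{three-intervals-R-WT-necessary} and nonnegativity. Put $f(j):=\nu(\{j\})\ge 0$ for $j\in\Z$ and $g(k,\ell):=f(kM+\ell N)$ for $(k,\ell)\in\Z^2$. The map $(k,\ell)\mapsto kM+\ell N$ need not be injective, but this causes no problem: every relation we use is an instance of Lemma~\ref{three-intervals-R-WT-necessary} at some integer $j$.

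Applying Lemma~\ref{three-intervals-R-WT-necessary} at $j=kM+\ell N$ gives, for every $(k,\ell)\in\Z^2$,
\[
g(k,\ell)+g(k-1,\ell)+g(k,\ell-1)=1 .
\]
Write $T(q):=\{q,\;q-(1,0),\;q-(0,1)\}$ for the triple attached to $q$. The relation says the values of $g$ on $T(q)$ are nonnegative and sum to $1$. Two consequences will be used repeatedly:
\begin{enumerate}[label=(\roman*)]
\item if $g=1$ at one point of $T(q)$, then $g=0$ at the other two points;
\item if $g=0$ at two points of $T(q)$, then $g=1$ at the third point.
\end{enumerate}
Note also that the residues of $k-\ell$ on $T(k,\ell)$ are $k-\ell,\ k-\ell-1,\ k-\ell+1$. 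These are distinct mod $3$, so each triple meets the class $k\equiv\ell$ exactly once. This is consistent with the claimed answer.

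The core step is a local propagation lemma. Suppose $g(p)=1$.
\begin{itemize}
\item From $T(p)$, $T(p+(1,0))$ and $T(p+(0,1))$ and rule (i), $g$ vanishes at $p\pm(1,0)$, $p\pm(0,1)$, $p+(1,-1)$ and $p+(-1,1)$.
\item $T(p+(1,1))$ now has two zeros, $p+(1,0)$ and $p+(0,1)$. By rule (ii), $g(p+(1,1))=1$.
\item $T(p+(1,-1))=\{p+(1,-1),\,p+(0,-1),\,p+(1,-2)\}$ has two zeros. By rule (ii), $g(p+(1,-2))=1$.
\item By the symmetric argument, $g(p+(-2,1))=1$.
\item Applying rule (i) to $T(p+(1,-2))$ gives $g(p+(0,-2))=0$.
\item $T(p+(0,-1))=\{p+(0,-1),\,p+(-1,-1),\,p+(0,-2)\}$ now has two zeros. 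By rule (ii), $g(p-(1,1))=1$.
\end{itemize}
So $g(p)=1$ implies $g=1$ at $p\pm(1,1)$, $p+(1,-2)$ and $p+(-2,1)$. The vectors $(1,1)$ and $(1,-2)$ generate the lattice $\{k\equiv\ell\pmod 3\}$, since $(3,0)=2(1,1)+(1,-2)$. Starting from $g(0,0)=f(0)=\nu(\{0\})=1$, induction along a path of these steps (in both directions) gives $g(k,\ell)=1$ whenever $k\equiv \ell\pmod 3$.

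For the zeros, take any $(k,\ell)$ with $k-\ell\equiv 1\pmod 3$. Then $(k,\ell)=q-(0,1)$ with $q=(k,\ell+1)$ in the class, so $g(k,\ell)=0$ by rule (i) applied to $T(q)$. If instead $k-\ell\equiv 2\pmod 3$, then $(k,\ell)=q-(1,0)$ with $q=(k+1,\ell)$ in the class, and the same rule gives $g(k,\ell)=0$. This proves the lemma.

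The only delicate point is the backward step $p\mapsto p-(1,1)$. The forward moves are generated by $(1,1)$, $(1,-2)$ and $(-2,1)$, whose coordinate sums are $2,-1,-1$, so their nonnegative combinations never reach $(-1,-1)$ directly. The detour through $p+(1,-2)$, which produces the auxiliary zero at $p+(0,-2)$, is what supplies the backward step. I would double-check that chain of triples carefully.
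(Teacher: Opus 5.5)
Your proof is correct and is essentially the paper's argument: the same triangle relation $g(k,\ell)+g(k-1,\ell)+g(k,\ell-1)=1$ from Lemma~\ref{three-intervals-R-WT-necessary}, the same propagation of a unit value to $p\pm(1,1)$, $p+(1,-2)$, $p+(-2,1)$, and the same use of rule (i) to get the zeros off the lattice. Your chain for the backward step $p\mapsto p-(1,1)$ is valid; it amounts to applying the $(-2,1)$-move at $p+(1,-2)$, where the paper applies the $(1,-2)$-move at $p+(-2,1)$. Since $(-1,2)=(-2,1)+(1,1)$, your moves do reach all of $\{(k,\ell):k\equiv\ell\pmod 3\}$.
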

\begin{figure}[H]
	\centering

	\begin{subfigure}[t]{0.47\textwidth}
		\centering
		\begin{tikzpicture}[scale=0.38]
			\draw[->, thick] (-4.8,0)--(4.8,0) node[right] {$k$};
			\draw[->, thick] (0,-4.8)--(0,4.8) node[above] {$\ell$};

			\foreach \x in {-3,...,3}{
				\foreach \y in {-3,...,3}{
					\pgfmathtruncatemacro{\modx}{Mod(\x,3)}
					\pgfmathtruncatemacro{\mody}{Mod(\y,3)}
					\ifnum\modx=\mody
						\fill[blue] (\x,\y) circle (6pt);
					\else
						\draw[black, line width=0.5pt] (\x,\y) circle (6pt);
					\fi
				}
			}
		\end{tikzpicture}
	\end{subfigure}
	\hfill
	\begin{subfigure}[t]{0.47\textwidth}
		\centering
		\begin{tikzpicture}[scale=0.6]
			\draw[help lines, gray!30] (-2.5,-2.5) grid (2.5,2.5);

			\draw[->, thick] (-3,0)--(3,0) node[right] {$k$};
			\draw[->, thick] (0,-3)--(0,3) node[above] {$\ell$};

			\foreach \x in {-2,-1,...,2}{
				\foreach \y in {-2,-1,...,2}{
					\pgfmathtruncatemacro{\modx}{Mod(\x,3)}
					\pgfmathtruncatemacro{\mody}{Mod(\y,3)}
					\ifnum\modx=\mody
						\fill[gray] (\x,\y) circle (6pt);
					\else
						\draw[gray, line width=0.6pt] (\x,\y) circle (5pt);
					\fi
				}
			}

			\fill[blue] (1,1) circle (6pt);
			\fill[blue] (-2,1) circle (6pt);

			\fill[skyblue] (2,-1) circle (6pt);
			\fill[skyblue] (-1,-1) circle (6pt);

			\fill[red] (0,0) circle (6pt);

			\node[red] at (1,0) {$\times$};
			\node[red] at (1,-1) {$\times$};
			\node[red] at (0,1) {$\times$};
			\node[red] at (-1,1) {$\times$};
			\node[red] at (0,-1) {$\times$};
			\node[red] at (-1,0) {$\times$};

			\node[blue] at (-2,0) {$\times$};
			\node[blue] at (2,0) {$\times$};

			\draw[red] (-1,1)--(0,1)--(0,0)--cycle;
			\draw[red] (-1,0)--(0,0)--(0,-1)--cycle;
			\draw[red] (0,0)--(1,0)--(1,-1)--cycle;
			\draw[blue] (1,0)--(1,1)--(0,1)--cycle;
			\draw[blue] (-2,1)--(-1,1)--(-1,0)--cycle;
			\draw[skyblue] (-1,0)--(-2,0)--(-1,-1)--cycle;
			\draw[skyblue] (2,-1)--(2,0)--(1,0)--cycle;
		\end{tikzpicture}
	\end{subfigure}

	\caption{The periodic pattern of \(\nu(\{kM+\ell N\})\) and the corresponding propagation of the values \(s_{k,\ell}\).}
	\label{fig:nu-and-propagation}
\end{figure}

\begin{proof}
	For $k,\ell\in\mathbb Z$, set
	\[
		s_{k,\ell}:=\nu\bigl(\{kM+\ell N\}\bigr).
	\]
	Since $\nu$ is nonnegative, we have
	$s_{k,\ell}\geq0$ for all $k,\ell\in\mathbb Z$. By
	Lemma~\ref{three-intervals-R-WT-necessary},
	\begin{equation}\label{eq:three-intervals-R-WT-necessary}
		s_{k,\ell}+s_{k-1,\ell}+s_{k,\ell-1}=1,
		\qquad k,\ell\in\mathbb Z.
	\end{equation}
	Thus, for every $(k,\ell)\in\mathbb Z^2$, the values at the three
	vertices $(k,\ell)$, $(k-1,\ell)$, and $(k,\ell-1)$ have sum $1$.
	In particular, whenever one of $s_{k,\ell}$, $s_{k-1,\ell}$ and $s_{k,\ell-1}$ is $1$, the other
	two must be $0$; conversely, whenever two of them are $0$, the
	remaining one must be $1$.

Let
\[
	L:=\bigl\{(k,\ell)\in\mathbb Z^2:k\equiv\ell\pmod 3\bigr\}.
\]
Then $L$ is generated by $(1,1)$ and $(-2,1)$. Indeed, if
$k\equiv\ell\pmod 3$, then
\[
	(k,\ell)
	=
	\frac{k+2\ell}{3}(1,1)
	+
	\frac{\ell-k}{3}(-2,1),
\]
and both coefficients are integers.

We first establish a local propagation property. Suppose that
$s_{i,j}=1$ for some $i,j\in\mathbb Z$. Applying
\eqref{eq:three-intervals-R-WT-necessary} at
$(i,j)$, $(i+1,j)$, and $(i,j+1)$, respectively, gives
\[
\begin{aligned}
	s_{i,j}+s_{i-1,j}+s_{i,j-1}&=1,\\
	s_{i+1,j}+s_{i,j}+s_{i+1,j-1}&=1,\\
	s_{i,j+1}+s_{i-1,j+1}+s_{i,j}&=1.
\end{aligned}
\]
Since $s_{i,j}=1$ and the sequence is nonnegative, it follows that
\begin{equation}\label{eq:three-intervals-zero-neighbors}
	0
	=s_{i-1,j}
	=s_{i,j-1}
	=s_{i+1,j}
	=s_{i+1,j-1}
	=s_{i,j+1}
	=s_{i-1,j+1}.
\end{equation}
Applying \eqref{eq:three-intervals-R-WT-necessary} at
$(i+1,j+1)$, $(i-1,j+1)$, and $(i+1,j-1)$, and using
\eqref{eq:three-intervals-zero-neighbors}, we obtain
\[
	s_{i+1,j+1}
	=s_{i-2,j+1}
	=s_{i+1,j-2}
	=1.
\]
Since $i,j$ are arbitrary, the same conclusion may be applied again
at any point where the value of the sequence is $1$. Applying it at
$(i-2,j+1)$ and $(i+1,j-2)$ gives, respectively,
\[
	s_{i-1,j-1}=1,
	\qquad
	s_{i+2,j-1}=1.
\]
Consequently, we obtain the bidirectional propagation rule
\begin{equation}\label{eq:three-intervals-bidirectional-propagation}
	s_{i,j}=1
	\quad\Longrightarrow\quad
	s_{i+1,j+1}
	=s_{i-1,j-1}
	=s_{i-2,j+1}
	=s_{i+2,j-1}
	=1.
\end{equation}

By assumption, $s_{0,0}=\nu(\{0\})=1$. Starting from $(0,0)$ and repeatedly applying \eqref{eq:three-intervals-bidirectional-propagation}, we may move in both directions along $(1,1)$ and $(-2,1)$. Since these two directions generate $L$, it follows that $s_{k,\ell}=1$ for every $(k,\ell)\in L$.

It remains to determine the values outside $L$. Let $(k,\ell)\notin L$. If $k-\ell\equiv1\pmod 3$, then $(k-1,\ell)\in L$, and hence $s_{k-1,\ell}=1$. Applying
\eqref{eq:three-intervals-R-WT-necessary} at $(k,\ell)$ gives $s_{k,\ell}+1+s_{k,\ell-1}=1$. By nonnegativity, $s_{k,\ell}=0$. Similarly, if $k-\ell\equiv-1\pmod 3$, then $(k,\ell-1)\in L$, so $s_{k,\ell-1}=1$, and
\eqref{eq:three-intervals-R-WT-necessary} again yields
$s_{k,\ell}=0$. The propagation described above is illustrated
in Figure~\ref{fig:nu-and-propagation}.

Therefore,
\[
	s_{k,\ell}
	=
	\begin{cases}
		1,& k\equiv\ell\pmod 3,\\
		0,& k\not\equiv\ell\pmod 3.
	\end{cases}
\]
\end{proof}

\begin{lemma}\label{lem:MN-mod-3-generate-gcd}
Let $M,N\in\mathbb{Z}^*$ be two distinct integers and let $d=\gcd(M,N)$. Assume that
	$$\left\{0,\frac{M}{d},\frac{N}{d}\right\}\not\equiv \{0,1,2\}\pmod{3}.$$
Then the set
	\[
	H := \{\,kM + \ell N : k,\ell\in\mathbb{Z},\ k\equiv \ell \pmod{3}\,\}
	\]
	coincides with the subgroup $d\mathbb{Z}\subset\mathbb{Z}$; that is, $H = d\mathbb{Z}$.
\end{lemma}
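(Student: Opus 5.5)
Write $M=dm$ and $N=dn$ with $\gcd(m,n)=1$. Then $H=d\cdot H'$, where
\[
H'=\{km+\ell n:\ k\equiv \ell \pmod 3\}.
\]
It therefore suffices to show $H'=\mathbb Z$. The set $L=\{(k,\ell):k\equiv\ell\pmod 3\}$ is a subgroup of $\mathbb Z^2$ generated by $(1,1)$ and $(-2,1)$, as noted in the proof of Lemma~\ref{lem:three-points-weak-tiling-measure}. Hence $H'$ is the image of $L$ under the homomorphism $(k,\ell)\mapsto km+\ell n$, so it is a subgroup of $\mathbb Z$ generated by $m+n$ and $n-2m$. The statement therefore reduces to the arithmetic claim
\[
\gcd(m+n,\,n-2m)=1 .
\]

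Let $g=\gcd(m+n,n-2m)$. Subtracting, $g$ divides $(m+n)-(n-2m)=3m$, and $g$ divides $2(m+n)+(n-2m)=3n$. So $g\mid\gcd(3m,3n)=3\gcd(m,n)=3$, and thus $g\in\{1,3\}$. It remains to rule out $g=3$.

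Suppose $3\mid m+n$ and $3\mid n-2m$, so that $n\equiv -m \pmod 3$. If $3\mid m$, then $3\mid n$ as well, contradicting $\gcd(m,n)=1$. Hence $m\not\equiv 0\pmod 3$ and $n\equiv -m\not\equiv 0,\,m \pmod 3$. Then $\{0,m,n\}\equiv\{0,1,2\}\pmod 3$, contradicting the hypothesis. So $g=1$, giving $H'=\mathbb Z$ and $H=d\mathbb Z$.

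The only step needing care is the reduction of $H'$ to a subgroup generated by two explicit elements. It relies on $L$ being exactly the lattice spanned by $(1,1)$ and $(-2,1)$, which was already verified in the paper. After that, the argument is the elementary gcd computation above. The hypothesis that $M,N$ are distinct and nonzero only guarantees that $d$ is well defined; it plays no further role.
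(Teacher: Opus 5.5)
Your proof is correct and follows the paper's argument. Both reduce to the coprime case $H=dH'$, identify $H'$ as the subgroup of $\mathbb Z$ generated by two explicit elements, show that its generator divides $3$, and exclude $3$ using the residue hypothesis. The only difference is the choice of generators for $L$: the paper writes $\ell=k+3t$, in effect using $(1,1)$ and $(0,3)$, and gets $H'=\gcd(A+B,3B)\mathbb Z$, whereas you use $(1,1)$ and $(-2,1)$.
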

\begin{proof}
Write $M=dA$ and $N=dB$, where $\gcd(A,B)=1$.
	Then $H=dH'$, where
	\[
	H':=\{kA+\ell B:\ k,\ell\in\Z,\ k\equiv \ell \pmod 3\},
	\]
	so it suffices to show that \(H'=\Z\).

Writing \(\ell=k+3t\), we get $kA+\ell B=k(A+B)+3tB$,
	and hence
	\[
	H'=(A+B)\Z+3B\Z=\gcd(A+B,3B)\Z.
	\]
	Since \(\gcd(A,B)=1\), we have \(\gcd(A+B,B)=1\), so \(\gcd(A+B,3B)\in\{1,3\}\).

	If \(\gcd(A+B,3B)=3\), then \(3\mid A+B\). As \(\gcd(A,B)=1\), this forces \(3\nmid A\) and \(3\nmid B\), so
	\[
	\{0,A,B\}\equiv \{0,1,2\}\pmod 3,
	\]
	contrary to assumption. Therefore \(\gcd(A+B,3B)=1\), and thus \(H'=\Z\).
\end{proof}

Now we are ready to prove Theorem~\ref{prop-three-intervals-R-tile-Z}.
\begin{proof}[Proof of Theorem~\ref{prop-three-intervals-R-tile-Z}]
	If $\Om$ weakly tiles $\R$ with a {non-negative}, locally finite measure $\nu$ such that $\nu(\{0\})=1$, then by 
    Lemma~\ref{lem:three-points-weak-tiling-measure}, we have
	$$\nu(\{kM+\ell N\})=1,\quad  \text{if } k\equiv \ell\pmod{3}.$$

	Our proof is by contradiction. Suppose that the set $\{0, M/d, N/d\}$ does not form a complete residue system modulo $3$. Then by Lemma~\ref{lem:MN-mod-3-generate-gcd}, we have
	$$d\Z=\{kM+\ell N: k,\ell\in\Z, k\equiv \ell\pmod{3}\}.$$
	In particular, we have $M\in d\Z$, hence $\nu(\{M\})=1$.
	However, by Lemma~\ref{lem:three-points-weak-tiling-measure}, we have $\nu(\{M\})=0$,
	which is a contradiction. Therefore, $\{0, M/d, N/d\}$ forms a complete residue system modulo $3$. This proves that (1) implies (4).
    Conversely, if (4) holds, then $T=\{0,1,\ldots,d-1\}+3d\Z$ is a tiling set for $\Omega$, and $\La=\{0,1/(3d),2/(3d)\}+\Z$ is a spectrum for $\Omega$. Since both (2) and (3) imply (1), the proof is complete.
    
\end{proof}

\section{Reduction from \texorpdfstring{$\R^2$}{R2} to \texorpdfstring{$\R\times\Z$}{RtimesZ}}\label{S6}

In this section, we prove Theorem~\ref{thm:layered-reduction}. Throughout the section, \(S\subset\mathbb Z\) is finite, and we consider the associated layered sets
	\[
	\Omega
	=
	\bigcup_{j\in S}\Omega_j\times(j,j+1)
	\subset\mathbb R^2,
	\qquad
	\widetilde{\Omega}
	=
	\bigcup_{j\in S}\Omega_j\times\{j\}
	\subset\mathbb R\times\mathbb Z,
	\]
where each \(\Omega_j\subset\mathbb R\) is bounded and measurable with finite positive Lebesgue measure. As illustrated in Figure~\ref{fig:layered-correspondence}, each horizontal strip \(\Omega_j\times(j,j+1)\) is collapsed onto the integer level \(\Omega_j\times\{j\}\). The theorem asserts that \(\Omega\) is a tile, a weak tile, or a spectral set if and only if \(\widetilde{\Omega}\) has the corresponding property. 

\begin{figure}[H]
\centering
\begin{tikzpicture}[scale=1.15]

\begin{scope}[xshift=0cm]
    \node[font=\small] at (2.1,4.35) {$\Omega\subset\mathbb R^2$};

    \foreach \y/\lab in {0/0,1/1,2/2,3/3}{
        \draw[gray!35, thin] (-0.2,\y) -- (4.4,\y);
        \node[left, gray, font=\scriptsize] at (-0.25,\y) {$\lab$};
    }
    \draw[gray!35, thin] (-0.2,4) -- (4.4,4);
    \node[left, gray, font=\scriptsize] at (-0.25,4) {$4$};

    \fill[gray!35] (0.4,0) rectangle (1.5,1);
    \fill[gray!35] (2.0,1) rectangle (3.4,2);
    \fill[gray!35] (1.0,3) rectangle (2.7,4);

    \draw[line width=0.8pt] (0.4,0) rectangle (1.5,1);
    \draw[line width=0.8pt] (2.0,1) rectangle (3.4,2);
    \draw[line width=0.8pt] (1.0,3) rectangle (2.7,4);


    \node[font=\scriptsize, gray] at (4.15,-0.25) {$x$};
    \node[font=\scriptsize, gray] at (-0.55,4.4) {$y$};
\end{scope}

\draw[->, line width=0.8pt] (4.9,2) -- (6.0,2);
\node[font=\scriptsize] at (5.45,2.35) {collapse};
\node[font=\scriptsize] at (5.45,1.65) {each strip};

\begin{scope}[xshift=6.5cm]
    \node[font=\small] at (2.1,4.35) {$\widetilde{\Omega}\subset\mathbb R\times\mathbb Z$};

    \foreach \y/\lab in {0/0,1/1,2/2,3/3,4/4}{
        \draw[gray!35, thin] (-0.2,\y) -- (4.4,\y);
        \node[right, gray, font=\scriptsize] at (4.45,\y) {$\mathbb R\times\{\lab\}$};
    }

    \draw[line width=3pt] (0.4,0) -- (1.5,0);
    \draw[line width=3pt] (2.0,1) -- (3.4,1);
    \draw[line width=3pt] (1.0,3) -- (2.7,3);

\end{scope}

\end{tikzpicture}
\caption{The geometric correspondence between \(\Omega\subset\mathbb R^2\) and \(\widetilde{\Omega}\subset\mathbb R\times\mathbb Z\): each horizontal strip \(\Omega_j\times(j,j+1)\) is collapsed onto the integer level \(\Omega_j\times\{j\}\).}
\label{fig:layered-correspondence}
\end{figure}
On $\R\times\Z$, we use the product of Lebesgue measure and counting measure as Haar measure. Its dual group is identified with $\R\times\mathbb T$, where $\mathbb T=\R/\Z$, with characters
\[
(x,j)\longmapsto e^{2\pi i(\lambda x+\eta j)},
\qquad (\lambda,\eta)\in\R\times\mathbb T.
\]
Tiling, weak tiling, and spectrality on $\R\times\Z$ are understood with respect to this Haar measure and these characters. In the definition of a weak tile, the weak tiling measure $\mu$ is required to have mass $1$ at the origin, that is, $\mu(\{(0,0)\})=1$.



\subsection{Reduction of tiling from \(\mathbb R^2\) to \(\mathbb R\times\mathbb Z\)} For translational tilings, we fix a generic horizontal phase and identify the sections of the translates of \(\Omega\) along the lines \(y=m+t\), \(m\in\mathbb Z\), with translates of \(\widetilde{\Omega}\).
\begin{proposition}\label{prop:tiling-layered-reduction}
Let \(S\subset\mathbb Z\) be finite, and let
	\[
	\Omega
	=
	\bigcup_{j\in S}\Omega_j\times(j,j+1)
	\subset\mathbb R^2,
	\qquad
	\widetilde{\Omega}
	=
	\bigcup_{j\in S}\Omega_j\times\{j\}
	\subset\mathbb R\times\mathbb Z,
	\]
	where the sets \(\Omega_j\subset\mathbb R\) are bounded and measurable. Then \(\Omega\) is a tile in \(\mathbb R^2\) if and only if \(\widetilde{\Omega}\) is a tile in \(\mathbb R\times\mathbb Z\).
\end{proposition}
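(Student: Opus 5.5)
The plan is to prove the two directions separately, with the converse (tiling of $\mathbb R\times\mathbb Z$ lifts to $\mathbb R^2$) being essentially formal, and the forward direction (tiling of $\mathbb R^2$ descends) requiring a slicing argument at a generic height.

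\emph{From $\widetilde\Omega$ to $\Omega$.} Suppose $\widetilde T\subset\mathbb R\times\mathbb Z$ is a tiling set for $\widetilde\Omega$, i.e. $\sum_{(s,k)\in\widetilde T}\mathbbm 1_{\widetilde\Omega}(x-s,j-k)=1$ for every $j\in\mathbb Z$ and a.e.\ $x\in\mathbb R$. I would take $T=\widetilde T\subset\mathbb R^2$, viewing $(s,k)$ as a point of $\mathbb R^2$. For $y=m+t$ with $m\in\mathbb Z$ and $t\in(0,1)$, the point $(x,y)$ lies in $\Omega+(s,k)$ if and only if $x-s\in\Omega_{m-k}$ and $m-k\in S$, that is, if and only if $(x-s,m-k)\in\widetilde\Omega$. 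Hence
\[
\sum_{(s,k)\in T}\mathbbm 1_\Omega\bigl((x,y)-(s,k)\bigr)=\sum_{(s,k)\in\widetilde T}\mathbbm 1_{\widetilde\Omega}(x-s,m-k)=1
\]
for a.e.\ $x$, for every $m$, and for every $t\in(0,1)$. By Fubini this gives a tiling of $\mathbb R^2$.

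\emph{From $\Omega$ to $\widetilde\Omega$.} Let $T\subset\mathbb R^2$ be a tiling set for $\Omega$, and write $T=\{(s,r)\}$. The key step is to choose a phase $t\in[0,1)$ such that, for a.e.\ $x$ and every $m\in\mathbb Z$, the tiling identity holds on the line $y=m+t$, and moreover no $r$ has $\{r\}$ near $t$ in a harmful way. Concretely, I would proceed as follows.

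\begin{enumerate}
\item By Fubini and countability of $\mathbb Z$, for a.e.\ $t$ the identity $\sum_{T}\mathbbm 1_\Omega(x-s,m+t-r)=1$ holds for a.e.\ $x$ and all $m$.
\item Since $T$ is countable, the set of $t$ with $t\equiv r\pmod 1$ for some $(s,r)\in T$ is countable. Discard these and fix a good $t$.
\item For each $(s,r)\in T$, the horizontal line $y=m+t$ meets the strip $(j+r,j+r+1)$ for exactly one $j$, namely $j=m-\lceil r-t\rceil$, since $t\not\equiv r\pmod 1$. Set $k(r):=\lceil r-t\rceil\in\mathbb Z$.
\item The section of $\Omega+(s,r)$ at height $m+t$ is then $\Omega_{m-k(r)}+s$ (empty if $m-k(r)\notin S$). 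Define $\widetilde T:=\{(s,k(r)):(s,r)\in T\}$, counted with multiplicity.
\item The identity in step 1 then becomes exactly the tiling equation for $\widetilde\Omega$ by $\widetilde T$ at every level $m$.
\end{enumerate}

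The main obstacle is step 4: the map $(s,r)\mapsto(s,k(r))$ might be non-injective, so that $\widetilde T$ is a priori a multiset. However, if two distinct elements of $T$ collapse to the same point $(s,k)$, then every level where $\widetilde\Omega+(s,k)$ is nonempty would be covered twice on a set of positive measure in $x$, since the sets $\Omega_j$ have positive measure. This contradicts the identity, which has value $1$. So $\widetilde T$ is a genuine set, and it is a tiling set for $\widetilde\Omega$. I would also note that step 1 needs only local finiteness of $T$, which follows from the tiling, in order to justify Fubini, and that boundedness of $S$ plays no further role.
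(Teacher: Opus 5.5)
Your proposal is correct and follows essentially the same route as the paper. You choose, via Fubini, a generic phase $t\in(0,1)$ with $r-t\notin\mathbb Z$ for all $(s,r)\in T$, collapse $(s,r)\mapsto(s,\lceil r-t\rceil)$, and exclude repeated points because they would force a covering count $\ge 2$ on a positive-measure set. For the converse, your direct sectional computation is the same one the paper obtains by specializing its weak-tiling reduction (Proposition~\ref{prop:weak-tiling-layered-reduction}) to the counting measure of the tiling set.
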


\begin{proof}
Suppose that \(T\subset\mathbb R^2\) is a tiling set for \(\Omega\). Thus
	\begin{equation}\label{eq:tiling}
	\sum_{(u,v)\in T}
	\mathbbm 1_{\Omega}(x-u,y-v)
	=
	1
	\qquad\text{for a.e. }(x,y)\in\mathbb R^2.
	\end{equation}
	Let $N$ be the set for which (\ref{eq:tiling}) fails and $N_y = \{x\in\R: (x,y)\in N\}$. By Fubini's theorem, \(N_y\) has one-dimensional Lebesgue measure zero for almost every \(y\). Hence there exists a full-measure set \(G \subset \mathbb{R}\) such that, for every \(y \in G\), \eqref{eq:tiling} holds for almost every \(x \in \mathbb{R}\). Consider 
    $$
    {\mathcal N}: = \{t\in(0,1): t+m\not\in G ~ {\textup{for some }} m\in\Z\}\cup \{t\in(0,1): v-t{\in}\Z ~ \textup{for some } (u,v)\in T\}.
    $$
    As $\Z$ and $T$ are countable, ${\mathcal N}$ is of measure zero. We may choose \(t\in(0,1)\) in the complement of ${\mathcal N}$ such that, for every
	\(m\in\mathbb Z\),
	\[
	\sum_{(u,v)\in T}
	\mathbbm 1_{\Omega}(x-u,m+t-v)
	=
	1
	\qquad\text{for a.e. }x\in\mathbb R,
	\]
 and $v-t\notin\mathbb Z\ \text{for every }(u,v)\in T$.

For every \(m\in\mathbb Z\) and \(s\in(0,1)\), we have
\begin{equation}\label{eq:layered-section-identity}
	\mathbbm 1_{\Omega}(x,m+s)
	=
	\mathbbm 1_{\widetilde{\Omega}}(x,m).
\end{equation}
Indeed, since \(m+s\in(j,j+1)\) if and only if \(m=j\), we have
\[
\begin{aligned}
	\mathbbm 1_{\Omega}(x,m+s)
	=
	\sum_{j\in S}
	\mathbbm 1_{\Omega_j}(x)
	\mathbbm 1_{(j,j+1)}(m+s)
	=
	\sum_{j\in S}
	\mathbbm 1_{\Omega_j}(x)
	\mathbbm 1_{\{j\}}(m)
	=
	\mathbbm 1_{\widetilde{\Omega}}(x,m).
\end{aligned}
\]

For every \((u,v)\in T\), since \(v-t\notin\mathbb Z\), we have
\[
	0<k_t(v)+t-v<1,
	\qquad
	\text{where } k_t(v):=\lceil v-t\rceil\in\mathbb Z.
\]
Thus $m+t-v= m-k_t(v)+\bigl(k_t(v)+t-v\bigr)$.
Applying \eqref{eq:layered-section-identity} with
\(m-k_t(v)\) in place of \(m\) and \(k_t(v)+t-v\) in place of \(s\),
we obtain
\begin{equation*}\label{eq:layered-section-shift-identity}
	\mathbbm 1_{\Omega}(x,m+t-v)
	=
	\mathbbm 1_{\widetilde{\Omega}}
	\bigl(x,m-k_t(v)\bigr).
\end{equation*}

Define an integer-valued measure on \(\mathbb R\times\mathbb Z\) by \[ \eta_t:= \sum_{(u,v)\in T}\delta_{(u,k_t(v))}.\] For every \(m\in\mathbb Z\), we now obtain 
\[ \begin{aligned} (\mathbbm 1_{\widetilde{\Omega}}*\eta_t)(x,m) &= \sum_{(u,v)\in T}\mathbbm{1}_{\widetilde{\Om}}(x-u,m-k_t(v)) \\ &=  \sum_{(u,v)\in T} \mathbbm 1_{\Omega}(x-u,m+t-v)\\ &=1\quad \text{a.e. }x\in\R,\end{aligned} \]
where the second equality follows from \eqref{eq:layered-section-identity}.
Therefore $\mathbbm 1_{\widetilde{\Omega}}*\eta_t=1 \ \text{a.e. on }\mathbb R\times\mathbb Z$.

It remains only to show that \(\eta_t\) has no multiple atoms. Suppose that \(\eta_t(\{(u,k)\})\geq2\) for some \((u,k)\). Then we have 
	\[
	1=\mathbbm 1_{\widetilde{\Omega}}*\eta_t\ge \mathbbm 1_{\widetilde{\Omega}}*(2\delta_{(u,k)})= 2
	\qquad\text{a.e. on }\widetilde{\Omega}+(u,k),
	\]
	a contradiction. Hence $\eta_t=\delta_{T_t}$ for some set $T_t\subset\mathbb R\times\mathbb Z$. Since $\delta_T$ is locally finite and the map $(u,v)\mapsto (u,\lceil v-t\rceil)$ changes only the second coordinate by less than one unit, $\eta_t$ is also locally finite. In particular, $T_t$ is countable. Thus \(\widetilde{\Omega}\) is a tile of \(\mathbb R\times\mathbb Z\) with the tiling set $T_t$.

We omit the proof of the converse as it follows by taking the weak tiling measure $\nu$ as $\sum_{t\in T} \delta_t$ in Proposition \ref{prop:weak-tiling-layered-reduction} in the next subsection.
\end{proof}

\subsection{Reduction of weak tilings from \texorpdfstring{$\R^2$}{R2} to \texorpdfstring{$\R\times\Z$}{RtimesZ}} For weak tilings, we average these sectional reductions over all phases; equivalently, a mass lying between two adjacent integer levels is distributed between them with the corresponding linear weights.
\begin{proposition}\label{prop:weak-tiling-layered-reduction}
Let \(S\subset\mathbb Z\) be finite, and let
	\[
	\Omega
	=
	\bigcup_{j\in S}\Omega_j\times(j,j+1)
	\subset\mathbb R^2,
	\qquad
	\widetilde{\Omega}
	=
	\bigcup_{j\in S}\Omega_j\times\{j\}
	\subset\mathbb R\times\mathbb Z,
	\]
	where each \(\Omega_j\subset\mathbb R\) is bounded and measurable.
	Then \(\Omega\) is a weak tile in \(\mathbb R^2\) if and only if \(\widetilde{\Omega}\) is a weak tile in \(\mathbb R\times\mathbb Z\).
\end{proposition}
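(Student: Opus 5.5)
We prove both directions by transporting weak tiling measures between $\mathbb R^2$ and $\mathbb R\times\mathbb Z$. The guiding principle is the one announced before the statement: a point mass at height $v$ lying strictly between two integer levels is split between those levels with linear weights. We use the formulation in which a weak tile $\Omega$ admits a non-negative locally finite $\nu$ with $\mathbbm 1_\Omega*\nu=1$ a.e.\ and $\nu(\{0\})=1$. On $\mathbb R\times\mathbb Z$ the analogous formulation is taken as the definition.

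\emph{From $\mathbb R^2$ to $\mathbb R\times\mathbb Z$.} Let $\nu$ be a weak tiling measure for $\Omega$ with $\nu(\{0\})=1$. For $t\in(0,1)$, mimic the proof of Proposition~\ref{prop:tiling-layered-reduction} by pushing $\nu$ forward under $(u,v)\mapsto(u,\lceil v-t\rceil)$. Call the result $\eta_t$; this is legitimate off the null set of $t$ with $\nu(\{v-t\in\mathbb Z\})>0$. Then \eqref{eq:layered-section-identity} gives $\mathbbm 1_{\widetilde\Omega}*\eta_t(x,m)=\mathbbm 1_\Omega*\nu(x,m+t)$. Averaging over $t\in(0,1)$ and using Fubini, the measure $\widetilde\nu:=\int_0^1\eta_t\,dt$ satisfies $\mathbbm 1_{\widetilde\Omega}*\widetilde\nu=1$ a.e.\ on $\mathbb R\times\mathbb Z$. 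Explicitly, $\widetilde\nu$ is the pushforward of $\nu$ in which a point $(u,v)$ with $v\in[k,k+1)$ sends mass $(k+1-v)$ to $(u,k)$ and $(v-k)$ to $(u,k+1)$. This form makes local finiteness and non-negativity of $\widetilde\nu$ evident. Finally, the atom $\delta_0$ of $\nu$ goes entirely to $(0,0)$, so $\widetilde\nu(\{(0,0)\})\ge1$.

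To obtain equality we argue as in the proof of Lemma~\ref{three-intervals-R-WT-necessary}. Suppose $\widetilde\nu(\{(0,0)\})=1+c$ with $c>0$. Then $\mathbbm 1_{\widetilde\Omega}*\widetilde\nu\ge1+c$ on $\widetilde\Omega$, which has positive Haar measure, contradicting $\mathbbm 1_{\widetilde\Omega}*\widetilde\nu=1$ a.e. The Fubini/measurability bookkeeping, namely that $t\mapsto\eta_t$ is measurable and that the a.e.\ identities survive, is routine given translation-boundedness of $\nu$.

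\emph{From $\mathbb R\times\mathbb Z$ to $\mathbb R^2$.} Given $\widetilde\nu$ on $\mathbb R\times\mathbb Z$ with $\mathbbm 1_{\widetilde\Omega}*\widetilde\nu=1$ a.e.\ and unit mass at the origin, define $\nu:=\sum_{k}\widetilde\nu_k\times\delta_k$ on $\mathbb R^2$, where $\widetilde\nu_k$ is the slice of $\widetilde\nu$ at level $k$. This places each level at its integer height. For $y=m+s$ with $s\in(0,1)$, the identity \eqref{eq:layered-section-identity} gives $\mathbbm 1_\Omega*\nu(x,m+s)=\mathbbm 1_{\widetilde\Omega}*\widetilde\nu(x,m)=1$ for a.e.\ $x$. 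Fubini then yields $\mathbbm 1_\Omega*\nu=1$ a.e.\ on $\mathbb R^2$, and $\nu(\{0\})=\widetilde\nu(\{(0,0)\})=1$. This direction also yields the omitted converse of Proposition~\ref{prop:tiling-layered-reduction}: when $\widetilde\nu=\delta_{\widetilde T}$, the measure $\nu$ is $\delta_T$ for a countable $T$.

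The main obstacle is the first direction. It lies in setting up the averaged measure rigorously, checking that $\int_0^1\eta_t\,dt$ is a well-defined locally finite measure whose convolution with $\mathbbm 1_{\widetilde\Omega}$ equals the average of the slice identities. It also requires pinning the origin mass to exactly $1$. We would handle this via the explicit linear-weight formula and Tonelli applied to non-negative integrands.
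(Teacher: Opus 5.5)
Your proposal is correct and follows essentially the same route as the paper. The paper constructs exactly your linear-weight measure $\nu=\sum_k\nu_k\times\delta_k$, with mass at height $k+s$ split as $1-s$ to level $k$ and $s$ to level $k+1$, verifies $(\mathbbm 1_{\widetilde\Omega}*\nu)(x,m)=\int_0^1(\mathbbm 1_\Omega*\mu)(x,m+t)\,dt$ by Tonelli, and pins the origin mass to $1$ by the same positivity argument on $\widetilde\Omega$; the easy direction is the same embedding of $\mathbb R\times\mathbb Z$ into $\mathbb R^2$. Your description of this measure as the phase average $\int_0^1\eta_t\,dt$ of the sectional pushforwards is the heuristic the paper itself states before the proof.
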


\begin{proof}
Suppose that \(\widetilde{\Omega}\) is a weak tile in \(\mathbb R\times\mathbb Z\). Let \(\nu\) be a nonnegative locally finite measure on \(\mathbb R\times\mathbb Z\) such that $\nu(\{(0,0)\})=1$ and $$\mathbbm{1}_{\widetilde{\Omega}}*\nu=1 \quad \text{a.e. on }\mathbb R\times\mathbb Z.$$
	We regard \(\nu\) as a measure on \(\mathbb R^2\), supported on \(\mathbb R\times\mathbb Z\). For \(m\in\mathbb Z\) and \(t\in(0,1)\), we have   \[ \begin{aligned} (\mathbbm{1}_{\Omega}*\nu)(x,m+t) &= \int_{\mathbb R\times\mathbb Z} \mathbbm{1}_{\Omega}(x-u,m+t-k)\,d\nu(u,k)\\  &= \int_{\mathbb R\times\mathbb Z} \mathbbm{1}_{\widetilde{\Om}}(x-u,m-k)\,d\nu(u,k)\\ &= (\mathbbm{1}_{\widetilde{\Om}}*\nu)(x,m)\\ &=1 \qae x\in\R,\end{aligned} \]
    where the second equality follows from \eqref{eq:layered-section-identity}. Therefore $\mathbbm{1}_{\Omega}*\nu=1 \ \text{a.e. on }\mathbb R^2$, and hence \(\Omega\) is a weak tile in \(\mathbb R^2\). It remains to prove the converse, which is the harder part of the proof.

Assume now that \(\Omega\) is a weak tile in \(\mathbb R^2\). Then there exists a nonnegative locally finite measure \(\mu\) on
	\(\mathbb R^2\) such that $\mu(\{(0,0)\})=1$ and 
	\[
	\mathbbm{1}_{\Omega}*\mu=1
	\qquad\text{a.e. on }\mathbb R^2.
	\]
For every \(k\in\mathbb Z\), define a nonnegative measure \(\nu_k\)  on \(\mathbb R\) by
	\[
	\begin{aligned}
	\nu_k(E)
	:={}
	\int_{E\times[k,k+1)}
	(k+1-v)\,d\mu(u,v)
	+
	\int_{E\times[k-1,k)}
	(v-k+1)\,d\mu(u,v)
	\end{aligned}
	\]
	for every Borel set \(E\subset\mathbb R\). Define a measure on \(\mathbb R\times\mathbb Z\) by
	\[
	\nu:=\sum_{k\in\mathbb Z}\nu_k\times\delta_k.
	\]
\begin{figure}[H]
	\centering
	\begin{tikzpicture}[
		scale=0.8,
		>=Stealth,
		every node/.style={font=\normalsize},
		axis/.style={->, line width=0.9pt},
		guide/.style={gray!60, dashed, line width=0.7pt},
		bluearr/.style={blue!80!black, ->, line width=1.1pt},
		orangearr/.style={orange!90!red, ->, line width=1.1pt}
	]

	\node[font=\large] at (2.4,6.0) {$\mathbb{R}^2$};

	\draw[axis] (-0.2,0) -- (5.2,0) node[below right] {$x$};
	\draw[axis] (0,-0.2) -- (0,5.5) node[above left] {$v$};

	\draw[guide] (0,0.5) -- (4.9,0.5);
	\draw[guide] (0,2.5) -- (4.9,2.5);
	\draw[guide] (0,4.5) -- (4.9,4.5);

	\node[left] at (0,4.5) {$v=k+1$};
	\node[left] at (0,2.5) {$v=k$};
	\node[left] at (0,0.5) {$v=k-1$};

	\draw[guide] (2.7,0) -- (2.7,4.5);

	\fill (2.7,3.0) circle (2.8pt);
	\node[below] at (2.7,0) {$u$};

	\node[left] at (2.35,3.15) {$[k,k+1)$};
	\node[left] at (2.35,1.35) {$[k-1,k)$};

	\node[above right] at (1.4,3.5) {mass at $(u,k+s)$};


	\node[font=\large] at (10.4,6.0) {$\mathbb{R}\times\mathbb{Z}$};

	\draw[axis] (7.8,0) -- (13.0,0);

	\draw[guide] (10.2,0) -- (10.2,4.2);

	\draw[axis] (10.2,4.5) -- (12.0,4.5);
	\draw[axis] (10.2,2.5) -- (12.0,2.5);

	\fill[blue!80!black] (10.2,4.5) circle (2.8pt);
	\fill[orange!90!red] (10.2,2.5) circle (2.8pt);

	\node[above right] at (10.2,4.5) {$(u,k+1)$};
	\node[above right] at (10.2,2.5) {$(u,k)$};

	\node[right] at (12.15,4.5) {$\mathbb{R}\times\{k+1\}$};
	\node[right] at (12.15,2.5) {$\mathbb{R}\times\{k\}$};

	\node[below] at (10.2,0) {$u$};

	\draw[bluearr]
		(2.7,3.0)
		to[out=8,in=180]
		node[pos=0.62, above, text=blue!80!black] {weight $s$}
		(10.1,4.5);

	\draw[orangearr]
		(2.7,3.0)
		to[out=-18,in=180]
		node[pos=0.58, below, text=orange!90!red] {weight $1-s$}
		(10.1,2.5);

	\node at (7,-1.5)
	{\(
	\delta_{(u,k+s)}
	\longmapsto
	(1-s)\,\delta_{(u,k)}
	+
	s\,\delta_{(u,k+1)}
	\)};

	\end{tikzpicture}
	\caption{Compression of a point mass from $\mathbb{R}^2$ to $\mathbb{R}\times\mathbb{Z}$. 
	A mass located at height $k+s$ is split between the two adjacent integer levels $k$ and $k+1$, with weights $1-s$ and $s$, respectively.}
	\label{fig:compression-point-mass}
\end{figure}
    
\noindent Geometrically, if \(v=k+s\), where \(k\in\mathbb Z\) and \(s\in[0,1)\), then the mass of \(\mu\) at height \(v\) is divided between the two adjacent integer levels \(k\) and \(k+1\), with weights \(1-s\) and \(s\), respectively. See Figure~\ref{fig:compression-point-mass}.

Since the first coordinate is unchanged and the second coordinate is moved by at most one unit, the measure \(\nu\) is locally finite.

Fix \(m\in\mathbb Z\). Since $\nu=\sum_{k\in\mathbb Z}\nu_k\times\delta_k$, we have
	\[
	\begin{aligned}
	(\mathbbm{1}_{\widetilde{\Omega}}*\nu)(x,m)
	&=
	\int_{\mathbb R\times\mathbb Z}
	\mathbbm{1}_{\widetilde{\Omega}}(x-u,m-k)\,d\nu(u,k)\\
	&=
	\sum_{k\in\mathbb Z}
	\int_{\mathbb R}
	\mathbbm{1}_{\widetilde{\Omega}}(x-u,m-k)\,d\nu_k(u).
	\end{aligned}
	\]

By the definition of \(\nu_k\) and Tonelli's theorem, we may move the sums inside the integrals:
	\[
	\begin{aligned}
	(\mathbbm{1}_{\widetilde{\Omega}}*\nu)(x,m)
	={}&
	\sum_{k\in\mathbb Z}
	\int_{\mathbb R\times[k,k+1)}
	(k+1-v)\,
	\mathbbm{1}_{\widetilde{\Omega}}(x-u,m-k)
	\,d\mu(u,v)\\
	&+
	\sum_{k\in\mathbb Z}
	\int_{\mathbb R\times[k-1,k)}
	(v-k+1)\,
	\mathbbm{1}_{\widetilde{\Omega}}(x-u,m-k)
	\,d\mu(u,v)\\
	={}&
	\int_{\mathbb R^2}
	\sum_{k\in\mathbb Z}
	\mathbbm{1}_{[k,k+1)}(v)
	(k+1-v)
	\mathbbm{1}_{\widetilde{\Omega}}(x-u,m-k)
	\,d\mu(u,v)\\
	&+
	\int_{\mathbb R^2}
	\sum_{k\in\mathbb Z}
	\mathbbm{1}_{[k-1,k)}(v)
	(v-k+1)
	\mathbbm{1}_{\widetilde{\Omega}}(x-u,m-k)
	\,d\mu(u,v).
	\end{aligned}
	\]
For each \(v\in\mathbb R\), the first sum contains only the term \(k=\lfloor v\rfloor\), while the second contains only the term \(k=\lfloor v\rfloor+1\). Moreover,
	\[
	\lfloor v\rfloor+1-v=1-\{v\},
	\qquad
	v-(\lfloor v\rfloor+1)+1=\{v\}.
	\]
	Therefore,
	\[
	\begin{aligned}
	(\mathbbm{1}_{\widetilde{\Omega}}*\nu)(x,m)
	=
	\int_{\mathbb R^2}
	\Bigl[
	&(1-\{v\})\,
	\mathbbm{1}_{\widetilde{\Omega}}
	\bigl(x-u,m-\lfloor v\rfloor\bigr)\\
	&+
	\{v\}\,
	\mathbbm{1}_{\widetilde{\Omega}}
	\bigl(x-u,m-\lfloor v\rfloor-1\bigr)
	\Bigr]
	\,d\mu(u,v).
	\end{aligned}
	\]

On the other hand,
	\[
	\begin{aligned}
	\int_0^1
	\mathbbm{1}_{\Omega}(x-u,m+t-v)\,dt
	={}&
	(1-\{v\})\mathbbm{1}_{\widetilde{\Omega}}
	(x-u,m-\lfloor v\rfloor)\\
	&+
	\{v\}\mathbbm{1}_{\widetilde{\Omega}}
	(x-u,m-\lfloor v\rfloor-1).
	\end{aligned}
	\]
    Indeed,
    \[
    	m+t-v
    	\in
    	\begin{cases}
    		\bigl(m-\lfloor v\rfloor,\,
    		m-\lfloor v\rfloor+1\bigr),
    		& t>\{v\},\\[2mm]
    		\bigl(m-\lfloor v\rfloor-1,\,
    		m-\lfloor v\rfloor\bigr),
    		& t<\{v\}.
    	\end{cases}
    \]
    The single value \(t=\{v\}\) is irrelevant to the integral.

Therefore, by Tonelli's theorem,
\[
\begin{aligned}
(\mathbbm{1}_{\widetilde{\Omega}}*\nu)(x,m)
&=
\int_{\mathbb R^2}
\int_0^1
\mathbbm{1}_{\Omega}(x-u,m+t-v)
\,dt\,d\mu(u,v)\\
&=
\int_0^1
(\mathbbm{1}_{\Omega}*\mu)(x,m+t)\,dt.
\end{aligned}
\]
Since $\mathbbm{1}_{\Omega}*\mu=1\ \text{a.e. on }\mathbb R^2$, Fubini's theorem implies that $(\mathbbm{1}_{\widetilde{\Omega}}*\nu)(x,m)=1$ for almost every \(x\in\mathbb R\). Since \(m\in\mathbb Z\) was arbitrary,
	\[
	\mathbbm{1}_{\widetilde{\Omega}}*\nu=1
	\qquad\text{a.e. on }\mathbb R\times\mathbb Z.
	\]

It remains to verify that \(\nu\) has mass \(1\) at the origin. The mass of \(\mu\) at \((0,0)\) is sent entirely to \((0,0)\), and hence $\nu(\{(0,0)\})\geq\mu(\{(0,0)\})=1$. On the other hand, 
	$$1=\mathbbm{1}_{\widetilde{\Omega}}*\nu\ge \mathbbm{1}_{\widetilde{\Omega}}*\nu|_{\{(0,0)\}}=\nu(\{(0,0)\})\mathbbm{1}_{\widetilde{\Omega}}=\nu(\{(0,0)\})\quad \text{a.e. on } \widetilde{\Omega}.$$
	Then $\nu(\{(0,0)\})=1$. Therefore \(\nu\) is a weak tiling measure for \(\widetilde{\Omega}\).
\end{proof}

\subsection{Spectrality of layered sets in \(\mathbb R\times\mathbb Z\) and \(\mathbb R^2\)}The reduction of spectrality is obtained by means of a product construction together with an appropriate linear change of variables.

\begin{proposition}\label{prop:spectral-layered-reduction}
Let \(S\subset\mathbb Z\) be finite and let
	\[
	\Omega=\bigcup_{j\in S}\Omega_j\times(j,j+1)\subset\mathbb R^2,
	\qquad
	\widetilde{\Omega}=\bigcup_{j\in S}\Omega_j\times\{j\}\subset\mathbb R\times\mathbb Z,
	\]
	where each \(\Omega_j\subset\mathbb R\) is bounded and measurable. Then \(\Omega\) is a spectral set in \(\mathbb R^2\) if and only if \(\widetilde{\Omega}\) is a spectral set in \(\mathbb R\times\mathbb Z\).
\end{proposition}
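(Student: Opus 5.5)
The approach is to compare the Fourier transforms of the two sets directly. Write \(\mathbbm 1_{\Omega}=\mathbbm 1_{\widetilde\Omega}\) in the sense of \eqref{eq:layered-section-identity}. For \((\xi,\zeta)\in\R^2\) we have
\[
\widehat{\mathbbm 1_{\Omega}}(\xi,\zeta)=\sum_{j\in S}\widehat{\mathbbm 1_{\Omega_j}}(\xi)\,e^{-2\pi i j\zeta}\,\widehat{\mathbbm 1_{(0,1)}}(\zeta)=\widehat{\mathbbm 1_{\widetilde\Omega}}(\xi,\zeta \bmod 1)\,\widehat{\mathbbm 1_{(0,1)}}(\zeta),
\]
where \(\widehat{\mathbbm 1_{\widetilde\Omega}}(\xi,\eta)=\sum_{j}\widehat{\mathbbm 1_{\Omega_j}}(\xi)e^{-2\pi i j\eta}\) is the Fourier transform on \(\R\times\Z\) with dual \(\R\times\T\). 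Equivalently, \(L^2(\Omega)\cong L^2(\widetilde\Omega)\otimes L^2(0,1)\) via \(f(x,j+s)\leftrightarrow F(x,j,s)\). Under this identification the character \(e^{2\pi i(\xi x+\zeta y)}\) on \(\Omega\) becomes \(e^{2\pi i(\xi x+\zeta j)}\cdot e^{2\pi i \zeta s}\), which is a character of \(\R\times\Z\) at \((\xi,\zeta\bmod 1)\) tensored with an exponential on \((0,1)\).

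\textbf{(\(\Leftarrow\)).} Let \(\widetilde\Lambda\subset\R\times\T\) be a spectrum for \(\widetilde\Omega\). Lift each \((\lambda,\eta)\) to a representative \(\eta\in[0,1)\), and set
\[
\Lambda=\{(\lambda,\eta+n):(\lambda,\eta)\in\widetilde\Lambda,\ n\in\Z\}.
\]
To check orthogonality, take two distinct points of \(\Lambda\). If their \(\widetilde\Lambda\)-parts differ, then \(\widehat{\mathbbm 1_{\widetilde\Omega}}\) vanishes at the difference, which is well defined modulo \(1\) in the second variable. If instead they have the same \(\widetilde\Lambda\)-part and different \(n\), then \(\widehat{\mathbbm 1_{(0,1)}}\) vanishes at a nonzero integer. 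For completeness I would use the tensor-product identification. For fixed \((\lambda,\eta)\), the family \(e^{2\pi i(\eta+n)s}\), \(n\in\Z\), is an orthogonal basis of \(L^2(0,1)\). The products of these with a basis of \(L^2(\widetilde\Omega)\) are therefore complete, since \(e^{2\pi i(\eta+n)s}\) can be expanded in \(e^{2\pi i m s}\) for each fixed \(\eta\). Concretely, if \(f\perp\Lambda\), then for each \((\lambda,\eta)\in\widetilde\Lambda\) the function \(s\mapsto\langle f(\cdot,\cdot+s),e_{\lambda,\eta}\rangle_{\widetilde\Omega}e^{-2\pi i\eta s}\) has all Fourier coefficients zero. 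Hence \(f(\cdot,\cdot+s)\perp e_{\lambda,\eta}\) for a.e. \(s\), and so \(f=0\).

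\textbf{(\(\Rightarrow\)).} This is the main obstacle, since a spectrum \(\Lambda\subset\R^2\) need not have the product structure above. The plan is to use the equivalent tiling characterization of spectra,
\[
\sum_{\lambda\in\Lambda}|\widehat{\mathbbm 1_{\Omega}}(w-\lambda)|^2=|\Omega|^2 \qquad\text{for all } w,
\]
together with the factorization \(|\widehat{\mathbbm 1_\Omega}|^2=|\widehat{\mathbbm 1_{\widetilde\Omega}}|^2(\xi,\zeta\bmod 1)\cdot|\widehat{\mathbbm 1_{(0,1)}}(\zeta)|^2\). The key step is that \(\Lambda+(0,\Z)\) still has pairwise differences in the zero set of \(\widehat{\mathbbm 1_{\widetilde\Omega}}\) after reducing mod \(1\), except where the second coordinates differ by a nonzero integer. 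I would try the following reduction first. Orthogonality gives that for distinct \(\lambda,\lambda'\) either \(\widehat{\mathbbm 1_{\widetilde\Omega}}(\lambda-\lambda')=0\) or \(\lambda_2-\lambda'_2\in\Z^*\). Hence the projection \(\widetilde\Lambda:=\{(\lambda_1,\lambda_2\bmod 1)\}\) is a set, possibly with collapses only along integer shifts, that is orthogonal for \(\widetilde\Omega\). For completeness, I would sum the tiling identity over \(\zeta\mapsto\zeta+n\) using \(\sum_n|\widehat{\mathbbm 1_{(0,1)}}(\zeta+n)|^2=1\), and integrate the phase in the appropriate way to obtain \(\sum_{\tilde\lambda\in\widetilde\Lambda}|\widehat{\mathbbm 1_{\widetilde\Omega}}(w-\tilde\lambda)|^2=|\widetilde\Omega|^2\). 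This requires showing that each class of \(\Lambda\) modulo \((0,\Z)\) is exactly a full coset \(\lambda+(0,\Z)\). I would establish that by a density comparison: \(D(\Lambda)=|\Omega|\) and \(D(\widetilde\Lambda)\le|\widetilde\Omega|\) by orthogonality (Landau/packing bound on \(\R\times\T\)). If that route fails, I would fall back on the weak-limit/symmetrization approach of \cite{GL20}, averaging translates of \(\Lambda\) in the \(\zeta\)-direction to produce a \(\Z\)-periodic spectrum.

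Finally, I would record that the same linear change of variables \(y=j+s\) handles the measure normalization, since the Haar measure on \(\R\times\Z\) gives \(|\widetilde\Omega|=|\Omega|\).
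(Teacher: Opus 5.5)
Your ($\Leftarrow$) direction is fine and uses exactly the paper's lift $\{(\lambda,\eta+k)\}$. The factorization of $\widehat{\mathbbm 1_\Omega}$ and your Fourier-coefficient argument in $s$ (with countability of $\widetilde\Lambda$ giving a common null set of $s$) supply the details the paper leaves as ``check by definition''.

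The ($\Rightarrow$) direction has a genuine gap.

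\textbf{The projection step fails.} From ``$\widehat{\mathbbm 1_{\widetilde\Omega}}(\lambda-\lambda')=0$ or $\lambda_2-\lambda_2'\in\Z^*$'' you cannot conclude that $\widetilde\Lambda=\{(\lambda_1,\lambda_2\bmod 1)\}$ is orthogonal. When $\lambda_2-\lambda_2'\in\Z^*$ but $\lambda_1\neq\lambda_1'$, the two images are distinct points with the same $\T$-coordinate, and nothing forces $\widehat{\mathbbm 1_{\widetilde\Omega}}(\lambda_1-\lambda_1',0)=0$. Your claim that each class of $\Lambda$ modulo $\{0\}\times\Z$ is a full coset is false, so no density comparison can establish it.

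\textbf{A concrete counterexample.} Take $\Omega=(0,1)^2$ and the lattice spectrum $\Lambda=\{(m+n\alpha,n):m,n\in\Z\}$ with $\alpha\notin\Q$.
\begin{itemize}
\item Every class of $\Lambda$ modulo $\{0\}\times\Z$ is a single point.
\item $\widetilde\Lambda=\{(m+n\alpha,0)\}$ has dense first coordinates, so it is neither orthogonal nor of the right density.
\item Summing the tiling identity over $\zeta\mapsto\zeta+n$ gives $\sum_{\lambda\in\Lambda}|\widehat{\mathbbm 1_{\widetilde\Omega}}(w_1-\lambda_1,(w_2-\lambda_2)\bmod 1)|^2=\infty$, not $|\widetilde\Omega|^2$, because the left side is infinitely many copies of $|\Omega|^2$.
\item Your fallback fails here too. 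Weak limits of $\Lambda-(0,\zeta_k)$ have the form $\{(m+n\alpha+\beta,\,n-\gamma)\}$, which is never invariant under $(0,1)$.
\end{itemize}

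\textbf{The missing idea.} A spectrum of $\widetilde\Omega$ has to be \emph{produced}, not read off from a given $\Lambda$, and this is a real factorization problem. In $L^2(\Omega)\cong L^2(\widetilde\Omega)\otimes L^2(0,1)$ the frequency $\zeta$ is shared by the $\Z$-factor and the $(0,1)$-factor, so spectra of $\Omega$ need not respect the splitting.

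The paper decouples the two factors in three steps:
\begin{itemize}
\item It lifts $\Omega$ to the tilted surface $E=\{(x,j+s,s):x\in\Omega_j,\ s\in[0,1]\}\subset\R^3$. The projection to $\R^2\times\{0\}$ is a.e.\ injective, and exponentials with frequencies in $\R^2\times\{0\}$ are constant on its fibers, so $\Lambda$ stays a spectrum.
\item It applies the shear $T(x,y,z)=(x,y-z,z)$. This turns $E$ into the genuine product $\bigl(\bigcup_j\Omega_j\times\{j\}\bigr)\times[0,1]$, i.e.\ the measure $\omega\times h_J$, and carries $\Lambda$ to the spectrum $T^{-\mathsf T}\Lambda$.
\item It invokes the product theorem, Proposition~\ref{cor:product-spectral-measure} from \cite{KLLL2026}, to conclude that $\omega$ is spectral. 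Since $\omega$ lives on $\R\times\Z$, a spectrum of $\omega$ reduces modulo $1$ in the second variable to a spectrum of $\widetilde\Omega$ in $\R\times\T$.
\end{itemize}
Your plan has no substitute for that factorization theorem, so ($\Rightarrow$) is not proved.
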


We shall use the following product result for spectral measures to prove Proposition~\ref{prop:spectral-layered-reduction}.

\begin{proposition}\label{cor:product-spectral-measure}
{\cite[Corollary~1.4]{KLLL2026}}
Let \(\nu\) be a finite Borel measure on \(\mathbb R^m\) with compact support. Then \(\mathfrak m|_{[0,1]^d}\times \nu\) is a spectral measure on \(\mathbb R^{d+m}\) if and only if \(\nu\) is a spectral measure on \(\mathbb R^m\).
\end{proposition}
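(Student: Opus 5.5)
The direction ``\(\nu\) spectral \(\Rightarrow\) product spectral'' is elementary, and I would do it first. Let \(\Lambda\) be a spectrum for \(\nu\) and set \(\Gamma=\Z^d\times\Lambda\). Since \(\widehat{\m|_{[0,1]^d}\times\nu}(\xi,\eta)=\widehat{\mathbbm 1_{[0,1]^d}}(\xi)\,\widehat{\nu}(\eta)\), the exponentials \(e_{(n,\lambda)}\) are the tensor products \(e_n\otimes e_\lambda\). Here \(\{e_n\}_{n\in\Z^d}\) is an orthonormal basis of \(L^2([0,1]^d)\) and \(\{e_\lambda\}\) is an orthogonal basis of \(L^2(\nu)\). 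Tensor products of orthogonal bases form an orthogonal basis of \(L^2(\m|_{[0,1]^d}\times\nu)\), which proves this direction.

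For the converse I would first reduce to \(d=1\) by induction. Write \(\m|_{[0,1]^d}\times\nu=\m|_{[0,1]}\times(\m|_{[0,1]^{d-1}}\times\nu)\); the second factor is again a finite compactly supported measure. Applying the \(d=1\) case repeatedly then peels off one interval at a time.

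So let \(\mu=\m|_{[0,1]}\times\nu\) have spectrum \(\Gamma\subset\R\times\R^m\). Since \(\widehat\mu\) is continuous with \(\widehat\mu(0)\neq0\), \(\Gamma\) is uniformly discrete. For \(s\in\R\) set
\[
\Lambda_s=\{\eta:\ (t,\eta)\in\Gamma \text{ for some } t\in[s,s+1)\}.
\]

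\emph{Orthogonality of each \(\Lambda_s\).} Take distinct \((t,\eta),(t',\eta')\in\Gamma\) with \(t,t'\in[s,s+1)\). Then \(t-t'\in(-1,1)\), so \(\widehat{\mathbbm 1_{[0,1]}}(t-t')\neq0\), and orthogonality in \(\Gamma\) forces \(\widehat\nu(\eta-\eta')=0\). In particular \(\eta\neq\eta'\), because \(\widehat\nu(0)=\nu(\R^m)>0\). Hence \(\Lambda_s\) is an orthogonal set for \(\nu\), and Bessel's inequality gives
\[
B_s(\eta):=\sum_{\lambda\in\Lambda_s}|\widehat\nu(\eta-\lambda)|^2\le\|\nu\|^2\qquad\text{for every }\eta.
\]

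\emph{Completeness.} The completeness of \(\Gamma\) reads
\[
\sum_{(t,\lambda)\in\Gamma}\Bigl|\widehat{\mathbbm 1_{[0,1]}}(\xi_1-t)\Bigr|^2\,|\widehat\nu(\eta-\lambda)|^2=\|\mu\|^2=\|\nu\|^2\qquad\text{for all }(\xi_1,\eta).
\]
I would integrate this over \(\xi_1\in[-R,R]\) and use \(\int_\R|\widehat{\mathbbm 1_{[0,1]}}|^2=1\). Up to an error \(o(R)\) coming from the tails near \(\pm R\) (controlled by uniform discreteness of \(\Gamma\) and the \(|x|^{-2}\) decay), the left side becomes \(\sum_{|k|\le R}B_{k}(\eta)\). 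The right side is \(2R\|\nu\|^2\). Combined with \(B_k\le\|\nu\|^2\), this shows that the deficiencies \(\|\nu\|^2-B_k(\eta)\ge0\) have average \(o(1)\) over the windows \(k\in[-R,R]\).

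\emph{Passing to an exact window.} Every translate \(\Gamma-(k,0)\) is again a spectrum for \(\mu\). I would therefore choose windows \(k_j\) along which the deficiency tends to \(0\), and pass to a weak limit of \(\Gamma-(k_j,0)\); limits of uniformly discrete spectra are spectra, as in Proposition~\ref{prop-containing-spectrum}. The goal is that the limiting spectrum \(\Gamma_\infty\) has \(B_0\equiv\|\nu\|^2\), so that \(\Lambda_0(\Gamma_\infty)\) is a spectrum for \(\nu\).

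The main obstacle is that this deficiency has to be killed for \emph{all} \(\eta\) simultaneously, not just one. I would handle it as follows.
\begin{enumerate}[label=(\roman*),font=\normalfont]
\item Enumerate a countable dense set \(\{\eta_i\}\) and extract the windows \(k_j\) by a diagonal argument, so that the average bound holds jointly for finitely many \(\eta_i\) at each stage.
\item Use Fatou's lemma along the weak limit to get \(B_0(\eta_i)\ge\|\nu\|^2\) for the limit. Care is needed with points of \(\Gamma\) whose first coordinates approach the window boundary.
\item Combine with Bessel and the continuity of \(B_0\) (it is a locally uniformly convergent series of continuous functions, by uniform discreteness and the compact support of \(\nu\)) to get \(B_0\equiv\|\nu\|^2\).
\end{enumerate}
The window-boundary issue in (ii) is where I expect the technical difficulty. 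I would handle it by choosing the window offset \(s\) generically, so that no limit point lies on \(\{s,s+1\}\times\R^m\).
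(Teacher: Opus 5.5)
The paper does not prove this statement; it is imported from \cite{KLLL2026}, so your argument has to stand on its own. The tensor-product direction, the reduction to $d=1$, the orthogonality of each window $\Lambda_s$, and the averaged estimate are fine; the averaged error is in fact $O(\log R)$, uniformly in $\eta$.

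The gap is the passage to an exact window. Along $\Gamma-(k_j,0)\to\Gamma_\infty$, Fatou's lemma gives $B_0^{\Gamma_\infty}(\eta_i)\le\liminf_j B_{k_j}(\eta_i)$, which you already know from Bessel. What you need is the reverse inequality $B_0^{\Gamma_\infty}(\eta_i)\ge\|\nu\|^2$. That requires that no part of $\sum_{\lambda\in\Lambda_{k_j}}|\widehat\nu(\eta_i-\lambda)|^2$ escapes to $|\lambda|\to\infty$. Such tightness is what makes weak limits of spectra into spectra in Proposition~\ref{prop-containing-spectrum}, where $\widehat{\mathbbm 1_\Omega}\in L^2$. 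For a general compactly supported $\nu$ there is no Fourier decay, and the claim is false. For example, take $\nu=\delta_0+\delta_{1/2}$ on $\R$; every pair $\{c,c+1\}$ is a spectrum. Then $\Gamma=\{(n,n^2),(n,n^2+1):n\in\Z\}$ is a uniformly discrete spectrum of $\m|_{[0,1]}\times\nu$ all of whose windows are exact. Yet $\Gamma-(k,0)\to\varnothing$ as $k\to\infty$, and nothing in your diagonal selection prevents $k_j\to\infty$. Recentering in $\eta$ does not save the method either. For the quarter Cantor measure, whose spectra include $5^k\Lambda_0$ for all $k\ge0$ (Dutkay--Jorgensen), take $\Gamma=\bigcup_n\{n\}\times5^{|n|}\Lambda_0$. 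Every limit of translates $\Gamma-(k_j,y_j)$ with $|k_j|\to\infty$ has at most one point per window. Your generic-offset remedy at the window boundary has a related defect: you controlled the deficiency only on the integer windows $[k_j,k_j+1)$.

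The missing idea is to use the completeness identity at all frequencies in $(-1,1)$, not only at the zero frequency that your averaging over $[-R,R]$ extracts. This yields an exact window directly, with no limit.

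Fix $\eta$ and let $\rho_\eta=\sum_{(t,\lambda)\in\Gamma}|\widehat\nu(\eta-\lambda)|^2\delta_t$; this is translation-bounded by your Bessel bound. Completeness says $f*\rho_\eta\equiv\|\nu\|^2$ with $f=|\widehat{\mathbbm 1_{[0,1]}}|^2$, and $\widehat f(x)=(1-|x|)_+$ vanishes nowhere on $(-1,1)$. So Proposition~\ref{prop-supp}(1) and Lemma~\ref{lem:atom-existence} give $\widehat{\rho_\eta}=a\,\delta_0$ on $(-1,1)$. Convolving the identity with a test function whose Fourier transform is supported in $(-1,1)$ shows $a=\|\nu\|^2$.

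Hence $h(s):=\|\nu\|^2-B_s(\eta)=\|\nu\|^2-(\rho_\eta*\mathbbm 1_{(-1,0]})(s)$ is bounded and nonnegative, and $\widehat h$ vanishes on $(-1,1)$. Take $\phi=|\check\psi|^2$ with $0\neq\psi\in C_c^\infty((-\tfrac12,\tfrac12))$. Then $\phi\ge0$, $\phi$ vanishes only on a discrete set, and $\supp\widehat\phi\subset(-1,1)$. So $h*\phi\equiv0$, and therefore $h=0$ a.e.

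Thus for each $\eta$, $B_s(\eta)=\|\nu\|^2$ for a.e.\ $s$. By Fubini, for a.e.\ $s$ this holds for a.e.\ $\eta$. Write $P_s$ for the orthogonal projection onto $\overline{\operatorname{span}}\{e_\lambda:\lambda\in\Lambda_s\}$; then $B_s(\eta)=\|\nu\|\,\|P_se_\eta\|^2_{L^2(\nu)}$ is continuous in $\eta$, so the equality holds for all $\eta$. So almost every window $\Lambda_s$ is a spectrum of $\nu$. With this replacement, the rest of your outline (windows, Bessel, induction on $d$) goes through.
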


\begin{proof}[Proof of Proposition~\ref{prop:spectral-layered-reduction}]
Suppose that \(\widetilde{\Omega}\) is a spectral set in \(\R\times\Z\) with spectrum \(\Sigma\subset\R\times\T\). One can check by definition that the set \(\Omega\) is a spectral set in \(\R^2\) with spectrum
	\[
	\Lambda:=\{(\lambda,\eta+k):(\lambda,\eta)\in\Sigma,\ k\in\Z\}.
	\]

Conversely, suppose that \(\Omega\) is a spectral set in \(\R^2\). We work in \(\mathbb R^3\), with the \(\Omega_j\subset \mathbb R\times\{0\}^2\) and \(\Omega\subset \mathbb R^2\times\{0\}\). Define the ``tilted'' line segment \(I\) which connects \((0,0,0)\) to \((0,1,1)\). Then let 
$$
E=\bigcup_{j\in S}\Omega_j+(0,j,0)+I.
$$
Let  \(\mu\) be the two-dimensional Lebesgue measure on \(\Omega\), and let \(\nu:=\mathcal H^2|_E\) be the Hausdorff measure on $E$. See Figures~\ref{fig:Omega-plane-measure} and~\ref{fig:E-above-Omega}.
\begin{figure}[H]
	\centering
	\begin{tikzpicture}[
		scale=1.2,
		x={(-0.75cm,-0.38cm)},
		y={(1.05cm,-0.20cm)},
		z={(0cm,1.3cm)},
		line join=round,
		line cap=round
	]

	\draw[->] (0,0,0) -- (4.4,0,0) node[below left] {$x$};
	\draw[->] (0,0,0) -- (0,3.5,0) node[right] {$y$};
	\draw[->] (0,0,0) -- (0,0,1.3) node[above] {$z$};

	\foreach \x in {1,2,3,4}{
		\draw (\x,0,0) -- ++(0,0.05,0);
		\node[below] at (\x,0,0) {$\x$};
	}

	\foreach \y in {1,2,3}{
		\draw (0,\y,0) -- ++(0.05,0,0);
		\node[below] at (0,\y,0) {$\y$};
	}

	\draw (0,0,1) -- ++(0.07,0,0);
	\node[left] at (0,0,1) {$1$};

	\filldraw[
		fill=blue!20,
		draw=blue!80!black,
		fill opacity=0.45,
		draw opacity=1
	] (1,1,0) -- (2,1,0) -- (2,2,0) -- (1,2,0) -- cycle;

	\filldraw[
		fill=blue!20,
		draw=blue!80!black,
		fill opacity=0.45,
		draw opacity=1
	] (1,2,0) -- (1.5,2,0) -- (1.5,3,0) -- (1,3,0) -- cycle;

	\filldraw[
		fill=blue!20,
		draw=blue!80!black,
		fill opacity=0.45,
		draw opacity=1
	] (2.5,2,0) -- (3.5,2,0) -- (3.5,3,0) -- (2.5,3,0) -- cycle;

	\node[right] at (0.4,3.2,0.85)
	{\(\mu\) and \(\Omega\subset\mathbb R\times\mathbb R\times\{0\}\)};

	\end{tikzpicture}
	\caption{The set \(\Omega\) with \(S=\{1,2\}\), \(\Omega_1=[1,2]\), and
	\(\Omega_2=[1,1.5]\cup[2.5,3.5]\). The measure \(\mu\) has density $\mathbbm{1}_\Om$ with respect to planar Lebesgue measure.}
	\label{fig:Omega-plane-measure}
\end{figure}
The orthogonal projection \(\pi:\mathbb R^3\to \mathbb R^2\times\{0\}\)
is injective on \(E\) outside a \(\nu\)-null set, and its pushforward satisfies $\pi_*\nu=\sqrt{2}\,\mu$. Therefore, if \(\Lambda\subset \mathbb R^2\times\{0\}\) is a spectrum of \(\mu\), then \(\Lambda\) is also a spectrum of \(\nu\), since the two \(L^2\) spaces can be identified via \(\pi\), and the exponentials in \(\Lambda\) take the same value at \(x\) and \(\pi(x)\).
\begin{figure}[H]
	\centering
	\begin{tikzpicture}[
		scale=1.2,
		x={(-0.75cm,-0.38cm)},
		y={(1.05cm,-0.20cm)},
		z={(0cm,1.3cm)},
		line join=round,
		line cap=round
	]

		\draw[->] (0,0,0) -- (4.4,0,0) node[below left] {$x$};
		\draw[->] (0,0,0) -- (0,3.6,0) node[right] {$y$};
		\draw[->] (0,0,0) -- (0,0,1.3) node[above] {$z$};

		\foreach \x in {1,2,3,4}{
			\draw (\x,0,0) -- ++(0,0.05,0);
			\node[below] at (\x,0,0) {$\x$};
		}

		\foreach \y in {1,2,3}{
			\draw (0,\y,0) -- ++(0.05,0,0);
			\node[below] at (0,\y,0) {$\y$};
		}

		\draw (0,0,1) -- ++(0.07,0,0);
		\node[left] at (0,0,1) {$1$};


		\filldraw[
			fill=blue!20,
			draw=blue!80!black,
			fill opacity=0.45,
			draw opacity=1
		] (1,1,0) -- (2,1,0) -- (2,2,0) -- (1,2,0) -- cycle;

		\filldraw[
			fill=blue!20,
			draw=blue!80!black,
			fill opacity=0.45,
			draw opacity=1
		] (1,2,0) -- (1.5,2,0) -- (1.5,3,0) -- (1,3,0) -- cycle;

		\filldraw[
			fill=blue!20,
			draw=blue!80!black,
			fill opacity=0.45,
			draw opacity=1
		] (2.5,2,0) -- (3.5,2,0) -- (3.5,3,0) -- (2.5,3,0) -- cycle;


		\filldraw[
			fill=orange!60,
			draw=orange!70!black,
			fill opacity=0.62,
			draw opacity=1
		] (1,1,0) -- (2,1,0) -- (2,2,1) -- (1,2,1) -- cycle;

		\filldraw[
			fill=orange!60,
			draw=orange!70!black,
			fill opacity=0.62,
			draw opacity=1
		] (1,2,0) -- (1.5,2,0) -- (1.5,3,1) -- (1,3,1) -- cycle;

		\filldraw[
			fill=orange!60,
			draw=orange!70!black,
			fill opacity=0.62,
			draw opacity=1
		] (2.5,2,0) -- (3.5,2,0) -- (3.5,3,1) -- (2.5,3,1) -- cycle;

		\node[right] at (0.85,3.15,0.9) {\(E\) together with \(\Omega\)};

	\end{tikzpicture}
	\caption{The set \(E=\bigcup_{j\in S}\Omega_j+(0,j,0)+I\) lying above \(\Omega\).}
	\label{fig:E-above-Omega}
\end{figure}

Let \(J=\{0\}^2\times[0,1]\) be the vertical unit line segment at the origin, and define the linear transformation \(T\) which maps \(e_1\) to \(e_1\), \(e_2\) to \(e_2\), and \(e_2+e_3\) to \(e_3\), i.e.
	\[
	T=
	\begin{bmatrix}
	1&0&0\\
	0&1&-1\\
	0&0&1
	\end{bmatrix}
	\quad\text{with}\quad
	T^{-1}=
	\begin{bmatrix}
	1&0&0\\
	0&1&1\\
	0&0&1
	\end{bmatrix}.
	\]
	It follows that \(TI=J\) and $T_*\nu=\sqrt{2}\,\sigma$, where \(\sigma\) is the Hausdorff measure on
    \[
    \left(\bigcup_{j\in S}\Omega_j+(0,j,0)\right)\times[0,1].
    \]
	See Figure~\ref{fig:product-measure-sigma}. It follows that \(L=T^{-\mathsf T}\Lambda\) is a spectrum for the product
	measure	$\sigma=\omega\times h_J$, where \(\omega\) is the Hausdorff measure on \(\bigcup_{j\in S}\Omega_j+(0,j,0)\), and
	\(h_J\) is the Hausdorff measure on \(J\).
\begin{figure}[H]
	\centering
	\begin{tikzpicture}[
		scale=1.2,
		x={(-0.75cm,-0.38cm)},
		y={(1.05cm,-0.20cm)},
		z={(0cm,1.3cm)},
		line join=round,
		line cap=round
	]

		\draw[->] (0,0,0) -- (4.4,0,0) node[below left] {$x$};
		\draw[->] (0,0,0) -- (0,3.6,0) node[right] {$y$};
		\draw[->] (0,0,0) -- (0,0,1.3) node[above] {$z$};

		\foreach \x in {1,2,3,4}{
			\draw (\x,0,0) -- ++(0,0.05,0);
			\node[below] at (\x,0,0) {$\x$};
		}

		\foreach \y in {1,2,3}{
			\draw (0,\y,0) -- ++(0.05,0,0);
			\node[below] at (0,\y,0) {$\y$};
		}

		\draw (0,0,1) -- ++(0.07,0,0);
		\node[left] at (0,0,1) {$1$};


		\filldraw[
			fill=blue!20,
			draw=blue!80!black,
			fill opacity=0.45,
			draw opacity=1
		] (1,1,0) -- (2,1,0) -- (2,2,0) -- (1,2,0) -- cycle;

		\filldraw[
			fill=blue!20,
			draw=blue!80!black,
			fill opacity=0.45,
			draw opacity=1
		] (1,2,0) -- (1.5,2,0) -- (1.5,3,0) -- (1,3,0) -- cycle;

		\filldraw[
			fill=blue!20,
			draw=blue!80!black,
			fill opacity=0.45,
			draw opacity=1
		] (2.5,2,0) -- (3.5,2,0) -- (3.5,3,0) -- (2.5,3,0) -- cycle;


		\filldraw[
			fill=green!45,
			draw=green!60!black,
			fill opacity=0.65,
			draw opacity=1
		] (1,1,0) -- (2,1,0) -- (2,1,1) -- (1,1,1) -- cycle;

		\filldraw[
			fill=green!45,
			draw=green!60!black,
			fill opacity=0.65,
			draw opacity=1
		] (1,2,0) -- (1.5,2,0) -- (1.5,2,1) -- (1,2,1) -- cycle;

		\filldraw[
			fill=green!45,
			draw=green!60!black,
			fill opacity=0.65,
			draw opacity=1
		] (2.5,2,0) -- (3.5,2,0) -- (3.5,2,1) -- (2.5,2,1) -- cycle;

		\node[right] at (0.95,3.15,1)
		{\(\left(\bigcup_{j\in S}(\Omega_j+(0,j))\right)\times J\)};

	\end{tikzpicture}
	\caption{The product measure \(\sigma\) in green.}
	\label{fig:product-measure-sigma}
\end{figure}
Now by Proposition~\ref{cor:product-spectral-measure}, it follows from the spectrality of \(\sigma\) that \(\omega\) is spectral with a spectrum \(M\subseteq \mathbb R^2\times\{0\}\). But if a measure \(\omega\) is spectral, then we can always find a spectrum in the dual group of the group generated by
\(\operatorname{supp}\omega\), which is contained in \(\mathbb R\times\mathbb Z\times\{0\}\).
Therefore, we can find a spectrum in \(\mathbb R\times\mathbb T\times\{0\}\), which implies that \(\widetilde{\Omega}\) is spectral in \(\mathbb R\times\mathbb Z\).
\end{proof}

\section{Structure of (weak) tiles  and spectral sets in $\R\times \Z$ }\label{S7}

Let $\widetilde{\Omega} \subset \R\times\Z$ be the union of three pairwise non-overlapping unit intervals. Since the weak tiling property is invariant under translations, we may, without loss of generality, assume that
\begin{equation}\label{eq:three-intervals-Omega1}
\widetilde{\Om} = (0,1)\times\{0\} \cup (a,a+1)\times\{\ell_1\} \cup (b,b+1)\times\{\ell_2\},
\quad a,b\in\R,\ \ell_1,\ell_2\in\Z.
\end{equation}
The main result of this section is the following theorem, which gives a complete characterization of when $\widetilde{\Om}$ is a weak tile of $\R\times\Z$. 
\begin{theorem}\label{prop-WT-3intervals-characterization}
Let $\widetilde{\Om}$ be as in \eqref{eq:three-intervals-Omega1}. Then $\widetilde{\Om}$ is a weak tile of $\R\times\Z$ if and only if exactly one of the following holds:
	\begin{itemize}
		\item[\upshape (1)] $0=\ell_1=\ell_2$, and $\{0,a,b\}$ tiles $\Z$;
		
		\item[\upshape (2)] Exactly two of $0,\ell_1,\ell_2$ are equal, and one of the following holds:
		\begin{itemize}
			\item[\upshape (i)] $0\neq \ell_1=\ell_2$ and $a-b\in\Z^*$;
			\item[\upshape (ii)] $\ell_1=0\neq \ell_2$ and $a\in\Z^*$;
			\item[\upshape (iii)] $\ell_2=0\neq \ell_1$ and $b\in\Z^*$.
		\end{itemize}
		
		\item[\upshape (3)] $0,\ell_1,\ell_2$ are pairwise distinct, and one of the following holds:
		\begin{itemize}
			\item[\upshape (i)] $\{0,\ell_1,\ell_2\}$ tiles $\Z$;
			\item[\upshape (ii)] $\{0,\ell_1,\ell_2\}$ does not tile $\Z$, and there exist $\xi\in\R$ and $M,N\in\Z$ such that
			\[
			a=M+\ell_1\xi,\qquad b=N+\ell_2\xi,
			\]
			where $(M,\ell_1)$ and $(N,\ell_2)$ are $\Q$-linearly independent.
		\end{itemize}
	\end{itemize}
Moreover, in each case, \(\widetilde{\Omega}\) is both a tile and a spectral set of \(\mathbb R\times\mathbb Z\).
\end{theorem}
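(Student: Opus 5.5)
The plan is to work throughout with the decomposition $\mathbbm 1_{\widetilde\Omega}=\mathbbm 1_{(0,1)\times\{0\}}*\delta_A$, where $A=\{(0,0),\alpha,\beta\}$ with $\alpha=(a,\ell_1)$ and $\beta=(b,\ell_2)$. Let $\nu$ be a weak tiling measure with $\nu(\{(0,0)\})=1$, and set $\rho:=\delta_A*\nu$. Write $\rho=\sum_k\rho_k\times\delta_k$. The identity $\mathbbm 1_{\widetilde\Omega}*\nu=1$ then says that $\mathbbm 1_{(0,1)}*\rho_k=1$ a.e. on $\R$ for every level $k$. Proposition~\ref{prop:unit-interval-WTmeasure-1periodic} and Corollary~\ref{cor:unit-interval-WTmeasure-constant-masses} then give two facts:
\begin{itemize}
\item each $\rho_k$ is $1$-periodic with mass $1$ per period;
\item $\rho_0=\delta_{\Z}$, since $\rho_0$ carries the unit atom at $0$.
\end{itemize}
These are the analogues in $\R\times\Z$ of Theorem~\ref{th-Keller} and Lemma~\ref{three-intervals-R-WT-necessary}.

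\textbf{Cases (1) and (2).}
\begin{itemize}
\item If two of the intervals lie on level $0$, then $\rho_0\ge\delta_{(a,0)}$ (or $\delta_{(b,0)}$). Together with $\rho_0=\delta_\Z$, this gives $a\in\Z^*$ (respectively $b\in\Z^*$), which covers (2)(ii) and (2)(iii).
\item For (2)(i), first translate $\widetilde\Omega$ by $-\alpha$; weak tiling is translation invariant, using the normalized formulation with unit mass at the origin. The same argument then gives $b-a\in\Z^*$.
\item In case (1), all three points are integers, and the level-$0$ slice reduces exactly to the situation of Section~\ref{S5}. Theorem~\ref{prop-three-intervals-R-tile-Z} then gives that $\{0,a,b\}$ tiles $\Z$.
\end{itemize}

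\textbf{Sufficiency.} Tilings and spectra will be written down explicitly in each case.
\begin{itemize}
\item In cases (1) and (2), use a product of the one-dimensional constructions of Theorems~\ref{th-two-cubes} and~\ref{prop-three-intervals-R-tile-Z} with a suitable periodic set of levels.
\item In case (3)(i), take $\Z\times T$, where $T$ is a tiling complement of $\{0,\ell_1,\ell_2\}$ in $\Z$, with a matching spectrum.
\item In case (3)(ii), apply the shear $(x,j)\mapsto(x-\xi j,j)$. This is a group automorphism of $\R\times\Z$ and preserves tiling, weak tiling and spectrality. It carries $\widetilde\Omega$ to $(0,1)\times\{0\}+A'$ with $A'=\{(0,0),(M,\ell_1),(N,\ell_2)\}\subset\Z^2$. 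By the independence hypothesis, $A'$ spans a rank-two sublattice, so $A'$ is a complete residue system modulo an index-$3$ sublattice of $\Z^2$. This yields a lattice tiling, and Fuglede's lattice theorem then supplies a spectrum.
\end{itemize}

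\textbf{Necessity in case (3).} This is the main obstacle: the levels $0,\ell_1,\ell_2$ are distinct and $\{0,\ell_1,\ell_2\}$ does not tile $\Z$. I would first try to show that $\nu$ is purely atomic and supported on $\{k\alpha+l\beta\}+\Z\times\{0\}$. For this, run the support argument level by level: each $\rho_k$ is $1$-periodic, every atom of $\nu$ propagates to atoms of $\rho$, and one would like to show that non-atomic parts cannot coexist with $\rho_0=\delta_\Z$. Next, set $s_{k,l}:=\nu(\{k\alpha+l\beta\}+\Z\times\{0\})$ suitably normalized. I expect these to satisfy the three-term relation $s_{k,l}+s_{k-1,l}+s_{k,l-1}=1$, and the propagation argument of Lemma~\ref{lem:three-points-weak-tiling-measure} then forces $s=1$ on $L=\{k\equiv l \pmod 3\}$ and $s=0$ off $L$.
\begin{itemize}
\item The consistency of this pattern requires that no relation $k\alpha+l\beta\in\Z\times\{0\}$ with $k\not\equiv l\pmod 3$ holds.
\item If $\alpha,\beta$ are independent modulo $\Z\times\{0\}$ over $\Q$ in the appropriate sense, we obtain $\xi$, $M$, $N$ as in (3)(ii).
\item If instead there is a relation, it forces the levels to satisfy the mod-$3$ condition of Lemma~\ref{lem:MN-mod-3-generate-gcd}. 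That condition is equivalent to $\{0,\ell_1,\ell_2\}$ tiling $\Z$, which is excluded here.
\end{itemize}
If the atomicity step resists a direct argument, I would fall back on a Fourier approach. Use Proposition~\ref{prop-supp}(1) on $\R\times\T$, where $\widehat{\mathbbm 1_{\widetilde\Omega}}(\lambda,\eta)=\widehat{\mathbbm 1_{(0,1)}}(\lambda)\bigl(1+e^{-2\pi i(a\lambda+\ell_1\eta)}+e^{-2\pi i(b\lambda+\ell_2\eta)}\bigr)$. Its zero set forces both exponents to be primitive cube roots of unity, and this is what would produce the linear relations $a=M+\ell_1\xi$ and $b=N+\ell_2\xi$.
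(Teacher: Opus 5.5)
Your necessity arguments in cases (1) and (2), via $\rho_0=\delta_\Z$, and your treatment of (3)(i) match the paper. Case (3)(ii), however, has genuine gaps in both directions.

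\textbf{Sufficiency in (3)(ii).} Your claim is that $\Q$-independence makes $A'=\{(0,0),(M,\ell_1),(N,\ell_2)\}$ a complete residue system modulo an index-$3$ sublattice of $\Z^2$. This is false. Take $\ell_1=3$, $\ell_2=9$, $a=3$, $b=0$, so $\xi=0$, $M=3$, $N=0$. Then $\{0,3,9\}$ does not tile $\Z$ and $(3,3),(0,9)$ are independent. Yet $(3,3)\in 3\Z^2$ lies in every index-$3$ subgroup of $\Z^2$. The only freedom, changing $\xi$ by an element of $\tfrac13\Z$, keeps $(N,9)\in 3\Z^2$, so it does not help.

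In fact this $\widetilde\Omega$ has no lattice tiling of $\R\times\Z$ at all. A tiling lattice $\Gamma$ has covolume $3$ and meets each level in a coset of $c\Z$ with $c\in\Z$. So either the levels of $\Gamma$ form $3\Z$, and then level $1$ is never covered, or $\Gamma=\{(3n+k\theta,k)\}$. The latter needs both $3\theta$ and $9\theta$ to lie in $\Z\setminus 3\Z$, which is impossible. Nevertheless $\widetilde\Omega$ is a tile: for $\eta=2$ its projection is $(0,1)+\{0,9,18\}$. So Fuglede's lattice theorem is not available.

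The paper (Lemma~\ref{lem:Qlin-indep-notile-tiling}) instead chooses the shear $\eta$ so that $\{0,M+\ell_1\eta,N+\ell_2\eta\}=3^s\{0,u,v\}$ with $\{u,v\}\equiv\{1,2\}\pmod 3$. Here $(M\ell_2-N\ell_1)/d=3^sq$, and solving $\frac{\ell_2}{d}u-\frac{\ell_1}{d}v=q$ with such $u,v$ uses exactly that $\{0,\ell_1,\ell_2\}$ does not tile. The tiling and the spectrum are then lifted from $\R$ by Corollary~\ref{cor:lift-1D-to-2D}. The same shear is needed in (2)(i): when $a\notin\Z$, no tiling set of product form $T\times L$ exists.

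\textbf{Necessity in (3): the propagation cannot start.} The atom-level propagation of Lemma~\ref{lem:three-points-weak-tiling-measure} does not get going. The identity $\nu(\{p\})+\nu(\{p-\alpha\})+\nu(\{p-\beta\})=\rho(\{p\})$ yields your three-term relation only where $\rho(\{p\})=1$ is already known. From $\nu(\{(0,0)\})=1$ you learn only that $\rho_0=\delta_\Z$, $\rho_{\ell_1}=\delta_{a+\Z}$ and $\rho_{\ell_2}=\delta_{b+\Z}$. The first step, $\nu(\{\alpha+\beta\})=1$, already needs a unit atom of $\rho_{\ell_1+\ell_2}$ at $a+b$, and that is essentially what must be proved.

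The fibres $\rho_j$ are only $1$-periodic and need not be atomic. If $d=\gcd(\ell_1,\ell_2)>1$ (e.g.\ $\ell_1=2$, $\ell_2=6$, $a=1$, $b=0$), the equations decouple over the cosets $j+d\Z$. You may then replace $\nu$ off the levels in $d\Z$ by $\tfrac13\,\m\times\delta_{\Z\setminus d\Z}$, so your atomicity claim is false.

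\textbf{Necessity in (3): the consistency test is misformulated.} We have $\frac{\ell_2}{d}\alpha-\frac{\ell_1}{d}\beta=\bigl(\frac{M\ell_2-N\ell_1}{d},0\bigr)\in\Z\times\{0\}$. Moreover $\frac{\ell_2}{d}\not\equiv-\frac{\ell_1}{d}\pmod 3$ precisely because $\{0,\ell_1,\ell_2\}$ does not tile. So working modulo $\Z\times\{0\}$ would exclude every genuine tile in (3)(ii). The real obstruction is an exact relation $k\alpha+l\beta=0$, i.e.\ $\Q$-dependence, which is Lemma~\ref{lem:Qlin-dep-notile-no-weak-tiling}.

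\textbf{The missing idea.} It is the paper's two-stage argument.
\begin{enumerate}
\item Propagate \emph{interval} statements instead of atoms. For $m\equiv n\pmod 3$, the fibre of $\nu$ at level $m\ell_1+n\ell_2$ alone convolves with $\mathbbm 1_{(0,1)}$ to $1$ a.e.\ on $(0,1)+ma+nb$ (Lemma~\ref{lem:nonempty-supp(mu_j)-gcd-fibers}). No atomicity is needed for this.
\item Use Lemma~\ref{lem:MN-mod-3-generate-gcd} to write $\pm\ell_1,\pm\ell_2$ in this form. Then on level $\ell_1$ a single fibre covers an interval on which $\rho_{\ell_1}=\delta_{a+\Z}$, so that fibre carries a unit atom congruent to $a$ mod $1$.
\item Recentre at that atom and iterate over all of $d\Z$. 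Comparing the congruences obtained on levels $\ell_1$ and $\ell_2$ produces $\xi$ (Lemma~\ref{lem:atoms-on-gcd-fibers} and Proposition~\ref{prop:WT-structure-necessary-condition}).
\end{enumerate}

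\textbf{Your Fourier fallback.} It points the wrong way. The relations $a\equiv\ell_1\xi$, $b\equiv\ell_2\xi\pmod 1$ say that the trigonometric factor equals $3$ at $(1,-\xi)$, not that it vanishes. Describing the zero set of $\widehat{\mathbbm 1_{\widetilde\Omega}}$ does not by itself yield them.
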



\subsection{Outline and preliminaries of the proof.} We will prove Theorem~\ref{prop-WT-3intervals-characterization} by considering the possible configurations of the set $\{0,\ell_1,\ell_2\}$ in the following subsections.
\begin{enumerate}
    \item Case (1) will be studied in Lemma \ref{lem:three-intervals-l1=l2=0} which follows essentially from Theorem \ref{prop-three-intervals-R-tile-Z}.
    \item Case (2)(i) will be given in Lemma \ref{lem:three-intervals-l1=l2-not0}. The remaining cases can be reduced to this case through a translation.
    \item Case (3)(i) will be proved in Lemma \ref{lem:three-intervals-0-l1-l2-tile-Z}, while the most subtle Case (3)(ii) will be given in Subsection \ref{subsection7.4}.  
\end{enumerate}

For the bounded sets in $\R\times\Z$ considered here, both tiling and spectrality imply weak tiling. The former is immediate, while the latter follows from Theorems~\ref{thm:1.1} and~\ref{thm:layered-reduction}. Thus, in each case, it suffices to show that every weak tile has the stated form and that every set of this form is both a tile and a spectral set.

We next establish some basic properties of bounded measurable sets in $\R\times\Z$. Let $S\subset\mathbb{Z}$ be a finite set, and define the set
\begin{equation}\label{eq:Omega}
\widetilde{\Omega}=\bigcup_{j\in S}\Omega_j\times\{j\},
\end{equation}
where each $\Omega_j\subset\mathbb{R}$ is bounded and measurable. For any $\xi\in\mathbb{R}$, we introduce the layer-translated set
\[
\widetilde{\Omega}_\xi:=\begin{bmatrix} 1&\xi\\0&1\end{bmatrix}\widetilde{\Omega} =\bigcup_{j\in S}(\Omega_j+j\xi)\times\{j\}.
\]
The shear map $T_\xi(x,j):=(x+j\xi,j)$ is a Haar-measure-preserving group automorphism of $\R\times\Z$ and therefore preserves tiling, weak tiling, and spectrality. We thus obtain the following proposition.

\begin{proposition}\label{prop:translation-invariance}
For every $\xi\in\mathbb{R}$, the following statements hold.
\begin{enumerate}
    \item[\upshape (i)] $\widetilde{\Omega}$ is a (weak) tile of $\mathbb{R}\times\mathbb{Z}$ if and only if $\widetilde{\Omega}_\xi$ is a (weak) tile of $\mathbb{R}\times\mathbb{Z}$;
    \item[\upshape (ii)] $\widetilde{\Omega}$ is a spectral set of $\mathbb{R}\times\mathbb{Z}$ if and only if $\widetilde{\Omega}_\xi$ is a spectral set of $\mathbb{R}\times\mathbb{Z}$.
\end{enumerate}
\end{proposition}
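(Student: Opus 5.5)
The plan is to verify directly that the shear $T_\xi(x,j)=(x+j\xi,j)$ is a continuous group automorphism of $\R\times\Z$ with continuous inverse $T_{-\xi}$. It preserves Haar measure, since on each layer $\R\times\{j\}$ it is a translation of Lebesgue measure and it permutes no layers. Since $\widetilde{\Omega}_\xi=T_\xi\widetilde{\Omega}$, and $T_{-\xi}$ undoes $T_\xi$, it suffices to prove the forward implications in (i) and (ii). The reverse implications follow by applying the same argument with $-\xi$.

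For (i), take a nonnegative locally finite measure $\nu$ on $\R\times\Z$ with $\nu(\{(0,0)\})=1$ and $\mathbbm 1_{\widetilde{\Omega}}*\nu=1$ a.e. We set $\nu_\xi:=(T_\xi)_*\nu$. Because $T_\xi$ is a homomorphism,
\[
(\mathbbm 1_{\widetilde{\Omega}_\xi}*\nu_\xi)(T_\xi z)=\int \mathbbm 1_{T_\xi\widetilde{\Omega}}(T_\xi z-T_\xi w)\,d\nu(w)=(\mathbbm 1_{\widetilde{\Omega}}*\nu)(z),
\]
which equals $1$ for a.e. $z$. Since $T_\xi$ preserves Haar null sets, $\mathbbm 1_{\widetilde{\Omega}_\xi}*\nu_\xi=1$ a.e. The pushed-forward measure $\nu_\xi$ is nonnegative. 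It is locally finite because $T_\xi$ is a homeomorphism, so preimages of compact sets are compact. Finally $\nu_\xi(\{(0,0)\})=\nu(\{(0,0)\})=1$. If $\nu=\delta_T$ comes from a tiling, then $\nu_\xi=\delta_{T_\xi T}$, so tiles are sent to tiles as well.

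For (ii), we dualize the shear. A character $\chi_{(\lambda,\eta)}(x,j)=e^{2\pi i(\lambda x+\eta j)}$ satisfies $\chi_{(\lambda,\eta)}\circ T_{-\xi}=\chi_{(\lambda,\eta-\lambda\xi)}$, where $\eta-\lambda\xi$ is taken mod $1$. So if $\Sigma\subset\R\times\mathbb T$ is a spectrum for $\widetilde{\Omega}$, we take $\Sigma_\xi:=\{(\lambda,\eta+\lambda\xi):(\lambda,\eta)\in\Sigma\}$. The composition operator $U f:=f\circ T_{-\xi}$ is unitary from $L^2(\widetilde{\Omega})$ onto $L^2(\widetilde{\Omega}_\xi)$, because $T_\xi$ is a measure-preserving bijection $\widetilde{\Omega}\to\widetilde{\Omega}_\xi$. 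Under $U$ the character $\chi_{(\lambda,\eta)}$ restricted to $\widetilde{\Omega}$ maps exactly to $\chi_{(\lambda,\eta+\lambda\xi)}$ restricted to $\widetilde{\Omega}_\xi$. The map $(\lambda,\eta)\mapsto(\lambda,\eta+\lambda\xi)$ is injective on $\R\times\mathbb T$, so orthogonality and completeness transfer.

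There is no genuine obstacle; the proof is just bookkeeping. The only points needing care are that the dual automorphism is well defined modulo $\Z$ in the $\mathbb T$-coordinate, and that null sets and local finiteness are preserved, both of which are immediate from $T_\xi$ being a homeomorphic Haar-preserving automorphism.
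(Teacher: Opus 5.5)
Your approach is the same as the paper's, which simply observes that the shear $T_\xi$ is a Haar-measure-preserving group automorphism of $\R\times\Z$. Your argument for (i) via the pushforward $(T_\xi)_*\nu$ is correct and complete.

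In (ii), however, your explicit spectrum has the wrong sign. Your own computation gives $\chi_{(\lambda,\eta)}\circ T_{-\xi}=\chi_{(\lambda,\eta-\lambda\xi)}$. So $U$ sends $\chi_{(\lambda,\eta)}|_{\widetilde{\Omega}}$ to $\chi_{(\lambda,\eta-\lambda\xi)}|_{\widetilde{\Omega}_\xi}$, not to $\chi_{(\lambda,\eta+\lambda\xi)}|_{\widetilde{\Omega}_\xi}$. Equivalently, $\widehat{\mathbbm 1_{\widetilde{\Omega}_\xi}}(\lambda,\eta)=\widehat{\mathbbm 1_{\widetilde{\Omega}}}(\lambda,\eta+\lambda\xi)$. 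Frequencies must therefore be moved by the inverse transpose of the shear, and the correct spectrum is $\Sigma_\xi=\{(\lambda,\eta-\lambda\xi):(\lambda,\eta)\in\Sigma\}$.

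The set you wrote down is in general not even orthogonal. For example, $\Sigma=(\Z\times\{0\})\cup\bigl((\Z+\tfrac12)\times\{\tfrac12\}\bigr)$ is a spectrum of $\widetilde{\Omega}=(0,1)\times\{0,1\}$. Your images of $(0,0)$ and $(\tfrac12,\tfrac12)$ are $(0,0)$ and $(\tfrac12,\tfrac12+\tfrac{\xi}{2})$. These give $\widehat{\mathbbm 1_{\widetilde{\Omega}_\xi}}(\tfrac12,\tfrac12+\tfrac{\xi}{2})=\widehat{\mathbbm 1_{(0,1)}}(\tfrac12)\,(1-e^{-2\pi i\xi})\neq 0$ whenever $\xi\notin\Z$.

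Once the sign is corrected, the rest of your unitary-transfer argument goes through verbatim. The map $(\lambda,\eta)\mapsto(\lambda,\eta-\lambda\xi)$ is a well-defined bijection of $\R\times\mathbb T$, and $U$ carries the orthogonal basis $\{\chi_\sigma|_{\widetilde{\Omega}}\}_{\sigma\in\Sigma}$ onto $\{\chi_{\sigma}|_{\widetilde{\Omega}_\xi}\}_{\sigma\in\Sigma_\xi}$.
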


The use of projections below is inspired by Rao and Xue~\cite[Theorem~1.3 and Proposition~3.1]{RX2006}. They characterized prime-cardinality subsets of $\Z^2$ admitting periodic tilings by the existence of an integer linear projection that is injective on the set and whose image tiles $\Z$. Here, we use suitable projections to obtain tiling and spectrality in $\R\times\Z$ from the corresponding one-dimensional properties.

For each $\eta\in\mathbb{R}$, define the projection
	\[
	\pi_\eta:\mathbb{R}\times\mathbb{Z}\to\mathbb{R},\qquad
	\pi_\eta(x,j)=x+\eta j.
	\]
	Then we have
	\[
	\pi_\eta(\widetilde{\Omega})=\bigcup_{j\in S}(\Omega_j+j\eta).
	\]

Suppose that the sets $\Omega_j+j\eta$, $j\in S$, are pairwise non-overlapping. If $\pi_\eta(\widetilde{\Omega})$ tiles $\mathbb R$ with a tiling set $T$, then $T\times\mathbb Z$ is a tiling set for $\widetilde{\Omega}_\eta$ in $\mathbb R\times\mathbb Z$. The same product construction works for weak tilings. For spectrality, the pairwise non-overlapping assumption on the projected components allows a spectrum of $\pi_\eta(\widetilde{\Omega})$ to be lifted to a spectrum of $\widetilde{\Omega}_\eta$. More precisely, if $\Lambda$ is a spectrum of $\pi_\eta(\widetilde{\Omega})$, then $\Lambda\times\{0\}\subset\mathbb R\times\mathbb T$ is a spectrum of $\widetilde{\Omega}_\eta$. Combining these observations with Proposition~\ref{prop:translation-invariance}, we obtain the following sufficient conditions.


\begin{corollary}\label{cor:lift-1D-to-2D}
Let $\widetilde{\Omega}\subset\mathbb{R}\times\mathbb{Z}$ be a measurable set of the form \eqref{eq:Omega}.
\begin{enumerate}
    \item[\upshape (1)] If there exists $\eta\in\mathbb{R}$ such that the sets $\Omega_j + j\eta$ ($j\in S$) are pairwise non-overlapping, and the set $\pi_\eta(\widetilde{\Omega})$ is a (weak) tile of $\mathbb{R}$, then $\widetilde{\Omega}$ is a (weak) tile of $\mathbb{R}\times\mathbb{Z}$.
    \item[\upshape (2)] If there exists $\eta\in\mathbb{R}$ such that the sets $\Omega_j + j\eta$ ($j\in S$) are pairwise non-overlapping, and the set $\pi_\eta(\widetilde{\Omega})$ is a spectral set of $\mathbb{R}$, then $\widetilde{\Omega}$ is a spectral set of $\mathbb{R}\times\mathbb{Z}$.
\end{enumerate}
\end{corollary}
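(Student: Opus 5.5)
The statement to prove is Corollary~\ref{cor:lift-1D-to-2D}. The plan is to reduce everything to the sheared set \(\widetilde{\Omega}_\eta\) via Proposition~\ref{prop:translation-invariance}, and then verify directly that one-dimensional tilings or spectra of \(\pi_\eta(\widetilde{\Omega})\) lift to \(\widetilde{\Omega}_\eta\). The key observation is a pointwise identity. Since the sets \(\Omega_j+j\eta\) are pairwise non-overlapping, we have
\[
\mathbbm 1_{\pi_\eta(\widetilde{\Omega})}=\sum_{j\in S}\mathbbm 1_{\Omega_j+j\eta}\quad\text{a.e. on }\mathbb R,
\]
while \(\mathbbm 1_{\widetilde{\Omega}_\eta}(x,m)=\mathbbm 1_{\Omega_m+m\eta}(x)\) (understood as \(0\) if \(m\notin S\)). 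Consequently
\[
\sum_{k\in\mathbb Z}\mathbbm 1_{\widetilde{\Omega}_\eta}(x,m-k)=\mathbbm 1_{\pi_\eta(\widetilde{\Omega})}(x)\quad\text{a.e. }x,
\]
for every \(m\in\mathbb Z\).

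For part (1), let \(\rho\) be a (weak) tiling measure on \(\mathbb R\) for \(\pi_\eta(\widetilde{\Omega})\), meaning \(\mathbbm 1_{\pi_\eta(\widetilde{\Omega})}*\rho=1\) a.e. with \(\rho(\{0\})=1\) in the weak case. Set \(\nu:=\rho\times\delta_{\mathbb Z}\) on \(\mathbb R\times\mathbb Z\). By Tonelli,
\[
(\mathbbm 1_{\widetilde{\Omega}_\eta}*\nu)(x,m)=\int\sum_k\mathbbm 1_{\widetilde{\Omega}_\eta}(x-u,m-k)\,d\rho(u)=(\mathbbm 1_{\pi_\eta(\widetilde{\Omega})}*\rho)(x)=1
\]
for a.e. \(x\) and every \(m\). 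The measure \(\nu\) is nonnegative and locally finite, and it has mass \(\rho(\{0\})=1\) at the origin. If \(\rho=\delta_T\), then \(\nu=\delta_{T\times\mathbb Z}\), which gives a genuine tiling. Proposition~\ref{prop:translation-invariance}(i) then transfers the conclusion from \(\widetilde{\Omega}_\eta\) back to \(\widetilde{\Omega}\).

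For part (2), let \(\Lambda\) be a spectrum of \(\pi_\eta(\widetilde{\Omega})\). The candidate spectrum of \(\widetilde{\Omega}_\eta\) is \(\Lambda\times\{0\}\subset\mathbb R\times\mathbb T\). The map \(U:L^2(\widetilde{\Omega}_\eta)\to L^2(\pi_\eta(\widetilde{\Omega}))\) given by \(Uf(x+j\eta\ldots)\), more precisely \(Uf(x)=f(x,j)\) for \(x\in\Omega_j+j\eta\), is well defined a.e. because of the non-overlap. It is unitary, since the Haar measure is Lebesgue times counting measure. The character \(e^{2\pi i\lambda x}\) with \(\eta\)-component \(0\) is sent by \(U\) to \(e^{2\pi i\lambda x}\) on \(\pi_\eta(\widetilde{\Omega})\). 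Therefore orthogonality and completeness transfer at once, and Proposition~\ref{prop:translation-invariance}(ii) finishes the argument.

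There is essentially no obstacle here. The only care needed is measure-theoretic: the non-overlap must hold only up to null sets, and the a.e. identities must hold for each of the countably many layers \(m\), which is harmless since there are countably many.
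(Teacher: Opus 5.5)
Your proposal is correct and follows essentially the same route as the paper. The paper also passes to the sheared set \(\widetilde{\Omega}_\eta\) via Proposition~\ref{prop:translation-invariance}, lifts a tiling set \(T\) (or weak tiling measure) of \(\pi_\eta(\widetilde{\Omega})\) to the product \(T\times\mathbb Z\) (resp.\ \(\rho\times\delta_{\mathbb Z}\)), and lifts a spectrum \(\Lambda\) to \(\Lambda\times\{0\}\subset\mathbb R\times\mathbb T\); you additionally write out the layer-sum identity and the unitary identification \(L^2(\widetilde{\Omega}_\eta)\cong L^2(\pi_\eta(\widetilde{\Omega}))\) that the paper leaves implicit.
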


We now further restrict our attention to  $\widetilde{\Omega}$ being a union of finitely many pairwise non-overlapping unit intervals in $\R\times\Z$, i.e.
	\begin{equation}\label{eq:finite-intervals-RZ}
		\widetilde{\Omega}=(0,1)\times\{0\}+A,\quad A\subset\R\times\Z\ \text{is a finite set}.
	\end{equation}
For a Borel measure $\mu$ on $\R\times\Z$ and each $j\in\Z$, we define the fiber measure $\mu_j$ on $\R$ by
	\[
	\mu_j(E) \;:=\; \mu\bigl(E\times\{j\}\bigr),\qquad E\subset\R\ \text{Borel}.
	\]
	We also define the measure $\nu_j$ on $\R$ by
	\[
	\nu_j \;:=\sum_{(a,\ell)\in A}\mu_{j-\ell}*\delta_a.
	\]

\begin{proposition}\label{prop:WT-finite-intervals-slicing}
Let $\widetilde{\Omega}$ be as in \eqref{eq:finite-intervals-RZ} and $\mu$ be a {non-negative}, locally finite Borel measure on $\R\times\Z$. Suppose that $\mathbbm{1}_{\widetilde{\Omega}}*\mu=1$ a.e. on $\R\times\Z$ and {$\mu(\{(0,0)\})=1$}. Then for each $j\in\Z$, the measure $\nu_j$ is a {non-negative}, locally finite Borel measure on $\R$ such that 
	$$\mathbbm{1}_{(0,1)} * \nu_j = 1 \quad \text{a.e. on }\R.$$
	Moreover, if $(a,\ell)$ and $(a',\ell)$ are two distinct elements of $A$, then $a-a'\in\mathbb{Z}^*$.
\end{proposition}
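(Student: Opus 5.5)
We slice the identity $\mathbbm{1}_{\widetilde{\Omega}}*\mu=1$ along the integer levels and rewrite each slice as a one-dimensional convolution with $\mathbbm{1}_{(0,1)}$. Since the translates $(0,1)\times\{0\}+(a,\ell)$, $(a,\ell)\in A$, are pairwise non-overlapping, we have
\[
\mathbbm{1}_{\widetilde{\Omega}}=\sum_{(a,\ell)\in A}\mathbbm{1}_{(0,1)\times\{0\}}*\delta_{(a,\ell)}\quad\text{a.e.}
\]
For fixed $j\in\Z$ and $x\in\R$ this gives
\[
(\mathbbm{1}_{\widetilde{\Omega}}*\mu)(x,j)=\sum_{(a,\ell)\in A}\sum_{k\in\Z}\int_{\R}\mathbbm{1}_{(0,1)}(x-a-u)\mathbbm 1_{\{0\}}(j-\ell-k)\,d\mu_k(u).
\]
Only $k=j-\ell$ survives, so the right-hand side equals $\sum_{(a,\ell)\in A}(\mathbbm{1}_{(0,1)}*\mu_{j-\ell}*\delta_a)(x)=(\mathbbm{1}_{(0,1)}*\nu_j)(x)$. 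This step uses only Tonelli's theorem, since everything is non-negative.

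The a.e. statement on $\R\times\Z$ with the Haar measure (Lebesgue $\times$ counting) means that for every $j$ the identity holds for a.e. $x$. Hence $\mathbbm{1}_{(0,1)}*\nu_j=1$ a.e. on $\R$. Non-negativity of $\nu_j$ is clear. Local finiteness follows because each $\mu_k$ is locally finite (as $\mu$ is), and $\nu_j$ is a finite sum of translates of such measures; alternatively it follows directly from $\mathbbm{1}_{(0,1)}*\nu_j=1$, since $\nu_j((x-1,x))\le 1$ for a.e. $x$.

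For the second assertion, fix $j=\ell$, where $(a,\ell),(a',\ell)\in A$ are distinct, so $a\neq a'$. By the first part and Theorem~\ref{th-Keller} in dimension one (in the form used in Lemma~\ref{three-intervals-R-WT-necessary}: normalize so that some atom is at the origin and apply Corollary~\ref{cor:unit-interval-WTmeasure-constant-masses}), we need to locate atoms of $\nu_\ell$. Since $\mu_0(\{0\})=\mu(\{(0,0)\})=1$, the term $\mu_{\ell-\ell}*\delta_a=\mu_0*\delta_a$ gives $\nu_\ell(\{a\})\ge 1$, and likewise $\nu_\ell(\{a'\})\ge 1$.

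By Corollary~\ref{cor:unit-interval-WTmeasure-constant-masses} with $c=1$, $\nu_\ell$ is $1$-periodic. The constant $c=1$ forces $\nu_\ell([0,1))=1$, so any atom has mass at most $1$; hence $\nu_\ell(\{a\})=1$ exactly. Translating $\nu_\ell$ by $-a$ gives a measure $\nu'$ with $\mathbbm{1}_{(0,1)}*\nu'=1$ and $\nu'(\{0\})=1$, so $\nu'=\delta_\Z$ by the same corollary. Since $\nu_\ell$ also has an atom at $a'$, we get $a'-a\in\Z$, and $a\ne a'$ yields $a-a'\in\Z^*$.

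The only potentially delicate point is the Fubini/slicing step that passes from a.e. on $\R\times\Z$ to a.e. in $x$ for each fixed $j$. This is immediate for counting measure on $\Z$, so no step is a serious obstacle.
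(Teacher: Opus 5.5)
Your proof is correct and follows essentially the paper's argument: you slice $\mathbbm{1}_{\widetilde{\Omega}}*\mu$ level by level so that only $k=j-\ell$ survives, and you note that $\nu_\ell$ has atoms of mass exactly $1$ at $a$ and $a'$. The only difference is cosmetic: the paper concludes $a-a'\in\Z^*$ by citing Theorem~\ref{th-Keller} in dimension one, whereas you use Corollary~\ref{cor:unit-interval-WTmeasure-constant-masses} to get $\nu_\ell=\delta_{a+\Z}$, which is the same one-dimensional fact (and the periodicity you invoke is really Proposition~\ref{prop:unit-interval-WTmeasure-1periodic}).
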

\begin{proof}
Fix $j\in\Z$. Since $A$ is finite and each $\mu_{j-\ell}$ is a {non-negative}, locally finite Borel measure on $\R$, it follows that $\nu_j$ is also a {non-negative}, locally finite Borel measure on $\R$. 

Note that
	\[
	\mathbbm{1}_{\widetilde{\Omega}}
	=\sum_{(a,\ell)\in A}\mathbbm{1}_{(0,1)\times\{0\}} * \delta_{(a,\ell)}.
	\]
	Hence, for a.e. $x\in\R$,
	\begin{align*}
	1
	&= (\mathbbm{1}_{\widetilde{\Omega}} * \mu)(x,j)\\
	&= \sum_{(a,\ell)\in A}
	\bigl(\mathbbm{1}_{(0,1)\times\{0\}} * \delta_{(a,\ell)} * \mu\bigr)(x,j)\\
	&= \sum_{(a,\ell)\in A}
	\int_{\R\times\Z}
	\mathbbm{1}_{(0,1)\times\{0\}}(x-y-a,j-k-\ell)\,d\mu(y,k).
	\end{align*}
	Since
	\[
	\mathbbm{1}_{(0,1)\times\{0\}}(x-y-a,j-k-\ell)
	=\mathbbm{1}_{(0,1)}(x-y-a)\,\mathbbm{1}_{\{0\}}(j-k-\ell),
	\]
	only the slice $k=j-\ell$ contributes, and therefore, for a.e. $x\in\R$,
	\begin{align*}
	1
	&= \sum_{(a,\ell)\in A}
	\int_{\R}\mathbbm{1}_{(0,1)}(x-y-a)\,d\mu_{j-\ell}(y)\\
	&= \sum_{(a,\ell)\in A}
	\bigl(\mathbbm{1}_{(0,1)} * \mu_{j-\ell} * \delta_a\bigr)
    (x)\\
	&= \mathbbm{1}_{(0,1)} *
	\Big(\sum_{(a,\ell)\in A}\mu_{j-\ell} * \delta_a\Big)(x)\\
	&= \mathbbm{1}_{(0,1)} * \nu_j(x).
	\end{align*}

Now suppose  $(a,\ell),(a',\ell)$ are two distinct elements of $A$. Since $\mu(\{(0,0)\})=1$, by the definition of $\nu_\ell$, we have $\nu_\ell(\{a\}),\nu_\ell(\{a'\})\geq1$, while $\mathbbm{1}_{(0,1)}*\nu_\ell=1$ a.e. implies that every atom of $\nu_\ell$ has mass at most $1$; hence $\nu_\ell(\{a\})=\nu_\ell(\{a'\})=1$. By Theorem~\ref{th-Keller}, it follows that $a-a'\in\Z^*$.
\end{proof}

\subsection{Structure of weak tiles for three unit intervals in \texorpdfstring{$\R\times\Z$}{RtimesZ}} We are now fully equipped to prove Theorem \ref{prop-WT-3intervals-characterization}.



\subsubsection{\texorpdfstring{Case (1): $\ell_1=\ell_2=0$}{l1=l2=0}}

In this case, $\widetilde{\Omega}$ reduces to a union of three unit intervals in $\R$. Hence $\widetilde{\Omega}$ weakly tiles $\R\times\Z$ if and only if it weakly tiles $\R$ by translations, which is equivalent to saying that $\{0,a,b\}$ tiles $\Z$ by Proposition~\ref{prop:WT-finite-intervals-slicing} and Theorem~\ref{prop-three-intervals-R-tile-Z}. 


\begin{lemma}\label{lem:three-intervals-l1=l2=0}
Suppose that $\widetilde{\Omega}$ is as in \eqref{eq:three-intervals-Omega1} and that $\ell_1=\ell_2=0$. Then $\widetilde{\Omega}$ is a weak tile of $\R\times\Z$ if and only if $\{0,a,b\}$ is a tile of $\Z$. In this case, $\widetilde{\Omega}$ is both a tile and a spectral set in $\R\times\Z$.
\end{lemma}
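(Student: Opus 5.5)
Both directions reduce to the one-dimensional Theorem~\ref{prop-three-intervals-R-tile-Z}. When $\ell_1=\ell_2=0$, the set $\widetilde{\Omega}$ lies in the single layer $\mathbb R\times\{0\}$, and we write $\Omega_0:=(0,1)\cup(a,a+1)\cup(b,b+1)\subset\mathbb R$. Non-overlapping forces $0,a,b$ to be distinct, with pairwise differences of absolute value at least $1$.

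\emph{Necessity.} Let $\mu$ be a weak tiling measure for $\widetilde{\Omega}$ with $\mu(\{(0,0)\})=1$.
\begin{enumerate}
\item By the ``moreover'' part of Proposition~\ref{prop:WT-finite-intervals-slicing}, applied to the three elements $(0,0),(a,0),(b,0)$ of $A$, we get $a,b,a-b\in\mathbb Z^*$. So we may write $a=M$ and $b=N$ with $0,M,N$ distinct integers, which is exactly the form \eqref{eq:three-intervals-R}.
\item The first part of Proposition~\ref{prop:WT-finite-intervals-slicing}, at $j=0$, gives $\mathbbm 1_{(0,1)}*\nu_0=1$ a.e. Here $\nu_0=\mu_0*\delta_{\{0,M,N\}}$, so $\mathbbm 1_{\Omega_0}*\mu_0=1$ a.e. on $\mathbb R$.
\item Since $\mu_0(\{0\})=\mu(\{(0,0)\})=1$, the set $\Omega_0$ is a weak tile of $\mathbb R$.
\item Theorem~\ref{prop-three-intervals-R-tile-Z} then gives (4): $\{0,M/d,N/d\}\equiv\{0,1,2\}\pmod 3$ with $d=\gcd(M,N)$.
\item This residue condition is equivalent to $\{0,M,N\}$ tiling $\mathbb Z$. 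It implies that $\{0,1,\dots,d-1\}+3d\mathbb Z$ is a tiling set for $\{0,M,N\}$. Conversely, if $\{0,M,N\}$ tiles $\mathbb Z$ with set $T$, then $T+[0,1)$ tiles $\mathbb R$ by $\Omega_0$, and Theorem~\ref{prop-three-intervals-R-tile-Z} applies again.
\end{enumerate}

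\emph{Sufficiency.} Suppose $\{0,a,b\}$ tiles $\mathbb Z$; in particular $a,b\in\mathbb Z$. Then $\Omega_0=\{0,a,b\}+(0,1)$ tiles $\mathbb R$, and by Theorem~\ref{prop-three-intervals-R-tile-Z} it is also spectral. Apply Corollary~\ref{cor:lift-1D-to-2D} with $\eta=0$. Since $S=\{0\}$ has a single element, the non-overlapping hypothesis is trivial, and $\pi_0(\widetilde{\Omega})=\Omega_0$. So $\widetilde{\Omega}$ is a tile and a spectral set of $\mathbb R\times\mathbb Z$. Explicitly, the tiling set is $T\times\mathbb Z$ and the spectrum is $\Lambda\times\{0\}$. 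A tile is in particular a weak tile, which closes the equivalence.

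The one point needing care is step 1 of necessity. The weak tiling measure $\mu$ on $\mathbb R\times\mathbb Z$ may have mass on other layers, but only the slice $\mu_0$ enters the equation on layer $0$. Proposition~\ref{prop:WT-finite-intervals-slicing} handles this, and its atom argument shows the atoms of $\nu_0$ at $a$ and $b$ equal $1$. With that in place, Theorem~\ref{th-Keller} yields integrality of the translates, so no separate argument is needed.
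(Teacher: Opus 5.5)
Your proposal is correct and follows essentially the same route as the paper: you slice at level $0$ via Proposition~\ref{prop:WT-finite-intervals-slicing} to get a weak tiling of $\mathbb R$ by $(0,1)+\{0,a,b\}$ with $\mu_0(\{0\})=1$, use its ``moreover'' part (Keller) to obtain $a,b\in\mathbb Z$, and invoke Theorem~\ref{prop-three-intervals-R-tile-Z}, with the converse coming from the one-dimensional result lifted to $\mathbb R\times\mathbb Z$ (your explicit use of Corollary~\ref{cor:lift-1D-to-2D} with $\eta=0$ just spells out what the paper leaves implicit). The only blemish is the garbled phrase ``$T+[0,1)$ tiles $\mathbb R$ by $\Omega_0$'': you mean $\Omega_0+T=(0,1)+\mathbb Z$, so that $T$ is a tiling set for $\Omega_0$, and that remark is not needed for the argument anyway.
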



\begin{figure}[H]
	\centering
	\begin{tikzpicture}[scale=1]
		\pgfmathsetmacro{\a}{2}
		\pgfmathsetmacro{\b}{4}

		\foreach \m in {-1,0,1,2}{
			\begin{scope}
				\clip (-3.5,\m/2) rectangle (6,\m/2+0.2);
				\begin{scope}[shift={(0,\m/2)}]
					\foreach \n in {-2,-1,0,1,2}{
						\foreach \s in {0,1}{
							\fill[myblue] ({6*\n+\s},0) rectangle ({6*\n+\s+1},0.2);
							\fill[myorange] ({6*\n+2+\s},0) rectangle ({6*\n+3+\s},0.2);
							\fill[myred] ({6*\n+4+\s},0) rectangle ({6*\n+5+\s},0.2);

							\draw[black] ({6*\n+\s},0) rectangle ({6*\n+\s+1},0.2);
							\draw[black] ({6*\n+2+\s},0) rectangle ({6*\n+3+\s},0.2);
							\draw[black] ({6*\n+4+\s},0) rectangle ({6*\n+5+\s},0.2);
						}
					}
				\end{scope}
			\end{scope}

			\draw[thick] (-3.5,\m/2)--(6,\m/2)
			node[right,scale=0.8] {$\mathbb R\times\{\m\}$};
		}

		\draw[white,line width=3pt] (0,0) rectangle (1,0.2);
		\draw[white,line width=3pt] (\a,0) rectangle ({\a+1},0.2);
		\draw[white,line width=3pt] (\b,0) rectangle ({\b+1},0.2);

		\draw[black,line width=1.5pt] (0,0) rectangle (1,0.2);
		\draw[black,line width=1.5pt] (\a,0) rectangle ({\a+1},0.2);
		\draw[black,line width=1.5pt] (\b,0) rectangle ({\b+1},0.2);
	\end{tikzpicture}

	\caption{A translational tiling of \(\mathbb R\times\mathbb Z\) by three unit intervals with \(\ell_1=\ell_2=0\), illustrated for \(\{0,a,b\}=\{0,2,4\}\). The framed intervals are precisely \(\widetilde{\Omega}\).}
	\label{fig:three-unit-intervals-same-level}
\end{figure}
\begin{proof}
By the one-dimensional result of Theorem~\ref{th-three-cubes-parallel}, it suffices to show that if $\widetilde{\Omega}$ is a weak tile of $\R\times\Z$, then $\{0,a,b\}$ is a tile of $\Z$. Suppose that $\mu$ is a {non-negative}, locally finite Borel measure on $\R\times\Z$ such that 
	$$\mathbbm{1}_{\widetilde{\Omega}}*\mu=1 \qae \text{on } \R\times\Z\quad\text{with }\mu(\{(0,0)\})=1.$$
	By Proposition~\ref{prop:WT-finite-intervals-slicing}, for $j=0$, we have 
	$$\mathbbm{1}_{(0,1)}*(\mu_0+\mu_0*\delta_a+\mu_0*\delta_b)=1\qae\quad\text{with }\mu_0(\{0\})=1.$$
	It follows that 
	$$\mathbbm{1}_{(0,1)+\{0,a,b\}}*\mu_0=1\qae\quad\text{with }\mu_0(\{0\})=1,$$
	which implies $(0,1)+\{0,a,b\}$ is a weak tile of $\R$. By Proposition~\ref{prop:WT-finite-intervals-slicing}, we have $a,b\in\Z$. Hence, by Theorem~\ref{prop-three-intervals-R-tile-Z}, $\{0,a,b\}$ is a tile of $\Z$.
\end{proof}

\subsubsection{Case (2): Exactly two of \texorpdfstring{$0,\ell_1,\ell_2$}{0,l1,l2} coincide} We only discuss the case $0\neq \ell_1=\ell_2$. The other two cases can be reduced to this case by translation.
\begin{lemma}\label{lem:three-intervals-l1=l2-not0}
	Suppose that $\widetilde{\Omega}$ is as in \eqref{eq:three-intervals-Omega1} and $0\neq \ell_1=\ell_2$. Then the following are equivalent:
	\begin{enumerate}
		\item[\upshape (1)] $\widetilde{\Omega}$ is a weak tile of $\R\times\Z$;
		\item[\upshape (2)] $a-b\in\Z^*$;
		\item[\upshape (3)] $\widetilde{\Omega}$ is a tile of $\R\times\Z$;
		\item[\upshape (4)] $\widetilde{\Omega}$ is a spectral set of $\R\times\Z$.
	\end{enumerate}
\end{lemma}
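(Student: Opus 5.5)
We prove (1)$\Rightarrow$(2)$\Rightarrow$(3),(4). The implications (3)$\Rightarrow$(1) and (4)$\Rightarrow$(1) hold in general, as noted at the start of Section~\ref{S7} (tiles are weak tiles; spectral sets are weak tiles via Theorems~\ref{thm:1.1} and~\ref{thm:layered-reduction}).

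\emph{(1)$\Rightarrow$(2).} Write $\ell:=\ell_1=\ell_2\neq0$ and $A=\{(0,0),(a,\ell),(b,\ell)\}$. The two elements $(a,\ell)$ and $(b,\ell)$ of $A$ are distinct and lie on the same level, since the intervals are non-overlapping. The last assertion of Proposition~\ref{prop:WT-finite-intervals-slicing} then gives $a-b\in\Z^*$ directly. (The underlying reason is that $\nu_\ell=\mu_\ell+\mu_0*\delta_a+\mu_0*\delta_b$ has unit atoms at $a$ and $b$ and satisfies $\mathbbm 1_{(0,1)}*\nu_\ell=1$, so Theorem~\ref{th-Keller} applies.) This direction is essentially free.

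\emph{(2)$\Rightarrow$(3),(4).} Set $m:=b-a\in\Z^*$. By Proposition~\ref{prop:translation-invariance}, apply the shear $T_\xi$ with $\xi$ chosen so that $a+\ell\xi=c$ for a convenient value $c$. The set becomes
\[
(0,1)\times\{0\}\cup(c,c+1)\times\{\ell\}\cup(c+m,c+m+1)\times\{\ell\}.
\]
We then use Corollary~\ref{cor:lift-1D-to-2D} with $\eta=0$, projecting to $(0,1)\cup(c,c+1)\cup(c+m,c+m+1)$. Choosing $c=2m$ (or $c=-m$ if $m<0$; more symmetrically, $c$ with $\{0,c,c+m\}=\{0,m,2m\}$ up to order, e.g.\ $c=m$) gives the projection $(0,1)+\{0,m,2m\}$. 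The components of this projection are pairwise non-overlapping because $m\neq0$, and $\{0,1,2\}$ is a complete residue system mod $3$. By Theorem~\ref{prop-three-intervals-R-tile-Z} (with $d=|m|$), this projection is a tile and a spectral set of $\R$. Corollary~\ref{cor:lift-1D-to-2D} therefore lifts both properties to the sheared set, and Proposition~\ref{prop:translation-invariance} transfers them back to $\widetilde\Omega$.

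The only delicate point is checking that the shear parameter is legitimate: we need $\xi=(c-a)/\ell$, which is fine since $\ell\neq0$. We also need the sheared set to still be of the form \eqref{eq:Omega}, which holds because the level-$0$ interval is unmoved. We expect no step to be a real obstacle; the main care is bookkeeping the shear so that the projected intervals coincide with $\{0,m,2m\}+(0,1)$.
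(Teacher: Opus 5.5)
Your proof is correct and follows the paper's route. For (1)$\Rightarrow$(2) you use the last assertion of Proposition~\ref{prop:WT-finite-intervals-slicing}. For (2)$\Rightarrow$(3),(4) you choose the shear/projection parameter so that the projected set is a three-interval tile (hence a spectral set) of $\R$, then lift with Corollary~\ref{cor:lift-1D-to-2D}. Your target $\{0,m,2m\}$ is a simpler choice than the paper's $\{0,3^su,3^s(u-q)\}$.

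One slip to fix: with your convention $m=b-a$, the parenthetical choices $c=2m$ and $c=-m$ give $\{0,2m,3m\}$ (not a tile of $\Z$) and $\{0,-m,0\}$ (overlapping intervals). They would be correct for $m=a-b$, but the choice $c=m$, which you actually use, works.
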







\begin{figure}[H]
	\centering
	\begin{tikzpicture}[scale=1]
		\pgfmathsetmacro{\a}{0.65}
		\pgfmathsetmacro{\b}{2.65}

		\foreach \m in {-3,-2,-1,0,1,2,3,4,5}{
			\pgfmathtruncatemacro{\rzero}{mod(-\m+30,3)}
			\pgfmathtruncatemacro{\rone}{mod(\rzero+1,3)}
			\pgfmathtruncatemacro{\rtwo}{mod(\rzero+2,3)}

			\begin{scope}
				\clip (-3.5,\m/2) rectangle (4,\m/2+0.2);
				\begin{scope}[shift={({\m*\a},\m/2)}]
					\foreach \n in {-3,-2,-1,0,1,2,3}{
						\foreach \s in {0,1}{
							\fill[myblue] ({6*\n+2*\rzero+\s},0) rectangle ({6*\n+2*\rzero+\s+1},0.2);
							\fill[myorange] ({6*\n+2*\rone+\s},0) rectangle ({6*\n+2*\rone+\s+1},0.2);
							\fill[myred] ({6*\n+2*\rtwo+\s},0) rectangle ({6*\n+2*\rtwo+\s+1},0.2);

							\draw[black] ({6*\n+2*\rzero+\s},0) rectangle ({6*\n+2*\rzero+\s+1},0.2);
							\draw[black] ({6*\n+2*\rone+\s},0) rectangle ({6*\n+2*\rone+\s+1},0.2);
							\draw[black] ({6*\n+2*\rtwo+\s},0) rectangle ({6*\n+2*\rtwo+\s+1},0.2);
						}
					}
				\end{scope}
			\end{scope}

			\draw[thick] (-3.5,\m/2)--(4,\m/2)
			node[right,scale=0.8] {$\mathbb R\times\{\m\}$};
		}


		\draw[white,line width=3pt] (0,0) rectangle (1,0.2);
		\draw[white,line width=3pt] (\a,0.5) rectangle ({\a+1},0.7);
		\draw[white,line width=3pt] (\b,0.5) rectangle ({\b+1},0.7);

		\draw[black,line width=1.5pt] (0,0) rectangle (1,0.2);
		\draw[black,line width=1.5pt] (\a,0.5) rectangle ({\a+1},0.7);
		\draw[black,line width=1.5pt] (\b,0.5) rectangle ({\b+1},0.7);
	\end{tikzpicture}

	\caption{A translational tiling of \(\mathbb R\times\mathbb Z\) by three unit intervals with $\ell_1=\ell_2=1$. The framed intervals are precisely \(\widetilde{\Omega}\).}
	\label{fig:three-unit-intervals}
\end{figure}

\begin{proof}
	Since $(3), (4)\Rightarrow (1)$, it suffices to show that $(1)\Rightarrow(2)\Rightarrow (3),(4)$.

	Since $0\neq \ell_1=\ell_2$, $(1)\Rightarrow(2)$ follows from Proposition~\ref{prop:WT-finite-intervals-slicing}. 
	Now we prove $(2)\Rightarrow(3)$. Suppose $a-b\in\Z^*$. By Corollary~\ref{cor:lift-1D-to-2D}, it suffices to find $\eta$ such that $$E_\eta:=\{0,\,a+\ell_1\eta,\,b+\ell_1\eta\}$$ is a three-element tile of $\Z$. Without loss of generality, we may assume that $a-b>0$. Write $a-b=3^s q$, where $s\geq 0$ and $3\nmid q$. Let $u$ be an integer such that 
	$$
	\begin{cases}
	u=2 & \text{if } q\equiv 1\pmod{3},\\
	u=1 & \text{if } q\equiv 2\pmod{3}.
	\end{cases}$$ 
	Then \(\{0,u,u-q\}\) is a complete residue system modulo \(3\). We take \(\eta=(3^s u-a)/\ell_1\). Then
		\[
		E_\eta=\{0,3^s u,3^s(u-q)\}
		\]
		is a tile of \(\mathbb Z\), since \(\{0,u,u-q\}\) is a complete residue system modulo \(3\).

	For $(2)\Rightarrow (4)$, since Fuglede's conjecture holds for the union of three non-overlapping unit intervals in $\R$ by Theorem \ref{th-three-cubes-parallel}, $\pi_\eta(\widetilde{\Om})$ is also a spectral set of $\R$. By Corollary~\ref{cor:lift-1D-to-2D}, $\widetilde{\Omega}$ is a spectral set of $\R\times\Z$.
\end{proof}

\subsubsection{Case (3): \texorpdfstring{$0,\ell_1,\ell_2$}{0,l1,l2} are distinct integers}
Now we focus on Case 3, i.e., $0,\ell_1,\ell_2$ are distinct integers.
If $\{0,\ell_1,\ell_2\}$ forms a tile of $\Z$, then it is immediate that $\widetilde{\Omega}$ tiles $\R\times\Z$. For later reference, we state this as a lemma.
\begin{lemma}\label{lem:three-intervals-0-l1-l2-tile-Z}
	Suppose that $\widetilde{\Omega}$ is as in \eqref{eq:three-intervals-Omega1}. If $\{0,\ell_1,\ell_2\}$ is a three-element tile of $\Z$, then $\widetilde{\Omega}$ is both a tile and a spectral set of $\R\times\Z$.
\end{lemma}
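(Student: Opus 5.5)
The plan is to realize $\widetilde{\Omega}$ as a product-like set built from the tiling of $\Z$ by $\{0,\ell_1,\ell_2\}$, and to check both properties directly on $\R\times\Z$. Since $\{0,\ell_1,\ell_2\}$ is a three-element tile of $\Z$, Newman's theorem (or the elementary structure used in Theorem~\ref{prop-three-intervals-R-tile-Z}, with $d=\gcd(\ell_1,\ell_2)$) gives $\{0,\ell_1/d,\ell_2/d\}\equiv\{0,1,2\}\pmod 3$. Consequently $T_0:=\{0,1,\dots,d-1\}+3d\Z$ satisfies $\{0,\ell_1,\ell_2\}\oplus T_0=\Z$.

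\emph{Tiling.} We take $\mathcal T:=\Z\times T_0\subset\R\times\Z$. For a fixed level $j\in\Z$ there is exactly one pair $(\ell,t)$ with $\ell\in\{0,\ell_1,\ell_2\}$ and $t\in T_0$ such that $\ell+t=j$. At that level the translates $\widetilde{\Omega}+(n,t)$, $n\in\Z$, contribute the single interval $(c,c+1)+n$, where $c\in\{0,a,b\}$ is the horizontal offset of the piece at height $\ell$. These translates tile $\R\times\{j\}$ up to a null set. The other pairs contribute nothing at level $j$, because the three heights are distinct. Hence $\mathbbm 1_{\widetilde{\Omega}}*\delta_{\mathcal T}=1$ a.e., and the shifts $a,b$ play no role.

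\emph{Spectrality.} We look for a spectrum of the form $\Lambda:=\Z\times\Gamma\subset\R\times\T$, where $\Gamma=\{0,\tfrac1{3d},\tfrac2{3d}\}+\tfrac1d\Z \bmod 1$. Equivalently, $\Gamma$ should be a spectrum of the finite set $\{0,\ell_1,\ell_2\}$ in $\Z$ with dual $\T$. The Fourier transform is
\[
\widehat{\mathbbm 1_{\widetilde\Omega}}(\lambda,\eta)=\widehat{\mathbbm 1_{(0,1)}}(\lambda)\bigl(1+e^{-2\pi i(a\lambda+\ell_1\eta)}+e^{-2\pi i(b\lambda+\ell_2\eta)}\bigr).
\]
For $\lambda\in\Z^*$ the first factor vanishes. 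For $\lambda=0$ the second factor reduces to the mask $1+e^{-2\pi i\ell_1\eta}+e^{-2\pi i\ell_2\eta}$, which vanishes on $(\Gamma-\Gamma)\setminus\{0\}$ because $\{0,\ell_1/d,\ell_2/d\}$ is a complete residue system mod $3$. This is the standard spectrum for a complete residue system: in $\T$ the set $\Gamma$ has exactly $3$ elements modulo $1/d$-shifts within the relevant group. This gives orthogonality.

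The main point to settle is completeness, since the phases $a\lambda$ spoil a naive product structure. We will check it through the Parseval/tiling identity
\[
\sum_{(n,\gamma)\in\Lambda}|\widehat{\mathbbm 1_{\widetilde\Omega}}(\lambda-n,\eta-\gamma)|^2=9,
\]
the analogue of the computation in the proof of Theorem~\ref{th-two-cubes}. Expanding $|1+\cdots|^2$ gives $3$ plus cross terms of the form $e^{-2\pi i((a)(\lambda-n)+\ell_1(\eta-\gamma))}$ and similar. Summing over $\gamma\in\Gamma$ kills each cross term, because $\sum_{\gamma}e^{2\pi i k\gamma}=0$ for $k\in\{\pm\ell_1,\pm\ell_2,\pm(\ell_1-\ell_2)\}$ by the residue condition. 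This holds independently of $n$ and of the horizontal phase, so the cross terms vanish for every $n$. What remains is $3\cdot|\Gamma|\sum_n|\widehat{\mathbbm 1_{(0,1)}}(\lambda-n)|^2=9$.

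Care is needed to count $\Gamma$ correctly as a subset of $\T$. If $d>1$, the set $\tfrac1d\Z\bmod 1$ has $d$ elements, so $|\Gamma|=3d$ and the right density fails. To avoid this we will first apply the shear of Proposition~\ref{prop:translation-invariance}, or the scaling $j\mapsto j$, and reduce to $d=1$. When $d>1$ the levels split into $d$ cosets $d\Z+r$. For an honest proof we instead take $\Gamma=\{0,\tfrac1{3d},\tfrac2{3d}\}$ exactly (three points). The vanishing of the character sums then requires $\ell_i/d\not\equiv0$ and $(\ell_1-\ell_2)/d\not\equiv0\pmod3$, which is exactly the residue condition. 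The displayed sum then equals $9$ by the computation above, giving completeness and hence spectrality.
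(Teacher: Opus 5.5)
Your final argument is correct and matches the paper's proof: the tiling set $\Z\times T_0$ and the spectrum $\Z\times\Gamma$ with $\Gamma=\{0,\tfrac1{3d},\tfrac2{3d}\}$ are the product constructions $\Z\times T$ and $\Z\times\Lambda$ used there, and your Parseval computation (the cross terms cancel after summing over $\Gamma$, whatever $a$ and $b$ are) supplies the verification the paper leaves to the reader. You should delete the detour, though: $\{0,\tfrac1{3d},\tfrac2{3d}\}+\tfrac1d\Z$ is not orthogonal when $d>1$ (the mask equals $3$ at $\eta=\tfrac1d$), and no shear $T_\xi$ can reduce to $d=1$, since shears do not change the levels.
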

\begin{figure}[!ht]
	\centering
	\begin{tikzpicture}[scale=1]
		\pgfmathsetmacro{\a}{0.65}
		\pgfmathsetmacro{\b}{1.85}
		\pgfmathsetmacro{\ellone}{1}
		\pgfmathsetmacro{\elltwo}{2}

		\foreach \m in {-3,-2,-1,0,1,2,3,4,5}{
			\pgfmathtruncatemacro{\r}{mod(\m+300,3)}

			\ifcase\r
				\def\thiscolor{myblue}
				\pgfmathsetmacro{\xshift}{0}
			\or
				\def\thiscolor{myorange}
				\pgfmathsetmacro{\xshift}{\a}
			\or
				\def\thiscolor{myred}
				\pgfmathsetmacro{\xshift}{\b}
			\fi

			\begin{scope}
				\clip (-3.5,\m/2) rectangle (4,\m/2+0.2);
				\begin{scope}[shift={(\xshift,\m/2)}]
					\foreach \n in {-7,-6,-5,-4,-3,-2,-1,0,1,2,3,4,5}{
						\fill[\thiscolor] (\n,0) rectangle (\n+1,0.2);
						\draw[black] (\n,0) rectangle (\n+1,0.2);
					}
				\end{scope}
			\end{scope}

			\draw[thick] (-3.5,\m/2)--(4,\m/2)
			node[right,scale=0.8] {$\mathbb R\times\{\m\}$};
		}

		\draw[white,line width=3pt] (0,0) rectangle (1,0.2);
		\draw[white,line width=3pt] (\a,0.5) rectangle ({\a+1},0.7);
		\draw[white,line width=3pt] (\b,1) rectangle ({\b+1},1.2);

		\draw[black,line width=1.5pt] (0,0) rectangle (1,0.2);
		\draw[black,line width=1.5pt] (\a,0.5) rectangle ({\a+1},0.7);
		\draw[black,line width=1.5pt] (\b,1) rectangle ({\b+1},1.2);
	\end{tikzpicture}

	\caption{A translational tiling of \(\mathbb R\times\mathbb Z\) in the case where \(\{0,\ell_1,\ell_2\}\) tiles \(\mathbb Z\). Here \(\{\ell_1,\ell_2\}=\{1,2\}\), with tiling complement \(3\mathbb Z\), and the translation set is \(\mathbb Z\times3\mathbb Z\). The framed intervals are precisely \(\widetilde{\Omega}\).}
	\label{fig:three-levels-tile-Z}
\end{figure}

\begin{proof}
	Suppose that $\{0,\ell_1,\ell_2\}$ tiles $\Z$ with a tiling set $T\subset \Z$. Define
	\[
	T'=\Z\times T\subset \R\times\Z.
	\]
	Then it is straightforward to verify that $\widetilde{\Omega}$ tiles $\R\times\Z$ by translations with tiling set $T'$.
	
	On the other hand, since $\{0,\ell_1,\ell_2\}$ is a tile of $\Z$, it is also a spectral set of $\Z$. Let $\Lambda$ be a spectrum for $\{0,\ell_1,\ell_2\}$. Define
	\[
	\Sigma\;:=\Z\times\Lambda\subset \R\times\mathbb{T}.
	\]
	Then $\Sigma$ is a spectrum for $\widetilde{\Omega}$. Therefore, $\widetilde{\Omega}$ is both a tile and a spectral set of $\R\times\Z$.
\end{proof}

In what follows, we assume that \(\{0,\ell_1,\ell_2\}\) does not tile \(\mathbb Z\). We first show that if \(\widetilde{\Omega}\) is a weak tile of \(\mathbb R\times\mathbb Z\), then its three unit intervals must satisfy a rigid arithmetic alignment: there exists \(\xi\in\mathbb R\) such that their horizontal positions on the levels \(0,\ell_1,\ell_2\), modulo \(1\), are respectively \(0,\ell_1\xi,\ell_2\xi\). See Figure~\ref{fig:compatible-incompatible-013} for an illustration of this phenomenon. 
\begin{proposition}\label{prop:WT-structure-necessary-condition}
Let $\widetilde{\Omega}$ be as in \eqref{eq:three-intervals-Omega1}, and let $\mu$ be a {non-negative}, locally finite measure on $\R\times\Z$ such that
	\[
	\mathbbm{1}_{\widetilde{\Omega}} * \mu = 1 \qquad \text{a.e. on } \R\times\Z,
	\qquad\text{and}\qquad
	\mu\bigl(\{(0,0)\}\bigr)=1 .
	\]
	Assume moreover that $\{0,\ell_1,\ell_2\}$ is not a tile of $\Z$.
	Then there exists $\xi\in\R$ such that
	\begin{equation}\label{eq:three-intervals-Omega-WT-structure}
		\widetilde{\Om} = (0,1)\times\{0\}+\{(0,0),(m+\ell_1\xi,\ell_1),(n+\ell_2\xi,\ell_2)\}\quad  \text{with}\ m,n\in\Z, \xi\in\R.
	\end{equation}
\end{proposition}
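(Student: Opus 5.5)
The plan is to work on the Fourier side in the dual group $\R\times\T$, exploiting the one-dimensional rigidity coming from Proposition~\ref{prop:WT-finite-intervals-slicing}. Write $F=\{0,\ell_1,\ell_2\}$ and
\[
m(\lambda,\eta)=1+e^{-2\pi i(a\lambda+\ell_1\eta)}+e^{-2\pi i(b\lambda+\ell_2\eta)},
\]
so that $\widehat{\mathbbm 1_{\widetilde\Omega}}(\lambda,\eta)=\widehat{\mathbbm 1_{(0,1)}}(\lambda)\,m(\lambda,\eta)$.

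\textbf{Step 1: each fibre measure is $1$-periodic.} By Proposition~\ref{prop:WT-finite-intervals-slicing}, each $\nu_j=\mu_j+\mu_{j-\ell_1}*\delta_a+\mu_{j-\ell_2}*\delta_b$ satisfies $\mathbbm 1_{(0,1)}*\nu_j=1$. Hence Proposition~\ref{prop:unit-interval-WTmeasure-1periodic} shows that $\nu_j$ is $1$-periodic. In Fourier terms (taking the partial transform in $x$ and then in $j$), this says that $m\cdot\widehat\mu$ is supported in $\Z\times\T$.

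\textbf{Step 2: locate $\supp\widehat\mu$ off $\Z\times\T$.} From Step 1, $\supp\widehat\mu\setminus(\Z\times\T)$ lies in the zero set of $m$. Since $1+z+w=0$ with $|z|=|w|=1$ forces $\{z,w\}=\{\omega,\bar\omega\}$, this zero set is the union of two cosets $\pm(\gamma+L)$. Here
\[
L=\{(\lambda,\eta):\ a\lambda+\ell_1\eta\in\Z,\ b\lambda+\ell_2\eta\in\Z\}
\]
(taken modulo $\{0\}\times\Z$), and $\gamma$ solves $a\lambda+\ell_1\eta\equiv \tfrac13$, $b\lambda+\ell_2\eta\equiv\tfrac23$. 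The target conclusion is exactly that $L$ contains a point with $\lambda=1$. Indeed, if $(1,-\xi)\in L$, then $a-\ell_1\xi=:m\in\Z$ and $b-\ell_2\xi=:n\in\Z$, which is \eqref{eq:three-intervals-Omega-WT-structure}.

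\textbf{Step 3: use that $F$ does not tile $\Z$.} Suppose $\widehat\mu$ were supported in $\Z\times\T$. Then $\mu$ is $1$-periodic in $x$, and averaging over one period yields a nonnegative measure $\rho$ on $\Z$ with $\mathbbm 1_F*\rho=1$ and $\rho(\{0\})\ge1$, hence $\rho(\{0\})=1$. So $F$ would be a weak tile of $\Z$. The argument of Lemmas~\ref{three-intervals-R-WT-necessary}--\ref{lem:three-points-weak-tiling-measure} (the propagation on the lattice generated by $\ell_1,\ell_2$, combined with Lemma~\ref{lem:MN-mod-3-generate-gcd}) shows that a three-element weak tile of $\Z$ tiles $\Z$, a contradiction. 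Therefore $\widehat\mu$ has a genuine component on $\pm(\gamma+L)$ away from $\lambda\in\Z$. In particular these two cosets are nonempty and $L$ is a rank-two lattice whenever $(a,\ell_1)$ and $(b,\ell_2)$ are independent. In the degenerate (parallel) case, a direct computation should give the $\xi$ immediately.

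\textbf{Step 4 (main obstacle): force $L$ to contain a point with $\lambda=1$.} The idea is to combine positivity of $\mu$ with the unit atom at the origin, as in the proof of Theorem~\ref{th-Keller}. The key fact is that a positive measure has a positive-definite transform, and its restriction to the discrete set $\pm(\gamma+L)$ carries nonnegative weights. Concretely, I would decompose $\mu=\mu_{\rm per}+\sum_{\theta\in\pm(\gamma+L)}c_\theta e^{2\pi i\langle\theta,\cdot\rangle}$ (using Lemma~\ref{lem:atom-existence} to isolate the point masses of $\widehat\mu$), and then test against translates by $(1,0)$. Since each $\nu_j$ is $1$-periodic, $\mu-\mu*\delta_{(1,0)}$ must be annihilated by $m$. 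This relates the phases $e^{2\pi i\theta_1}$ for $\theta\in\gamma+L$ to those on $-\gamma-L$. Positivity together with the atom $\mu(\{(0,0)\})=1$ should then force $e^{2\pi i\theta_1}$ to be constant along $\gamma+L$, i.e.\ $L\ni(1,\eta)$ for some $\eta$. I expect this to be the delicate step. If the direct phase argument stalls, the fallback is a Kolountzakis-type auxiliary function $f_\varepsilon\ge0$ with $\widehat{f_\varepsilon}$ supported near $\pm\gamma+L$. Feeding this into Proposition~\ref{prop-supp}(2) should produce a functional tiling identity whose value at the origin is incompatible with the atom unless $(1,-\xi)\in L$.
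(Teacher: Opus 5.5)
Your Steps 1--3 are essentially sound. In Step 3 you should apply Lemma~\ref{lem:atom-existence} fiber by fiber to exclude transversal derivative terms before concluding that $\mu$ is $1$-periodic in $x$. However, these steps never reach the actual content of the proposition.

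In the non-parallel case put $\Delta:=a\ell_2-b\ell_1\neq0$ and $d=\gcd(\ell_1,\ell_2)$. Solving $a\lambda+\ell_1\eta=p$, $b\lambda+\ell_2\eta=q$ shows that the first coordinates of points of $L$ form exactly $\frac{d}{\Delta}\Z$. So the conclusion is the arithmetic statement $\Delta/d\in\Z$. Steps 1--3 only tell you that $\widehat\mu$ charges some point of $\pm(\gamma+L)$ off $\Z\times\T$. Such points exist both when $\Delta/d\in\Z$ and when it is not (e.g.\ $\Delta/d=2$ or $\tfrac12$), so this carries no information about the conclusion.

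Everything therefore rests on Step 4, which is not an argument and aims at the wrong target. Three problems:
\begin{itemize}
\item Positivity of $\mu$ makes $\widehat\mu$ positive-definite, not a measure with nonnegative point masses. For instance, $(1-\cos 2\pi\theta x)\,dx\ge0$ has transform $\delta_0-\tfrac12(\delta_\theta+\delta_{-\theta})$.
\item More seriously, ``$e^{2\pi i\theta_1}$ constant along $\gamma+L$'' means $\frac{d}{\Delta}\Z\subset\Z$, i.e.\ $d/\Delta\in\Z$. That is not the needed $1\in\frac{d}{\Delta}\Z$, and it is false for genuine tiles. Take $\ell_1=1$, $\ell_2=3$, $a=0$, $b=2$. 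Then $\{0,1,3\}$ does not tile $\Z$, and $\widetilde\Omega$ tiles $\R\times\Z$ with $\mu=\delta_\Gamma$, $\Gamma=\{(3s-k,k):s,k\in\Z\}$ (cf.\ Lemma~\ref{lem:Qlin-indep-notile-tiling}). Yet $L=\tfrac12\Z\times\{0\}$. So the phase mechanism you hope for would prove a false statement.
\item The decomposition $\mu=\mu_{\mathrm{per}}+\sum c_\theta e^{2\pi i\langle\theta,\cdot\rangle}$ is unjustified at points of $\pm(\gamma+L)$ lying on $\Z\times\T$, and the fallback via Proposition~\ref{prop-supp}(2) is not specified.
\end{itemize}

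The missing idea is a quantitative, physical-space use of the unit atom and positivity, fiber by fiber, which is what the paper does.
\begin{itemize}
\item From $\mu(\{(0,0)\})=1$ and Corollary~\ref{cor:unit-interval-WTmeasure-constant-masses} one gets $\nu_0=\delta_\Z$, $\nu_{\ell_1}=\delta_{a+\Z}$, $\nu_{\ell_2}=\delta_{b+\Z}$. Hence every atom of $\mu_{\ell_1}$ lies in $a+\Z$, and every atom of $\mu_{\ell_2}$ lies in $b+\Z$.
\item Restricting $\mathbbm 1_{(0,1)}*\nu_j=1$ to the intervals $(0,1)+ma+nb$ gives the same triangle rule as in the one-dimensional case. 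So for $m\equiv n\pmod 3$ the interval $(0,1)+ma+nb$ is tiled by $\mu_{m\ell_1+n\ell_2}$ alone.
\item With Lemma~\ref{lem:MN-mod-3-generate-gcd}, which is where non-tiling of $\{0,\ell_1,\ell_2\}$ enters, this reaches every fiber in $d\Z$. For each representation $j=m\ell_1+n\ell_2$ it forces a unit atom of $\mu_j$ at a point $\equiv ma+nb\pmod 1$ (Lemma~\ref{lem:atoms-on-gcd-fibers}).
\item Choose $m_d\ell_1+n_d\ell_2=d$. Compare the representation $\ell_1=\frac{\ell_1}{d}(m_d\ell_1+n_d\ell_2)$ with $\nu_{\ell_1}=\delta_{a+\Z}$, and likewise for $\ell_2$. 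This yields $a\equiv\frac{\ell_1}{d}\xi_d$ and $b\equiv\frac{\ell_2}{d}\xi_d\pmod 1$ with $\xi_d=m_da+n_db$, which is the conclusion with $\xi=\xi_d/d$.
\end{itemize}
Nothing in your Step 4 replaces this rigidity argument.
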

\begin{figure}[!ht]
	\centering
	\begin{tikzpicture}[scale=0.9]
		\pgfmathsetmacro{\xival}{sqrt(2)}
		\pgfmathsetmacro{\wrongshift}{3*sqrt(2)+0.5}
		\pgfmathsetmacro{\h}{0.18}

		\begin{scope}
			\node[font=\small] at (3,2.15) {\textup{Compatible displacements}};

			\foreach \y/\lab in {0/0,0.5/1,1/2,1.5/3}{
				\draw[gray!55,thin] (-0.2,\y)--(6.1,\y);
				\node[right,font=\scriptsize,gray!75] at (6.2,\y)
				{$\mathbb R\times\{\lab\}$};
			}

			\draw[dashed,gray!70,line width=0.8pt]
				(0,0)--({3*\xival},1.5);

			\filldraw[fill=myblue,draw=black]
				(0,0) rectangle (1,\h);
			\filldraw[fill=myorange,draw=black]
				(\xival,0.5) rectangle ({\xival+1},{0.5+\h});
			\filldraw[fill=myred,draw=black]
				({3*\xival},1.5) rectangle ({3*\xival+1},{1.5+\h});

			\fill (0,0) circle (1.2pt);
			\fill (\xival,0.5) circle (1.2pt);
			\fill ({3*\xival},1.5) circle (1.2pt);

			\node[font=\scriptsize,align=center] at (3,-0.48)
				{\(a=\sqrt2,\qquad b=3\sqrt2\)};
		\end{scope}

		\begin{scope}[shift={(9.2,0)}]
			\node[font=\small] at (3,2.15) {\textup{Incompatible displacements}};

			\foreach \y/\lab in {0/0,0.5/1,1/2,1.5/3}{
				\draw[gray!55,thin] (-0.2,\y)--(6.1,\y);
				\node[right,font=\scriptsize,gray!75] at (6.2,\y)
					{$\mathbb R\times\{\lab\}$};
			}

			\draw[dashed,gray!70,line width=0.8pt]
				(0,0)--({3*\xival},1.5);

			\filldraw[fill=myblue,draw=black]
				(0,0) rectangle (1,\h);
			\filldraw[fill=myorange,draw=black]
				(\xival,0.5) rectangle ({\xival+1},{0.5+\h});

			\draw[dashed,gray!70]
				({3*\xival},1.5)
				rectangle ({3*\xival+1},{1.5+\h});

			\filldraw[fill=myred,draw=black]
				(\wrongshift,1.5)
				rectangle ({\wrongshift+1},{1.5+\h});

			\fill (0,0) circle (1.2pt);
			\fill (\xival,0.5) circle (1.2pt);
			\fill[red!70!black] (\wrongshift,1.5) circle (1.4pt);

			\draw[<->,red!70!black,line width=0.8pt]
				({3*\xival},1.32)--(\wrongshift,1.32);
			\node[below,font=\scriptsize,red!70!black]
				at ({(3*\xival+\wrongshift)/2},1.30)
				{$\varepsilon=\frac12$};

			\node[font=\scriptsize,align=center] at (3,-1)
				{$a=\sqrt2,\qquad b=3\sqrt2+\frac{1}{2}$\\there is no common \(\xi\) such that\\
				\(a-\xi,\ b-3\xi\in\mathbb Z\)};
		\end{scope}
	\end{tikzpicture}

	\caption{
		Compatible and incompatible horizontal displacements for
		\(\{0,\ell_1,\ell_2\}=\{0,1,3\}\).
		In the left panel, the three intervals have the common linear
		drift \(\xi=\sqrt2\). In the right panel, the third interval is
		displaced from its compatible position by
		\(\varepsilon=\frac12\). Compatibility of the displacements is only a necessary condition for weak tiling.
	}
	\label{fig:compatible-incompatible-013}
\end{figure}


Let $\mu$ be a weak tiling measure for $\widetilde{\Omega}$. For each $j\in\Z$, recall that the fiber measure $\mu_j$ on $\R$ is defined by
\[
\mu_j(E)\;:=\;\mu(E\times\{j\}),\qquad E\subset\R \text{ Borel}.
\]
By Proposition~\ref{prop:WT-finite-intervals-slicing}, for each $j\in\Z$, we have 
\begin{equation}\label{eq:three-intervals-WT-fiber-measures}
\mathbbm{1}_{(0,1)} * \nu_j = 1
\qquad \text{a.e. on }\R\quad\text{with }\nu_j = \mu_j + \mu_{j-\ell_1}*\delta_a + \mu_{j-\ell_2}*\delta_b.
\end{equation}

The proof of Proposition~\ref{prop:WT-structure-necessary-condition} is based on a fiberwise analysis of these measures. We first derive nonemptiness and support properties of the fiber measures $\mu_j$. We then use the relation defining $\nu_j$ together with the one-dimensional weak tiling identity above to locate atoms on the fibers indexed by $d\Z$, where $d:=\gcd(\ell_1,\ell_2)$. This will yield the desired structural conclusion as in \eqref{eq:three-intervals-Omega-WT-structure}.

Our first step is to show that every fiber $\mu_j$ with $j\in d\Z$ is nontrivial. More precisely, we prove that on suitable translates of $(0,1)$, the weak tiling is carried entirely by a single fiber measure. This is the content of the next lemma, which extends Lemma~\ref{lem:three-points-weak-tiling-measure} to the setting of $\R\times\Z$.

\begin{lemma}\label{lem:nonempty-supp(mu_j)-gcd-fibers}
Let $\widetilde{\Omega}$ be as in \eqref{eq:three-intervals-Omega1}, and let $\mu$ be a {non-negative}, locally finite measure such that
	\[
	\mathbbm{1}_{\widetilde{\Omega}} * \mu = 1 \qquad \text{a.e. on } \mathbb{R}\times\mathbb{Z}.
	\]
	Suppose further that $$\mathbbm{1}_{(0,1)}*\mu_0(x)=1\qae\ x\in(0,1).$$
Then for any integers $m,n$ with $m\equiv n\pmod{3}$, 
	$$\mathbbm{1}_{(0,1)}*\mu_{m\ell_1+n\ell_2}(x)=1\qae\  x\in(0,1)+ma+nb.$$
\end{lemma}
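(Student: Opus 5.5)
The plan is to mimic the propagation argument of Lemma~\ref{lem:three-points-weak-tiling-measure}, working with the ``local tiling'' property instead of atom masses. For $(k,\ell)\in\Z^2$ set $p_{k,\ell}:=k\ell_1+\ell\ell_2$ and $I_{k,\ell}:=(0,1)+ka+\ell b$. Say $(k,\ell)$ is \emph{full} if $\mathbbm 1_{(0,1)}*\mu_{p_{k,\ell}}=1$ a.e.\ on $I_{k,\ell}$, and \emph{empty} if $\mathbbm 1_{(0,1)}*\mu_{p_{k,\ell}}=0$ a.e.\ on $I_{k,\ell}$. The hypothesis says $(0,0)$ is full, and the goal is that every $(k,\ell)$ with $k\equiv\ell\pmod 3$ is full.

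The basic tool is \eqref{eq:three-intervals-WT-fiber-measures} at level $j=p_{k,\ell}$. Since $\mathbbm 1_{(0,1)}*(\mu*\delta_c)(x)=\mathbbm 1_{(0,1)}*\mu(x-c)$, this gives, for a.e.\ $x\in I_{k,\ell}$,
\[
\mathbbm 1_{(0,1)}*\mu_{p_{k,\ell}}(x)+\mathbbm 1_{(0,1)}*\mu_{p_{k-1,\ell}}(x-a)+\mathbbm 1_{(0,1)}*\mu_{p_{k,\ell-1}}(x-b)=1 .
\]
As $x$ ranges over $I_{k,\ell}$, the points $x-a$ and $x-b$ range over $I_{k-1,\ell}$ and $I_{k,\ell-1}$ respectively. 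So this is exactly the three-term relation \eqref{eq:three-intervals-R-WT-necessary}, but now between nonnegative functions defined on three translated copies of $(0,1)$, all pulled back to a common parameter $y=x-ka-\ell b\in(0,1)$. All three terms are nonnegative. Hence if one of them equals $1$ a.e., the other two vanish a.e. Conversely, if two of them vanish a.e., the third equals $1$ a.e.

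With this pointwise-a.e.\ version of the relation in hand, the argument of Lemma~\ref{lem:three-points-weak-tiling-measure} transfers verbatim. Suppose $(i,j)$ is full. Applying the relation at $(i,j)$, $(i+1,j)$ and $(i,j+1)$ makes the six neighbours $(i-1,j)$, $(i,j-1)$, $(i+1,j)$, $(i+1,j-1)$, $(i,j+1)$, $(i-1,j+1)$ empty. Applying it next at $(i+1,j+1)$, $(i-2,j+1)$ and $(i+1,j-2)$ makes those three points full. Iterating once more gives the bidirectional rule \eqref{eq:three-intervals-bidirectional-propagation}: fullness propagates along $\pm(1,1)$ and $\pm(-2,1)$. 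These two vectors generate $L=\{k\equiv \ell \pmod 3\}$, so starting from $(0,0)$ every $(m,n)$ with $m\equiv n\pmod 3$ is full, which is the claim.

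The main obstacle is bookkeeping rather than ideas. The relation must hold at \emph{every} $(k,\ell)$, including points where the map $(k,\ell)\mapsto p_{k,\ell}$ is not injective, since $\ell_1,\ell_2$ need not be independent over $\Z$. This causes no problem: the relation is derived from the level-$p_{k,\ell}$ identity restricted to the specific interval $I_{k,\ell}$, so it is valid for each pair separately. The only care needed is that the a.e.\ statements on $(0,1)$ are combined through finitely many translations at each propagation step, and that the null sets involved stay countable in number; the identification of the translated intervals makes this routine.
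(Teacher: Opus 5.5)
Your proposal is correct and is essentially the paper's proof. The paper likewise sets $\varphi_{m,n}=\mathbbm 1_{(0,1)}*\mu_{m\ell_1+n\ell_2}$ on $I_{m,n}=(0,1)+ma+nb$, restricts \eqref{eq:three-intervals-WT-fiber-measures} to $I_{m,n}$ to get the nonnegative three-term relation $\varphi_{m,n}(x)+\varphi_{m-1,n}(x-a)+\varphi_{m,n-1}(x-b)=1$, and reruns the triangular propagation of Lemma~\ref{lem:three-points-weak-tiling-measure} starting from $(0,0)$.

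One small slip in your write-up: the new full points $(i+1,j+1)$, $(i-2,j+1)$, $(i+1,j-2)$ come from applying the relation at $(i+1,j+1)$, $(i-1,j+1)$ and $(i+1,j-1)$. These are the triangles whose other two vertices are among the six empty neighbours. Applying it at $(i-2,j+1)$ and $(i+1,j-2)$ themselves does not give this.
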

\begin{proof}
For each $m,n\in\Z$, set
\[
I_{m,n}:=(0,1)+ma+nb,
\qquad
\varphi_{m,n}(x):=\mathbbm{1}_{(0,1)}*\mu_{m\ell_1+n\ell_2}(x).
\]
Then each $\varphi_{m,n}$ is nonnegative, and by assumption 
\[
\varphi_{0,0}(x)=1 \qae x\in I_{0,0}=(0,1).
\]
Moreover, by \eqref{eq:three-intervals-WT-fiber-measures}, for every $m,n\in\Z$ we have
\begin{equation}\label{eq:phi-recursion-proof}
\varphi_{m,n}(x)+\varphi_{m-1,n}(x-a)+\varphi_{m,n-1}(x-b)=1
\qae x\in\R.
\end{equation}

We now propagate the values of the functions $\varphi_{m,n}$ across the lattice, exactly as in Figure~\ref{fig:nu-and-propagation}. 
For convenience, write
\[
P(m,n):\ \varphi_{m,n}(x)=1 \qae x\in I_{m,n},
\qquad
Q(m,n):\ \varphi_{m,n}(x)=0 \qae x\in I_{m,n}.
\]
Since all the functions $\varphi_{m,n}$ are nonnegative, restricting
\eqref{eq:phi-recursion-proof} to $I_{m,n}$ gives exactly the same
triangular propagation rule as in the proof of
Lemma~\ref{lem:three-points-weak-tiling-measure}: on each triangle
with vertices $(m,n)$, $(m-1,n)$ and $(m,n-1)$, one occurrence of $P$ forces $Q$ at the other two vertices, while
two occurrences of $Q$ force $P$ at the remaining vertex.

Since $P(0,0)$ holds, the propagation argument in the proof of
Lemma~\ref{lem:three-points-weak-tiling-measure} applies verbatim.
Consequently, $P(m,n)$ holds whenever $m\equiv n\pmod 3$. That is,
\[
	\mathbbm{1}_{(0,1)}*\mu_{m\ell_1+n\ell_2}(x)
	=1
	\qae x\in (0,1)+ma+nb
\]
for every $m,n\in\Z$ satisfying $m\equiv n\pmod 3$.

\end{proof}

Using Lemma~\ref{lem:MN-mod-3-generate-gcd}, we can now extend the previous conclusion from the congruence class condition $m\equiv n\pmod 3$ to all fibers indexed by $d\Z$.

\begin{corollary}\label{cor:nonempty-supp(mu_j)-all-gcd-fibers}
Let $\widetilde{\Omega}$ be as in \eqref{eq:three-intervals-Omega1}, and let $\mu$ be a {non-negative}, locally finite measure such that
	\[
	\mathbbm{1}_{\widetilde{\Omega}} * \mu = 1 \qquad \text{a.e. on } \mathbb{R}\times\mathbb{Z}\quad\text{with }\mu(\{(0,0)\})=1.
	\]
	Suppose further that $\{0,\ell_1,\ell_2\}$ is not a tile of $\Z$. 
Then for any $\ell\in d\Z$, there exist $m^*,n^*\in\Z$ with $m^*\equiv n^*\pmod{3}$ such that 
	$$\mathbbm{1}_{(0,1)}*\mu_{\ell}(x)=1\qae\  x\in(0,1)+m^*a+n^*b.$$
\end{corollary}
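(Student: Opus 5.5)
The plan is to combine Lemma~\ref{lem:nonempty-supp(mu_j)-gcd-fibers} with the arithmetic Lemma~\ref{lem:MN-mod-3-generate-gcd}, applied to the pair $(\ell_1,\ell_2)$ in place of $(M,N)$.

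\emph{Step 1: the hypothesis of Lemma~\ref{lem:nonempty-supp(mu_j)-gcd-fibers}.} We first check that $\mathbbm{1}_{(0,1)}*\mu_0=1$ a.e.\ on $(0,1)$. By \eqref{eq:three-intervals-WT-fiber-measures} with $j=0$, we have $\mathbbm{1}_{(0,1)}*\nu_0=1$ a.e.\ with $\nu_0=\mu_0+\mu_{-\ell_1}*\delta_a+\mu_{-\ell_2}*\delta_b$. Since $\mu(\{(0,0)\})=1$, the measure $\mu_0$ has an atom of mass $1$ at $0$. Hence
\[
\mathbbm{1}_{(0,1)}*\mu_0\ge \mathbbm{1}_{(0,1)}\qquad\text{on }(0,1).
\]
All three summands of $\nu_0$ are nonnegative and their convolutions with $\mathbbm{1}_{(0,1)}$ sum to $1$ a.e. It follows that $\mathbbm{1}_{(0,1)}*\mu_0=1$ a.e.\ on $(0,1)$. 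Thus the conclusion of Lemma~\ref{lem:nonempty-supp(mu_j)-gcd-fibers} holds: for all $m\equiv n\pmod 3$, we have $\mathbbm{1}_{(0,1)}*\mu_{m\ell_1+n\ell_2}=1$ a.e.\ on $(0,1)+ma+nb$.

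\emph{Step 2: arithmetic.} Since $0,\ell_1,\ell_2$ are distinct (Case 3), $\ell_1,\ell_2\in\Z^*$ are distinct. Let $d=\gcd(\ell_1,\ell_2)$. A three-element set $\{0,\ell_1,\ell_2\}$ tiles $\Z$ exactly when $\{0,\ell_1/d,\ell_2/d\}$ is a complete residue system modulo $3$. This is the one-dimensional criterion (4) of Theorem~\ref{prop-three-intervals-R-tile-Z}, transferred from intervals to integer sets, with tiling set $\{0,\dots,d-1\}+3d\Z$ in one direction. For the converse we can use the classical fact that tiles of $\Z$ of prime cardinality $p$ have the form $d\cdot(\text{complete residue system mod }p)$, or simply invoke Theorem~\ref{prop-three-intervals-R-tile-Z} via $\{0,\ell_1,\ell_2\}+(0,1)$.

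Because $\{0,\ell_1,\ell_2\}$ does not tile $\Z$, Lemma~\ref{lem:MN-mod-3-generate-gcd} gives
\[
\{m\ell_1+n\ell_2:\ m\equiv n \pmod 3\}=d\Z.
\]
So for any $\ell\in d\Z$ we can pick $m^*\equiv n^*\pmod 3$ with $m^*\ell_1+n^*\ell_2=\ell$. Applying Step 1 to $(m^*,n^*)$ gives exactly the claim.

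\emph{Main obstacle.} The only delicate point is Step 1, namely verifying the hypothesis of Lemma~\ref{lem:nonempty-supp(mu_j)-gcd-fibers} from the normalization $\mu(\{(0,0)\})=1$. Everything else is a direct quotation of the two earlier lemmas. A secondary point needing care is the identification of the ``tiles $\Z$'' condition with the residue condition used in Lemma~\ref{lem:MN-mod-3-generate-gcd}.
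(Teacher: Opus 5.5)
Your proposal is correct and follows the paper's proof essentially step for step. You verify $\mathbbm{1}_{(0,1)}*\mu_0=1$ a.e.\ on $(0,1)$ by squeezing between the unit atom at $0$ and $\mathbbm{1}_{(0,1)}*\nu_0=1$, then use Lemma~\ref{lem:MN-mod-3-generate-gcd} to write $\ell=m^*\ell_1+n^*\ell_2$ with $m^*\equiv n^*\pmod 3$ and apply Lemma~\ref{lem:nonempty-supp(mu_j)-gcd-fibers}; your remark that a complete residue system $\{0,\ell_1/d,\ell_2/d\}$ gives the tiling set $\{0,\dots,d-1\}+3d\Z$ also justifies a step the paper only asserts.
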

\begin{proof}
Since $\nu_0=\mu_0+\mu_{-\ell_1}*\delta_a+\mu_{-\ell_2}*\delta_b$, and everything is nonnegative,
    \[
    \mathbbm{1}_{(0,1)}*\mu_0 \le \mathbbm{1}_{(0,1)}*\nu_0 =1\qae
    \]
But the unit atom $\mu(\{(0,0)\})=1$ gives $\mu_0(\{0\})=1$ and then $\mathbbm{1}_{(0,1)}*\mu_0\geq 1$  a.e. on $(0,1)$. Hence equality holds.
    
Since $\{0,\ell_1,\ell_2\}$ is not a tile of $\Z$, we have 
	\[ \left\{0,\frac{\ell_1}{d},\frac{\ell_2}{d}\right\} \not\equiv \{0,1,2\} \pmod{3}. \]
	By Lemma~\ref{lem:MN-mod-3-generate-gcd}, for each $\ell\in d\Z$, there exist $m^*,n^*\in\Z$ with $m^*\equiv n^*\pmod{3}$ such that $\ell=m^*\ell_1+n^*\ell_2$. The conclusion then follows from Lemma~\ref{lem:nonempty-supp(mu_j)-gcd-fibers}.
\end{proof}

The next step is to strengthen the non-emptiness result by showing that each fiber $\mu_j$ with $j\in d\Z$ actually carries an atom whose location is determined modulo $1$ by the parameters $a$ and $b$.

\begin{lemma}\label{lem:atoms-on-gcd-fibers}
Let $\widetilde{\Omega}$ be as in \eqref{eq:three-intervals-Omega1}, and let $\mu$ be a {non-negative}, locally finite measure on $\R\times\Z$ such that
	\[
	\mathbbm{1}_{\widetilde{\Omega}} * \mu = 1 \qquad \text{a.e. on } \mathbb{R}\times\mathbb{Z}\quad\text{with }\mu(\{(0,0)\})=1.
	\]
	Suppose that $\{0,\ell_1,\ell_2\}$ is not a tile of $\Z$. Then for every $j=m\ell_1+n\ell_2\in d\Z$, there exists $\xi_j=\xi_j(m,n)\in\R$ such that
	\begin{equation}\label{eq:atoms-on-gcd-fibers-linear-combo}
	\mu_j(\{\xi_j\})=1,
	\qquad\text{and}\qquad
	\xi_j \equiv ma+nb \pmod{1}.
	\end{equation}
\end{lemma}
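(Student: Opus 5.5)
The plan is to upgrade the function-level propagation of Lemma~\ref{lem:nonempty-supp(mu_j)-gcd-fibers} to a measure-level propagation, using the rigidity of one-dimensional weak tilings of $(0,1)$ (Corollary~\ref{cor:unit-interval-WTmeasure-constant-masses}). I will prove the statement for representations $j=m\ell_1+n\ell_2$ with $m\equiv n\pmod 3$; by Corollary~\ref{cor:nonempty-supp(mu_j)-all-gcd-fibers} every $j\in d\Z$ has such a representation. Throughout, $\nu_j=\mu_j+\mu_{j-\ell_1}*\delta_a+\mu_{j-\ell_2}*\delta_b$ satisfies $\mathbbm 1_{(0,1)}*\nu_j=1$ a.e. by \eqref{eq:three-intervals-WT-fiber-measures}, and all three summands are non-negative.

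\textbf{Step 1 (lattice form of $\nu_j$).} If $\nu_j$ has an atom of mass $\ge 1$ at a point $t$, then $\nu_j*\delta_{-t}$ is a weak tiling measure of $(0,1)$ with level $1$. Its mass at $0$ is at most $1$, since $\mathbbm 1_{(0,1)}*\nu_j\le 1$, so it equals $1$. Corollary~\ref{cor:unit-interval-WTmeasure-constant-masses} then gives $\nu_j=\delta_{t+\Z}$. Consequently every sub-measure of $\nu_j$, in particular $\mu_j$, $\mu_{j-\ell_1}*\delta_a$ and $\mu_{j-\ell_2}*\delta_b$, is a sum of unit atoms located in $t+\Z$.

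\textbf{Step 2 (propagation of atoms).} Set $c_{m,n}:=ma+nb$ and let $P'(m,n)$ be the assertion that $\mu_{m\ell_1+n\ell_2}$ has a unit atom at some point $\equiv c_{m,n}\pmod 1$. $P'(0,0)$ holds because $\mu(\{(0,0)\})=1$. Suppose $P'(m,n)$ holds with atom at $\xi$. Then $\nu_{j+\ell_1}\ge \mu_j*\delta_a$ has a unit atom at $\xi+a$, so Step 1 gives $\nu_{j+\ell_1}=\delta_{\xi+a+\Z}$; similarly $\nu_{j+\ell_2}=\delta_{\xi+b+\Z}$. Hence every fiber $\mu_{m'\ell_1+n'\ell_2}$ adjacent to $(m,n)$ in the triangle pattern of Lemma~\ref{lem:three-points-weak-tiling-measure} is a sum of unit atoms in the coset $c_{m',n'}+\Z$.

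I then run the triangular propagation of Lemma~\ref{lem:nonempty-supp(mu_j)-gcd-fibers} along the generators $(1,1)$ and $(-2,1)$ of $\{m\equiv n \pmod 3\}$. At each new vertex $(m',n')$ that lemma gives $\mathbbm 1_{(0,1)}*\mu_{j'}=1$ a.e. on $I_{m',n'}=(c,c+1)$ with $c=c_{m',n'}$, while the previous step shows $\mu_{j'}$ is a sum of unit atoms in $c+\Z$. For $x\in(c,c+1)$ the interval $(x-1,x)$ contains exactly one point of $c+\Z$, namely $c$ itself. So the convolution identity forces $\mu_{j'}(\{c\})=1$, which is $P'(m',n')$ with $\xi_{j'}=c$ exactly. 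Inducting over the lattice $\{m\equiv n\pmod 3\}$ gives the claim.

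\textbf{Main obstacle.} The delicate point is Step 2's bookkeeping: each vertex of the triangle path must be reached from a neighbor at which $P'$ already holds, so that the coset information about $\nu$ is available exactly where the interval information from Lemma~\ref{lem:nonempty-supp(mu_j)-gcd-fibers} is used. The fiber $j'$ can be reached from $j'-\ell_1$ or $j'-\ell_2$ via $\nu_{j'}$, and the path along $(1,1)$ and $(-2,1)$ passes through intermediate vertices $(m+1,n)$, $(m,n+1)$ and their neighbors. I would mirror precisely the sequence of triangle applications in the proof of Lemma~\ref{lem:three-points-weak-tiling-measure}, checking at each one that the relevant $\nu$ already contains a known unit atom. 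If a general representation $j=m\ell_1+n\ell_2$ is wanted, rather than one with $m\equiv n\pmod 3$, I would first prove the case above and then compare the two resulting atoms via $\nu_j=\delta_{\xi+\Z}$, which pins down the residue modulo $1$.
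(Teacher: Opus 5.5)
Your induction in Step 2 has a genuine gap, and it is more than bookkeeping. The next admissible vertex $(m+1,n+1)$ is not a triangle-neighbour of $(m,n)$. The six neighbours you control, $(m\pm1,n)$, $(m,n\pm1)$, $(m+1,n-1)$ and $(m-1,n+1)$, all lie outside the class $m\equiv n\pmod 3$. At those vertices the propagation behind Lemma~\ref{lem:nonempty-supp(mu_j)-gcd-fibers} gives only $\mathbbm 1_{(0,1)}*\mu=0$ on the relevant interval, not full coverage. So at the intermediate vertices you know the coset containing the support, but never a unit atom.

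The measure $\nu_{j+\ell_1+\ell_2}=\mu_{j+\ell_1+\ell_2}+\mu_{j+\ell_2}*\delta_a+\mu_{j+\ell_1}*\delta_b$ is the one that should locate the support of $\mu_{j+\ell_1+\ell_2}$. But it contains no known unit atom, so your Step 1 cannot be applied. An atom of mass $s<1$ only gives $\nu=s\,\delta_{t+\Z}+\rho$ with $\rho$ an arbitrary $1$-periodic remainder.

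This cannot be fixed with your listed ingredients. Step 1, the neighbour cosets and Lemma~\ref{lem:nonempty-supp(mu_j)-gcd-fibers} all hold without the non-tile hypothesis, which you use only to reduce to $m\equiv n$. Now take $\ell_1=1$, $\ell_2=2$, $a=b=1/3$ and $\mu=\delta_{\Z\times3\Z}$. Every hypothesis except non-tiling holds, yet $\mu_3=\delta_\Z$ has no atom $\equiv a+b=2/3\pmod 1$, so $P'(1,1)$ fails. The non-tile hypothesis must therefore enter the propagation step itself.

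The missing idea, which is the paper's argument, is to use a \emph{different} admissible representation of the neighbouring fiber. Suppose $\mu_j$ has a unit atom at $\xi$.
\begin{itemize}
\item Your Step 1 gives $\nu_{j+\ell_1}=\delta_{\xi+a+\Z}$, so $\mu_{j+\ell_1}$ is supported in $\xi+a+\Z$.
\item Since $j+\ell_1\in d\Z$, Corollary~\ref{cor:nonempty-supp(mu_j)-all-gcd-fibers} gives $m^*\equiv n^*\pmod 3$ with $m^*\ell_1+n^*\ell_2=j+\ell_1$ and $\mathbbm 1_{(0,1)}*\mu_{j+\ell_1}=1$ a.e.\ on $(0,1)+m^*a+n^*b$. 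This is exactly where non-tiling enters, via Lemma~\ref{lem:MN-mod-3-generate-gcd}.
\item Hence the unique point of $\xi+a+\Z$ in $[m^*a+n^*b,\,m^*a+n^*b+1)$ is a unit atom of $\mu_{j+\ell_1}$. It is congruent to $\xi+a$, not to $m^*a+n^*b$.
\end{itemize}
The same works for $j-\ell_1$ (via $\nu_j=\delta_{\xi+\Z}$) and for $j\pm\ell_2$. Translate $\mu$ so the new atom sits at the origin and iterate along any path from $(0,0)$ to $(m,n)$. This proves the lemma for \emph{every} representation, which Proposition~\ref{prop:WT-structure-necessary-condition} needs, since it uses $(km_d,kn_d)$.

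This also shows your final remark does not work as stated. An atom $\equiv m^*a+n^*b$ does not produce one $\equiv ma+nb$ for another representation of the same $j$. That would require $(m-m^*)a+(n-n^*)b\in\Z$, which is essentially the conclusion of Proposition~\ref{prop:WT-structure-necessary-condition}.

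A minor point: in Step 1 the three summands of $\nu_j$ need not consist of unit atoms, since they may split a unit atom fractionally. Only the support statement holds, but that is all you actually use.
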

\begin{proof}
Since $\mu(\{(0,0)\})=1$, by the definition of $\mu_0$, we have $\mu_0(\{0\})=1>0$. We first prove that the conclusion holds for the cases $j=\pm\ell_1$ and $j=\pm\ell_2$.

Recall that for each $j\in\Z$, we have \eqref{eq:three-intervals-WT-fiber-measures}, i.e.,
	\begin{equation}\label{eq:three-intervals-WT-fiber-measures-proof-2}
		\mathbbm{1}_{(0,1)} * \nu_j= 1 \quad \text{a.e. on }\R\qquad \text{with }\nu_j=\mu_{j}+\mu_{j-\ell_1}*\delta_a+\mu_{j-\ell_2}*\delta_b.
	\end{equation}
	Since $\mu_0(\{0\})=1$ and by the definition of $\nu_{\ell_1}$,
	we have $$1 \ge \nu_{\ell_1}(\{a\})\ge \mu_0*\delta_a(\{a\})=1$$ and then $\nu_{\ell_1}(\{a\})=1$. By Corollary~\ref{cor:unit-interval-WTmeasure-constant-masses}, we have 
	$$\nu_{\ell_1}(\{a+k\})=1\quad \text{for any } k\in\Z.$$
	Corollary~\ref{cor:nonempty-supp(mu_j)-all-gcd-fibers} implies that there exist integers $m^*_{\ell_1}$ and $n^*_{\ell_1}$ with $m^*_{\ell_1}\equiv n^*_{\ell_1}\pmod{3}$ such that the interval $(0,1)+m^*_{\ell_1}a+n^*_{\ell_1}b$ must be weakly tiled by the fiber measure $\mu_{\ell_1}$ and not by any other fiber measure in the weak tiling equation \eqref{eq:three-intervals-WT-fiber-measures-proof-2} with $j=\ell_1$. Since there exists a unique $k_{\ell_1}\in\mathbb Z$ such that
    \[
    	\xi_{\ell_1}:=a+k_{\ell_1}
    	\in [0,1)+m_{\ell_1}^*a+n_{\ell_1}^*b,
    \]
    and $\mu_{\ell_1}$ is the only fiber measure contributing on this
    interval, the weak tiling equation on the nontrivial overlap of
    $(\xi_{\ell_1},\xi_{\ell_1}+1)$ with
    $(0,1)+m_{\ell_1}^*a+n_{\ell_1}^*b$ gives
    \[
    	\mu_{\ell_1}(\{\xi_{\ell_1}\})
    	=
    	\nu_{\ell_1}(\{\xi_{\ell_1}\})
    	=
    	1,
    	\qquad
    	\xi_{\ell_1}\equiv a\pmod 1.
    \]
    

	Now we show that $\mu_{-\ell_1}$ carries an atom at a location congruent to $-a$ modulo $1$. Setting $j=0$ in \eqref{eq:three-intervals-WT-fiber-measures-proof-2}, we obtain
	\begin{equation}\label{eq:three-intervals-WT-fiber-measures-proof-j=0}
		\mathbbm{1}_{(0,1)} * \nu_0= 1 \qae\quad \text{with }\nu_0=\mu_{0}+\mu_{-\ell_1}*\delta_a+\mu_{-\ell_2}*\delta_b.
	\end{equation}
	Since $\nu_0(\{0\})=1$, by Corollary~\ref{cor:unit-interval-WTmeasure-constant-masses}, we have $\nu_0(\{k\})=1$ for each $k\in\Z.$
	By Corollary~\ref{cor:nonempty-supp(mu_j)-all-gcd-fibers}, for $j=-\ell_1$, there exist integers $m^*_{-\ell_1}$ and $n^*_{-\ell_1}$ with $m^*_{-\ell_1}\equiv n^*_{-\ell_1}\pmod{3}$ such that 
	$$\mathbbm{1}_{(0,1)}*\mu_{-\ell_1}*\delta_a=1\qae\  x\in(0,1)+(m^*_{-\ell_1}+1)a+n^*_{-\ell_1}b,$$
	which implies that the interval $(0,1)+(m^*_{-\ell_1}+1)a+n^*_{-\ell_1}b$ is weakly tiled by the fiber measure $\mu_{-\ell_1}*\delta_a$ and not by any other fiber measure in the weak tiling equation \eqref{eq:three-intervals-WT-fiber-measures-proof-j=0}. Since there exists a unique $k_{-\ell_1}\in\Z$ such that
	$$\xi_{-\ell_1} := k_{-\ell_1}-a\in [0,1)+m^*_{-\ell_1}a+n^*_{-\ell_1}b,$$
	it follows that 
	$$\mu_{-\ell_1}(\{\xi_{-\ell_1}\})=\mu_{-\ell_1}*\delta_a(\{k_{-\ell_1}\})=\nu_0(\{k_{-\ell_1}\})=1.$$
	By the definition of $\xi_{-\ell_1}$, we have $\xi_{-\ell_1}\equiv -a\pmod{1}$.
	
	For $j=\pm\ell_2$, the proof is similar. We omit the details.


Since $\mu_j(\{\xi_j\})=1$, we may translate the whole measure by
$-(\xi_j,j)$. The translated measure still satisfies the same convolution
identity and has a unit atom at the origin. Hence the preceding
four-direction propagation argument applies verbatim.


Now let $j=m\ell_1+n\ell_2$. Starting from the unit atom of $\mu_0$ at $0$ and
iterating the above steps according to this representation, we obtain a unit
atom of $\mu_j$ whose location is congruent to $ma+nb$ modulo $1$. Since
$\ell_1\mathbb Z+\ell_2\mathbb Z=d\mathbb Z$, the conclusion holds for every
$j\in d\mathbb Z$.
\end{proof}

We are now ready to complete the proof of Proposition~\ref{prop:WT-structure-necessary-condition}. The existence of atoms on all fibers in $d\Z$ yields the desired linear congruence relations for $a$ and $b$.
\begin{proof}[Proof of Proposition~\ref{prop:WT-structure-necessary-condition}]
	By Bézout's theorem, there exist integers $m_d,n_d$ such that $m_d\ell_1+n_d\ell_2=d$. By Lemma~\ref{lem:atoms-on-gcd-fibers}, more precisely \eqref{eq:atoms-on-gcd-fibers-linear-combo}, there exists a unit atom $\xi_d$ of $\mu_d$ such that $m_da+n_db\equiv\xi_d\pmod{1}$.

	For any $k\in\mathbb Z$, applying Lemma~\ref{lem:atoms-on-gcd-fibers} to the representation $kd=(km_d)\ell_1+(kn_d)\ell_2$, we obtain a unit atom $\zeta_{kd}$ of $\mu_{kd}$ satisfying
	$$\zeta_{kd}\equiv km_da+kn_db\equiv k\xi_d\pmod{1}.$$

	Since $d\mid\ell_1$, taking $k=\ell_1/d$ gives a unit atom $\zeta_{\ell_1}$ of $\mu_{\ell_1}$ such that
	$\zeta_{\ell_1}\equiv \frac{\ell_1}{d}\xi_d\pmod{1}$. On the other hand, $\mathbbm{1}_{(0,1)}*\nu_{\ell_1}=1$ a.e. on $\mathbb R$ and $\nu_{\ell_1}(\{a\})=1$. Hence, by Corollary~\ref{cor:unit-interval-WTmeasure-constant-masses}, $\nu_{\ell_1}=\delta_{a+\mathbb Z}$. Since $\mu_{\ell_1}\leq\nu_{\ell_1}$, every atom of $\mu_{\ell_1}$ is congruent to $a$ modulo $1$. In particular, $\zeta_{\ell_1}\equiv a\pmod{1}$, and therefore
	\begin{equation}\label{eq:a-linear-combo-xi_d}
		a\equiv \frac{\ell_1}{d}\xi_d\pmod{1}.
	\end{equation}

	Similarly, taking $k=\ell_2/d$ in Lemma~\ref{lem:atoms-on-gcd-fibers}, and using $\mathbbm{1}_{(0,1)}*\nu_{\ell_2}=1$ a.e. on $\mathbb R$ and $\nu_{\ell_2}(\{b\})=1$, we obtain $\nu_{\ell_2}=\delta_{b+\mathbb Z}$ and hence
	\begin{equation}\label{eq:b-linear-combo-xi_d}
		b\equiv \frac{\ell_2}{d}\xi_d\pmod{1}.
	\end{equation}

	Finally, setting $\xi=\xi_d/d$, \eqref{eq:a-linear-combo-xi_d} and \eqref{eq:b-linear-combo-xi_d} give $a-\ell_1\xi\in\mathbb Z$ and $b-\ell_2\xi\in\mathbb Z$. This proves the proposition.
\end{proof}

\subsubsection{Proof of Theorem~\ref{prop-WT-3intervals-characterization}\ \label{subsection7.4} \textup{(3)(ii)}}
Proposition~\ref{prop:WT-structure-necessary-condition} allows us to write $\widetilde{\Om}$ as 
	\begin{equation*}
		\widetilde{\Om} = (0,1)\times\{0\}+\{(0,0),(M+\ell_1\xi,\ell_1),(N+\ell_2\xi,\ell_2)\}\quad  \text{with}\ M,N\in\Z, \xi\in\R.
	\end{equation*}
By Proposition~\ref{prop:translation-invariance}, the shear $T(x,j)=(x-j\xi,j)$ preserves tiling, weak tiling, and spectrality. After applying $T$, we retain the notation $\widetilde{\Omega}$ for the transformed set. Thus it suffices to consider 
\begin{equation}\label{eq:three-intervals-Omega2}
\widetilde{\Omega}=(0,1)\times\{0\}+E,
\end{equation}
where
	\[
	E:=\{(0,0),(M,\ell_1),(N,\ell_2)\}\subset \Z^2.
	\]


{We consider two cases}:
\begin{itemize}
	\item[\textup(1)] \((M,\ell_1)\) and \((N,\ell_2)\) are \(\mathbb Q\)-linearly independent;
	\item[\textup(2)] \((M,\ell_1)\) and \((N,\ell_2)\) are \(\mathbb Q\)-linearly dependent.
\end{itemize}

\begin{lemma}\label{lem:Qlin-indep-notile-tiling}
Let $\widetilde{\Omega}$ be as in \eqref{eq:three-intervals-Omega2}. Suppose that $\{0,\ell_1,\ell_2\}$ is not a tile of $\Z$. If $(M,\ell_1),(N,\ell_2)$ are $\Q$-linearly independent, then $\widetilde{\Omega}$ is both a tile and a spectral set of $\R\times\Z$.
\end{lemma}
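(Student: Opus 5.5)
The plan is to reduce to the one-dimensional case through the projections $\pi_\eta$ and Corollary~\ref{cor:lift-1D-to-2D}. We look for an integer $\eta$ such that the three projected intervals
\[
(0,1),\qquad (A,A+1),\qquad (B,B+1),\qquad A:=M+\ell_1\eta,\quad B:=N+\ell_2\eta,
\]
are pairwise non-overlapping, and such that their union is a tile of $\R$. Since $A,B\in\Z$, non-overlapping just means that $0,A,B$ are distinct. By Theorem~\ref{prop-three-intervals-R-tile-Z}, this union is then simultaneously a tile and a spectral set of $\R$. Corollary~\ref{cor:lift-1D-to-2D}(1),(2) lifts both properties to $\widetilde{\Omega}$. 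So everything comes down to an arithmetic choice of $\eta\in\Z$ satisfying
\[
0,A,B \text{ distinct},\qquad \{0,A/g,B/g\}\equiv\{0,1,2\}\pmod 3,\quad g=\gcd(A,B).
\]

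The distinctness costs almost nothing. Each of $A=0$, $B=0$, $A=B$ is a linear equation in $\eta$. Because $(M,\ell_1)$ and $(N,\ell_2)$ are $\Q$-linearly independent, none of these three equations holds identically; for $A=B$ this uses that $(M-N,\ell_1-\ell_2)\neq0$. So each has at most one solution, and at most three values of $\eta$ are excluded. The key invariant is
\[
A\ell_2-B\ell_1=M\ell_2-N\ell_1=:D\neq0,
\]
which holds for every $\eta$ and is nonzero by linear independence. We also use that $\{0,\ell_1,\ell_2\}$ does not tile $\Z$, i.e.\ $\{0,\ell_1/d,\ell_2/d\}$ is not a complete residue system mod $3$.

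For the residue condition, the observation is that $\{0,A/g,B/g\}$ is a complete residue system mod $3$ exactly when $v_3(A)=v_3(B)=:s$ and $A/3^s+B/3^s\equiv0\pmod 3$. Here $v_3$ is the $3$-adic valuation. So we need to control $A\bmod 3^{s+1}$ and $B\bmod 3^{s+1}$ as $\eta$ ranges over $\Z$, and we would work with $\eta$ in residue classes modulo a power of $3$. We split by whether $3\mid \ell_1+\ell_2$ after removing common powers of $3$.

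If $\ell_1+\ell_2\not\equiv0$ modulo the relevant power, then $A+B=(M+N)+(\ell_1+\ell_2)\eta$ can be tuned to be divisible by $3^{s+1}$ while keeping $A$ of exact valuation $s$. We would take $s=v_3(\gcd(\ell_1,\ell_2,\ldots))$ and solve a linear congruence. If $\ell_1+\ell_2$ is too divisible by $3$, the non-tiling hypothesis on $\{0,\ell_1,\ell_2\}$ suggests that one of $\ell_1,\ell_2$ is divisible by a higher power of $3$ than the other; then $\eta$ barely affects the relevant residue of that coordinate, and we use $D\neq0$ to show that $M,N$ themselves supply the needed residues. Failing a clean case split, the fallback is a counting or CRT argument modulo $3^{v_3(D)+1}$. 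The invariant $D$ then prevents $A$ and $B$ from both being degenerate simultaneously for every $\eta$.

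We expect this residue analysis to be the main obstacle. A clean parametrisation of the admissible $\eta$, via $v_3(D)$ and the non-tiling condition, is where the real work lies. Once $\eta$ is found, Theorem~\ref{prop-three-intervals-R-tile-Z} and Corollary~\ref{cor:lift-1D-to-2D} finish the proof immediately.
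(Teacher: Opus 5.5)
Your overall reduction matches the paper's: choose $\eta$, project by $\pi_\eta$, and invoke Theorem~\ref{prop-three-intervals-R-tile-Z} and Corollary~\ref{cor:lift-1D-to-2D}. However, restricting to $\eta\in\Z$ is a genuine error, not a harmless convenience.

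Take $\ell_1=3$, $\ell_2=9$, $M=N=1$. Then $\{0,3,9\}$ does not tile $\Z$, and $(1,3),(1,9)$ are $\Q$-linearly independent. Yet for every $\eta\in\Z$ you get $A=1+3\eta\equiv 1$ and $B=1+9\eta\equiv 1 \pmod 3$. Moreover $g=\gcd(A,B)$ divides $B-3A=-2$, so $A/g\equiv B/g\pmod 3$. Hence $\{0,A/g,B/g\}$ is never a complete residue system. In general, whenever $3\mid d=\gcd(\ell_1,\ell_2)$, the residues of $A,B$ modulo $3$ are frozen as $\eta$ runs over $\Z$, so no congruence or CRT argument in $\eta\in\Z$ can rescue the plan. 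The fix is to let $\eta$ range over $\frac1d\Z$; $A,B$ are then still integers. In the example, $\eta=1/3$ gives $A=2$, $B=4$, and $\{0,2,4\}$ tiles $\Z$.

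The second gap is the arithmetic step itself, which you flag as the main obstacle and leave as a vague case split. Put $\alpha=\ell_1/d$, $\beta=\ell_2/d$ and $\eta=k/d$. Then $A=M+\alpha k$ and $B=N+\beta k$, with the invariant $A\beta-B\alpha=\Delta:=(M\ell_2-N\ell_1)/d\neq 0$. Since $\gcd(\alpha,\beta)=1$, the non-tiling hypothesis is exactly $3\nmid \alpha+\beta$, so your ``$\ell_1+\ell_2$ too divisible by $3$'' case never arises once $d$ is removed.

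This hypothesis also forces the common valuation. If $v_3(A)=v_3(B)=s'$ with $A/3^{s'}\equiv -B/3^{s'}\not\equiv 0$, then $\Delta/3^{s'}\equiv (A/3^{s'})(\alpha+\beta)\not\equiv 0 \pmod 3$, so $s'=v_3(\Delta)$. This is not $v_3$ of a gcd of the $\ell_i$, as you suggest; in the example $s'=0$ while $3\mid d$.

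The paper's construction then goes as follows. Write $\Delta=3^sq$ with $3\nmid q$. Since $\beta-2\alpha$ and $2\beta-\alpha$ represent both nonzero classes modulo $3$, and $\gcd(\alpha,\beta)=1$, one can find $u,v$ in distinct nonzero residue classes with $\beta u-\alpha v=q$. Setting $\eta=(3^su-M)/\ell_1$ gives $A=3^su$ and $B=3^sv$, so $E_\eta=\{0,3^su,3^sv\}$ tiles $\Z$, with distinctness automatic. Without this step, or an equivalent one over $\eta\in\frac1d\Z$, your proposal does not prove the lemma.
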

\begin{figure}[H]
	\centering
	\begin{tikzpicture}[scale=1]
		\pgfmathsetmacro{\M}{0}
		\pgfmathsetmacro{\N}{-1}
		\pgfmathsetmacro{\ellone}{1}
		\pgfmathsetmacro{\elltwo}{3}

		\foreach \m in {-3,-2,-1,0,1,2,3,4,5}{
			\begin{scope}
				\clip (-3.5,\m/2) rectangle (4,\m/2+0.2);
				\foreach \n in {-5,-4,-3,-2,-1,0,1,2,3,4,5}{
					\pgfmathtruncatemacro{\r}{mod(\n+\m+300,3)}

					\ifcase\r
						\def\thiscolor{myblue}
					\or
						\def\thiscolor{myorange}
					\or
						\def\thiscolor{myred}
					\fi

					\fill[\thiscolor] (\n,\m/2) rectangle ({\n+1},{\m/2+0.2});
					\draw[black] (\n,\m/2) rectangle ({\n+1},{\m/2+0.2});
				}
			\end{scope}

			\draw[thick] (-3.5,\m/2)--(4,\m/2)
			node[right,scale=0.8] {$\mathbb R\times\{\m\}$};
		}

		\draw[white,line width=3pt] (0,0) rectangle (1,0.2);
		\draw[white,line width=3pt] (\M,\ellone/2) rectangle ({\M+1},{\ellone/2+0.2});
		\draw[white,line width=3pt] (\N,\elltwo/2) rectangle ({\N+1},{\elltwo/2+0.2});

		\draw[black,line width=1.5pt] (0,0) rectangle (1,0.2);
		\draw[black,line width=1.5pt] (\M,\ellone/2) rectangle ({\M+1},{\ellone/2+0.2});
		\draw[black,line width=1.5pt] (\N,\elltwo/2) rectangle ({\N+1},{\elltwo/2+0.2});
	\end{tikzpicture}

	\caption{A translational tiling of \(\mathbb R\times\mathbb Z\) in the case where \(0,\ell_1,\ell_2\) are distinct and \(\{0,\ell_1,\ell_2\}\) does not tile \(\mathbb Z\). Here the framed intervals are precisely \(\widetilde{\Omega}\), with \((M,\ell_1)=(0,1)\), \((N,\ell_2)=(-1,3)\), and the translation set is the lattice \(\{(n,m)\in\mathbb Z^2:n+m\equiv0\pmod 3\}\).}
	\label{fig:three-distinct-levels-tiling}
\end{figure}

\begin{proof}
By Corollary~\ref{cor:lift-1D-to-2D}, it suffices to find $\eta\in\R$ such that $$E_\eta:=\{0,\,M+\ell_1\eta,\,N+\ell_2\eta\}$$ forms a three-element tile of $\Z$.

Let $d:=\gcd(\ell_1,\ell_2),\ \alpha:=\ell_1/d,\ \beta:=\ell_2/d.$
	Then \(\gcd(\alpha,\beta)=1\). Since \((M,\ell_1)\) and \((N,\ell_2)\) are \(\Q\)-linearly independent,
	\[
	\Delta:=M\beta-N\alpha=\frac{M\ell_2-N\ell_1}{d}\neq 0.
	\]
	Write $\Delta=3^s q,$ where \(s\ge 0\) and \(3\nmid q\).

Since \(\{0,\ell_1,\ell_2\}\) does not tile \(\Z\), the set \(\{0,\alpha,\beta\}\) is not a complete residue system modulo \(3\). Hence the two numbers $\beta-2\alpha,\ 2\beta-\alpha$ represent the two nonzero residue classes modulo \(3\). Therefore we may choose \((r,t)\in\{(1,2),(2,1)\}\) such that
	\[
	\beta r-\alpha t\equiv q \pmod 3.
	\]
	Write $q-(\beta r-\alpha t)=3h$ for some \(h\in\Z\). Since \(\gcd(\alpha,\beta)=1\), there exist \(m,n\in\Z\) such that $\beta m-\alpha n=h.$ Set $u:=r+3m,\ v:=t+3n.$ Then \(u\) and \(v\) represent the two distinct nonzero residue classes modulo \(3\), and $\beta u-\alpha v=q.$

Now define $\eta:=(3^s u-M)/\ell_1.$
	Since
	\[
	\ell_2(3^s u-M)-\ell_1(3^s v-N)
	=
	d\bigl(3^s(\beta u-\alpha v)-(M\beta-N\alpha)\bigr)=0,
	\]
	we also have $\eta=(3^s v-N)/\ell_2$.
	Thus
	\[
	M+\ell_1\eta=3^s u,\qquad N+\ell_2\eta=3^s v\quad\text{and}\quad
	E_\eta=\{0,3^s u,3^s v\}.
	\]
After dividing by its greatest common divisor, the two nonzero elements remain distinct nonzero residues modulo \(3\). Therefore \(E_\eta\) is a tile of \(\Z\). The spectrality of \(\widetilde\Omega\) follows in the same way as in Lemma~\ref{lem:three-intervals-l1=l2-not0}.
\end{proof}

The proof of the following lemma is essentially identical to that of Theorem~\ref{prop-three-intervals-R-tile-Z}.
\begin{lemma}\label{lem:Qlin-dep-notile-no-weak-tiling}
Let $\widetilde{\Omega}$ be as in \eqref{eq:three-intervals-Omega2}. Assume that $\{0,\ell_1,\ell_2\}$ is not a tile of $\Z$, and that $(M,\ell_1),(N,\ell_2)$ are $\Q$-linearly dependent. 
Then $\widetilde{\Omega}$ is not a weak tile.
\end{lemma}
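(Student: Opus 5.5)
Argue by contradiction, in the same spirit as the proof of Theorem~\ref{prop-three-intervals-R-tile-Z}. Suppose $\mu$ is a non-negative, locally finite weak tiling measure for $\widetilde\Omega$ with $\mu(\{(0,0)\})=1$. Write $v_1=(M,\ell_1)$ and $v_2=(N,\ell_2)$. The first step is to run the triangular propagation argument of Lemma~\ref{lem:three-points-weak-tiling-measure} on the lattice of points $kv_1+nv_2$. Proposition~\ref{prop:WT-finite-intervals-slicing} gives, for every fiber $j$, $\nu_j=\mu_j+\mu_{j-\ell_1}*\delta_M+\mu_{j-\ell_2}*\delta_N$ with $\mathbbm 1_{(0,1)}*\nu_j=1$. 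Combined with Corollary~\ref{cor:unit-interval-WTmeasure-constant-masses}, the numbers
\[
s_{k,n}:=\mu(\{kv_1+nv_2\})
\]
then satisfy $s_{k,n}+s_{k-1,n}+s_{k,n-1}=1$. The derivation mirrors Lemma~\ref{three-intervals-R-WT-necessary}. One evaluates $\nu_j$ at the atom location $kM+nN$ in fiber $j=k\ell_1+n\ell_2$. Since the entries of $E$ are integers and $\nu_j$ has total mass $1$ per unit-length window, each such atom of $\nu_j$ has mass exactly $1$. This uses Lemma~\ref{lem:atoms-on-gcd-fibers}-type reasoning to see that $\nu_j(\{kM+nN\})=1$, which is obtained inductively along the propagation. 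From $s_{0,0}=1$, the propagation yields $s_{k,n}=1$ whenever $k\equiv n\pmod 3$ and $s_{k,n}=0$ otherwise.

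The second step uses the linear dependence. Since $v_1,v_2$ are $\mathbb Q$-dependent and nonzero (because $\ell_1,\ell_2\neq0$), they both lie on a line $\mathbb R w$, where $w\in\mathbb Z^2$ is primitive. Write $v_1=Aw'$ and $v_2=Bw'$, where $w'=\gcd\cdot w$ is chosen so that $\gcd(A,B)=1$. The second coordinates give $\ell_1=Ac$ and $\ell_2=Bc$ for a fixed nonzero $c$. Hence $\{0,A,B\}$ is proportional to $\{0,\ell_1/d,\ell_2/d\}$, up to the common factor $d=|c|\gcd(A,B)$ or a sign. Because $\{0,\ell_1,\ell_2\}$ does not tile $\mathbb Z$, $\{0,A,B\}$ is not a complete residue system mod $3$. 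Lemma~\ref{lem:MN-mod-3-generate-gcd} then gives
\[
\{kA+nB:k\equiv n \pmod 3\}=\mathbb Z.
\]
In particular $A=kA+nB$ for some $k\equiv n\pmod 3$, so $v_1=kv_1+nv_2$ and $s_{1,0}=\mu(\{v_1\})=s_{k,n}=1$. But $(1,0)$ has $1\not\equiv0$, so $s_{1,0}=0$, a contradiction. The map $(k,n)\mapsto kv_1+nv_2$ is not injective here; that is precisely what forces the inconsistency, and it is why $s$ must be defined as the mass at the point rather than indexed abstractly.

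The main obstacle is rigorously justifying the recursion $s_{k,n}+s_{k-1,n}+s_{k,n-1}=1$ as point masses rather than as functions on intervals. Lemma~\ref{lem:nonempty-supp(mu_j)-gcd-fibers} only gives the propagation for $\mathbbm 1_{(0,1)}*\mu_j$ on intervals $(0,1)+kM+nN$. To handle this, I would first use Proposition~\ref{prop:WT-finite-intervals-slicing} and Theorem~\ref{th-Keller} on the fiber measures containing a unit atom to show that $\operatorname{supp}\mu_j\subset\mathbb Z$ for all $j$ hit by the propagation. Then Corollary~\ref{cor:unit-interval-WTmeasure-constant-masses} gives $\nu_j=\delta_{\mathbb Z}$ on those fibers, and the interval identities become exactly the atomic identities. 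If that route is awkward, the alternative is to work with the interval statements $P(k,n)$ and $Q(k,n)$ directly. Since $v_1=kv_1+nv_2$ gives the same fiber and the same interval, $P(1,0)$ and $Q(1,0)$ would both hold, which is the same contradiction at the level of functions.
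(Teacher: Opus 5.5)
Your proposal is correct and is essentially the paper's argument. Your interval fallback is exactly what the paper does: choose $(s,t)$ with $s\equiv t\pmod 3$ and $sv_1+tv_2=v_1$, so Lemma~\ref{lem:nonempty-supp(mu_j)-gcd-fibers} gives $\mathbbm 1_{(0,1)}*\mu_{\ell_1}=1$ on $(0,1)+M$, contradicting the unit atom of $\mu_0*\delta_M$ at $M$ in $\nu_{\ell_1}$, with no point masses needed. If you keep the atomic recursion instead, justify it by citing Lemma~\ref{lem:atoms-on-gcd-fibers} outright: every $\mu_j$ with $j\in d\mathbb Z$ has a unit atom at an integer, so $\nu_j=\delta_{\mathbb Z}$ by Corollary~\ref{cor:unit-interval-WTmeasure-constant-masses}. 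Applying Theorem~\ref{th-Keller} only to fibers already known to carry a unit atom does not supply the atom needed at the steps where two zeros force a one.
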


\begin{proof}
Let $\ell=\gcd(\ell_1,\ell_2)$. We argue by contradiction.  
Assume that there exists a {non-negative}, locally finite Borel measure $\mu$ on $\R\times\Z$ with $\mu(\{(0,0)\})=1$ such that
\[
\mathbbm{1}_{\widetilde{\Omega}} * \mu = 1 \qquad \text{a.e. on }\R\times\Z.
\]
Recall that for each $j\in\Z$, \eqref{eq:three-intervals-WT-fiber-measures} gives
\begin{equation}\label{eq:three-intervals-WT-fiber-measures-proof-3}
\mathbbm{1}_{(0,1)} * \nu_j = 1 \quad \text{a.e. on }\R,
\qquad 
\nu_j=\mu_{j}+\mu_{j-\ell_1}*\delta_M+\mu_{j-\ell_2}*\delta_N .
\end{equation}

As in the proof of Corollary~\ref{cor:nonempty-supp(mu_j)-all-gcd-fibers}, the assumptions $\mu(\{(0,0)\})=1$ and $\mathbbm{1}_{\widetilde{\Omega}}*\mu=1$ imply that $\mathbbm{1}_{(0,1)}*\mu_0=1$ a.e. on $(0,1)$. Hence by Lemma~\ref{lem:nonempty-supp(mu_j)-gcd-fibers}, for any integers $s,t$ with $s\equiv t\pmod{3}$, we have
\begin{equation}\label{eq:three-intervals-WT-fiber-measures-linear-combo}
\mathbbm{1}_{(0,1)}*\mu_{s\ell_1+t\ell_2}(x)=1 \quad \text{a.e. for }x\in(0,1)+sM+tN.
\end{equation}
In particular, since $\{0,\ell_1,\ell_2\}$ is not a tile of $\Z$, Lemma~\ref{lem:MN-mod-3-generate-gcd} implies
\[
\ell\Z=\{\,s\ell_1+t\ell_2:\ s,t\in\Z,\ s\equiv t \pmod{3}\,\}.
\]

Next, since $(M,\ell_1),(N,\ell_2)$ are $\Q$-linearly dependent, we have $M\ell_2=N\ell_1.$ Since $\ell_1\ell_2\neq 0$, we may set $c:=M/\ell_1=N/\ell_2$, so that $M=c\ell_1$ and $N=c\ell_2$. Choose integers $s,t$ with $s\equiv t\pmod{3}$ and $s\ell_1+t\ell_2=\ell_1.$ (Such $s,t$ exist because $\ell_1\in \ell\Z$ and $\ell\Z$ is generated as above.)
	Then
	\[
	sM+tN=c(s\ell_1+t\ell_2)=c\ell_1=M.
	\]
	Applying \eqref{eq:three-intervals-WT-fiber-measures-linear-combo} with this pair $(s,t)$ gives
	\begin{equation}\label{eq:mu-l1-tiling-on-shift}
	\mathbbm{1}_{(0,1)}*\mu_{\ell_1}(x)=1 \quad \text{a.e. for }x\in(0,1)+M.
	\end{equation}

Now let $j=\ell_1$ in \eqref{eq:three-intervals-WT-fiber-measures-proof-3}. We obtain
\[
\mathbbm{1}_{(0,1)} *\Big(\mu_{\ell_1}+\mu_{0}*\delta_M+\mu_{\ell_1-\ell_2}*\delta_N\Big)= 1 
\quad \text{a.e. on }\R.
\]
Since $\mu(\{(0,0)\})=1$, the fiber measure $\mu_0$ has an atom of mass $1$ at $0$, and hence $\mu_0*\delta_M$ has an atom of mass $1$ at $M$. Therefore $\nu_{\ell_1}$ has a unit mass at $M$, so for a.e. $x\in(0,1)+M$,
\[
1=\mathbbm{1}_{(0,1)}*\nu_{\ell_1}(x)
\;\ge\;
\mathbbm{1}_{(0,1)}*\big(\mu_{\ell_1}+\delta_M\big)(x)
=
\mathbbm{1}_{(0,1)}*\mu_{\ell_1}(x)+\mathbbm{1}_{(0,1)}*\delta_M(x).
\]
For $x\in(0,1)+M$ we have $\mathbbm{1}_{(0,1)}*\delta_M(x)=1$, and by \eqref{eq:mu-l1-tiling-on-shift} also
$\mathbbm{1}_{(0,1)}*\mu_{\ell_1}(x)=1$ a.e. on $(0,1)+M$. Hence the right-hand side equals $2$ for a.e. $x\in(0,1)+M$, contradicting
$\mathbbm{1}_{(0,1)}*\nu_{\ell_1}(x)=1$ a.e. on $\R$.
\end{proof}


\section{Fuglede’s conjecture for three non-overlapping unit squares}\label{S8}
We first establish a geometric result about tilings of the plane by congruent squares, allowing both translations and rotations. We then use this result to complete the proof of Theorem~\ref{thm:Fuglede-conjecture-3squares}. 
\begin{proposition}\label{lem:congruent-square-tiling-parallel}
    Every tiling of \(\mathbb R^2\) by translated and rotated copies of a square consists of pairwise parallel squares.
\end{proposition}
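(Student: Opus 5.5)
We argue directly from the geometry of a single tiling, by propagating orientation along edges. Let $\mathcal T$ be a tiling of $\mathbb R^2$ by congruent closed squares of side $s$; the interiors are pairwise disjoint and the union is the plane. To each tile $S$ we attach its orientation $\theta(S)\in\mathbb R/(\pi/2)\mathbb Z$, namely the angle of any of its edges. Our aim is to show that $\theta$ is constant on $\mathcal T$. Since the tiles have area $s^2>0$, any bounded region meets only finitely many of them, so $\mathcal T$ is locally finite.

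\textbf{Local step.} Two tiles are \emph{adjacent} if they share a boundary segment of positive length. We claim that any two adjacent tiles have the same orientation: the common segment lies on an edge of each square, so the two edges are collinear and hence $\theta(S)=\theta(S')$.

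\textbf{Main step: every tile has a neighbour along each edge.} Fix a tile $S$ and an edge $e$ of $S$. Take a point $p$ in the relative interior of $e$, chosen to avoid the finitely many vertices of tiles near $e$. Points just outside $S$ near $p$ are covered by tiles, and by local finiteness a single tile $S'$ covers a small half-disc at $p$ on the outer side of $e$. The boundary of $S'$ then passes through $p$; we need to see that it contains a segment of $e$ around $p$. Suppose instead that the boundary of $S'$ crosses $e$ transversally at $p$. Then a side of $S'$ passes through $p$ at a nonzero angle to $e$. Near $p$ the square $S'$ occupies a half-plane bounded by the line of that side, and because of the crossing this half-plane enters the open half-disc on the side of $S$. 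So $S'$ overlaps the interior of $S$, which is a contradiction. Since $p$ is not a vertex of $S'$, near $p$ the tile $S'$ is exactly a half-plane, and this half-plane must therefore be bounded by the line of $e$. Hence $S'$ shares a segment of $e$ around $p$, so $S'$ is adjacent to $S$. This transversality argument is the main point to get right; the rest is bookkeeping.

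\textbf{Connectivity.} Consider the graph whose vertices are the tiles and whose edges join adjacent tiles, and let $C$ be the connected component containing a fixed tile. Let $U$ be the union of the tiles in $C$. Then $U$ is closed by local finiteness, and it is nonempty. We check that $U$ is also open.

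Take a point $x\in U$ and suppose, for contradiction, that $x$ is a limit of points lying only in tiles outside $C$. If $x$ lies in the relative interior of an edge of a tile in $C$, the main step shows that the tile on the other side is adjacent to it, hence belongs to $C$, so $x$ is interior to $U$. If $x$ is a vertex, the finitely many tiles meeting $x$ fill a neighbourhood of $x$ in cyclic order. Consecutive tiles in this order share a segment emanating from $x$: they cannot both be reduced to a single point near $x$, since otherwise there would be a gap between them. So consecutive tiles are adjacent, and all of them lie in $C$.

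Therefore $U$ is clopen, and $U=\mathbb R^2$. Combined with the local step, $\theta$ is constant along the connected graph, so all squares are parallel.

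The main obstacle is making the main step rigorous at points near vertices. We handle this by choosing $p$ generically and arguing through local half-plane structure rather than through global combinatorics.
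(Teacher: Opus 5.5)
Your proof is correct. Its local ingredient is the same as the paper's: at a boundary point that is not a vertex of any tile, a side crossing $e$ transversally would make the two local half-planes overlap in an open sector, so the two squares must share a segment of a common edge line.

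The difference is in the global step.
\begin{itemize}
\item The paper takes two arbitrary squares and joins interior points of them by a polygonal path in $\mathbb R^2\setminus V$, where $V$ is the locally finite vertex set. It perturbs the path to cross tile boundaries transversally and applies the local argument at each crossing, so vertices never have to be examined.
\item You instead show that the union $U$ of a component of the adjacency graph is clopen. This forces you to analyze neighbourhoods of vertices via the cyclic order of the sectors meeting there. In return you avoid the perturbation bookkeeping for the path, and you justify local finiteness (by an area count) where the paper simply assumes it.
\end{itemize}

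Three small points should be tightened.
\begin{itemize}
\item The assertion that a single tile covers the outer half-disc at $p$ does not follow from local finiteness alone. It follows from your half-plane argument applied to any tile $S'\neq S$ containing $p$. Such a tile exists because finitely many closed tiles cover that half-disc, so one of them contains outer points converging to $p$.
\item A point in the relative interior of an edge of a tile in $C$ can still be a vertex of tiles on the other side, as in a brick pattern. That case therefore belongs to your sector argument, which in fact handles every point of $U$ uniformly.
\item From $U=\mathbb R^2$ you should note explicitly that every tile lies in $C$. This holds because an interior point of a tile $T$ can only be covered by $T$ itself.
\end{itemize}
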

\begin{proof}
Let \(\mathcal S\) be a tiling of \(\mathbb R^2\) by congruent squares, and let \(V\) denote the set of all vertices of the squares in \(\mathcal S\). Since the tiling is locally finite, \(V\) is locally finite, i.e. all balls of radius 1 have only finitely many points from $V$. 

Take any two squares \(Q,Q'\in\mathcal S\), and choose points \(x\in\operatorname{int}(Q)\) and \(x'\in\operatorname{int}(Q')\). Since \(\mathbb R^2\setminus V\) is path connected, we may join \(x\) to \(x'\) by a polygonal path avoiding \(V\). After a small perturbation, we may further assume that the path crosses every tile boundary transversely and does not run along any tile boundary.

Since the image of the path is compact and the tiling is locally finite, the path meets only finitely many squares. Denote them, in the order in which they are traversed, by \(Q=Q_0,Q_1,\ldots,Q_N=Q'\).

Suppose that the path passes from \(Q_i\) to \(Q_{i+1}\) at a point \(p\). Since \(p\notin V\), it lies in the relative interior of a side of each of \(Q_i\) and \(Q_{i+1}\). We claim that these two sides are collinear. Indeed, if their supporting lines were distinct, they would intersect transversely at \(p\). Near \(p\), each square occupies one side of its corresponding supporting line, and the two resulting half-planes have an open sector in common. It would follow that \(\operatorname{int}(Q_i)\cap\operatorname{int}(Q_{i+1})\neq\varnothing\), contrary to the definition of a tiling.

Thus \(Q_i\) and \(Q_{i+1}\) share a boundary segment of positive length. In particular, one edge direction of \(Q_i\) agrees with one edge direction of \(Q_{i+1}\). Since the other edge direction of a square is perpendicular to the first, the two squares have the same pair of edge directions and are therefore parallel. Consequently, all $Q_0,\cdots, Q_N$ are parallel.  Since \(Q\) and \(Q'\) were arbitrary, all squares in the tiling are pairwise parallel.
\end{proof}

Now we are ready to prove Theorem \ref{th-three-cubes-parallel} in $\R^2$ and Theorem~\ref{thm:Fuglede-conjecture-3squares}.

\begin{proof}[Proof of Theorem~\ref{th-three-cubes-parallel} in $\R^2$] We can write
\begin{equation}\label{-Omega-final-form}
\Omega=(0,1)^2+A \quad \text{with } 0\in A\subset\R^2,\ \#A=3.
\end{equation}

By Corollary~\ref{cor-equal-size-necessary-dimension-2}, without loss of generality, we may further assume that $\pi_2(a)\in\mathbb{Z}$ for each $a\in A$.
Then by Theorem~\ref{thm:layered-reduction}, $\Omega$ is a weak tile of $\mathbb{R}^2$ if and only if the set
$$
\widetilde{\Omega}=\bigcup_{a\in A} ((0,1)+\pi_1(a))\times\{\pi_2(a)\}
$$
is a weak tile of $\mathbb{R}\times\mathbb{Z}$. Theorem~\ref{prop-WT-3intervals-characterization} implies that if $\widetilde{\Omega}$ is a weak tile, then it is both a tile and a spectral set of $\mathbb{R}\times\mathbb{Z}$. Hence by Theorem~\ref{thm:layered-reduction}, $\Omega$ is both a tile and a spectral set of $\mathbb{R}^2$. This completes the proof.
\end{proof}
\begin{proof}[Proof of Theorem~\ref{thm:Fuglede-conjecture-3squares}]
If the squares are not all parallel, then  $\Omega$ cannot be a tile by Proposition~\ref{lem:congruent-square-tiling-parallel}, and it cannot be a spectral set either. Indeed, if the three squares are not all parallel, one of them can be chosen as the standard unit square so that the other two are not parallel to its coordinate directions. This also covers the case where two squares are parallel and the third is not: after a rotation and a translation, we may take the latter square to be $Q=(0,1)^2$. Then neither of the other two squares has a facet normal equal to $\pm e_1$ or $\pm e_2$, and hence Theorem~\ref{th-spectral-non-parallel} shows that $\Omega$ is not a spectral set. 

It remains to consider the case of three
parallel squares. After a rotation and translation, $\Om$ has the form (\ref{-Omega-final-form}). The conclusion follows immediately from Theorem \ref{th-three-cubes-parallel}. 
\end{proof}

\noindent{\bf Disclosure of AI use.} The proofs of Proposition~\ref{prop:tiling-layered-reduction}, Proposition~\ref{prop:weak-tiling-layered-reduction} and Proposition \ref{lem:congruent-square-tiling-parallel} were first generated by ChatGPT 5.6-Sol and the authors rewrote them with more clarity. The other proofs were written by human. The authors take full responsibilities for the correctness of the manuscript.

\bibliographystyle{plain}
\bibliography{references.bib}

\end{document}